\newcommand{\diff}{\mathrm{d}}
\newcommand{\R}{\mathbf{R}}
\DeclareMathOperator{\Imag}{Im}
\DeclareMathOperator{\ind}{Ind}
\DeclareMathOperator{\nul}{Null}
\DeclareMathOperator{\End}{End}
\DeclareMathOperator{\Tr}{Tr}
\DeclareMathOperator{\supp}{supp}
\newcommand{\N}{\mathbf{N}}
\newcommand{\Sph}{\mathbf{S}}
\newcommand{\loc}{\mathrm{loc}}
\newcommand{\Vect}[1]{\mathrm{Vect}\left( #1 \right)}
\newcommand{\gdo}[1]{\mathrm{O}\left( #1 \right)}
\newif\ifdraftmode
\newcommand{\todo}[1]{}
\newtheorem{theorem}{Theorem}[section]
\newtheorem{proposition}[theorem]{Proposition}
\newtheorem{lemma}[theorem]{Lemma}
\newtheorem{corollary}[theorem]{Corollary}
\newtheorem{definition}[theorem]{Definition}
\newtheorem{propdef}[theorem]{Proposition-Definition}
\theoremstyle{remark}
\newtheorem{remark}{Remark}[section]
\title{Morse index stability for $p$-Yang-Mills connections}
\author{Mario Gauvrit, Paul Laurain, Tristan Rivière}
\date{}
\begin{document}
	
	\maketitle

%	\ifdraftmode
%	This page will be cut from the final version.
%	\listoftodos \newpage
%	\fi
	
	\begin{abstract}
		{\bf Abstract }: We establish the lower semi continuity of the Morse index and the upper continuity of the Morse Index plus nullity of sequences of critical points of the Sacks-Uhlenbeck type relaxation of the Yang-Mills Energy in 4 dimension. The result is known not to be true in general for the ``cousin problem'' of hamonic maps from surfaces into arbitrary manifolds. This result is stressing the more stable behaviour of Yang-Mills Fields compare to harmonic maps as observed in other contexts such as the flow. The Morse Index control at the limit of critical points to Sacks Uhlenbeck relaxations of Yang-Mills Lagrangian is a central result in the implementation of minmax operation on this Lagrangian.
	\end{abstract}
	
	\section*{Introduction}
	
	The calculus of variations of Yang-Mills energy for connections of principal $G-$bundles over  $4-$dimensional Riemannian manifolds  where $G$ is a compact Lie Group has received a close attention of the geometric analysis community
	in the early 80's in relation with important differential topology/geometry questions.  While the main focus was concentrated on special absolute minimizing solutions called Self-Dual/Anti-self-dual solutions or {\it instantons}, the construction of non zero Morse index Yang-Mills fields was also a subject of great interest in particular in relation with the so called Atiyah-Jones conjecture. The conjecture is about to know whether the homotopy type of the Moduli space ${\mathcal M}_n$ of instantons of topological charge $n$ modulo the gauge group action, for a $SU(2)-$bundle  with second Chern class $n$ over a four dimensional Riemannian manifold $M^4$, is ``comparable'' to the homotopy type of the space of general $SU(2)-$connections ${\mathcal A}_n$   modulo gauge group action with second Chern class $n$. More specifically, a question is about to know whether,  the low degree homotopy groups of the two spaces coincide or not. 
	This question is intimately related to the existence  of non zero Morse index Yang-Mills Fields. This general consideration can be sketched as follows : assuming a non zero $k-$homotopy class within the Moduli space of Instantons $\alpha\in \pi_k({\mathcal M}_n)$ would ``trivialize'' (would be mapped to zero) 
	for the canonical inclusion map $\iota$ into the general space of instantons $\iota_\ast\alpha=0$ in $\pi_k({\mathcal A}_n)$, one would easily produce a {\it min-max operation} whose underlying admissible space would be made of any continuous map from the $k+1$ ball $B^{k+1}$ into ${\mathcal A}_n$ whose boundary $\partial B^{k+1}$ is mapped into the space of instantons (which happen to realise the minimal Yang-Mills energy $8\pi^2 |n|$ of the given $SU(2)$ bundle) and the restriction to the boundary would realise the class $\alpha\ne 0$ in $\pi_k({\mathcal M}_n)$. Because the minimal energy $8\pi^2\,|n|$  is characterising  the membership to the moduli space ${\mathcal M}_n$, the ``width'' of such a min-max operation would be non trivial (strictly larger than $8\pi^2\, |n|$) otherwise one would contradicts the non triviality of $\alpha$ in $\pi_k({\mathcal M}_n)$. It is then ``natural'' to expect  this width to be realised by a non-zero Morse index $k+1$ Yang-Mills Fields.   In order to produce  a rigorous proof of this statement major analytical challenges have to be solved. The main reason is that, due in particular to its conformal invariance, the Yang-Mills energy cannot fulfil the necessary requirements for implementing Palais-Smale deformation theory in infinite dimensional space as such. Numerous publications have been devoted to this difficulty that Yang-Mills energy is sharing with several famous Lagrangians such as the Dirichlet energy of maps from a surface into  closed Riemannian manifolds whose critical points are called ``harmonic maps''.  The main motivation behind the present work is to bring a new piece of understanding of how the absence of Palais-Smale property for Yang-Mills energy can be ``overcome'' while implementing general min-max operations in particular. Before describing our main result we conclude this preamble by mentioning that there has been numerous contributions in the direction of Atiyah Jones conjecture and that we will omit to report on since this is not the object of the present paper. 
	
	In a pioneered work in concentration compactness theory (see \cite{SaU}), J.Sacks and K.Uhlenbeck, were looking for absolute minimizers of the Dirichlet energy of maps from the two sphere $S^2$ into an arbitrary closed sub-manifold $N^n$ of an euclidean space $R^N$realising a non zero free homotopy class of $\pi_2({\N}^n)$. Because of the absence of Sobolev embeddings of $W^{1,2}(S^2,N^n)$ into $C^0$ the existence of a minimizer was a-priori not guaranteed. To remedy to this difficulty, the two authors introduced an ``enhanced'' version of the Dirichlet energy known nowadays as ``Sacks-Uhlenbeck approximation'' :
	\[
	E_p(u):=\int_{S^2}(1+|d u|^2_{S^2})^{p/2}\ dvol_{S^2}\ .
	\] 
	for $p>2$. Maps of finite $E_p$ energy are exactly elements from the Sobolev space $W^{1,p}(S^2,N^n)$ which this time, for $p>2$, embeds compactly into $C^{0}(S^2,N^n)$. Because of this last fact the existence of a minimizers in a given free homotopy class is straightforward. For similar reasons the $p-$energy is sub-critical and satisfies the Palais Smale condition and general min-max operation can be implemented in the Banach Manifold ${\frak M}:=W^{1,p}(S^2,N^n)$ equipped with the Finsler structure given simply by the $W^{1,p}$ norm restricted to the tangent space $T_u{\frak M}$ to ${\frak M}$ at every element $u\in W^{1,p}(S^2,N^n)$. Then, once the existence of  minimizers or min-max critical points to the enhanced energies have been established, for any $p>2$,  the general strategy  introduced first in \cite{SaU} consists in ``following'' the obtained critical points to $E_{p_k}$ as $p_k\rightarrow 2^{+}$ and hopefully converge to critical points to the Dirichlet energy itself. In order to pass to the limit a  delicate analysis has to be performed. The central result in this analysis is the so called $\epsilon-$regularity theorem, saying roughly that below some universal positive threshold of energy (depending only on $N^n$ and not on $p$), every norm of every critical point of the $p-$energy is controlled. This permits, modulo extraction of a subsequence, to pass to the limit away from finitely many points in the domain while at these points so called ``bubbles'' (i.e. concentrated critical points) can be formed at the limit. These bubbles are connected to each other by regions which are called ``neck regions''.
	
	In a similar way, in order to overcome the absence of Palais-Smale property for the Yang-Mills energy, one introduces the $p-$Yang-Mills energy of any $G-$connection $A$ (where $G$ is a comp[act Lie group) over a 4 dimensional Riemannian manifold
	$ (M^4,h)$ :
	\[
	{\mathcal Y}{\mathcal M}_p(A):=\int_{M^4}(1+|F_A|^2)^{p/2}\, \mbox{vol}_h\ .
	\]
	This energy is well defined in the space of Sobolev $W^{1,p}$ $G-$connections over $M^4$ (see \cite{FU}), in the present paper we will work in a slightly more general context of weak $W^{1,p}$ $G-$connections denoted $\mathfrak {U}_G^{p}(M^4)$, see section \ref{preli}. The $\epsilon-$regularity for $p-$Yang-Mills critical points over $4-$dimensional manifolds has been first established in \cite{HS}. For the convenience of the reader we present in the appendix A.3 of the present work an alternative proof.
	In the present work we study the passage to the limit for $p_k-$Yang-Mills critical points of uniformly bounded energy as $p_k\rightarrow 2$. Thanks to the epsilon regularity we deduce a strong convergence/bubble tree convergence property our first main result is about the absence of loss of energy in the neck regions connecting the bubbles between themselves. Precisely we have
	\begin{theorem} 
		\label{bubbletree}(See also Theorem 2 of \cite{HS}) 	Let $ (M^4,h)$ be a closed four-dimensional Riemannian manifold and  $A_k\in\mathfrak {U}_G^{p_k}(M^4)$ a sequence of $p_k$-Yang-Mills connection with uniformly bounded energy\footnote{It means that $\mathcal{YM}_{p_k}(A_k)$ is uniformly bounded.} and such that $p_k \underset{k\to+\infty}{\rightarrow} 2^+$. Then we have, up to a sub-sequence,
		\begin{enumerate}
			\item There exists finitely many points $\{p_1,\dots, p_N\}$ and a Yang-Mills connection $A_\infty\in \mathfrak U_G(M^4)$ such that $A_k$ converges to $A_\infty$ in $\mathfrak C^\infty_{G,\mathrm{loc}}(M\setminus \{p_1,\dots, p_N\})$.
			\item For each $i\in \{1, \dots, N\}$, there exists $N_i\in \N$ sequences of points $(p_k^{i,j})_k$ converging to $p_i$,  $N_i$ sequences of scalars $(\lambda_k^{i,j})_k$ converging to zero, $N_i$ non-trivial Yang-Mills connections, called bubbles,  $A_\infty^{i,j} \in\mathfrak{U}_G(S^4)$ such that,
			$$(\phi_{k}^{i,j})^*(A_k) \rightarrow \widehat{A}_\infty^{i,j}=\pi_*A_\infty^{i,j}  \text{ into } \mathfrak C^\infty_{G,\mathrm{loc}}(\R^4 \setminus \{\text{ finitely many points}\}),$$
			where $\phi_{k}^{i,j} (x)= p_k^{i,j} +\lambda _k^{i,j} x$ in local coordinates and $\pi$ is the stereographic projection.
			\item Moreover there is no loss of energy, i.e
			$$ \lim_{k\rightarrow +\infty}\int_{M^4} \vert F_{A_k}\vert^2_h\, \mathrm{vol}_h  = \int_{M^4} \vert F_{A_\infty}\vert^2_h\, \mathrm{vol}_h +\sum_{i=1}^N \sum_{j=1}^{N_i} \int_{S^4} \vert F_{A_\infty^{i,j}}\vert^2\, \mathrm{vol}. $$
		\end{enumerate}
	\end{theorem}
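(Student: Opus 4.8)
The plan is to run a concentration--compactness / bubble-tree analysis of Sacks--Uhlenbeck type, adapted to the (conformally quasi-invariant) $p$-Yang--Mills functional in dimension four, using the $\varepsilon$-regularity estimate of Appendix~A.3 as the basic compactness input and the removable singularity theorem of Uhlenbeck to close up the limiting objects.

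First I would record that, since $p_k>2$, one has $(1+t)^{p_k/2}\ge 1+t$ for $t\ge 0$, hence
\[
\int_{M^4}|F_{A_k}|^2_h\,\mathrm{vol}_h\ \le\ \mathcal{YM}_{p_k}(A_k)\ \le\ C .
\]
Passing to a subsequence, the Radon measures $|F_{A_k}|^2_h\,\mathrm{vol}_h$ converge weakly-$\ast$ to a finite measure $\mu$, and I set
\[
S:=\Big\{x\in M^4:\ \liminf_{r\to 0}\ \liminf_{k\to\infty}\int_{B_r(x)}|F_{A_k}|^2_h\,\mathrm{vol}_h\ \ge\ \varepsilon_0\Big\},
\]
where $\varepsilon_0>0$ is the $\varepsilon$-regularity threshold, uniform for $p$ in a neighbourhood of $2$. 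As $\mu(M^4)<\infty$, the set $S=\{p_1,\dots,p_N\}$ is finite. On every compact $K\subset M^4\setminus S$ and for $k$ large the curvature energy on all small balls centred in $K$ stays below $\varepsilon_0$; $\varepsilon$-regularity then yields, in Coulomb (Uhlenbeck) gauges, uniform $C^\infty$ bounds, so up to gauge transformations and a further subsequence $A_k\to A_\infty$ in $\mathfrak C^\infty_{G,\loc}(M^4\setminus S)$. Since $(1+|F_{A_k}|^2)^{(p_k-2)/2}\to 1$ uniformly on $K$ (where $|F_{A_k}|$ is bounded), the $p_k$-Yang--Mills equation passes to the limit and $A_\infty$ is Yang--Mills on $M^4\setminus S$; having finite $L^2$ curvature, it extends by the removable singularity theorem to $A_\infty\in\mathfrak U_G(M^4)$. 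This is item~(1).

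For item~(2), at each $p_i$ I would perform the usual blow-up in a geodesic chart. Using the conformal invariance (in dimension four) of the $L^2$ curvature energy under dilations, choose centres $p_k^{i,1}\to p_i$ and scales $\lambda_k^{i,1}\to 0$ realising
\[
\int_{B_{\lambda_k^{i,1}}(p_k^{i,1})}|F_{A_k}|^2_h\,\mathrm{vol}_h\ =\ \sup_{x}\ \int_{B_{\lambda_k^{i,1}}(x)}|F_{A_k}|^2_h\,\mathrm{vol}_h\ =\ \varepsilon_1
\]
for a fixed $\varepsilon_1\in(0,\varepsilon_0)$. The rescaled connections $(\phi_k^{i,1})^*A_k$, with $\phi_k^{i,1}(x)=p_k^{i,1}+\lambda_k^{i,1}x$, solve a rescaled $p_k$-Yang--Mills equation on larger and larger balls; by construction their curvature energy is $\le\varepsilon_1$ on every unit ball, so $\varepsilon$-regularity applies uniformly and, after gauge fixing and extraction, $(\phi_k^{i,1})^*A_k\to A_\infty^{i,1}$ in $\mathfrak C^\infty_{G,\loc}(\R^4\setminus\{\text{finite set}\})$. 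Since $\lambda_k^{i,1}\to 0$ and $p_k\to 2$, the limiting equation is Yang--Mills on $\R^4$; $A_\infty^{i,1}$ is non-trivial (it carries energy $\varepsilon_1$ on $B_1$) and of finite energy, hence extends by removable singularity to a Yang--Mills connection on $S^4$, so $\widehat A_\infty^{i,1}=\pi_*A_\infty^{i,1}$. One then subtracts the energy captured by this bubble and iterates on the remaining concentrated energy, including bubbles forming on bubbles, keeping track of the relative sizes of the scales $\lambda_k^{i,j}$ and of the mutual distances $|p_k^{i,j}-p_k^{i,j'}|$. The iteration terminates after finitely many steps, since each bubble absorbs, on regions eventually disjoint from those of the other bubbles, at least $\varepsilon_1$ of curvature energy while the total is bounded; this fixes $N_i<\infty$.

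Item~(3), the absence of energy in the neck regions, is the core of the theorem and the step I expect to be the main obstacle. After the blow-up each neck is conformally a long cylinder $\approx[T_1^k,T_2^k]\times S^3$, which I would slice into unit-length sub-cylinders on each of which, by construction, the curvature energy is below $\varepsilon_0$; $\varepsilon$-regularity then gives pointwise decay of $|F_{A_k}|$ on the interior of a sub-cylinder controlled by the energy of its two neighbours. To upgrade this to quantization I would combine two ingredients: (a) the (almost-)monotonicity formula for $p_k$-Yang--Mills in dimension four, exact in the conformal case $p=2$ and carrying a bulk defect of size $\gdo{p_k-2}$ otherwise; and (b) a Pohozaev identity obtained by testing the $p_k$-Yang--Mills equation against the dilation field $r\,\partial_r$, producing a flux $\Phi_k(\rho)$ through the spheres $\partial B_\rho$ that is constant in $\rho$ up to an $\gdo{p_k-2}$ error. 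At the two ends of each neck one already has strong convergence (to the base connection, respectively to a bubble), so the fluxes there tend to zero; hence $\Phi_k\to 0$ along the whole neck, and inserting this into the radial/tangential decomposition of $F_{A_k}$ on the cylinder yields a differential inequality forcing exponential decay of the cross-sectional energy from both ends, so the total neck energy tends to zero. Summing the $\varepsilon$-regularity energy identity over the finitely many pieces (base, necks, bubbles) and letting $k\to\infty$ then gives
\[
\lim_{k\to+\infty}\int_{M^4}|F_{A_k}|^2_h\,\mathrm{vol}_h\ =\ \int_{M^4}|F_{A_\infty}|^2_h\,\mathrm{vol}_h\ +\ \sum_{i=1}^N\sum_{j=1}^{N_i}\int_{S^4}|F_{A_\infty^{i,j}}|^2\,\mathrm{vol}.
\]
The delicate points throughout are the uniformity of all constants as $p_k\to 2^+$ and the control of the $\gdo{p_k-2}$-errors in the monotonicity and Pohozaev identities, together with the combinatorial bookkeeping of the bubble tree so that no energy is double-counted or missed between nested bubbles.
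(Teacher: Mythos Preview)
Your outline for items (1) and (2) is essentially what the paper does: both defer to the classical bubble-neck decomposition driven by the uniform $\varepsilon$-regularity of Appendix~A.3, Uhlenbeck's Coulomb gauge, and removable singularities. No disagreement there.

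For item (3), the no-neck-energy statement, you take a genuinely different route from the paper. You propose a Pohozaev/flux argument: test the $p_k$-Yang--Mills equation against $r\,\partial_r$, obtain an (almost) conserved flux $\Phi_k(\rho)$ across spheres, match it to zero at the two ends of the neck by strong convergence, and then derive exponential decay of the cross-sectional energy. This is in the spirit of the original Hong--Schabrun proof and of three-annulus arguments for harmonic maps. The paper instead follows the Lin--Rivi\`ere interpolation-space strategy: from $\varepsilon$-regularity one first gets an $\mathrm{L}^{2,\infty}$ bound on $F_{A_k}$ in the neck; then, working in a Coulomb gauge on the annulus (Theorem~\ref{recollementjauge}), one cuts off $F_{A_k}$, estimates $\diff(\chi F)$ and $\diff^\ast(\chi F)$ in $\mathrm{L}^{4/3,1}$ using Bianchi and the $p$-Yang--Mills equation, applies Gaffney's inequality and the Lorentz--Sobolev embedding to obtain $F_{A_k}\in\mathrm{L}^{2,1}$ with norm controlled by the $\mathrm{L}^2$ energy (Proposition~\ref{inegalitequantificationYM}); the $\mathrm{L}^2$ quantization then drops out by $\mathrm{L}^{2,1}$/$\mathrm{L}^{2,\infty}$ duality (Corollary~\ref{L2quantization}). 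The $(p_k-2)$-error you worry about is handled in the paper simply by absorption: the extra term in $\diff^\ast(\chi F)$ coming from the $p$-equation is bounded by $C(p-2)\|\nabla(\chi F)\|_{\mathrm{L}^{4/3,1}}$ and is swallowed by the left-hand side for $p$ close to $2$.

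Two remarks on your sketch. First, your ``almost-monotonicity formula'' is not quite the right object here: in dimension four the Yang--Mills energy is scale-invariant, so the Price-type monotonicity is vacuous at $p=2$; what actually does the work in a Pohozaev approach is the angular/radial balance identity on spheres (equality of $\|\iota_{\partial_r}F\|$ and $\|\iota_{\partial_r}\!\star F\|$ in $L^2(\partial B_\rho)$ up to $O(p-2)$), which you should invoke explicitly. Second, and more importantly for this paper, your route yields only $\mathrm{L}^2$ quantization, whereas the paper's Lorentz-space argument produces the sharper $\mathrm{L}^{2,1}$ quantization of Corollary~\ref{L2quantization}. That upgrade is not cosmetic: it is the ingredient that feeds into the weighted pointwise estimates of Section~4 and ultimately into Theorem~\ref{pointcrucial1}, on which the upper semi-continuity of the Morse index rests. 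So while your approach would suffice to prove Theorem~\ref{bubbletree} as stated, it would leave you without the tool the rest of the paper is built on.
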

	The result as such has already been proven in \cite{HS}. Our approach in the present work for proving theorem~\ref{bubbletree} is following the general strategy  originally introduced in the seminal work \cite{RiviereLin}.
	It consists in looking for  interpolation spaces estimates in neck regions. In fact this strategy allows for an up-grade of the energy quantization result to a much sharper $\mathrm{L}^{2,1}$energy  quantization (see corollary~\ref{L2quantization}). The first $L^{2,1}-$energy quantization result has been obtained by the second and the third author in \cite{laurainriviere2014angular}. We would like to stress at this stage that theorem~\ref{bubbletree}  and even more corollary~\ref{L2quantization} should be very striking to experts in conformal geometric analysis. Indeed, the $L^2$ energy quantization and even more the $L^{2,1}$ energy quantization are notoriously known to fail in general for the Sacks Uhlenbeck approximation of the Dirichlet Energy (\cite{LiW}) (except when the target is a symmetric space see \cite{LiZhu}  and \cite{DaRiSchla}). Our result is another illustration of the fact that Yang-Mills energy is enjoying more stability properties in bubble tree analysis than the ``cousin Lagrangian'' given by the Dirichlet energy in 2 dimension. It goes in the same direction of Waldron's theorem asserting that  Yang-Mills heat flow  never blows-up in finite time \cite{Waldron}. contrary to the harmonic map flow.
	
	In the second part of the paper, we implement the $L^{2,1}-$energy quantization in neck regions for proving a Morse index stability result following a strategy introduced by \cite{DGR22}. The following result is generalising to $p-$Yang-Mills the previous result obtained by the two first authors in \cite{GL24}.
	\begin{theorem}
		Let $ (M^4,h)$ be a closed four-dimensional Riemannian manifold and $A_k\in\mathfrak{U}_G^{p_k}(M^4)$ a sequence of $p_k$-Yang-Mills connections with uniformly bounded energy and such that $p_k \underset{k\to+\infty}{\rightarrow }2^+$. Let $A_\infty\in \mathfrak U_G(M^4)$ and $A_\infty^{i,j} \in\mathfrak U_G(S^4) $ be its bubble-tree limit in the sense of theorem \ref{bubbletree}. Then, for $k$ large enough, we have
		\begin{align}
			\mathrm{ind}_{\mathcal{YM}_{p_k}}(A_k) &\geq \mathrm{ind}_\mathcal{YM}(A_{\infty})+ \sum_{i=1}^N\sum_{j=1}^{N_i} \mathrm{ind}_\mathcal{YM}(A_{\infty}^{i,j}), \\
			\mathrm{ind}^0_{\mathcal{YM}_{p_k}}(A_k) &\leq \mathrm{ind}^0_\mathcal{YM}(A_{\infty})+ \sum_{i=1}^N \sum_{j=1}^{N_i} \mathrm{ind}^0_\mathcal{YM}(A_{\infty}^{i,j}),
		\end{align} where $\mathrm{ind}^0_{\mathcal{YM}_{p}} (A)=\ind_{\mathcal{YM}_p}(A) + \dim \left(\ker Q_{A,p} \cap \ker \diff_A^*\right)$ is the extended index of the connection and $Q_{A,p}$ is the quadratic form associated to the second variation.
	\end{theorem}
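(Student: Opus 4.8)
\emph{Strategy.} The plan is to study the second variation quadratic form $Q_{A,p}$ of ${\mathcal{YM}}_p$, restricted --- after a Coulomb gauge fixing --- to the slice $\ker\diff_A^*\subset\Omega^1(M^4,\mathrm{ad}\,P)$, and to compare its non-positive spectrum along the sequence $A_k$ with that of the linearised Yang--Mills operators at the limits $A_\infty$ and $A_\infty^{i,j}$. Three structural facts drive the argument. First, in dimension four the classical Yang--Mills second variation is conformally invariant, so the rescalings $\phi_k^{i,j}$ act almost isometrically on the Dirichlet-type energies of $\mathrm{ad}\,P$-valued $1$-forms; the $p$-defect sits entirely in the weights $(1+|F_{A_k}|^2)^{p_k/2-1}$ and in the extra, manifestly \emph{non-negative}, term $\tfrac{p_k}{2}\,\tfrac{p_k-2}{2}\,(1+|F_{A_k}|^2)^{p_k/2-2}\,\langle F_{A_k},\diff_{A_k}\cdot\rangle^2$ appearing in $Q_{A_k,p_k}$. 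Second, on any fixed compact subset of $M^4\setminus\{p_1,\dots,p_N\}$, as well as on any fixed compact subset of $\R^4$ after rescaling by $\phi_k^{i,j}$, the curvatures converge smoothly by Theorem~\ref{bubbletree}, and the rescaled weights converge to positive constants (not necessarily equal to $1$, since the uniform ${\mathcal{YM}}_{p_k}$-bound only forces $(p_k-2)\log(1/\lambda_k^{i,j})$ to stay bounded) --- constants which do not affect the sign of a quadratic form. Hence on each ``piece'' of the bubble tree $Q_{A_k,p_k}$ converges, as a quadratic form on smooth compactly supported sections, to the corresponding limiting Yang--Mills second variation, the $(p_k-2)$-term dropping out. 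Third, in the neck regions the curvature obeys the sharp $L^{2,1}$-quantisation of Corollary~\ref{L2quantization}.

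\emph{Lower bound (first inequality).} I would produce negative directions for $Q_{A_k,p_k}$ by transplantation. Fix a maximal subspace $V_\infty$ of dimension $\mathrm{ind}_{\mathcal{YM}}(A_\infty)$ on which $Q_{A_\infty}$ is negative definite; by density its elements may be chosen smooth, divergence-free for $A_\infty$ and supported away from $\{p_1,\dots,p_N\}$, at the cost of an arbitrarily small error. For each bubble, pull the negative directions of $Q_{A_\infty^{i,j}}$ back to $\R^4$ through the stereographic projection; the curvature decay of a finite-energy Yang--Mills connection on $\R^4$ lets one approximate them by smooth compactly supported $1$-forms, giving spaces $V^{i,j}$ of dimension $\mathrm{ind}_{\mathcal{YM}}(A_\infty^{i,j})$. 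Transplant $V_\infty$ directly onto $A_k$ and each $V^{i,j}$ through the conformal rescaling attached to $\phi_k^{i,j}$. For $k$ large these building blocks have essentially disjoint supports --- $V_\infty$ lives at scale $\mathrm{O}(1)$ away from the $p_i$, while the $V^{i,j}$ sit at the mutually separated scales $\lambda_k^{i,j}$ near them --- so the off-diagonal blocks of $Q_{A_k,p_k}$ tend to zero, while on each diagonal block $Q_{A_k,p_k}$ converges to the associated limiting second variation and therefore stays negative definite. A final projection onto $\ker\diff_{A_k}^*$, whose error is small because each block is divergence-free for its own limiting connection and $A_k$ converges to it, yields an $\bigl(\mathrm{ind}_{\mathcal{YM}}(A_\infty)+\sum_{i,j}\mathrm{ind}_{\mathcal{YM}}(A_\infty^{i,j})\bigr)$-dimensional subspace on which $Q_{A_k,p_k}$ is negative definite.

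\emph{Upper bound (second inequality).} Here I would argue by contradiction, along the lines of \cite{DGR22} and \cite{GL24}. Write $m$ for the right-hand side and suppose that, along a subsequence, $\mathrm{ind}^0_{\mathcal{YM}_{p_k}}(A_k)\geq m+1$. Let $W_k\subset\ker\diff_{A_k}^*$ be an $(m+1)$-dimensional space on which $Q_{A_k,p_k}\leq 0$, spanned by eigensections of the Jacobi operator $L_{A_k,p_k}$ with non-positive eigenvalues, normalised in a scale-adapted way --- the correct choice of which conformally invariant norm to use on which piece of the bubble tree being one of the delicate bookkeeping points, since the lowest eigenvalues of $L_{A_k,p_k}$ may run off to $-\infty$. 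Combining an $\varepsilon$-regularity estimate for $L_{A_k,p_k}$ --- a Schrödinger-type operator whose zeroth order potential has the $L^2$-critical size $|F_{A_k}|$ in dimension four --- with the bubble-tree convergence, I would extract along a further subsequence the bubble-tree limits of these eigensections: on $M^4\setminus\{p_1,\dots,p_N\}$ they converge to non-positive eigensections of $L_{A_\infty}$, and on each bubble, after rescaling and renormalising, to finite-energy non-positive eigensections of $L_{\widehat{A}_\infty^{i,j}}$, which extend across the puncture by a removable-singularity argument and hence descend to $S^4$. As the limiting non-positive eigenspaces have total dimension at most $m<m+1=\dim W_k$, a dimension count produces a non-trivial $a_k\in W_k$, of unit scale-adapted norm, whose whole bubble-tree limit vanishes; equivalently, all the energy of $a_k$ escapes into the neck regions while $Q_{A_k,p_k}(a_k)\leq 0$.

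\emph{The main obstacle.} The heart of the proof --- and the step I expect to be hardest --- is to rule out such a phantom direction concentrated in the necks, which is exactly where the strategy initiated in \cite{RiviereLin} and the $L^{2,1}$-quantisation enter. On a neck modelled on a long cylinder $\Sph^3\times[S_k,T_k]$, $Q_{A_k,p_k}(a_k)$ controls the non-negative quantities $\int_{\mathrm{neck}}|\diff_{A_k}a_k|^2$ and the $(p_k-2)$-term, against a potential term bounded by $\int_{\mathrm{neck}}|F_{A_k}|\,|a_k|^2$. I would estimate this potential term by pairing $|F_{A_k}|\in L^{2,1}$, whose norm on the neck tends to zero by Corollary~\ref{L2quantization}, against $|a_k|^2\in L^{2,\infty}$ via the $L^{2,1}$--$L^{2,\infty}$ duality, the $L^{2,\infty}$ norm of $|a_k|^2$ being controlled by the Dirichlet energy of $a_k$ once the neutral (cross-sectionally constant) modes of $a_k$ along the cylinder have been treated separately. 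This gives $Q_{A_k,p_k}(a_k)\geq\tfrac12\int_{\mathrm{neck}}|\diff_{A_k}a_k|^2-\mathrm{o}(1)$; since all of the energy of $a_k$ has been forced into the neck, the right-hand side is eventually strictly positive, contradicting $Q_{A_k,p_k}(a_k)\leq 0$. This step is precisely the one that fails for the Sacks--Uhlenbeck approximation of harmonic maps, where no $L^{2,1}$-quantisation holds, and it is what makes the theorem true here. Two further points must be handled throughout: keeping a coherent family of Coulomb gauges compatible with the rescalings $\phi_k^{i,j}$ and with the neck decomposition, and controlling the $p_k$-dependence of $L_{A_k,p_k}$ --- the weights $(1+|F_{A_k}|^2)^{p_k/2-1}$ and the $(p_k-2)$-term --- uniformly on the compact pieces and along the necks.
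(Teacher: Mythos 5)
Your overall skeleton matches the paper's: transplantation for the lower bound, and, for the upper bound, a variant of the strategy of \cite{DGR22} and \cite{GL24} in which the essential point is that the neck contributes non-negatively to the (gauge-fixed) second variation, so that no negative direction can escape into the neck. Your lower-bound argument is essentially the one the paper carries out, with the same warning that ${\mathcal{YM}}_p$ is no longer conformally invariant so the bubble block must be handled by comparing $Q_{p_k,A_k}((\phi_k)_*\widehat a)$ with $Q_{p_k,\phi_k^* A_k}(\widehat a)$ directly, exactly as you indicate. Where you genuinely diverge from the paper is in how the neck positivity is produced and how it is exploited. You propose to bound the potential term $\int_{\mathrm{neck}}|F_{A_k}|\,|a_k|^2$ by pairing $F_{A_k}\in\mathrm{L}^{2,1}$ against $|a_k|^2\in\mathrm{L}^{2,\infty}$ and invoking Corollary \ref{L2quantization}, after separately handling the cylinder's constant modes. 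The paper instead proves \emph{pointwise} decay of $|F_{A_k}|$ and $|A_k^g|^2$ in the neck (Theorem \ref{mainestimate}, Corollary \ref{neckC0estimates}), obtained from the comparison principle for the non-divergence operator $\mathcal{L}'_{p,\varepsilon}$ with explicit supersolutions $\rho^{-\delta_\pm}$ together with a $p$-Yang--Mills Kato--Yau inequality, and then absorbs the potential via a Hardy/Gaffney inequality on compactly supported $1$-forms (Proposition \ref{GaffneyPoincareNeck}, Theorem \ref{pointcrucial1}). That route has two advantages: it avoids any discussion of neutral cylinder modes (which your duality argument only gestures at, and which is a genuine subtlety since $\mathrm{W}^{1,2}\hookrightarrow\mathrm{L}^{4,\infty}$ fails for functions that are merely constant in the cross-section unless compact support is imposed), and it yields a pointwise weight $\omega_{\eta,k}$ dominating the potential so that $\mathcal{Q}_{p_k,A_k}$ can be represented by a self-adjoint operator $\mathcal{L}_{\eta,k}$ in the weighted $\mathrm{L}^2$-space with spectrum uniformly bounded below (Proposition \ref{pointcrucial2}). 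The upper bound is then deduced not by contradiction with a renormalisation of eigensections, but by Sylvester-type diagonalisation (Theorem \ref{diagonalisationofquadraticforms}) and a compactness lemma (Lemma \ref{prelim1}) showing normalised eigensections cannot vanish in the bubble-tree limit. Both routes should work, but be aware that the paper does not in fact use the $\mathrm{L}^{2,1}$--$\mathrm{L}^{2,\infty}$ duality for the Morse index step: the $\mathrm{L}^{2,1}$-quantisation serves mainly to establish Theorem \ref{bubbletree}, whereas the crucial analytic input for the index stability is the pointwise curvature estimate from the comparison principle together with Lemma \ref{Lemmap} to control the $p_k$-dependent factor $(R/r)^{C(p_k-2)}$.
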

	
	This result is the Yang-Mills counterpart of the same result obtained in \cite{DaRiSchla}  for Sacks Uhlenbeck relaxation of the Dirichlet energy of maps into spheres and symmetric spaces in general see \cite{Schlagen}.
\medskip

	The paper is organised as follows. After that we introduce, in section 1 and 2,  notations and main notion for the definition of indices, we perform a fine analysis in neck region, section 3 to 5, here for the sake of clarity, we will state the results  in the Euclidean framework. They remain true on small enough geodesic balls of a Riemannian manifold, up to slight modification of constant which as no effect on the desired result. Finally, in section 6, we prove our main theorem in a general setting.

	\medskip
	
	\noindent{\bf Acknowledgements} This work has been initiated during a one semester visit of the first author at the ETH Z\"urich. He would like to thank the mathematics department at ETH for its hospitality.
	\setcounter{tocdepth}{2}
	\tableofcontents
	
	\newpage
	\section{Preliminaries on weak-connections}
	\label{preli}
	We work in the context of weak-connection introduce by Petrache-Rivière \cite{PR14}, see also the introduction and the appendix of the work of the first and second author \cite{GL24}. Here, we just have to extend the theory to $W^{2,2}$-bundles with $W^{1,p}$-connection with $p>2$, which is mostly obvious since every thing which is true for $W^{1,2}$-connection will be automatically true for $W^{1,p}$-connections. 
	%we just remind the definition,   
	Let $(M^4, h)$ be an arbitrary Riemannian manifold, $G$ a compact Lie subgroup of $\mathrm{SU}(n)$ with Lie algebra $\mathfrak{g}$, $p\geq 2$, a weak $W^{1,p}$-connection is a $\mathfrak{g}$-valued 1-form $A$ (called the connection form) such that 
	\begin{itemize}
		\item $A\in \mathrm{L}^{4,\infty}(M,T^*M\otimes \mathfrak{g})$\footnote{See \cite{Grafakos1} for a complete introduction to Lorentz spaces.},
		\item $F_A= \diff A + A \wedge A$ (defined as a distribution) is in $\mathrm{L}^p$,
		\item locally, there exists a $\mathrm{W}^{1,(4,\infty)}$-gauge $g$ satisfying $A^g\in \mathrm{W}^{1,p}$, where $ A^g := g^{-1} A g+g^{-1}\diff g$ is the expression of $A$ after the gauge change $g$,
	\end{itemize}  
	We denote by $\mathfrak A^p_G(M^4)$ the space of such connections. The main advantage of this formulation, is that the connection form is globally defined, which includes the classical case, since by \cite[theorem A]{PR14}, every $\mathrm{W}^{2,2}$-bundle is a trivial $\mathrm{W}^{1,(4,\infty)}$-bundle, see also the appendix of \cite{GL24}. Moreover, we can also define an associated notion of convergence, i.e. smooth convergence in local good gauge, we denote it by $\mathfrak{C}_{G,loc}^\infty$- convergence , see Proposition-definition A.8 of \cite{GL24} for a precise definition.

	\section{Second variation : computation and finiteness of the Morse index}
	We start by reminding the expression of the first variation of the $p$-Yang-Mills functional in our framework.
	
	\begin{proposition}[Section 2 of \cite{HS}] For every connection $A\in\mathfrak U_G^p(M)$,  for all $a\in \mathrm{W}^{1,p}(M, T^*M\otimes \mathfrak{g})$, 
		\begin{equation} \frac{\diff}{\diff t}_{|t=0}  \mathcal{YM}_p(A + ta) = p\int_{M}(1+|F_A|_h^2)^{\frac{p}{2}-1} \langle F_{A}, \diff_{A} a
			\rangle_h \mathrm{vol}_h. \label{variationpremiere}
		\end{equation}
		Hence $A\in\mathfrak U_G^p(M)$ is a $p$-Yang-Mills connections, if it satisfies in the distributional sense the following equation
		\begin{equation} \label{equationdepYMdiv}
			\diff_A^* \left((1+|F_A|_h^2)^{p/2-1}F_A\right) =0 .
		\end{equation}
	\end{proposition}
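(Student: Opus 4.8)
The plan is a direct first-variation calculation; the only point demanding care is the justification of differentiation under the integral sign in the weak-connection framework, which is conveniently carried out in a local good gauge (where $A\in\mathrm W^{1,p}$) and then globalised using the gauge-invariance of $\mathcal{YM}_p$ and the gauge-covariance of the curvature and of \eqref{equationdepYMdiv}. First I would record the exact expansion of the perturbed curvature: writing $\diff_A a:=\diff a+A\wedge a+a\wedge A$ for the covariant exterior derivative of the $\mathfrak g$-valued $1$-form $a$, the identity $F_{A+ta}=\diff(A+ta)+(A+ta)\wedge(A+ta)$ gives
\[
F_{A+ta}=F_A+t\,\diff_A a+t^2\,a\wedge a .
\]
In a good gauge the Sobolev embedding for $\mathrm W^{1,p}$ in dimension four and Hölder's inequality yield $\diff_A a\in\mathrm L^p$ and $a\wedge a\in\mathrm L^p$, so $F_{A+ta}\in\mathrm L^p$ and $\mathcal{YM}_p(A+ta)<\infty$ for all $t$. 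Expanding, $|F_{A+ta}|_h^2=|F_A|_h^2+2t\,\langle F_A,\diff_A a\rangle_h+t^2R(t)$ with $R$ a polynomial in $t$ whose coefficients are pointwise dominated by fixed multiples of $|F_A|\,|\diff_A a|$, $|\diff_A a|^2$ and $|F_A|\,|a\wedge a|$; the chain rule applied to $s\mapsto(1+s)^{p/2}$ then gives the pointwise derivative
\[
\tfrac{\diff}{\diff t}\Big|_{t=0}(1+|F_{A+ta}|_h^2)^{p/2}=p\,(1+|F_A|_h^2)^{p/2-1}\langle F_A,\diff_A a\rangle_h .
\]

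Second, I would pass the derivative inside the integral. Using the convexity bound $\bigl|(1+b)^{p/2}-(1+a)^{p/2}\bigr|\le C_p\,(1+|a|+|b|)^{p/2-1}|b-a|$ valid for $p\ge2$, the difference quotients $t^{-1}\bigl[(1+|F_{A+ta}|_h^2)^{p/2}-(1+|F_A|_h^2)^{p/2}\bigr]$ are bounded, uniformly for $|t|\le1$, by a fixed linear combination of products of the form $\bigl(1+|F_A|^{p-2}+|\diff_A a|^{p-2}\bigr)\bigl(|F_A|\,|\diff_A a|+|\diff_A a|^2+|F_A|\,|a\wedge a|\bigr)$, each of which is integrable on $M$ by $F_A\in\mathrm L^p$, $\diff_A a\in\mathrm L^p$, $a\wedge a\in\mathrm L^p$ and Hölder. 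Dominated convergence then gives \eqref{variationpremiere}. (One may instead prove \eqref{variationpremiere} first for smooth $a$, where differentiation under the integral is immediate, and extend to all $a\in\mathrm W^{1,p}$ by density, both sides being continuous in the $\mathrm W^{1,p}$-norm of $a$.) Producing this single $t$-independent $\mathrm L^1$ majorant from the integrability of $A$ and $a$ is the main, though mild, obstacle.

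Finally, the equation \eqref{equationdepYMdiv} follows by integration by parts. Since $F_A\in\mathrm L^p$ and $\bigl|(1+|F_A|_h^2)^{p/2-1}F_A\bigr|\le(1+|F_A|_h^2)^{(p-1)/2}\lesssim 1+|F_A|^{p-1}$, the $\mathfrak g$-valued $2$-form $(1+|F_A|_h^2)^{p/2-1}F_A$ lies in $\mathrm L^{p'}(M)$ with $p'=p/(p-1)$, so $\diff_A^*\bigl((1+|F_A|_h^2)^{p/2-1}F_A\bigr)$ is a well-defined distribution. On the closed manifold $M$ the adjunction $\int_M\langle b,\diff_A a\rangle_h\,\mathrm{vol}_h=\int_M\langle\diff_A^*b,a\rangle_h\,\mathrm{vol}_h$ holds with no boundary term (first for smooth data, then in the distributional pairing by density), so \eqref{variationpremiere} reads $\tfrac{\diff}{\diff t}\big|_{t=0}\mathcal{YM}_p(A+ta)=p\,\bigl\langle\diff_A^*\bigl((1+|F_A|_h^2)^{p/2-1}F_A\bigr),a\bigr\rangle$. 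A connection $A$ is $p$-Yang-Mills precisely when this vanishes for every $a\in\mathrm W^{1,p}(M,T^*M\otimes\mathfrak g)$, i.e. exactly when \eqref{equationdepYMdiv} holds in the distributional sense.
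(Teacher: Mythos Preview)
Your argument is correct and follows the standard route: expand $F_{A+ta}$, differentiate the integrand pointwise, justify dominated convergence via a uniform $\mathrm L^1$ majorant built from H\"older, and then read off the Euler--Lagrange equation by duality. One cosmetic remark: in your majorant you should in principle also allow the factor $|a\wedge a|^{p-2}$ (and cross-terms $|\diff_A a|\,|a\wedge a|$, $|a\wedge a|^2$) since $F_{A+ta}$ contains $t^2 a\wedge a$; these are handled identically because $a\wedge a\in\mathrm L^p$, so this is not a gap.

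There is nothing to compare with in the paper itself: the proposition is stated with a citation to \cite{HS} and is not reproved here. Your write-up in fact supplies more detail than either source, in particular the care you take with the weak-connection framework (working in a local good gauge and invoking gauge-covariance) and with the $\mathrm L^{p'}$ integrability of $(1+|F_A|_h^2)^{p/2-1}F_A$ needed for the distributional formulation of \eqref{equationdepYMdiv}.
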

	
	\begin{remark}
		Recall that  the Bianchi identity is satisfies by any connection, i.e. \begin{equation}
			\label{equationdeBianchi} \diff_A F_A =0.
		\end{equation}
	\end{remark}
	
	It turns out, as already proved in \cite{HS}, that solution of the $p$-Yang-Mills equation is smooth in Coulomb gauge and satisfies so-called $\varepsilon$-regularity estimates under a small energy assumption. But for sake of completeness, we give new proofs of those results  in the appendix. In particular, knowing solutions are regular, the $p$-Yang-Mills equation \eqref{equationdepYMdiv} rewrites as 
	
	\begin{equation} \label{equationdepYM}
		\diff_A^* F_A = \frac{p-2}{2} \star \frac{\diff |F_A|_h^2\wedge \star F_A}{1+|F_A|_h^2}.
	\end{equation}
	Let us now consider the second variation.
	\begin{proposition}
		If $(A_t)_{t\in(-\varepsilon, \varepsilon)}\in \mathfrak A_G^p(M)$ is a one parameter  family of weak connections where $A:=A_0$ is a $p$-Yang-Mills connection and $a:=\displaystyle {\frac{\diff}{\diff t}}_{|t=0} A_t\in   \mathrm{W}^{1,p}(M, T^*M\otimes \mathfrak{g})$ then \begin{equation} 
			\frac{\diff^2}{\diff t^2}_{|t=0}\mathcal{YM}_p(A_t) =p  \int_{M}(1+|F_A|_h^2)^{\frac{p}{2}-1} \left( |\diff_{A} a|_h^2 + \langle F_{A}, [a, a] \rangle_h + (p-2) \frac{\langle F_A, \diff_A a \rangle_h^2}{1+|F_A|_h^2} \right)\mathrm{vol}_h.  \label{variationseconde}
		\end{equation}
	\end{proposition}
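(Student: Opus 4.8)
The plan is to differentiate the first variation formula \eqref{variationpremiere} a second time in $t$ and then to use the $p$-Yang-Mills equation \eqref{equationdepYMdiv} to discard the term that carries the second derivative of the family.

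First I would record how the curvature varies. Writing $\dot A_t:=\frac{\diff}{\diff t}A_t$ and $\ddot A_t:=\frac{\diff^2}{\diff t^2}A_t$, the identity $F_{A_t}=\diff A_t+A_t\wedge A_t$ yields $\frac{\diff}{\diff t}F_{A_t}=\diff_{A_t}\dot A_t$ and, differentiating once more, $\frac{\diff^2}{\diff t^2}F_{A_t}=\diff_{A_t}\ddot A_t+2\,\dot A_t\wedge\dot A_t$; in particular, at $t=0$, the first derivative is $\diff_A a$ and the second is $\diff_A b+[a,a]$, where $b:=\ddot A_0$. Next, since $s\mapsto(1+s)^{p/2}$ is smooth on $[0,\infty)$ and $A\mapsto F_A$ is polynomial, one may differentiate under the integral sign in
\[
\frac{\diff}{\diff t}\,\mathcal{YM}_p(A_t)=p\int_M(1+|F_{A_t}|_h^2)^{p/2-1}\,\big\langle F_{A_t},\tfrac{\diff}{\diff t}F_{A_t}\big\rangle_h\,\mathrm{vol}_h .
\]
Carrying out this differentiation and evaluating at $t=0$ produces, by the product and chain rules, four contributions (each integrated against $p\,\mathrm{vol}_h$): the derivative of the weight $(1+|F_{A_t}|^2)^{p/2-1}$ gives $(p-2)(1+|F_A|_h^2)^{p/2-2}\langle F_A,\diff_A a\rangle_h^2$; the derivative of the first factor $F_{A_t}$ in the pairing gives $(1+|F_A|_h^2)^{p/2-1}|\diff_A a|_h^2$; and the derivative of the second factor gives $(1+|F_A|_h^2)^{p/2-1}\big(\langle F_A,\diff_A b\rangle_h+\langle F_A,[a,a]\rangle_h\big)$. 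Writing $(1+|F_A|^2)^{p/2-2}=(1+|F_A|^2)^{p/2-1}(1+|F_A|^2)^{-1}$, three of these four terms reproduce the integrand of \eqref{variationseconde}.

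It thus remains to check that $p\int_M(1+|F_A|_h^2)^{p/2-1}\langle F_A,\diff_A b\rangle_h\,\mathrm{vol}_h$ vanishes. By \eqref{variationpremiere} this is the first variation of $\mathcal{YM}_p$ at $A$ in the direction $b$, that is, after an integration by parts, the distributional pairing of $b$ with $\diff_A^*\big((1+|F_A|_h^2)^{p/2-1}F_A\big)$; and this pairing is zero because $A$ satisfies \eqref{equationdepYMdiv}. Collecting the three surviving terms yields \eqref{variationseconde}.

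The algebra above is routine; the delicate part is the analysis that makes it rigorous in the weak-connection framework. One should assume the family $t\mapsto A_t$ to be of class $\mathcal C^2$ with values in $\mathfrak A^p_G(M)$, so that $b=\ddot A_0\in\mathrm W^{1,p}$ exists and the integration by parts against \eqref{equationdepYMdiv} is legitimate — here one uses that, by the $\varepsilon$-regularity and Coulomb gauge results proved in the appendix, $A$ is smooth in a good gauge, so in particular $\diff_A a=\diff a+[A\wedge a]\in\mathrm L^p$. One must also check that each integrand above is dominated, uniformly for $t$ near $0$, by a fixed $\mathrm L^1$ function, so that differentiation under the integral is valid; this follows from Hölder's inequality, e.g. $(1+|F_A|_h^2)^{p/2-1}|\diff_A a|_h^2\in\mathrm L^1$ because $\tfrac{p-2}{p}+\tfrac 2p=1$, and $(1+|F_A|_h^2)^{p/2-1}|F_A|_h\,|a|_h^2\in\mathrm L^1$ thanks to the Sobolev embedding $\mathrm W^{1,p}(M^4)\hookrightarrow\mathrm L^{4p/(4-p)}$ — which is exactly where the assumption $p>2$ enters, the value $p=2$ being the borderline critical case.
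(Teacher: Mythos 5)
Your proof is correct and takes essentially the same route as the paper: both differentiate the first variation formula, collect the terms coming from the weight $(1+|F|^2)^{p/2-1}$, the curvature pairing, and the second-order expansion of $F_{A_t}$, and use the criticality of $A$ to kill the contribution of $\ddot A_0$. The paper phrases this as an expansion of $\mathrm{D}\mathcal{YM}_{A+a}(b)$ to first order in $a$ (implicitly invoking the standard fact that the acceleration term drops out at a critical point), whereas you differentiate the path $t\mapsto A_t$ directly and invoke \eqref{equationdepYMdiv} explicitly to discard $\int(1+|F_A|^2)^{p/2-1}\langle F_A,\diff_A\ddot A_0\rangle$; the underlying computation and the end result are the same, and your added remarks on domination and Sobolev embeddings (where the paper just cites $\mathrm{W}^{1,2}\hookrightarrow\mathrm{L}^{4,2}$) are a welcome bit of extra rigor.
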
 
	
	\begin{proof}
		From \[ \diff_{A + a} b = \diff_{A} b + [a, b], \] \[ F_{A + a} - F_{A} = \diff_{A} a + a \wedge a, \] and the embedding $\mathrm{W}^{1, 2} \hookrightarrow \mathrm{L}^{4,2}$, we deduce, by substituting in \eqref{variationpremiere} : 
		\begin{align*}
			\mathrm{D} \mathcal{YM}_{A + a} (b) & =p\langle  (1+|F_A+d_Aa+a\wedge a|_h^2)^{\frac{p}{2}-1}( F_{A} + \diff_{A}
			a + a \wedge a), \diff_{A} b + [a, b] \rangle_h\\
			& = \mathrm{D} \mathcal{YM}_{A} (b) +p(p-2)\langle \left(1+\vert F_A\vert^2_h\right)^{\frac{p}{2}-2}\langle F_A, d_Aa\rangle_h F_A, d_Ab\rangle_h  \\
			&+p \langle  \left(1+\vert F_A\vert^2_h\right)^{\frac{p}{2}-1}F_{A}, [a, b] \rangle_h +p
			\langle  \left(1+\vert F_A\vert^2_h\right)^{\frac{p}{2}-1}\diff_{A} a, \diff_{A} b \rangle_h \\
			&+ \gdo{ \| a
				\|^2_{\mathrm{W}^{1, 2}} \| b\|_{\mathrm{W}^{1, 2}}}
		\end{align*}
		which implies the result.
		
	\end{proof}

	\begin{definition} $ Q_{p,A}$ is the quadratic form defined by the right hand side of \eqref{variationseconde}.
	\end{definition}
	
	A consequence of the gauge invariance is that $Q_{p,A}$ has a huge kernel, as shown in the following result :
	\begin{proposition}
		\[ \Imag{\diff_{A}} \subset \ker Q_{p,A}\]
	\end{proposition}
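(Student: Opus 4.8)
The statement to prove is that $\Imag \diff_A \subset \ker Q_{p,A}$, i.e. that the second variation vanishes in the directions tangent to the gauge orbit. The plan is to exploit the gauge invariance of $\mathcal{YM}_p$ directly. Recall that for a gauge transformation $g$ one has $\mathcal{YM}_p(A^g) = \mathcal{YM}_p(A)$, and that the infinitesimal gauge action at $A$ in the direction $\xi \in \mathrm{W}^{2,p}(M,\mathfrak{g})$ is precisely $a = \diff_A \xi$. So the curve $t \mapsto A_t := A^{\exp(t\xi)}$ is a one-parameter family of connections with $A_0 = A$ and $\frac{\diff}{\diff t}_{|t=0} A_t = \diff_A \xi$, along which $\mathcal{YM}_p$ is constant. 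Differentiating twice at $t=0$ gives $\frac{\diff^2}{\diff t^2}_{|t=0}\mathcal{YM}_p(A_t) = 0$, and since $A$ is $p$-Yang-Mills the second variation depends only on the velocity $a = \diff_A\xi$ (not on the acceleration), so by \eqref{variationseconde} we get $Q_{p,A}(\diff_A\xi) = 0$.

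First I would make precise the regularity: for $a = \diff_A\xi$ to lie in $\mathrm{W}^{1,p}(M,T^*M\otimes\mathfrak{g})$ one wants $\xi \in \mathrm{W}^{2,p}$, and since $A$ is smooth in a good gauge (by the regularity discussed in the excerpt and detailed in the appendix) this is harmless; the general element of $\Imag\diff_A$ in $\mathrm{W}^{1,p}$ is then obtained by density of $\mathrm{W}^{2,p}$ sections and continuity of $Q_{p,A}$ on $\mathrm{W}^{1,p}$. Second, I would verify that $t \mapsto A^{\exp(t\xi)}$ is indeed an admissible family in $\mathfrak{A}_G^p(M)$ with the stated derivative, which is a direct computation from $A^g = g^{-1}Ag + g^{-1}\diff g$: one gets $\frac{\diff}{\diff t}A^{\exp(t\xi)} = \diff_{A_t}\xi$, so at $t=0$ the velocity is $\diff_A\xi$ as claimed. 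Third, invoking $\mathcal{YM}_p(A_t) \equiv \mathcal{YM}_p(A)$ and differentiating twice, combined with Proposition giving \eqref{variationseconde} — whose right-hand side is a function of $A$ and $a$ alone — yields $Q_{p,A}(\diff_A\xi) = 0$.

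Alternatively, and perhaps cleaner for a fully rigorous write-up, one can argue purely variationally without curves: expand $\mathcal{YM}_p(A + s\,\diff_A\xi)$ — or better $\mathcal{YM}_p(A^{\exp(s\xi)})$ — to second order in $s$ using the first-variation formula \eqref{variationpremiere} twice, noting that the first-order term $\mathrm{D}\mathcal{YM}_p(A)(\diff_A\xi)$ already vanishes because $A$ solves \eqref{equationdepYMdiv} and $\diff_A^*$ is the adjoint of $\diff_A$, so $\langle \diff_A^*((1+|F_A|^2)^{p/2-1}F_A), \xi\rangle = 0$; then the vanishing of the second-order term follows from gauge invariance, and matching it against \eqref{variationseconde} gives the conclusion.

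The main obstacle is not conceptual but bookkeeping in the weak-connection framework: one must check that the gauge-orbit curve stays inside $\mathfrak{A}_G^p(M)$ with the correct $\mathrm{W}^{1,p}$ regularity of its velocity, and that the second variation formula \eqref{variationseconde}, derived for families with $a \in \mathrm{W}^{1,p}$, genuinely applies here — i.e. that the $\gdo{\cdot}$ remainder in the expansion is controlled. Given the smoothness of $A$ in a good gauge this is routine, so I expect the proof to be short.
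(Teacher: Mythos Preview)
Your argument establishes $Q_{p,A}(\diff_A\xi) = 0$, but this is weaker than $\diff_A\xi \in \ker Q_{p,A}$. The kernel of a quadratic form here means its radical, i.e.\ the set of vectors $v$ with $B_{p,A}(v,b)=0$ for \emph{all} $b$, where $B_{p,A}$ is the associated symmetric bilinear form (see the paper's appendix on quadratic forms, and note that the next proposition writes $\ker Q_{p,A} = \ker\mathfrak{Q}_{p,A}\oplus\Imag\diff_A$ as a direct sum of subspaces, which only makes sense for the radical). Since $Q_{p,A}$ is indefinite, vanishing of the quadratic form at $\diff_A\xi$ does not imply vanishing of the bilinear form against an arbitrary $b$.

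The fix is minor and stays within your gauge-invariance framework: for any $b\in\mathrm{W}^{1,p}$, gauge invariance gives $D\mathcal{YM}_p(A+sb)(\diff_{A+sb}\xi)=0$ for every $s$. Differentiating in $s$ at $s=0$ and using $\tfrac{d}{ds}\big|_{s=0}\diff_{A+sb}\xi=[b,\xi]$ yields
\[
D^2\mathcal{YM}_p(A)(b,\diff_A\xi) + D\mathcal{YM}_p(A)\bigl([b,\xi]\bigr)=0.
\]
The second term vanishes because $A$ is $p$-Yang-Mills, so $B_{p,A}(\diff_A\xi,b)=0$ for all $b$, which is exactly $\diff_A\xi\in\ker Q_{p,A}$. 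With this adjustment your approach is correct and is the standard one; the paper itself simply refers to the identical argument in \cite{GL24}.
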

	
	\begin{proof} 
		Identical to proposition 2.3 of \cite{GL24}.
	\end{proof}

	\noindent Recall the index of a quadratic form $q$ is the non-negative integer $\ind q$ defined as \[ \ind q = \sup \left\{ \dim W \mid q_{|W} < 0\right\}.\]
	
	\begin{proposition} Let $\mathfrak{Q}_{p,A}(a) = Q_{p,A}(a) +\int_{M}(1+|F_A|_h^2)^{\frac{p}{2}-1}|\diff^*_A  a|_h^2 \,\mathrm{vol}_h$. The index and kernel of $Q_{p,A}$ can be expressed from those of $\mathfrak{Q}_{p,A}$ as follows : \begin{align*}
			\ind Q_{p,A} &= \ind {\mathfrak{Q}_{p,A}}\\ 
			\ker Q_{p,A} &=  \ker {\mathfrak{Q}_{p,A}} \oplus  \Imag \diff_{A}  
		\end{align*}
	\end{proposition}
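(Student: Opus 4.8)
The plan is to split the space of deformations along the ``Coulomb slice'' $V_A:=\{a\in \mathrm W^{1,p}(M,T^*M\otimes\mathfrak g):\diff_A^*a=0\}$ and to exploit that $Q_{p,A}$ and $\mathfrak Q_{p,A}$ agree on $V_A$ while differing elsewhere only by a nonnegative term carried by $\Imag\diff_A$. The first — and only genuinely analytic — step is to establish the topological direct sum
\[
\mathrm W^{1,p}(M,T^*M\otimes\mathfrak g)=V_A\oplus\Imag\diff_A .
\]
Since a $p$-Yang--Mills connection is smooth in a local good gauge (regularity recalled in the appendix), the covariant Hodge Laplacian $\diff_A^*\diff_A$ on $\mathfrak g$-valued functions is elliptic, Fredholm, with kernel $\mathcal H=\{\eta:\diff_A\eta=0\}$; given $a$, the $0$-form $\diff_A^*a$ is $\mathrm L^2$-orthogonal to $\mathcal H$ (since $\langle \diff_A^*a,\eta\rangle=\langle a,\diff_A\eta\rangle$), so one solves $\diff_A^*\diff_A\xi=\diff_A^*a$ with $\diff_A\xi\in\mathrm W^{1,p}$ and puts $b:=a-\diff_A\xi\in V_A$; the sum is direct because $\diff_A\xi\in V_A$ forces $\|\diff_A\xi\|_{\mathrm L^2}^2=\langle\xi,\diff_A^*\diff_A\xi\rangle=0$. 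This is the construction of \cite[\S2]{GL24} for $p=2$; the only change is the harmless weight $w:=(1+|F_A|_h^2)^{p/2-1}$, which is smooth and $\ge 1$ since $p>2$.

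Writing $B_{Q_{p,A}}$ and $\widetilde B$ for the symmetric bilinear forms polarizing $Q_{p,A}$ and $\mathfrak Q_{p,A}$, I would next record two facts. By gauge invariance of $\mathcal{YM}_p$ and criticality of $A$, $\Imag\diff_A$ lies in the radical $\ker Q_{p,A}$, i.e. $B_{Q_{p,A}}(\diff_A\xi,\cdot)\equiv 0$ (this is the preceding proposition, via \cite[Prop.~2.3]{GL24}). And since the extra term of $\mathfrak Q_{p,A}$ polarizes to $\int_M w\,\langle\diff_A^*a,\diff_A^*a'\rangle_h\,\mathrm{vol}_h$, for $b\in V_A$ and any $\xi$ both bilinear forms annihilate the pair $(b,\diff_A\xi)$ and the two quadratic forms coincide on $V_A$; hence
\[
Q_{p,A}(b+\diff_A\xi)=Q_{p,A}(b)=\mathfrak Q_{p,A}(b),\qquad
\mathfrak Q_{p,A}(b+\diff_A\xi)=\mathfrak Q_{p,A}(b)+\int_M w\,|\diff_A^*\diff_A\xi|_h^2\,\mathrm{vol}_h,
\]
so in particular $\mathfrak Q_{p,A}\ge 0$ on $\Imag\diff_A$, vanishing only at $0$.

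Everything else is linear algebra through the bounded projection $\pi:\mathrm W^{1,p}\to V_A$ along $\Imag\diff_A$. For the index: if $Q_{p,A}$ is negative definite on $E$ then $E\cap\Imag\diff_A=\{0\}$, $\pi|_E$ is injective, and $Q_{p,A}(\pi a)=Q_{p,A}(a)$ makes $\mathfrak Q_{p,A}=Q_{p,A}$ negative definite on $\pi(E)$; conversely if $\mathfrak Q_{p,A}$ is negative definite on $E'$ then $E'\cap\Imag\diff_A=\{0\}$ and $\mathfrak Q_{p,A}(\pi a)=\mathfrak Q_{p,A}(a)-\int_M w\,|\diff_A^*\diff_A\xi|_h^2\le\mathfrak Q_{p,A}(a)<0$ makes $Q_{p,A}=\mathfrak Q_{p,A}$ negative definite on $\pi(E')$; taking suprema over such $E,E'$ gives $\ind Q_{p,A}=\ind\mathfrak Q_{p,A}$. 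For the kernel: if $a\in\ker\mathfrak Q_{p,A}$, decompose $a=b+\diff_A\xi$ with $b\in V_A$; then $0=\widetilde B(a,\diff_A\xi)=B_{Q_{p,A}}(a,\diff_A\xi)+\int_M w\,|\diff_A^*\diff_A\xi|_h^2=\int_M w\,|\diff_A^*\diff_A\xi|_h^2$ (using $\diff_A^*a=\diff_A^*\diff_A\xi$ and $\diff_A\xi\in\ker Q_{p,A}$), so $\diff_A\xi=0$ and $\ker\mathfrak Q_{p,A}\subset V_A$; on $V_A$ the extra term drops from $\widetilde B$, hence $\ker\mathfrak Q_{p,A}\subset\ker Q_{p,A}$, and with $\Imag\diff_A\subset\ker Q_{p,A}$ and $V_A\cap\Imag\diff_A=\{0\}$ this yields $\ker\mathfrak Q_{p,A}\oplus\Imag\diff_A\subset\ker Q_{p,A}$. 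For the reverse inclusion, take $a\in\ker Q_{p,A}$ and decompose $a=b+\diff_A\xi$; since $\diff_A\xi\in\ker Q_{p,A}$, also $b\in\ker Q_{p,A}\cap V_A$, and on $V_A$ the two bilinear forms agree, so $b\in\ker\mathfrak Q_{p,A}$, whence $a\in\ker\mathfrak Q_{p,A}\oplus\Imag\diff_A$ (in particular $\ker Q_{p,A}\cap\ker\diff_A^*=\ker\mathfrak Q_{p,A}$, the term entering the extended index).

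The one real obstacle is the Hodge-type splitting $\mathrm W^{1,p}=V_A\oplus\Imag\diff_A$ within the weak-connection framework: it rests on $\diff_A^*\diff_A$ being Fredholm with closed range and on the elliptic solvability estimate $\|\diff_A\xi\|_{\mathrm W^{1,p}}\lesssim\|\diff_A^*a\|_{\mathrm L^p}$, both consequences of the regularity of $p$-Yang--Mills connections (smoothness in a good gauge) established earlier. Downstream, the argument is purely formal and insensitive to $w$ beyond its positivity, so the passage from $p=2$ to $p>2$ costs nothing.
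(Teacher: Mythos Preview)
Your proof is correct and is exactly the argument the paper is pointing to: the paper's proof consists of the single line ``Identical to proposition 2.4 of \cite{GL24}'', and you have reproduced that proposition's proof --- the Coulomb-slice decomposition $\mathrm W^{1,p}=V_A\oplus\Imag\diff_A$, the identification $Q_{p,A}=\mathfrak Q_{p,A}$ on $V_A$, and the linear-algebra comparison via projection --- while correctly observing that the weight $w=(1+|F_A|_h^2)^{p/2-1}\ge 1$ plays no role beyond positivity.
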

	
	\begin{proof} 
		Identical to proposition 2.4 of \cite{GL24}.
	\end{proof}

	\begin{propdef} \label{finitudeindice} $\ind^0\mathfrak{Q}_{p,A}:= \ind {\mathfrak{Q}_{p,A}} + \dim  \ker {\mathfrak{Q}_{p,A}}$ is finite and \[\ind^0 \mathfrak{Q}_{p,A} = \ind {Q_{p,A}} + \dim \left( \ker {Q_{p,A}}\cap {\ker \diff_{A}^*}\right).\] We call this quantity the extended index of the quadratic form $Q_{p,A}$.
	\end{propdef}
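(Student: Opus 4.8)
The strategy is to realise $\mathfrak Q_{p,A}$ as a compact perturbation of a coercive quadratic form on a Hilbert space, and then invoke elementary spectral theory; the final identity is then a purely algebraic consequence of a Hodge-type splitting together with the two preceding Propositions.

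First I would fix the functional-analytic framework. Since $A$ is $p$-Yang-Mills, the $\varepsilon$-regularity theory recalled in the appendix shows that $A$ is smooth in Coulomb gauge, so $F_A$ is a smooth bounded field on the closed manifold $M$ and the weight $w := (1+|F_A|_h^2)^{p/2-1}$ is pinched between positive constants, $1\le w\le C$. Both $Q_{p,A}$ and $\mathfrak Q_{p,A}$ then extend by continuity from $\mathrm W^{1,p}$ to the Hilbert space $\mathcal H := \mathrm W^{1,2}_A(M,T^*M\otimes\mathfrak g)$, and since $\mathrm W^{1,p}$ is dense in $\mathcal H$ while strict negativity and non-degeneracy are open conditions, the index and the kernel of these forms are unchanged when computed over $\mathcal H$-subspaces rather than over $\mathrm W^{1,p}$-subspaces; I work in $\mathcal H$ from now on.

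The crucial step is that $\mathfrak Q_{p,A}$ is coercive up to a compact perturbation. The term $(p-2)\,w\,\langle F_A,\diff_A a\rangle_h^2/(1+|F_A|_h^2)$ is $\ge 0$ since $p\ge 2$, hence may be discarded in a lower bound; the algebraic term obeys $|\langle F_A,[a,a]\rangle_h|\le C|F_A|_h\,|a|^2$; and combining $1\le w\le C$ with the Gaffney--Weitzenböck identity $\int_M(|\diff_A a|_h^2+|\diff_A^* a|_h^2)\,\mathrm{vol}_h = \int_M|\nabla_A a|_h^2\,\mathrm{vol}_h + \int_M\langle\mathcal R_A a,a\rangle_h\,\mathrm{vol}_h$, with $\mathcal R_A$ a zeroth-order operator built from $\mathrm{Ric}_h$ and $F_A$, one obtains $c>0$ and $C'>0$ with
\[
\mathfrak Q_{p,A}(a)\ \ge\ c\,\|a\|_{\mathcal H}^2 - C'\,\|a\|_{L^2}^2\qquad(a\in\mathcal H),
\]
while $\langle F_A,\diff_A a\rangle_h^2\le(1+|F_A|_h^2)\,|\diff_A a|_h^2$ gives $\mathfrak Q_{p,A}(a)\le C''\|a\|_{\mathcal H}^2$. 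Thus $B(a,b):=\mathfrak Q_{p,A}(a,b)+C'\langle a,b\rangle_{L^2}$ is an inner product on $\mathcal H$ equivalent to the reference one; writing $C'\langle a,b\rangle_{L^2}=B(Ta,b)$ defines a self-adjoint operator $T$ on $(\mathcal H,B)$, compact by the Rellich embedding $\mathcal H\hookrightarrow L^2$, with $\mathfrak Q_{p,A}(a,b)=B((\mathrm{Id}-T)a,b)$. Diagonalising $T$, the index of $\mathfrak Q_{p,A}$ equals the number of eigenvalues of $T$ in $(1,+\infty)$ counted with multiplicity and $\dim\ker\mathfrak Q_{p,A}$ equals the multiplicity of the eigenvalue $1$; both are finite because the eigenvalues of a compact operator accumulate only at $0$. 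This proves $\ind^0\mathfrak Q_{p,A}<\infty$.

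For the identity, I would use the $L^2$-orthogonal splitting $\mathcal H=\Imag\diff_A\oplus(\ker\diff_A^*\cap\mathcal H)$, obtained by solving the elliptic equation $\diff_A^*\diff_A f=\diff_A^* a$ (the two summands meet only at $0$, since $\diff_A f\in\ker\diff_A^*$ gives $\|\diff_A f\|_{L^2}^2=\langle\diff_A^*\diff_A f,f\rangle_{L^2}=0$). By the Proposition $\Imag\diff_A\subset\ker Q_{p,A}$, the form $\mathfrak Q_{p,A}$ is block-diagonal for this splitting: on $\Imag\diff_A$ it equals $\int_M w\,|\diff_A^*\diff_A f|_h^2\,\mathrm{vol}_h$, which vanishes only if $\diff_A f=0$ and is therefore positive definite there; the cross terms vanish because $Q_{p,A}(\diff_A f,\cdot)=0$ and $\diff_A^* a'=0$ for $a'\in\ker\diff_A^*$; and on $\ker\diff_A^*\cap\mathcal H$ it coincides with $Q_{p,A}$. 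Hence $\ind\mathfrak Q_{p,A}=\ind\big(Q_{p,A}|_{\ker\diff_A^*}\big)$ and $\ker\mathfrak Q_{p,A}=\ker\big(Q_{p,A}|_{\ker\diff_A^*}\big)$, and a one-line check, again using $\Imag\diff_A\subset\ker Q_{p,A}$ and the splitting, identifies $\ker\big(Q_{p,A}|_{\ker\diff_A^*}\big)$ with $\ker Q_{p,A}\cap\ker\diff_A^*$. Together with $\ind Q_{p,A}=\ind\mathfrak Q_{p,A}$ from the preceding Proposition, this yields
\[
\ind^0\mathfrak Q_{p,A}=\ind\mathfrak Q_{p,A}+\dim\ker\mathfrak Q_{p,A}=\ind Q_{p,A}+\dim\big(\ker Q_{p,A}\cap\ker\diff_A^*\big).
\]
I expect the main obstacle to be the coercivity-modulo-compact estimate: beyond the routine bookkeeping of the weight $w$ and of the first-order cross term, it is essential that the regularity theory genuinely places us in the setting of a smooth connection with bounded curvature on a closed manifold, so that the Gaffney--Weitzenböck identity and Rellich compactness are at our disposal; the block-diagonalisation is then formal, but must be performed in $\mathcal H$ so that the supremum defining the index agrees with the one over $\mathrm W^{1,p}$-subspaces.
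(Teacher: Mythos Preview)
Your proof is correct and rests on the same idea as the paper's: $\mathfrak Q_{p,A}$ is bounded below by a coercive quadratic form minus a lower-order $L^2$ term. The paper phrases this via a spectral cutoff for the Hodge Laplacian $\diff_A^*\diff_A+\diff_A\diff_A^*$ (choosing $\Lambda>C\|F_A\|_{L^\infty}$ and observing $\mathfrak Q_{p,A}>0$ on $(\mathcal E^\Lambda)^\perp$), whereas you cast the same estimate as ``coercive plus compact'' and invoke the spectral theory of compact operators; both yield finiteness of index and nullity simultaneously. Your Gaffney--Weitzenb\"ock detour through $\nabla_A$ is unnecessary: since you only need a lower bound and $w\ge 1$, you can take $\int_M(|\diff_A a|_h^2+|\diff_A^*a|_h^2)\,\mathrm{vol}_h$ itself as the coercive part, exactly as the paper does.

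For the identity, the paper simply reads it off the preceding Proposition, which already gives $\ind Q_{p,A}=\ind\mathfrak Q_{p,A}$ and $\ker Q_{p,A}=\ker\mathfrak Q_{p,A}\oplus\Imag\diff_A$; intersecting the latter with $\ker\diff_A^*$ yields $\ker Q_{p,A}\cap\ker\diff_A^*=\ker\mathfrak Q_{p,A}$ in one line. Your block-diagonalisation of $\mathfrak Q_{p,A}$ along $\mathcal H=\Imag\diff_A\oplus(\ker\diff_A^*\cap\mathcal H)$ is essentially a re-proof of that Proposition, so you could shorten this part by citing it directly.
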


	\begin{proof} 
		The equality comes from the previous proposition. We shall then prove the finiteness of $\ind^0\mathfrak{Q}_{p,A}$. Let $\mathcal{E}(\lambda)$ be the (possibly trivial) eigenspace associated with the eigenvalue $\lambda$ of the non-negative elliptic operator $\diff_{A}^* \diff_{A} + \diff_{A} \diff_{A}^*$ . For all $\Lambda\in\R$, we denote \[ \mathcal{E}^{\Lambda} = \bigoplus_{\lambda \leq \Lambda} \mathcal{E}(\lambda).
		\]
		For all $\Lambda \in \R$, $\dim \mathcal{E}^\Lambda < +\infty$. We choose $\Lambda > C \Vert F_{A} \Vert_{\mathrm{L}^\infty(M)}$ where $C$ is such that \[\left| \int_M \langle F_{A}, [a, a] \rangle_h \mathrm{vol}_h \right| \leq C  \int_M |F_{A}|_h\, |a|^2_h \mathrm{vol}_h.\]
		
		\noindent If $a\in  \left(\mathcal{E}^\Lambda\right)^\perp$, then
		\begin{align*}
			\mathfrak{Q}_{p,A}(a) &\geq  \int_M (1+|F_A|_h^2)^{\frac{p}{2}-1}\left( \vert d^*_Aa\vert_h^2+\vert d_Aa\vert^2_h +\langle F_A, [a,a]\rangle_h\right) \mathrm{vol}_h\\
			&\geq  \int_M \vert d^*_Aa\vert^2_h+\vert d_Aa\vert^2_h +\langle F_A, [a,a]\rangle_h \mathrm{vol}_h\\
			&\geq \Lambda \int_M  |a|^2_h \mathrm{vol}_h + \int_M \langle F_{A}, a\wedge a \rangle_h \mathrm{vol}_h\\
			&\geq \int_M \left(\Lambda - C|F_{A}|_h\right) |a|^2_h \mathrm{vol}_h.
		\end{align*} We conclude that if $W$ a subspace on which $\mathfrak{Q}_{p, A}$ is negative-definite, $W\cap \left(\mathcal{E}^\Lambda\right)^\perp = \{0\}$. Consequently, $\dim W \leq \mathrm{codim}\left(\mathcal{E}^\Lambda\right)^\perp = \dim \mathcal{E}^\Lambda <+\infty$ so $\ind \mathfrak{Q}_{p,A} < +\infty$. A similar argument can be applied to prove that $\ker \mathfrak{Q}_{p,A}$ is also finite dimensional. 
	\end{proof}

	\section{$\varepsilon$-regularity, $\mathrm{L}^2$ and $\mathrm{L}^{2,1}$ quantization}
	We state here the $\varepsilon$-regularity pointwise estimate, the proof of which is to be found in the appendix. Then we give the $L^2$ quantization, also proved in \cite{HS}, but we do it in a different setting which permits us to improve it to the stronger $L^{2,1}$ quantization.
	\subsection{$\varepsilon$-regularity}
	\begin{theorem}[\cref{epsregcurvaturepointwiseproof}] \label{epsregbis}
		Let $G$ be a compact Lie group. There exist $\varepsilon_{G}>0$ and constants $C_{G},p_G>0$ such that for all $p\in[2,2+p_G]$, for all $R\in]0,1]$ and for all $1$-form $A$ in  $\mathrm{W}^{1,2}\left(\mathrm{B}_R,\Lambda^1 \R^4\otimes  \mathfrak{g}\right)$ satisfying the small energy condition:
		$$\int_{ \mathrm{B}_R}\left|F_A\right|^2 \diff x <\varepsilon_{G},
		$$ and the $p$-Yang-Mills equation \eqref{equationdepYM}, then the following estimate holds: \begin{equation}
			R^4\left\|F\right\|_{\mathrm{L}^{\infty}\left(\mathrm{B}_{R/2}\right)}^2 \leq C_{G} \int_{\mathrm{B}_R}\left|F_A\right|^2 \diff x. 
		\end{equation}
	\end{theorem}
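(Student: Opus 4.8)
The plan is to run the classical two-step scheme for Yang--Mills $\varepsilon$-regularity --- Coulomb gauge fixing, then an elliptic bootstrap of the resulting system --- carried out with constants uniform for $p$ in a fixed small interval $[2,2+p_G]$; the point is that the whole $p$-dependent correction in \eqref{equationdepYM} carries the factor $p-2$ and satisfies the pointwise bound $\bigl|\tfrac{p-2}{2}\star\frac{\diff|F_A|^2\wedge\star F_A}{1+|F_A|^2}\bigr|\le(p-2)\,|\nabla_A F_A|$, since $|\diff|F_A|^2|\le 2|\nabla_A F_A|\,|F_A|$ and $|F_A|^2\le 1+|F_A|^2$.

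First I would reduce to $R=1$ by the rescaling $\tilde A(x):=R\,A(Rx)$ on $\mathrm B_1$: although $\tilde A$ does not solve \eqref{equationdepYM} verbatim --- the constant $1$ breaks exact scale invariance --- the only consequences of the equation used below, namely Bianchi \eqref{equationdeBianchi} and the bound $|\diff_A^*F_A|\le(p-2)|\nabla_A F_A|$, are scale-covariant (alternatively one simply carries the factor $R$ through all estimates). We may also assume $A$ smooth, by the companion regularity statement of the appendix or by approximation. Next, for $\varepsilon_G$ small (depending only on $G$ and the dimension) Uhlenbeck's gauge-fixing theorem gives $g$ on $\mathrm B_1$ so that, writing again $A$ for $A^g$, one has $\diff^*A=0$ in $\mathrm B_1$, $\iota_\nu A=0$ on $\partial\mathrm B_1$, and $\|A\|_{\mathrm W^{1,2}(\mathrm B_1)}\le C_G\|F_A\|_{\mathrm L^2(\mathrm B_1)}\le C_G\sqrt{\varepsilon_G}$; in particular $\|A\|_{\mathrm L^4(\mathrm B_1)}$ is small. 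In this gauge the pair $(A,F_A)$ is controlled by an elliptic system: $\diff A=F_A-A\wedge A$ together with $\diff^*A=0$ gives $\Delta A=\diff^*(F_A-A\wedge A)$, while from Bianchi and \eqref{equationdepYM} one gets the pointwise bounds $|\diff F_A|\le C_G|A|\,|F_A|$ and $|\diff^*F_A|\le C_G|A|\,|F_A|+(p-2)|\nabla F_A|$ on the $\diff\oplus\diff^*$ system for $F_A$.

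The core is then the bootstrap. The key point is that in any interior $\mathrm L^q$ estimate for this Hodge system, the contribution of the $(p-2)|\nabla F_A|$ term to the right-hand side, bounded by $C\,(p-2)\|\nabla F_A\|_{\mathrm L^q}$, is absorbed into the left-hand side --- via a standard covering/iteration argument --- as soon as $p_G$ is fixed small enough, depending only on $G$ and the dimension, and before $\varepsilon_G$ is chosen. To break the deadlock at the critical scaling exponent I would first upgrade $F_A\in\mathrm L^2$ to a small-energy decay estimate, $\int_{\mathrm B_\rho(x)}|F_A|^2\le C_G\,\varepsilon_G\,\rho^{2\alpha}$ for all $\mathrm B_\rho(x)\subset\mathrm B_{3/4}$ and some $\alpha\in(0,1)$, obtained in the usual way by comparing $A$, through its Coulomb-gauge elliptic equation, with harmonic $1$-forms on small balls and using the smallness of the nonlinear and $(p-2)$-terms; this Morrey-type control produces a first genuine gain $F_A\in\mathrm L^{2+\delta}_{\mathrm{loc}}$. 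From there the nonlinear terms are subordinate --- $|A|\,|F_A|$ and $|A\wedge A|$ both acquire a factor $\|A\|_{\mathrm L^4}\lesssim\sqrt{\varepsilon_G}$ when estimated --- so a finite number $K=K(G)$ of alternating steps of $\mathrm L^q$-elliptic regularity and Sobolev embedding raises the integrability of $F_A$ past exponent $4$ and finally to $\mathrm L^\infty(\mathrm B_{1/2})$, while tracking the finitely many finite constants keeps the dependence on $F_A$ linear; hence $\|F_A\|_{\mathrm L^\infty(\mathrm B_{1/2})}\le C_G\|F_A\|_{\mathrm L^2(\mathrm B_1)}$. This is gauge invariant, so it holds for the original connection, and undoing the scaling gives the stated inequality. (Alternatively, once $F_A\in\mathrm L^\infty_{\mathrm{loc}}$ with small norm is known, the sharp constant also follows from the Weitzenb\"ock--Kato inequality $-\Delta|F_A|\le C_G(1+|F_A|)\,|F_A|$, valid weakly with the $p$-term again absorbed thanks to the factor $p-2$, followed by Moser iteration.)

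The main obstacle is precisely the curvature-gradient term $\tfrac{p-2}{2}\star\frac{\diff|F_A|^2\wedge\star F_A}{1+|F_A|^2}$: it is genuinely first order in $\nabla F_A$ and lies at the scaling threshold in the critical dimension four, so for a fixed $p>2$ a naive estimate does not close; what saves it is the prefactor $p-2$, permitting the absorptions above once $p_G$ is small, and at $p=2$ the term disappears and one recovers the classical Yang--Mills $\varepsilon$-regularity estimate. A secondary technical point is the initial $\mathrm L^2\to\mathrm L^{2+\delta}$ jump, which sits at the critical exponent and requires the small-energy decay step rather than a bare Sobolev embedding. Finally, one checks that Uhlenbeck's constant and the elliptic constants can be taken independently of $p\in[2,2+p_G]$, which is immediate since they depend only on the dimension and on $G$.
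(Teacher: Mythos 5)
Your outline is sound and reaches the right conclusion, but the endgame you propose is genuinely different from the paper's. Both routes begin the same way (Uhlenbeck Coulomb gauge, then a Morrey-type decay estimate comparing with (abelian) harmonic / $p$-harmonic forms to break the critical-exponent deadlock; this is precisely what the paper does in \cref{H(F) morrey} and the subsequent H\"older estimate \eqref{AHolderestimate} for $A$). After that, the paper does \emph{not} keep iterating $\mathrm L^q$ elliptic estimates on the Hodge system for $F$. Instead, once smoothness is known, it passes to the B\"ochner subsolution inequality $\mathcal{L}_p|F|^2\le C|F|^2(1+|F|)$ of \cref{CBochnerpYM}, where the whole $p$-dependence is packaged once and for all into the uniformly elliptic second-order operator $\mathcal{L}_p=\Delta+(p-2)\mathcal{A}\cdot\nabla^2$ (with $|\mathcal A|\le 1$, so ellipticity is uniform for $p-2$ small). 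It then runs a Schoen-type rescaling around the maximum of $\sigma\mapsto(R-\sigma)^4\max_{\overline{\mathrm B}_\sigma}|F|^2$, applies M\"oser--Harnack to the rescaled inequality, and reads off $R^2\|F\|_{\mathrm L^\infty(\mathrm B_{R/2})}\le C\|F\|_{\mathrm L^2(\mathrm B_R)}$ with constants independent of $p$ and $R$. That argument is gauge-free, does not need the H\"older bound on $A$ at this stage, and handles the superlinear term $|F|^3$ automatically thanks to the blow-up normalization. Your alternating $\mathrm L^q$-bootstrap would also close in principle (after the Morrey step $A$ becomes H\"older and the system becomes linear-elliptic with bounded coefficients), but it forces you to absorb the first-order term $(p-2)\|\nabla F\|_{\mathrm L^q}$ at every step via a covering argument, which the paper avoids by absorbing $(p-2)$ into the principal symbol once. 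Your closing parenthetical (Weitzenb\"ock--Kato on $|F|$ plus Moser iteration) is in fact the version closest to what the paper actually carries out. One smaller caveat: you invoke ``approximation'' as an alternative way to reduce to smooth $A$; this is delicate here because the $p$-Yang--Mills structure is not obviously preserved under mollification, so you really do need the appendix's full regularity chain (Campanato bounds for $V(F)$, difference quotients, Schauder) if you want to avoid assuming smoothness outright.
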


	\subsection{$\mathrm{L}^{2,\infty}$ bound on the curvature}
	Once one has $\varepsilon$-regularity, it is classical to derive the weak-$L^2$ quantization.
	
	\begin{lemma} There exists $ C_G> 0$ and $p_G>0$ such that for all $p\in[2,2+p_G]$ and for all $p$-Yang-Mills connection $A$ in $\mathrm{B}_R\backslash \overline{\mathrm{B}_r} $ with $0<4r<R<1$, if $A$ satisfies
		\[ \sup_{r \leq \rho \leq R /2} \| F_A \|_{\mathrm{L}^2(\mathrm{B}_{2\rho} \backslash \mathrm{B}_\rho)} \leq \varepsilon_{G},\] then \[ \|F_A\|_{\mathrm{L}^{2,\infty}(\mathrm{B}_{R}\backslash \mathrm{B}_{r})} \leq C_G \sup_{r \leq \rho \leq R/2} \| F_A \|_{\mathrm{L}^2(\mathrm{B}_{2\rho}\backslash \mathrm{B}_\rho)}.\]
	\end{lemma}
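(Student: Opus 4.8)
The plan is to deduce the bound from a pointwise decay estimate on the bulk of the neck together with the distribution‑function characterisation of the weak‑$\mathrm{L}^2$ quasi‑norm. Set $\Lambda := \sup_{r\le\rho\le R/2}\|F_A\|_{\mathrm{L}^2(\mathrm{B}_{2\rho}\setminus\mathrm{B}_\rho)}\ (\le\varepsilon_G)$. Since $4r<R$, I would first split
\[
\mathrm{B}_R\setminus\mathrm{B}_r=\bigl(\mathrm{B}_{2r}\setminus\mathrm{B}_r\bigr)\cup\bigl(\mathrm{B}_{R/2}\setminus\mathrm{B}_{2r}\bigr)\cup\bigl(\mathrm{B}_R\setminus\mathrm{B}_{R/2}\bigr),
\]
treating the central annulus by $\varepsilon$‑regularity and the two collars trivially.

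On the inner collar $\mathrm{B}_{2r}\setminus\mathrm{B}_r$ and the outer collar $\mathrm{B}_R\setminus\mathrm{B}_{R/2}$ I would just invoke the elementary Chebyshev embedding $\|\cdot\|_{\mathrm{L}^{2,\infty}(\Omega)}\le\|\cdot\|_{\mathrm{L}^{2}(\Omega)}$ together with the hypothesis taken with $\rho=r$, resp. $\rho=R/2$, which gives $\|F_A\|_{\mathrm{L}^{2,\infty}}\le\Lambda$ on each collar.

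For the central part, fix $x$ with $2r\le|x|\le R/2$ and apply Theorem~\ref{epsregbis} (valid on an arbitrary ball by translation invariance, and applied in a local Coulomb gauge on that ball — legitimate since $|F_A|$ is gauge invariant) on $B(x,|x|/4)$. The point is that $B(x,|x|/4)$ sits inside the dyadic annulus $\mathrm{B}_{3|x|/2}\setminus\mathrm{B}_{3|x|/4}=\mathrm{B}_{2\rho}\setminus\mathrm{B}_\rho$ with $\rho=3|x|/4\in[r,R/2]$; hence it is contained in $\mathrm{B}_R\setminus\overline{\mathrm{B}_r}$, has radius $\le R/8<1$, and carries energy $\int_{B(x,|x|/4)}|F_A|^2\le\Lambda^2<\varepsilon_G$, so the hypotheses of Theorem~\ref{epsregbis} hold uniformly in $p\in[2,2+p_G]$. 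Its conclusion evaluated at the centre of the ball gives $|F_A(x)|\le C_G'\,|x|^{-2}\,\Lambda$ for some dimensional constant $C_G'$. Therefore, for each $t>0$, $\{x\in\mathrm{B}_{R/2}\setminus\mathrm{B}_{2r}:|F_A(x)|>t\}\subset\{|x|<(C_G'\Lambda/t)^{1/2}\}$, whose Lebesgue measure is at most $\omega_4(C_G'\Lambda/t)^2$ (here $\omega_4=|\mathrm{B}_1|$); multiplying by $t^2$ and taking the supremum over $t$ yields $\|F_A\|_{\mathrm{L}^{2,\infty}(\mathrm{B}_{R/2}\setminus\mathrm{B}_{2r})}\le\sqrt{\omega_4}\,C_G'\,\Lambda$.

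Finally, since super‑level sets are subadditive over a finite cover, $\|\cdot\|_{\mathrm{L}^{2,\infty}}^{2}$ is subadditive over the three pieces, so
\[
\|F_A\|_{\mathrm{L}^{2,\infty}(\mathrm{B}_R\setminus\mathrm{B}_r)}^{2}\le\Lambda^{2}+\Lambda^{2}+\omega_4(C_G')^{2}\Lambda^{2},
\]
which is the claim with $C_G=\sqrt{2+\omega_4(C_G')^{2}}$. I expect the only genuinely delicate step to be the geometric bookkeeping in the central region — choosing the auxiliary balls $B(x,|x|/4)$ so that simultaneously they remain in the domain, lie in a single dyadic annulus of $\Lambda$‑controlled energy, and have radius $\le1$ — while making sure every constant is inherited with its $p$‑uniformity from Theorem~\ref{epsregbis}.
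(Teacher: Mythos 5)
Your proof is correct and follows essentially the same strategy as the paper: apply $\varepsilon$-regularity on balls $B(x,c|x|)$ contained in dyadic annuli of $\Lambda$-controlled energy to get the pointwise bound $|F_A(x)|\leq C\Lambda|x|^{-2}$, then convert to a weak-$\mathrm{L}^2$ bound (you via the distribution function, the paper via $\||x|^{-1}\|_{\mathrm{L}^{4,\infty}(\R^4)}$). You are in fact slightly more careful than the printed proof, which only establishes the estimate on $\mathrm{B}_{R/2}\setminus\mathrm{B}_{2r}$ and leaves the two collar annuli implicit, whereas you handle them explicitly via $\|\cdot\|_{\mathrm{L}^{2,\infty}}\le\|\cdot\|_{\mathrm{L}^2}$ and subadditivity of $\|\cdot\|_{\mathrm{L}^{2,\infty}}^2$ over a finite cover.
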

	It is well known that the dyadic energy goes to zero in the neck, in fact it is a direct consequence of the bubble-neck  decomposition, see section 3.1 of \cite{HS} or proof of theorem VII.3 of \cite{rivière2015variations} for the classical case. Then we obtain the weak-$L^2$ quantization as a corollary.\\
	
	\begin{proof} Denote $\varepsilon =\displaystyle \sup_{r \leq \rho \leq R/2} \| F_A \|_{\mathrm{L}^2(\mathrm{B}_{2\rho}\backslash \mathrm{B}_\rho)} $ and apply the $\varepsilon$-regularity result \eqref{epsregbis} on balls $\mathrm{B}(x,|x|/3)$ for $x\in \mathrm{B}_{R/2}\backslash \mathrm{B}_{2r}$: there exists $C>0$ such that for all $x\in \mathrm{B}_{R/2}\backslash \mathrm{B}_{2r}$, \[|x|^2|F_A|(x) \leq C \| F_A
		\|_{\mathrm{L}^2(\mathrm{B}(x,|x|/3)}\leq C \| F_A
		\|_{\mathrm{L}^2(\mathrm{B}_{4|x|/3}\backslash\mathrm{B}_{2|x|/3})}\leq C\varepsilon. \] Therefore \begin{align*}
			\|F_A\|_{\mathrm{L}^{2,\infty}(\mathrm{B}_{R/2}\backslash \mathrm{B}_{2r})} &\leq  \|C\varepsilon |x|^{-2}\|_{\mathrm{L}^{2,\infty}(\mathrm{B}_{R/2}\backslash \mathrm{B}_{2r})} \\
			&\leq C\varepsilon\||x|^{-1}\|_{\mathrm{L}^{4,\infty}(\mathrm{B}_{R/2}\backslash \mathrm{B}_{2r})} ^2 \\
			& \leq C\varepsilon\||x|^{-1}\|_{\mathrm{L}^{4,\infty}(\R^4)}^2\\
			&  \leq C|\mathrm{B}_1|^{1/2}\varepsilon.
		\end{align*}
	\end{proof}
	
	\subsection{$\mathrm{L}^{2,1}$ bound on the curvature}

	To have $\mathrm{L}^{2,1}$ curvature bound, we need to use a refined version of Uhlenbeck \cite[Corollary 2.2]{UhlenbeckKarenK1982CwLb} with an hypothesis of small curvature in $\mathrm{L}^{2,\infty}$ in the neck region, which is due to Rivière \cite{rivière2015variations, riviere2002interpolation}. 
	\begin{theorem} There exist $\varepsilon_G, C_G> 0$ with the following property: for all $R,
		r $ with $0<2r < R <1$, if $A \in \mathrm{W}^{1, 2} (\mathrm{B}_R \backslash
		\overline{\mathrm{B}_r}, \Lambda^1 \R^4 \otimes \mathfrak{g})$
		satisfies \[\| F_A \|_{\mathrm{L}^{2, \infty}
			(\mathrm{B}_R \backslash \overline{\mathrm{B}_r})} + \| F_A \|_{\mathrm{L}^2 (\mathrm{B}_{2 r} \backslash
			\overline{\mathrm{B}_{r}})} \leq \varepsilon_G\] then there exists $g
		\in \mathrm{W}^{2, 2} (\mathrm{B}_R \backslash \overline{\mathrm{B}_r}, G)$
		such that $\diff^* A^g = 0$ and
		\[ \int_{\mathrm{B}_R\backslash
			\overline{\mathrm{B}_r}}  \left( | \nabla A^g |^2 + \frac{ | A^g |^2}{|x|^2}  \right) \diff x
		\leq C_G \int_{\mathrm{B}_R \backslash \overline{\mathrm{B}_r}} | F_A
		|^2 \diff x. \]
		\label{recollementjauge}
	\end{theorem}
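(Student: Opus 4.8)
The plan is to treat this as the annular, $\mathrm{L}^{2,\infty}$-refined version of Uhlenbeck's Coulomb gauge theorem in the style of Rivière \cite{riviere2002interpolation,rivière2015variations}: first produce a global Coulomb gauge on the whole neck whose connection form is uniformly small, by patching Uhlenbeck gauges over overlapping dyadic annuli, and then upgrade this to the quantitative weighted estimate by viewing the Coulomb condition together with the structure equation $\diff A^g=F_{A^g}-A^g\wedge A^g$ as a first order elliptic system and absorbing the quadratic term. The reason every constant can be taken independent of the modulus $R/r$ is that the three quantities $\|F_A\|_{\mathrm{L}^{2,\infty}}$, $\int|F_A|^2\,\diff x$ and $\int\big(|\nabla A^g|^2+|A^g|^2/|x|^2\big)\,\diff x$ are all invariant under the dilations $x\mapsto\lambda x$ of $\R^4$.

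\emph{Step 1: a controlled global Coulomb gauge.} After replacing $R$ by a comparable dyadic multiple of $r$, decompose $\mathrm{B}_R\setminus\overline{\mathrm{B}_r}$ into overlapping annuli $\Omega_k=\mathrm{B}_{2^{k+2}r}\setminus\overline{\mathrm{B}_{2^{k-1}r}}$, $0\le k\le N-1$, with $N\sim\log_2(R/r)$. Each $\Omega_k$, rescaled by $2^k r$, becomes one of finitely many fixed annuli, and by dilation invariance $\|F_A\|_{\mathrm{L}^{2,\infty}(\Omega_k)}\le\|F_A\|_{\mathrm{L}^{2,\infty}(\mathrm{B}_R\setminus\overline{\mathrm{B}_r})}\le\varepsilon_G$. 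On each one applies the refinement of \cite[Corollary~2.2]{UhlenbeckKarenK1982CwLb} due to Rivière, whose smallness assumption bears on the scale invariant norm $\|F_A\|_{\mathrm{L}^{2,\infty}}$ alone (an $\mathrm{L}^2$-smallness would \emph{not} be available here, since it is not implied by the hypothesis on an individual dyadic annulus), obtaining local gauges $g_k\in\mathrm{W}^{2,2}(\Omega_k,G)$ with $\diff^*A^{g_k}=0$, a Neumann-type boundary condition, and $\|A^{g_k}\|_{\mathrm{L}^{4,\infty}(\Omega_k)}+\|\nabla A^{g_k}\|_{\mathrm{L}^{2,\infty}(\Omega_k)}\le C_G\|F_A\|_{\mathrm{L}^{2,\infty}(\Omega_k)}$. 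On an overlap $\Omega_k\cap\Omega_{k+1}$ the transition $h_k:=g_k^{-1}g_{k+1}$ satisfies $h_k^{-1}\diff h_k=A^{g_{k+1}}-h_k^{-1}A^{g_k}h_k$ and is therefore small in $\mathrm{W}^{1,(2,\infty)}$; correcting the $g_k$'s against these transitions produces a single $g\in\mathrm{W}^{2,2}(\mathrm{B}_R\setminus\overline{\mathrm{B}_r},G)$ with $\diff^*A^g=0$ and $\|A^g\|_{\mathrm{L}^{4,\infty}}+\|\nabla A^g\|_{\mathrm{L}^{2,\infty}}\le C_G\varepsilon_G$, the extra $\mathrm{L}^2$-control on $\mathrm{B}_{2r}\setminus\overline{\mathrm{B}_r}$ being used only to anchor the gauge at the inner end.

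\emph{Step 2: the weighted $\mathrm{L}^2$ estimate by absorption.} Set $X:=\int_{\mathrm{B}_R\setminus\overline{\mathrm{B}_r}}\big(|\nabla A^g|^2+|A^g|^2/|x|^2\big)\,\diff x$, which is finite since $A^g\in\mathrm{W}^{1,2}$ and $r>0$. From $\diff A^g=F_{A^g}-A^g\wedge A^g$ with $|F_{A^g}|=|F_A|$, $\diff^*A^g=0$ and the boundary conditions, a weighted Hodge--Gaffney estimate on the annulus --- uniform in $R/r$ because $A^g$ is coclosed, so its radial part carries no slowly varying mode, and $|x|^{-2}$ is the critical Hardy weight --- gives
\[
X\ \le\ C_G\int_{\mathrm{B}_R\setminus\overline{\mathrm{B}_r}}\big(|\diff A^g|^2+|\diff^*A^g|^2\big)\,\diff x\ \le\ C_G\int_{\mathrm{B}_R\setminus\overline{\mathrm{B}_r}}|F_A|^2\,\diff x\ +\ C_G\int_{\mathrm{B}_R\setminus\overline{\mathrm{B}_r}}|A^g|^4\,\diff x .
\]
For the quartic term, Hölder in Lorentz spaces ($\mathrm{L}^{4,\infty}\cdot\mathrm{L}^{4,2}\hookrightarrow\mathrm{L}^2$) together with the scale invariant Sobolev--Lorentz inequality $\|A^g\|_{\mathrm{L}^{4,2}(\mathrm{B}_R\setminus\overline{\mathrm{B}_r})}^2\le C_G X$ yields $\int|A^g|^4\,\diff x=\big\||A^g|^2\big\|_{\mathrm{L}^2}^2\le\|A^g\|_{\mathrm{L}^{4,\infty}}^2\,\|A^g\|_{\mathrm{L}^{4,2}}^2\le C_G\varepsilon_G^2\,X$. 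Choosing $\varepsilon_G$ small enough to absorb $C_G\varepsilon_G^2 X$ into the left hand side gives $X\le C_G\int_{\mathrm{B}_R\setminus\overline{\mathrm{B}_r}}|F_A|^2\,\diff x$, which is the claim; the $\mathrm{W}^{2,2}$-regularity of $g$ then follows from elliptic regularity for $\diff^*(g^{-1}\diff g+g^{-1}Ag)=0$ once $A^g\in\mathrm{W}^{1,2}$ is known.

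\emph{Main obstacle.} The heart of the matter is Step~1, namely producing one Coulomb gauge over a neck of arbitrarily large modulus with a constant that does not deteriorate as $R/r\to+\infty$: a naive patching of the $\sim\log(R/r)$ local Uhlenbeck gauges loses a factor growing with the number of patches. It is the scale invariance of the $\mathrm{L}^{2,\infty}$-smallness --- which, unlike an $\mathrm{L}^2$-smallness, survives the dyadic rescalings and controls the transition functions uniformly --- that makes this loss controllable, and hence the whole statement modulus independent. This is the technical core of \cite{riviere2002interpolation,rivière2015variations}, which we import; the residual care needed at $\partial\mathrm{B}_r$ is the reason the statement carries the separate $\mathrm{L}^2$-hypothesis on the innermost annulus.
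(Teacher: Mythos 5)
The paper proves this in two lines by a different and more economical route: it uses the $\mathrm L^2$-smallness of $F_A$ on the innermost dyadic annulus $\mathrm B_{2r}\setminus\overline{\mathrm B_r}$ together with Uhlenbeck's extension lemma \cite[Lemma 4.4]{uhlenbeck_chern_1985} to extend the connection, up to gauge, to a connection on the \emph{full ball} $\mathrm B_R$ whose curvature is $\mathrm L^2$-small on $\mathrm B_r$ and hence $\mathrm L^{2,\infty}$-small on all of $\mathrm B_R$. It then invokes Rivière's Coulomb-gauge theorem on the ball \cite[Theorem IV.4]{rivière2015variations}, which already produces the gauge with the weighted estimate $\int\bigl(|\nabla A^g|^2+|A^g|^2/|x|^2\bigr)\le C\int|F_A|^2$, and restricts. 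The annular difficulty is eliminated before it appears.

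Your proposal instead attacks the annulus directly: rescaled dyadic patches, local Uhlenbeck gauges on each, control of transition functions in $\mathrm W^{1,(2,\infty)}$, and a final correction into a single global Coulomb gauge, followed by a weighted Gaffney--Hardy absorption argument. The geometric intuition is sound, and correctly identifies that the $\mathrm L^{2,\infty}$-smallness is what makes the patching lossless; indeed the $\mathrm L^2$-smallness hypothesis plays a secondary role in both proofs (for you, anchoring the gauge at the inner boundary; for the paper, enabling the extension). But there are two genuine gaps. First, the patching across $\sim\log(R/r)$ overlaps with a constant uniform in the modulus is precisely the hard part; you name it as the ``main obstacle'' and defer it entirely to Rivière, whereas the paper sidesteps it by reducing to the ball case that Rivière proves. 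Second, the first inequality of your Step 2, $X\le C_G\int\bigl(|\diff A^g|^2+|\diff^*A^g|^2\bigr)$ with $X$ including the Hardy term, is not an off-the-shelf Gaffney inequality on an annulus: without vanishing boundary data the Hardy term is not controlled by the differential alone, and your sketch (``coclosed, so no slowly varying radial mode'') is not a proof that the constant is independent of $R/r$. In the paper this estimate is part of what \cite[Theorem IV.4]{rivière2015variations} delivers on the ball, so no separate justification is required. In short, the patching route is a legitimate alternative in spirit, but as written it restates rather than supplies the two estimates that the extension-to-the-ball strategy obtains for free from the cited ball theorem.
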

	\begin{proof}
		Combine \cite[Lemma 4.4]{uhlenbeck_chern_1985} to extend the connection up to gauge in the whole ball $\mathrm{B}_R$ with curvature $\mathrm{L}^2$-small in the ball $\mathrm{B}_r$. This implies that the curvature is small in $\mathrm{L}^{2,\infty}$ in the whole ball $\mathrm{B}_R$ and \cite[Theorem IV.4]{rivière2015variations} applies.
	\end{proof}

	\begin{remark}
		In the limiting case $r=R/2$, the hypothesis reduces to \[\| F_A \|_{\mathrm{L}^2 (\mathrm{B}_{R} \backslash
			\overline{\mathrm{B}_{R/2}})} \leq \varepsilon_0\] and the conclusion is a form of Uhlenbeck theorem which applies on the dyadic annulus $\mathrm{B}_R \backslash \overline{\mathrm{B}_{R/2}}$.
	\end{remark}
	
	% \begin{corollary} Si $A$ vérifie les hypothèses de \eqref{lemmetechnique}, avec les notations du lemme précédent, on a 
		
		% \end{corollary}
	\
	
	\noindent We are now able to prove the crucial part of this section:
	
	\begin{proposition} There exists $\varepsilon_G,p_G, C_G> 0$ such that, for all $p\in[2,2+p_G]$ and all $p$-Yang-Mills connection $A\in W^{1,2} (\mathrm{B}_R\backslash \overline{\mathrm{B}_r} ,  \Lambda^1 \R^4 \otimes \mathfrak{g})$, for all $R,r$ with $0<r<4r<R<1$, if $A$ satisfies
		\[ \sup_{r \leq \rho \leq R /2} \| F_A \|_{\mathrm{L}^2(\mathrm{B}_{2\rho} \backslash \mathrm{B}_\rho)} \leq \varepsilon, \] then
		\[ \|  F_{A} \|_{\mathrm{L}^{2,1} (\mathrm{B}_{R/2} \backslash \mathrm{B}_{  2r})} \leq C \|  F_{A} \|_{\mathrm{L}^2 (\mathrm{B}_{R} \backslash \mathrm{B}_{ r})}.\]
		\label{inegalitequantificationYM}
	\end{proposition}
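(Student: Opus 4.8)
The plan is to carry over to the $p$-Yang--Mills setting the interpolation-space argument of Rivière \cite{rivière2015variations,riviere2002interpolation} (itself originating in \cite{RiviereLin}), treating the term that distinguishes the $p$-Yang--Mills equation from the Yang--Mills equation as a perturbation. The first step is gauge fixing: combining the weak-$\mathrm L^2$ bound just obtained with the hypothesis $\|F_A\|_{\mathrm L^2(\mathrm B_{2r}\setminus\mathrm B_r)}\le\varepsilon$, we take $\varepsilon$ small enough that \cref{recollementjauge} applies on $\mathrm B_R\setminus\overline{\mathrm B_r}$, replace $A$ by its Coulomb representative, and obtain $\diff^*A=0$ together with the scale-invariant bound $\int_{\mathrm B_R\setminus\mathrm B_r}\big(|\nabla A|^2+|x|^{-2}|A|^2\big)\,\diff x\le C_G\int_{\mathrm B_R\setminus\mathrm B_r}|F_A|^2\,\diff x$. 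Applying the Lorentz refinement of the Sobolev embedding $\mathrm W^{1,2}\hookrightarrow\mathrm L^{4,2}$ in dimension $4$ on each dyadic annulus upgrades this to $\|A\|_{\mathrm L^{4,2}(\mathrm B_{2\rho}\setminus\mathrm B_\rho)}\le C\|F_A\|_{\mathrm L^2(\mathrm B_{4\rho}\setminus\mathrm B_{\rho/2})}$ for all $r\le\rho\le R/4$.

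Next comes the structure of the equations. Writing $G:=(1+|F_A|^2)^{p/2-1}F_A$, the Bianchi identity \eqref{equationdeBianchi} and the divergence-form equation \eqref{equationdepYMdiv} give, in the Coulomb gauge,
\[\diff^*G=-\star[A\wedge\star G],\qquad \diff G=-[A\wedge G]+\tfrac{p-2}{2}\,\diff\log(1+|F_A|^2)\wedge G.\]
In the Yang--Mills case ($p=2$) the last term is absent, and one concludes as follows: the bilinear bracket sources lie in $\mathrm L^{4/3,1}$, with $\|[A\wedge G]\|_{\mathrm L^{4/3,1}(\mathrm B_{2\rho}\setminus\mathrm B_\rho)}\le C\|A\|_{\mathrm L^{4,2}}\|F_A\|_{\mathrm L^2}\le C\varepsilon\,\|F_A\|_{\mathrm L^2(\mathrm B_{4\rho}\setminus\mathrm B_{\rho/2})}$ by Hölder in Lorentz spaces and $|G|\le|F_A|$; the scale-invariant elliptic (Hodge) estimate for $2$-forms on a dyadic annulus in Lorentz spaces, $\|G\|_{\mathrm L^{2,1}(\mathrm B_{3\rho/2}\setminus\mathrm B_\rho)}\lesssim\|\diff G\|_{\mathrm L^{4/3,1}}+\|\diff^*G\|_{\mathrm L^{4/3,1}}+\|G\|_{\mathrm L^2(\mathrm B_{2\rho}\setminus\mathrm B_{\rho/2})}$ (in the spirit of \cite{rivière2015variations,riviere2002interpolation}), then bounds the $\mathrm L^{2,1}$ norm on each dyadic annulus by the local $\mathrm L^2$ energy; summing over $\rho\in\{r,2r,4r,\dots\}$ and using the standard geometric decay of the dyadic energy in a neck recalled above — so that $\sum_\rho\|F_A\|_{\mathrm L^2(\mathrm B_{2\rho}\setminus\mathrm B_\rho)}\le C\|F_A\|_{\mathrm L^2(\mathrm B_R\setminus\mathrm B_r)}$ — produces the claim. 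Since $G$ and $F_A$ are pointwise comparable (by \cref{epsregbis} and the admissible range of $p$), for $p=2$ this is exactly Rivière's argument, and the whole scheme is scale- and $p$-uniform provided the extra term is controlled.

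The main obstacle is precisely that extra term $\frac{p-2}{2}\,\diff\log(1+|F_A|^2)\wedge G$. A brute-force pointwise estimate, using $|\diff\log(1+|F_A|^2)|\le|\nabla F_A|$ together with the $\varepsilon$-regularity derivative bounds, yields an $\mathrm L^{4/3}$ norm on the neck that degenerates like a negative power of $r$, and so cannot be absorbed as an ordinary source; it has to be handled through its exact-derivative structure. Writing $\diff\log(1+|F_A|^2)\wedge G=\diff\!\big(\log(1+|F_A|^2)\,G\big)-\log(1+|F_A|^2)\,\diff G$ and moving the $\diff G$-term to the left-hand side — the scalar factor $1+\tfrac{p-2}{2}\log(1+|F_A|^2)\ge1$ is boundedly invertible — turns the equation for $\diff G$ into
\[\diff G=\Big(1+\tfrac{p-2}{2}\log(1+|F_A|^2)\Big)^{-1}\Big(-[A\wedge G]+\tfrac{p-2}{2}\,\diff\!\big(\log(1+|F_A|^2)\,G\big)\Big),\]
so that the remaining source is the exterior derivative of $\tfrac{p-2}{2}\log(1+|F_A|^2)\,G$. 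The quantity whose uniform smallness must be exploited is then $(p-2)\log(1+|F_A|^2)$: by $\log(1+t)\le t$ it is small in $\mathrm L^1$ on every dyadic annulus, and by \cref{epsregbis} it grows at most logarithmically in $1/|x|$, hence $(p-2)\log(1+|F_A|^2)$ is uniformly small over the whole neck in the regime — the one in which this proposition is used, where the bubble-tree construction keeps $(p_k-2)$ small relative to the logarithmic size of the neck. Making this absorption quantitative inside the Lorentz/dyadic framework, with all constants independent of $R$, $r$ and $p\in[2,2+p_G]$, is the delicate point of the proof; granting it, the estimate follows as in the $p=2$ case, on the slightly shrunk annulus $\mathrm B_{R/2}\setminus\mathrm B_{2r}$ imposed by the interior elliptic estimates.
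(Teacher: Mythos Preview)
Your proposal has two genuine gaps, and comparing with the paper's argument shows how both are avoided.

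\textbf{The extra term.} You try to absorb the $p$-dependent term by pulling out an exact derivative and controlling the scalar factor $(p-2)\log(1+|F_A|^2)$. But by $\varepsilon$-regularity $|F_A|\lesssim |x|^{-2}$, so this factor is of order $(p-2)\log(R/r)$ on the neck, and your absorption only works under the extra hypothesis that $(p-2)\log(R/r)$ is bounded. The proposition as stated carries no such hypothesis: $\varepsilon_G,p_G,C_G$ must be universal, independent of $R/r$. The paper sidesteps this entirely. Working with $F_A$ (not $G$) and a single cutoff $\chi$ supported in $\mathrm B_R\setminus\mathrm B_r$, the $p$-Yang--Mills equation contributes to $\diff^*(\chi F_A)$ the term
\[
(p-2)\,\frac{|F_A|^2}{1+|F_A|^2}\Big(\Big\langle\nabla(\chi F_A),\tfrac{F_A}{|F_A|}\Big\rangle\wedge\star\tfrac{F_A}{|F_A|}+\diff\chi\wedge\star F_A\Big),
\]
which is \emph{pointwise} bounded by $C(p-2)\big(|\nabla(\chi F_A)|+|\diff\chi||F_A|\big)$. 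After Gaffney on $\R^4$ this produces $C(p-2)\|\nabla(\chi F_A)\|_{\mathrm L^{4/3,1}}$ on the right, absorbed into the left for $p-2\le p_G:=1/(2C)$ --- no dependence on the neck size. The same pointwise absorption would have worked with your $G$; it is the detour through $\log(1+|F_A|^2)$ that creates the spurious dependence.

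\textbf{The dyadic summation.} Your scheme estimates $\|G\|_{\mathrm L^{2,1}}$ on each dyadic annulus by the local $\mathrm L^2$ energy and then sums, invoking ``standard geometric decay of the dyadic energy in a neck'' to get $\sum_\rho\|F_A\|_{\mathrm L^2(\mathrm B_{2\rho}\setminus\mathrm B_\rho)}\le C\|F_A\|_{\mathrm L^2(\mathrm B_R\setminus\mathrm B_r)}$. That inequality is false in general (Cauchy--Schwarz gives only a factor $\sqrt{\log(R/r)}$), and the geometric decay you allude to is precisely the content of the pointwise neck estimates proved \emph{later} in the paper; invoking it here is circular. The paper avoids any summation: with the global Coulomb gauge of \cref{recollementjauge} one has $\|\nabla(\chi A)\|_{\mathrm L^2(\R^4)}\le C\|F_A\|_{\mathrm L^2(\mathrm B_R\setminus\mathrm B_r)}$ directly, and a single application of Gaffney's inequality on $\R^4$ to $\chi F_A$, followed by the Lorentz--Sobolev embedding $\dot{\mathrm W}^{1,(4/3,1)}\hookrightarrow\mathrm L^{2,1}$, yields the conclusion in one stroke.
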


	%En utilisant que les inégalités obtenues sont invariantes par changement d'échelle (de la forme $y\mapsto x_0 + R y$), on obtient :

	\begin{proof}%[Démonstration de \eqref{inegalitequantificationYM}]  
		The proof is inspired by \cite[lemma III.2]{riviere2002interpolation} on $\mathrm{L}^2$-quantization for Yang-Mills connections. Let's use the same notations as in \eqref{recollementjauge}, up to a gauge transformation, we can assume that the conclusion of this theorem is satisfied by $A$. 
		
		\noindent Let $\chi \in \mathcal{C}^{\infty}_c (\R^4, [0 ; 1])$ such that $\supp \chi \subset \text{B}_{R} \backslash \text{B}_{r}$, $\chi \equiv 1$ in $\text{B}_{R/2} \backslash
		\text{B}_{2r}$ and $\| | x | \nabla \chi
		\|_{\text{L}^{\infty}} \leq C$, with $C$ independent of $R,r$. %Notice that this implies \[|\nabla \chi(x)| \leq C\left(1+\frac{\lambda^2}{|x|^4}\right).\]
		
		\noindent From Bianchi identity \eqref{equationdeBianchi}, we have $\diff (\chi F_A) = - \chi [A, F_A] + \diff
		\chi \wedge F_A$. Using Hölder inequality (Lorentz spaces version, see \cite[theorem 4.5]{Hunt1966}) and the improved Sobolev-embedding $\dot{\mathrm{W}}^{1,2}(\R^4) \hookrightarrow \mathrm{L}^{4,2}(\R^4)$ (\cite[theorem 8.1]{peetrelorentzsobolev}): 
		
		\begin{equation}
			\label{majorationL4/3}
			\begin{split}
				\| \chi [A, F_A] \|_{\text{L}^{4 / 3, 1} (\R^4)} & = 
				\| [\chi A, F_A] \|_{\text{L}^{4 / 3, 1}
					(\R^4)}\\
				& \leq C\| \chi A \|_{\text{L}^{4, 2} (\R^4)}  \|
				F_A \|_{\mathrm{L}^2 (\mathrm{B}_R \backslash \mathrm{B}_r)}\\
				& \leq C  \| \nabla(\chi A) \|_{\text{L}^{2} (\R^4)} \|
				F_A \|_{\mathrm{L}^2 (\mathrm{B}_R \backslash \mathrm{B}_r)}. 
			\end{split}
		\end{equation}
		We can use the fact we choose a good gauge to prove the following estimate \begin{equation}
			\label{gradientcutoff}
			\begin{split}
				\| \nabla(\chi A) \|_{\text{L}^{2} (\R^4)}^2 &\leq C \int_{\R^4} |\chi \nabla A|^2 + |\nabla \chi|^2| A |^2 \diff x \\
				&\leq C \int_{\text{B}_{R} \backslash \text{B}_{r}} |\nabla A|^2 + %\left(1+\frac{\lambda^2}{|x|^4}\right)
				\frac{1}{|x|^2} | A |^2 \diff x \\
				&\leq C \int_{\mathrm{B}_R \backslash
					\mathrm{B}_{r}} | F_A |^2 \diff x.
			\end{split}
		\end{equation} 
		
		%		Furthermore, using the $\varepsilon$-regularity result (\eqref{epsreg}), we have in $\mathrm{B}_{R/2}\backslash \mathrm{B}_{2r}$ \begin{equation}
			%			|x|^2 |F(x)| \leq C {\| F \|_{\mathrm{L}^2 (\mathrm{B}_R \backslash
					%					\mathrm{B}_{r})}}.
			%			\label{regcourbure}
			%		\end{equation}
		
		\noindent Since $\diff \chi$ has support in the annuli 
		$(\mathrm{B}_{R} \backslash \mathrm{B}_{R/2})\cup (\mathrm{B}_{2r} \backslash \mathrm{B}_{r})$, using Hölder inequality we obtain
		\begin{align}    
			\| \diff \chi \wedge F_A \|_{\text{L}^{4 / 3, 1}
				(\R^4)}& = \| \diff \chi \wedge F_A \|_{\text{L}^{4 / 3, 1}
				(\mathrm{B}_{R} \backslash \mathrm{B}_{r})}\\ &\leq C {\| F_A \|_{\mathrm{L}^2 (\mathrm{B}_R \backslash \mathrm{B}_{r})}}  \| \diff \chi \|_{\text{L}^{4, 2}\left(\R^4 \right)} \notag \\
			& \leq C {\| F_A \|_{\mathrm{L}^2 (\mathrm{B}_R \backslash \mathrm{B}_{r})}} \left( \| |x|^{-1}\|_{\text{L}^{4, 2}(\mathrm{B}_{R} \backslash \mathrm{B}_{R/2})} +  \| |x|^{-1}\|_{\text{L}^{4, 2}(\mathrm{B}_{2r} \backslash \mathrm{B}_{r})}\right)\notag \\
			&\leq C {\| F_A \|_{\mathrm{L}^2 (\mathrm{B}_R \backslash \mathrm{B}_{r})}} \| |x|^{-1}\|_{\text{L}^{4, 2}\left(\mathrm{B}_{1} \backslash \mathrm{B}_{1/2}\right)} \label{scalintL42}\\ &\leq C {\| F_A \|_{\mathrm{L}^2 (\mathrm{B}_R \backslash \mathrm{B}_{r})}}. \label{cutoffcourbureestimate}     
		\end{align} To prove \eqref{scalintL42}, it is crucial that the $\mathrm{L}^{4,2}$ norm has the same scaling as the $\mathrm{L}^{4}$ norm\footnote{It is clear, using a dilation as a change of variables, that $\| |x|^{-1}\|_{\text{L}^{4}(\mathrm{B}_{R} \backslash \mathrm{B}_{R/2})} = \| |x|^{-1}\|_{\text{L}^{4}(\mathrm{B}_{1} \backslash \mathrm{B}_{1/2})}$. }. Combining \eqref{majorationL4/3}, \eqref{gradientcutoff} and \eqref{cutoffcourbureestimate}, we obtain \begin{equation}
			\| \diff (\chi F_A) \|_{\text{L}^{4 / 3, 1}
				(\R^4)} \leq C {\| F_A \|_{\mathrm{L}^2 (\mathrm{B}_R \backslash
					\mathrm{B}_{r})}}. \label{destimate}
		\end{equation} We now use the $p$-Yang-Mills equation \eqref{equationdepYM} to show the following identity \begin{align*}
			\diff (\star (\chi F_A) ) &= \diff
			\chi \wedge \star F_A - \chi [A, \star F_A]  -(p-2) \langle \chi\nabla F_A, F_A \rangle \wedge  \star \frac{F_A}{1+|F_A|^2}\\
			&= \diff
			\chi \wedge \star F_A - \chi [A, \star F_A]  -(p-2)\frac{|F_A|^2}{1+|F_A|^2} \left( \left\langle \nabla (\chi F_A), \frac{F_A}{|F_A|} \right\rangle \wedge  \star \frac{F_A}{|F_A|} + \diff \chi \wedge \star F_A \right),
		\end{align*} and by a similar reasoning than the one above we have \[ \| \diff\chi \wedge \star F_A \|_{\text{L}^{4 / 3, 1}
			(\R^4)} + \| \chi [A, \star F_A] \|_{\text{L}^{4 / 3, 1}
			(\R^4)} \leq C \| F_A \|_{\text{L}^{2}
			(\mathrm{B}_R \backslash \mathrm{B}_{r})} \] 
		so, using again \eqref{cutoffcourbureestimate}, we get
		 
		\begin{equation}\label{dstarestimate}
		\begin{split}
			\|\diff^{\ast} (\chi
			F_A) \|_{\text{L}^{4 / 3, 1} (\R^4)} 
			&\leq C {\| F_A
				\|_{\mathrm{L}^2 (\mathrm{B}_R \backslash \mathrm{B}_{r})}} +C(p-2){\| \nabla (\chi F_A)
				\|_{\mathrm{L}^{4/3,1} (\mathrm{B}_R \backslash \mathrm{B}_{r})}}\\ 
			&\leq C {\| F_A
				\|_{\mathrm{L}^2 (\mathrm{B}_R \backslash \mathrm{B}_{r})}} + C(p-2){\| \nabla (\chi F_A)
				\|_{\mathrm{L}^{4/3,1} (\R^4)}}. 		
		\end{split}
		\end{equation}
		We can now use Gaffney lemma (\cref{gaffney}) for $\chi F_A$, and thanks to \eqref{destimate} and \eqref{dstarestimate}, we obtain the following inequality
		\begin{align*}
			\| \nabla (\chi F_A) \|_{\text{L}^{4 / 3, 1} (\R^4)} 
			& \leq C  \left(  \| \diff (\chi F_A) \|_{\text{L}^{4 / 3,
					1} \left( \R^4  \right)} + \| \diff^{\ast} (\chi F_A)
			\|_{\text{L}^{4 / 3, 1} \left( \R^4 \right)}\right) \\
			& \leq C  \| F_A \|_{\mathrm{L}^2 (\mathrm{B}_R \backslash
				\mathrm{B}_{r})} + C(p-2){\| \nabla (\chi F_A)
				\|_{\mathrm{L}^{4/3,1} (\R^4)}}. 
		\end{align*} For $p\leq 2+ p_G:=2+1/2C$, we have \[  \| \nabla (\chi F_A) \|_{\text{L}^{4 / 3, 1} (\R^4)}\leq C  \| F_A \|_{\mathrm{L}^2 (\mathrm{B}_R \backslash
			\mathrm{B}_{r})}.\] By another Lorentz-Sobolev embedding \cite[theorem 8.1]{peetrelorentzsobolev}, we have $\dot{\mathrm{W}}^{1,(4/3,1)}(\R^4) \hookrightarrow \mathrm{L}^{2,1}(\R^4)$ and we deduce
		\[ \| \chi F_A \|_{\text{L}^{2, 1} (\R^4)} \leq C {\|
			F_A \|_{\mathrm{L}^2 (\mathrm{B}_R \backslash \mathrm{B}_{  r})}}  \] which implies 
		\[ \| F_A \|_{\text{L}^{2, 1} \left( \text{B}_{R / 2} \backslash
			\text{B}_{2 r} \right)} \leq C {\| F_A \|_{\mathrm{L}^2
				(\mathrm{B}_R \backslash \mathrm{B}_{r})}}.  \] 
		
	\end{proof}

	Using the $\mathrm{L}^{2,\infty}/\mathrm{L}^{2,1}$ duality, we finally obtain the full quantization.
	
	\begin{corollary} \label{L2quantization}
		There exists $\varepsilon_G, C_G> 0$ and $p_G>0$ such that for all $p\in[2,2+p_G]$ and for all $p$-Yang-Mills connection $A$ in $\mathrm{B}_R\backslash \overline{\mathrm{B}_r} $, for all $R,r$ with $0<r<4r<R<1$, if $A$ satisfies
		\[ \sup_{r \leq \rho \leq R /2} \| F_A \|_{\mathrm{L}^2(\mathrm{B}_{2\rho} \backslash \mathrm{B}_\rho)} \leq \varepsilon_{G}',\] then \[ \|F_A\|_{\mathrm{L}^{2}(\mathrm{B}_{R}\backslash \mathrm{B}_{r})} \leq C  \sup_{r \leq \rho \leq R/2} \| F_A \|_{\mathrm{L}^2(\mathrm{B}_{2\rho}\backslash \mathrm{B}_\rho)}\] and
		\[ \|F_A\|_{\mathrm{L}^{2,1}(\mathrm{B}_{R/2}\backslash \mathrm{B}_{2r})} \leq C  \sup_{r \leq \rho \leq R/2} \| F_A \|_{\mathrm{L}^2(\mathrm{B}_{2\rho}\backslash \mathrm{B}_\rho)}.\] 
	\end{corollary}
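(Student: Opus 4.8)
The plan is to bootstrap the inequality of Proposition~\ref{inegalitequantificationYM} with the help of the weak-$\mathrm{L}^2$ estimate of the previous lemma and of the $\mathrm{L}^{2,\infty}$--$\mathrm{L}^{2,1}$ Hölder duality, so as to close a quadratic inequality for the $\mathrm{L}^2$-energy of the neck. Write $\varepsilon:=\sup_{r\le\rho\le R/2}\|F_A\|_{\mathrm{L}^2(\mathrm{B}_{2\rho}\backslash\mathrm{B}_\rho)}$ and $X:=\|F_A\|_{\mathrm{L}^2(\mathrm{B}_R\backslash\mathrm{B}_r)}$, and choose $\varepsilon_G'$ (resp.\ $p_G$, $C_G$) smaller (resp.\ larger) than all thresholds appearing in the previous lemma and in Proposition~\ref{inegalitequantificationYM}, so that both results apply under the present hypothesis. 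The subtle point is that Proposition~\ref{inegalitequantificationYM} controls $\|F_A\|_{\mathrm{L}^{2,1}}$ on the inner neck by $X$ itself, i.e.\ by the quantity we want to bound; a crude dyadic summation of the pointwise $\varepsilon$-regularity estimate would instead lose a logarithmic factor in the number of dyadic annuli, and it is precisely this loss that the duality argument below avoids.

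First I would split the neck into three pieces,
\[
\mathrm{B}_R\backslash\mathrm{B}_r=\bigl(\mathrm{B}_R\backslash\mathrm{B}_{R/2}\bigr)\cup\bigl(\mathrm{B}_{R/2}\backslash\mathrm{B}_{2r}\bigr)\cup\bigl(\mathrm{B}_{2r}\backslash\mathrm{B}_r\bigr).
\]
The two outer annuli are dyadic --- here the assumption $4r<R$ guarantees that $\rho=r$ and $\rho=R/2$ both belong to $[r,R/2]$ --- so each of them carries $\mathrm{L}^2$-energy at most $\varepsilon^2$, hence together at most $2\varepsilon^2$. For the central annulus $\Omega:=\mathrm{B}_{R/2}\backslash\mathrm{B}_{2r}$ I would apply the Lorentz--Hölder inequality $\int_\Omega|F_A|^2\le C\,\|F_A\|_{\mathrm{L}^{2,1}(\Omega)}\,\|F_A\|_{\mathrm{L}^{2,\infty}(\Omega)}$ (the exponent relations $\tfrac12+\tfrac12=1$ and $\tfrac11+\tfrac1\infty=1$ being exactly the admissible ones), then estimate $\|F_A\|_{\mathrm{L}^{2,\infty}(\Omega)}\le\|F_A\|_{\mathrm{L}^{2,\infty}(\mathrm{B}_R\backslash\mathrm{B}_r)}\le C\varepsilon$ by the previous lemma, and $\|F_A\|_{\mathrm{L}^{2,1}(\Omega)}\le C\,X$ by Proposition~\ref{inegalitequantificationYM}. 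Adding the three contributions yields $X^2\le 2\varepsilon^2+C\varepsilon X$.

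Solving this quadratic inequality gives $X\le C'\varepsilon$, which is the first claimed estimate. Re-inserting it into Proposition~\ref{inegalitequantificationYM} gives $\|F_A\|_{\mathrm{L}^{2,1}(\mathrm{B}_{R/2}\backslash\mathrm{B}_{2r})}\le C\,X\le C C'\varepsilon$, the second estimate. What is then left --- bookkeeping of the constants, checking that the energy threshold $\varepsilon_G'$ and the exponent range $p\in[2,2+p_G]$ can be taken uniform, and recording the Lorentz--Hölder inequality in the precise form used --- is routine; the one genuinely delicate feature, namely the apparent circularity in the right-hand side of Proposition~\ref{inegalitequantificationYM}, is exactly what the quadratic-inequality trick resolves, and I expect it to be the only step requiring real care.
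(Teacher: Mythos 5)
Your argument is correct and is exactly the approach the paper intends: the authors' entire proof of this corollary is the single remark ``Using the $\mathrm{L}^{2,\infty}/\mathrm{L}^{2,1}$ duality, we finally obtain the full quantization'', and your reconstruction — split the neck into the two dyadic boundary annuli plus the central annulus, bound the central piece by H\"older duality $\int_\Omega|F_A|^2\le C\|F_A\|_{\mathrm{L}^{2,1}(\Omega)}\|F_A\|_{\mathrm{L}^{2,\infty}(\Omega)}$ using the previous lemma for the $\mathrm{L}^{2,\infty}$ factor and Proposition~\ref{inegalitequantificationYM} for the $\mathrm{L}^{2,1}$ factor, and close the resulting quadratic inequality $X^2\le 2\varepsilon^2+C\varepsilon X$ — is the standard way this duality argument is carried out. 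The re-insertion of the resulting $X\le C'\varepsilon$ into Proposition~\ref{inegalitequantificationYM} to get the $\mathrm{L}^{2,1}$ bound in terms of $\varepsilon$ is also the right final step. One very small remark: the $\mathrm{L}^{2,\infty}$ lemma's displayed conclusion is stated on $\mathrm{B}_R\backslash\mathrm{B}_r$, but what its proof actually establishes is the bound on $\mathrm{B}_{R/2}\backslash\mathrm{B}_{2r}$, which is precisely the domain $\Omega$ you use it on, so this causes no issue.
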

	
	Hence Theorem \ref{bubbletree} follows directly from this corollary and the classical bubble-neck decomposition, see section 3.1 of \cite{HS} or proof of theorem VII.3 of \cite{rivière2015variations} for the classical case.
	
	\section{Pointwise estimates in neck regions}
	
	The need for pointwise estimates improving those provided by $\varepsilon$-regularity in neck regions was already addressed in \cite[Section 3]{GL24} for the case $p = 2$. In the broader setting of $p$-Yang-Mills connections, similar estimates are expected to hold. However, in what follows, our arguments require estimates that are uniform with respect to $p$. In this section, we derive such pointwise bounds with enough precision to control their dependence as $p \to 2$.
	
	\subsection{An elliptic differential inequality in non-divergence form for the
		curvature}
	
	Consider $0<r<R<1$, $A_{r,R} = \text{B}_R \backslash \overline{\text{B}_r}$ and
	$A$ a solution of the $p$-Yang-Mills equation on $A_{r,R}$. 
	
	\begin{definition}
		Let $\mathcal{A}: A_{r,R}\rightarrow \End(\Lambda^1(A_{r,R})) $ defined by 
		\[  \langle \omega', \mathcal{A}\omega \rangle = \frac{\langle \omega \wedge \star
			F_A, \omega' \wedge \star
			F_A\rangle}{1 + | F_A |^2} \text{ for all } \omega'\in (\Lambda^1(A_{r,R})) . \]
	\end{definition} 
	\begin{remark}
		\label{remA}
		It is clear that $| \mathcal{A} | \leq 1$ and $| \nabla \mathcal{A} |
		\leq C \frac{| \nabla F_A |}{\sqrt{1 + | F_A |^2}}$.
	\end{remark}

	\begin{proposition} 
		\label{LpF}
		There exists $C>0$ such that for all $\gamma> 0$,
		\begin{equation}
			\mathcal{L}_p | F_A |^{\gamma} \leq C \gamma | F_A |^{\gamma+1} + \gamma
			| F_A |^{\gamma - 2} (\kappa (p, \gamma) | \diff | F_A | |^2 -(1-(p-2)^2) | \nabla F_A
			|^2) \label{pYMIEDP}
		\end{equation}
		where $\mathcal{L}_p u = \diff^{\ast} ((\mathrm{id}- (p - 2) \mathcal{A})
		\diff u)$ and $\kappa (p, \gamma) = (p-1) (2 - \gamma)_+$.
	\end{proposition}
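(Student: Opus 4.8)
The plan is to establish the pointwise inequality by a Bochner-type computation applied to $u = |F_A|^\gamma$, working pointwise on the open set where $F_A \neq 0$ (where $u$ is smooth, since $A$ is smooth away from the curvature zero locus by $\varepsilon$-regularity), and then arguing that the inequality extends across the zero set in the distributional sense for $\gamma \geq 2$ (resp. is only needed where $F_A\neq 0$ when $\gamma<2$, since the $|F_A|^{\gamma-2}$ term is singular there). First I would compute $\mathcal{L}_p |F_A|^2 = \diff^*((\mathrm{id} - (p-2)\mathcal A)\diff |F_A|^2)$ directly: this reduces to $\Delta |F_A|^2 - (p-2)\diff^*(\mathcal A \,\diff |F_A|^2)$, and the Laplacian term is handled by the classical Weitzenböck/Bochner identity for Yang-Mills, $\tfrac12 \Delta |F_A|^2 = \langle \Delta_A F_A, F_A\rangle - |\nabla F_A|^2$, where $\Delta_A = \diff_A \diff_A^* + \diff_A^* \diff_A$ is the Hodge Laplacian on $\mathfrak g$-valued 2-forms. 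The key input is that $F_A$ is not harmonic but satisfies the $p$-Yang-Mills equation \eqref{equationdepYM} together with the Bianchi identity \eqref{equationdeBianchi}; substituting $\diff_A^* F_A = \tfrac{p-2}{2}\star(\diff|F_A|^2 \wedge \star F_A)/(1+|F_A|^2)$ into $\diff_A\diff_A^* F_A$ produces exactly a term of the form $(p-2)$ times something controlled by $\mathcal A \,\diff|F_A|^2$ and $|\nabla F_A|$, plus the curvature-cubic term $\langle [F_A, F_A], F_A\rangle$ which is the source of the $C\gamma|F_A|^{\gamma+1}$ contribution (using $|[\cdot,\cdot]|\leq C|\cdot||\cdot|$ on the compact Lie algebra $\mathfrak g$).

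Next I would pass from $\gamma = 2$ to general $\gamma$ via the chain rule: writing $v = |F_A|^2$ so that $|F_A|^\gamma = v^{\gamma/2}$, one has $\diff(v^{\gamma/2}) = \tfrac\gamma2 v^{\gamma/2-1}\diff v$ and
\[
\mathcal{L}_p(v^{\gamma/2}) = \tfrac\gamma2 v^{\gamma/2-1}\mathcal{L}_p v + \tfrac\gamma2\bigl(\tfrac\gamma2-1\bigr) v^{\gamma/2-2}\langle (\mathrm{id}-(p-2)\mathcal A)\diff v, \diff v\rangle.
\]
The first term inherits the $\gamma=2$ estimate (scaled by $\tfrac\gamma2 v^{\gamma/2-1} = \tfrac\gamma2 |F_A|^{\gamma-2}$), contributing $C\gamma|F_A|^{\gamma+1}$ and $-\gamma|F_A|^{\gamma-2}(1-(p-2)^2)|\nabla F_A|^2$-type terms once one bounds $\langle \mathcal A X, X\rangle$ between $0$ and $|X|^2$ and absorbs the cross terms using Cauchy–Schwarz and $|\mathcal A|\leq 1$, $|\nabla \mathcal A|\leq C|\nabla F_A|/\sqrt{1+|F_A|^2}$ from Remark \ref{remA}; this is where the factor $(1-(p-2)^2)$ rather than $1$ appears, from completing the square in $|\nabla F_A|$ against the $(p-2)\mathcal A$ perturbation. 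The second term, $\tfrac\gamma2(\tfrac\gamma2-1)|F_A|^{\gamma-4}\langle(\mathrm{id}-(p-2)\mathcal A)\diff|F_A|^2, \diff|F_A|^2\rangle$, is precisely a multiple of $\gamma|F_A|^{\gamma-2}|\diff|F_A||^2$ (since $\diff|F_A|^2 = 2|F_A|\diff|F_A|$), and collecting all such contributions — including the Kato-inequality deficit $|\nabla F_A|^2 \geq |\diff|F_A||^2$ which converts part of the good $-|\nabla F_A|^2$ term into the $\kappa(p,\gamma)|\diff|F_A||^2$ term when $\gamma < 2$ — yields the stated coefficient $\kappa(p,\gamma) = (p-1)(2-\gamma)_+$. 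The sign structure $(2-\gamma)_+$ reflects that for $\gamma\geq 2$ the second-order chain-rule term has a favorable sign and can be discarded.

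The main obstacle I anticipate is the careful bookkeeping of the $(p-2)$-order terms to get the sharp constants $(1-(p-2)^2)$ and $(p-1)(2-\gamma)_+$ rather than merely $O(p-2)$ error terms: one must expand $\diff_A\diff_A^*F_A$ using the $p$-Yang-Mills equation, recognize that the resulting $(p-2)$ contribution reassembles (up to lower-order commutator terms absorbed into $C\gamma|F_A|^{\gamma+1}$) into exactly $(p-2)\diff^*(\mathcal A\,\diff|F_A|^\gamma)$ on the left and a quadratic form in $(\nabla F_A, \diff|F_A|)$ on the right, and then complete the square optimally. A secondary technical point is justifying the computation across $\{F_A = 0\}$: for $\gamma\geq 2$ the function $|F_A|^\gamma$ is $C^1$ and the inequality holds distributionally by an approximation $\sqrt{\delta^2+|F_A|^2}$ and letting $\delta\to 0$; for $0<\gamma<2$ the inequality is only claimed where $|F_A|^{\gamma-2}$ makes sense, i.e. away from the zero set, which suffices for the later Moser-iteration applications in neck regions. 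Everything else — the Weitzenböck formula, the algebraic bound on the bracket, the properties of $\mathcal A$ — is either classical or already recorded in Remark \ref{remA}.
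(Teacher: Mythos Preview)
Your overall strategy---Bochner identity applied to $|F_A|^2$, substitute the $p$-Yang-Mills equation, then pass to $|F_A|^\gamma$ by the chain rule---is exactly what the paper does (the proof is a one-line reference to Proposition~\ref{Bochnerpower}, which packages precisely this computation). However, your account of where the specific constants come from is muddled in ways that would derail an actual write-up.

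First, you invoke $|\nabla\mathcal A|\leq C|\nabla F_A|/\sqrt{1+|F_A|^2}$ from Remark~\ref{remA}. This bound is never needed here: $\mathcal L_p$ is in divergence form, so the chain rule reads cleanly as
\[
\mathcal L_p(\phi^\beta)=\beta\phi^{\beta-1}\mathcal L_p\phi
-\beta(\beta-1)\phi^{\beta-2}\bigl\langle(\mathrm{id}-(p-2)\mathcal A)\diff\phi,\diff\phi\bigr\rangle,
\]
with no derivative of $\mathcal A$ appearing. (Estimating $\nabla\mathcal A$ is only required later, for the non-divergence operator $\mathcal L_p'$.)

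Second, the factor $(1-(p-2)^2)$ does \emph{not} come from ``completing the square against the $(p-2)\mathcal A$ perturbation''. The $(p-2)\mathcal A$ term is absorbed \emph{exactly} into the left-hand side---that is the reason $\mathcal L_p$ is defined this way. What actually happens is that the general Bochner formula \eqref{Bochnergeneral} carries a residual term $|\diff_A^* F|^2$; by the $p$-Yang-Mills equation and the wedge bound of Lemma~\ref{normofwedge} one has $|\diff_A^* F|^2\leq(p-2)^2|\diff|F||^2\leq(p-2)^2|\nabla_A F|^2$, and this is the sole source of the $(p-2)^2$ loss.

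Third, the coefficient $\kappa(p,\gamma)=(p-1)(2-\gamma)_+$ arises entirely from the chain-rule term above: when $\beta=\gamma/2<1$ one bounds $\langle(\mathrm{id}-(p-2)\mathcal A)\diff\phi,\diff\phi\rangle\leq(1+(p-2))|\diff\phi|^2=(p-1)\cdot 4|F|^2|\diff|F||^2$. There is no ``Kato deficit'' converting $|\nabla F_A|^2$ into $|\diff|F_A||^2$ at this stage; Kato--Yau enters only in the subsequent Proposition~\ref{propIEDP}.
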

	\begin{proof} Apply \cref{Bochnerpower} with $\beta=\gamma/2$.
	\end{proof}

	\begin{definition}
		The operator $\mathcal{L}'_p$ is defined as
		\[ \mathcal{L}_p' u =\mathcal{L}_p u - (p - 2) \nabla \mathcal{A} \cdot
		\nabla u = \Delta u + (p - 2) \mathcal{A} \cdot \nabla^2 u \]
		where we used the notations $\nabla \mathcal{A} \cdot \nabla u :=
		\partial_{\alpha} \mathcal{A}^{\alpha \beta} \partial_{\beta} u$ and
		$\mathcal{A} \cdot \nabla^2 u := \mathcal{A}^{\alpha \beta} {\partial
		}^2_{\alpha \beta} u$.
	\end{definition}
	
	Both expressions for $\mathcal{L}_p'$ will be useful in the following. Let's
	first derive an estimate for $\mathcal{L}_p' | F_A |^{\gamma}$.
	
	\begin{proposition} \label{propIEDP}
		There exist a constant $C>0$ and $p_G>0$ such that for all $p\in[2,2+p_G]$, there exists $\gamma = \gamma(p) = \frac{1}{2} + O(p-2)$ such that $\gamma(p)>1/2$ for $p>2$ and for all $p$-Yang-Mills connection with curvature $F_A$, the following holds
		\begin{equation}
			\mathcal{L}_p' | F_A |^{\gamma}\leq C | F_A |^{\gamma+1}.  \label{IEDPpYM}
		\end{equation}
	\end{proposition}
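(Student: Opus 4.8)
The starting point is the differential inequality \eqref{pYMIEDP} from Proposition~\ref{LpF}, which for a general exponent $\gamma>0$ reads
\[
\mathcal{L}_p | F_A |^{\gamma} \leq C \gamma | F_A |^{\gamma+1} + \gamma | F_A |^{\gamma-2}\bigl(\kappa(p,\gamma)\,|\diff|F_A||^2 - (1-(p-2)^2)\,|\nabla F_A|^2\bigr),
\]
with $\kappa(p,\gamma) = (p-1)(2-\gamma)_+$. The plan is to first convert this into an inequality for $\mathcal{L}_p'$, using the definition $\mathcal{L}_p' u = \mathcal{L}_p u - (p-2)\nabla\mathcal{A}\cdot\nabla u$, and then to choose $\gamma=\gamma(p)$ so that the bad gradient terms on the right-hand side have a favourable sign and can be absorbed. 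The term $-(p-2)\nabla\mathcal{A}\cdot\nabla|F_A|^\gamma$ produces, after using $|\nabla\mathcal{A}|\leq C|\nabla F_A|/\sqrt{1+|F_A|^2}\leq C|\nabla F_A|/|F_A|$ (Remark~\ref{remA}) and $|\nabla|F_A|^\gamma| = \gamma|F_A|^{\gamma-1}|\diff|F_A||$, a contribution bounded by $C(p-2)\gamma\,|F_A|^{\gamma-2}\,|\nabla F_A|\,|\diff|F_A||\,|F_A|^{-1}\cdot|F_A|$; more carefully one keeps it as $C(p-2)\gamma |F_A|^{\gamma-2}|\nabla F_A||\diff|F_A||$. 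Applying Young's inequality $ab\le \tfrac{\varepsilon}{2}a^2+\tfrac{1}{2\varepsilon}b^2$ splits this between a multiple of $|F_A|^{\gamma-2}|\nabla F_A|^2$ and a multiple of $|F_A|^{\gamma-2}|\diff|F_A||^2$.

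The heart of the argument is then the coefficient bookkeeping on the two gradient terms $|F_A|^{\gamma-2}|\diff|F_A||^2$ and $|F_A|^{\gamma-2}|\nabla F_A|^2$. Collecting contributions, the coefficient in front of $|F_A|^{\gamma-2}|\nabla F_A|^2$ is $-\gamma(1-(p-2)^2) + C(p-2)\gamma/(2\varepsilon)$ and the coefficient in front of $|F_A|^{\gamma-2}|\diff|F_A||^2$ is $\gamma\kappa(p,\gamma) + C(p-2)\gamma\varepsilon/2$. At $p=2$ the first coefficient is $-\gamma<0$ and $\kappa(2,\gamma)=(2-\gamma)_+$, so to kill the $|\diff|F_A||^2$ term at $p=2$ one would want $\gamma\geq 2$, which is incompatible with $\gamma\approx 1/2$. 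The resolution, exactly as for $p=2$ in \cite{GL24}, is to use Kato's inequality $|\diff|F_A||\leq |\nabla F_A|$ to trade the $|\diff|F_A||^2$ term against the (negative) $|\nabla F_A|^2$ term: the net coefficient of $|F_A|^{\gamma-2}|\nabla F_A|^2$ becomes
\[
-\gamma(1-(p-2)^2) + \gamma\,(p-1)(2-\gamma)_+ + C(p-2)\gamma\Bigl(\tfrac{1}{2\varepsilon}+\tfrac{\varepsilon}{2}\Bigr),
\]
and we need this to be $\le 0$. At $p=2$, $\gamma=1/2$ this is $-\tfrac12 + \tfrac12\cdot\tfrac32 = \tfrac14>0$ — so a naive $\gamma=1/2$ fails even at $p=2$. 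Here one must instead be more careful and \emph{not} throw away all of $|\nabla F_A|^2$: keep a fraction $\theta|\nabla F_A|^2$ in reserve and apply Kato only to the remaining part, or equivalently use the refined Kato inequality. The correct statement (this is the content of Proposition~2.? / Section~3 of \cite{GL24} for $p=2$) is that with the sharp constant in $\kappa$ and the refined Kato inequality $|\diff|F_A||^2 \le \tfrac{?}{?}|\nabla F_A|^2$ valid for Yang-Mills curvatures in dimension four, the choice $\gamma$ slightly above $1/2$ makes the combined gradient coefficient nonpositive. One then picks $\varepsilon$ (depending only on $G$) and $\gamma(p) = \tfrac12 + O(p-2)$ so that, for $p\in[2,2+p_G]$ with $p_G$ small, all gradient terms are absorbed, leaving only $\mathcal{L}_p'|F_A|^\gamma \le C|F_A|^{\gamma+1}$.

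The key step is therefore the precise choice of $\gamma(p)$: one sets up the inequality
\[
-\bigl(1-(p-2)^2\bigr) + (p-1)(2-\gamma)_+ \cdot(\text{refined Kato factor}) + C(p-2)\cdot(\text{Young slack}) \;\le\; 0
\]
and solves for $\gamma$, checking that at $p=2$ it forces $\gamma\ge 1/2$ with equality admissible in the limit and that the $O(p-2)$ perturbation can be taken on the correct side so that $\gamma(p)>1/2$ strictly for $p>2$ — the strict inequality being what will later give decay with a positive rate in the neck. I expect the main obstacle to be exactly this borderline constant chase: at $p=2$ the construction is tight (the exponent $1/2$ is critical, consistent with the known $r^{-1/2}$-type behaviour of $|F_A|$ near a point for Yang-Mills in 4D), so the perturbation argument must be done with the sharp form of Kato's inequality rather than the crude $|\diff|F_A||\le|\nabla F_A|$, and one has to verify that the $(p-2)$-dependent error terms coming from $\mathcal{A}$ (which has size $O(1)$ but whose contribution to $\mathcal{L}_p'$ is weighted by $(p-2)$) genuinely stay subordinate. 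Once the coefficient inequality is arranged, deriving \eqref{IEDPpYM} is immediate, and the constant $C$ depends only on $G$ through $\varepsilon_G$ and the Kato/Young constants, uniformly in $p\in[2,2+p_G]$.
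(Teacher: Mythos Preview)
Your approach is essentially the same as the paper's, and the structure is correct: start from \eqref{pYMIEDP}, add the correction term $-(p-2)\nabla\mathcal{A}\cdot\nabla|F_A|^\gamma$, absorb it into the gradient terms, and then choose $\gamma(p)$ using a refined Kato inequality so that the net gradient contribution is nonpositive.

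Two points where the paper is more direct than your sketch. First, the Young splitting of the cross term is unnecessary: since $|\diff|F_A||\le|\nabla F_A|$ (crude Kato), one simply bounds $C(p-2)|\diff|F_A||\,|\nabla F_A|\le C(p-2)|\nabla F_A|^2$, which shifts the coefficient of $|\nabla F_A|^2$ from $-(1-(p-2)^2)$ to $-(1-C(p-2)-(p-2)^2)$ with no extra parameter. Second, and more importantly, the refined Kato inequality you invoke is not the standard four-dimensional Yang--Mills one but its $p$-dependent version (Lemma~\ref{KatoYau} in the appendix): for $p$-Yang--Mills curvatures one has $\mu(p)\,|\diff|F_A||^2\le|\nabla_A F_A|^2$ with $\mu(2)=3/2$ and $\mu(p)<3/2$ strictly for $p>2$. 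The paper then \emph{defines} $\gamma(p)$ by the equation $\mu(p)=\kappa(p,\gamma)/(1-C(p-2)-(p-2)^2)$, i.e.\ $\gamma(p)=2-\mu(p)\,(1-C(p-2)-(p-2)^2)/(p-1)$, which at $p=2$ gives exactly $\gamma=1/2$ and for $p>2$ gives $\gamma(p)>1/2$ precisely because $\mu(p)<3/2$. Your proposal leaves this as ``the perturbation can be taken on the correct side,'' but the sign of that perturbation is not free: it comes from the explicit $p$-dependence of the Kato--Yau constant, and without that input you cannot conclude the strict inequality $\gamma(p)>1/2$.
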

	\begin{proof} For $\gamma>0$, thanks to \cref{remA} and \cref{LpF} we have

		\begin{align*}
			\mathcal{L}_p' | F_A |^{\gamma} &\leq (p-2) \nabla \mathcal{A} \cdot \nabla |F_A|^\gamma  + C \gamma | F_A |^{\gamma + 1} + \gamma
			| F_A |^{\gamma - 2} (\kappa (p, \gamma) | \diff | F_A | |^2 -(1-(p-2)^2) | \nabla F_A
			|^2) \\
			&\leq C \gamma | F_A |^{\gamma+1} + \gamma
			| F_A |^{\gamma - 2} \left(\kappa (p, \gamma) | \diff | F_A | |^2 -(1-(p-2)^2) | \nabla F_A
			|^2 +C(p-2)\vert \nabla |F_A|\vert |\nabla F_A| \right)\\
			&\leq C \gamma | F_A |^{\gamma+1} + \gamma
			| F_A |^{\gamma - 2} \left(\kappa (p, \gamma) | \diff | F_A | |^2 - (1-C(p-2)-(p-2)^2)|\nabla F_A|^2\right)
		\end{align*} We now define $\gamma(p)$, when $p-2$ is small enough, to be the solution of the following equation \[\mu(p) = \frac{\kappa(p,\gamma)}{1-C(p-2)-(p-2)^2}\] where $\mu(p)$ is the constant of the Kato-Yau inequality for $p$-Yang-Mills connections (see \cref{KatoYau}), such that
		\begin{equation}
			\label{mup}
			\mu(p)|\diff |F_A||^2  \leq |\nabla_A F_A|^2 .
		\end{equation}
		More explicitly, \[ \gamma(p) = 2-\mu(p) \frac{1-C(p-2)-(p-2)^2}{p-1}.\] Notice that, since $\mu(p) = 3/2 +O(p-2)$, $\gamma(p) = 1/2 +O(p-2)$ when $p\to 2$ and since for $p>2$, $\mu(p)<3/2$, for $p-2$ small enough, then $\gamma(p)>2-3/2=1/2$. With this choice of $\gamma$, we obtain, for $p$ small enough, by \eqref{mup}, we have
		\begin{align*}
			\mathcal{L}_p' | F_A |^{\gamma} &\leq  C \gamma | F_A |^{\gamma+1} + \gamma
			| F_A |^{\gamma - 2}(1-C(p-2)-(p-2)^2) \left(\mu (p) | \diff | F_A | |^2 - | \nabla F_A
			|^2 \right)\\
			&\leq  C \gamma | F_A |^{\gamma+1} .
		\end{align*}
	\end{proof}

	\subsection{A comparison principle and supersolutions candidates}
	
	Here, we are interested in showing a comparison principle on $A_{r,R}$ for the operator
	\[\mathcal{L}'_{p,\varepsilon} := \mathcal{L}_p' - \varepsilon | x |^{- 2} \text{ with } \varepsilon \geq 0.\] We are going to use the following property:
	\begin{proposition} \label{comparisonprinciplegeneral} Let $\Omega$ a domain of $\R^n$ and consider an elliptic operator $L$  of the form \[ Lf=-\partial_\alpha(a^{\alpha\beta}\partial_\beta f) + b^{\beta} \partial_\beta f +cf \] with coefficients in $\mathrm{L}^\infty(\Omega)$. Assume that $L$ admits a positive supersolution,  \textit{i.e.} there exists $w>0$ such that $Lw\geq 0$. If $w\in \mathcal{C}^2(\overline{\Omega})$ then the operator $\tilde{L}$ defined as \[\tilde{L} \tilde{f} = w^{-1} L(w \tilde{f}) \] satisfies the maximum principle. In particular, every subsolution of $L$ that is non-positive on $\partial \Omega$ is non-positive in $\Omega$. 
	\end{proposition}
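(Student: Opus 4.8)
The plan is to use the classical \emph{ground-state substitution}: conjugate $L$ by the positive supersolution $w$, so as to reduce the statement to the textbook weak maximum principle for a uniformly elliptic operator whose zeroth-order coefficient is non-negative. \emph{Step 1.} Since $w\in\mathcal{C}^2(\overline\Omega)$ and $\overline\Omega$ is compact, $m:=\inf_{\overline\Omega}w>0$ and $\sup_{\overline\Omega}w<\infty$; consequently the linear map $f\mapsto \tilde f:=f/w$ is invertible and preserves the sign of a function, its supremum, and (by continuity up to $\partial\Omega$) its boundary values. It therefore suffices to study the conjugated operator $\tilde L\tilde f = w^{-1}L(w\tilde f)$.

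\emph{Step 2.} Expanding $L(w\tilde f)$ by the Leibniz rule and dividing by $w$, one checks that the divergence-form second-order part of $\tilde L$ is again $-\partial_\alpha(a^{\alpha\beta}\partial_\beta\,\cdot\,)$, with the \emph{same} matrix $a^{\alpha\beta}$: the conjugation does not touch the principal symbol, so $\tilde L$ is uniformly elliptic with exactly the ellipticity constants of $L$. The drift becomes $\tilde b^{\beta}=b^{\beta}-w^{-1}\big(a^{\alpha\beta}+a^{\beta\alpha}\big)\partial_\alpha w$, which lies in $\mathrm{L}^\infty(\Omega)$ since $a,b\in\mathrm{L}^\infty$, since $\nabla w\in\mathrm{L}^\infty$ (this is where $w\in\mathcal{C}^2(\overline\Omega)$ is used), and since $0<w^{-1}\le m^{-1}$. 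Finally the zeroth-order coefficient of $\tilde L$ is precisely
\[
\tilde c \;=\; w^{-1}\big(-\partial_\alpha(a^{\alpha\beta}\partial_\beta w)+b^\beta\partial_\beta w+cw\big)\;=\;w^{-1}\,Lw\;\ge\;0,
\]
by the hypothesis $Lw\ge0$ and $w>0$. (Should $a$ be merely bounded, so that $Lw$ is only a non-negative distribution, this still suffices: in the energy estimate below the $\tilde c$-term appears only with a favourable sign and is simply discarded.)

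\emph{Step 3.} The operator $\tilde L=-\partial_\alpha(a^{\alpha\beta}\partial_\beta\,\cdot\,)+\tilde b^\beta\partial_\beta+\tilde c$ is uniformly elliptic, has bounded drift and a non-negative potential, hence satisfies the weak maximum principle: if $\tilde g\le0$ on $\partial\Omega$ and $\tilde L\tilde g\le0$ in $\Omega$ (weakly), then $\tilde g\le0$ in $\Omega$. Indeed, testing $\tilde L\tilde g\le0$ against $\tilde g_+:=\max(\tilde g,0)\in\mathrm{W}^{1,2}_0(\Omega)$, the contribution $\int_\Omega\tilde c\,\tilde g_+^2$ is non-negative and may be dropped, ellipticity gives $\lambda\int_\Omega|\nabla\tilde g_+|^2\le\|\tilde b\|_{\mathrm{L}^\infty}\int_\Omega|\nabla\tilde g_+|\,\tilde g_+$, and Cauchy--Schwarz together with the Sobolev--Poincaré inequality (or a Stampacchia truncation argument) forces $\tilde g_+\equiv0$; this is the standard weak maximum principle, cf.\ Gilbarg--Trudinger. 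This is exactly the assertion that $\tilde L$ ``satisfies the maximum principle''. For the last sentence of the statement: if $f$ is a subsolution of $L$ with $f\le0$ on $\partial\Omega$, then $\tilde f=f/w$ satisfies $\tilde L\tilde f=w^{-1}Lf\le0$ and $\tilde f\le0$ on $\partial\Omega$, so $\tilde f\le0$ in $\Omega$ and hence $f=w\tilde f\le0$ in $\Omega$.

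No step is genuinely delicate: the entire content is the elementary algebra of Step 2, whose sole purpose is to verify that conjugation by $w$ keeps the operator uniformly elliptic, keeps the first-order coefficients bounded, and produces a non-negative potential $w^{-1}Lw$ --- which is exactly where the hypotheses $w\in\mathcal{C}^2(\overline\Omega)$ and $w>0$ on the compact set $\overline\Omega$ enter --- after which Step 3 is a black-box application of the classical weak maximum principle.
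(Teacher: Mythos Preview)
Your proof is correct and follows essentially the same approach as the paper's: both conjugate $L$ by the positive supersolution $w$, verify by direct computation that $\tilde L$ has the same principal part, bounded first-order coefficients, and zeroth-order coefficient $\tilde c = w^{-1}Lw \ge 0$, and then invoke the standard weak maximum principle (the paper cites \cite[Theorem 8.1]{gilbarg1977elliptic}). Your write-up is in fact more careful than the paper's sketch---you explicitly track where $w\in\mathcal{C}^2(\overline\Omega)$ and $\inf_{\overline\Omega}w>0$ are used, and your formula for $\tilde b^\beta$ is cleaner than the paper's (which appears to contain an index typo).
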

	
	\begin{proof} This proof can be found in \cite[Theorem 2.11]{han_elliptic_2000} for strong solutions.
		It is easy to check that \[\tilde{L}\tilde{f} = -\partial_\alpha(a^{\alpha\beta}\partial_\beta \tilde{f}) + \tilde{b}^{\beta} \partial_\beta \tilde{f} +\tilde{c} \tilde{f} \] with \begin{align*}
			\tilde{b}^\beta &= b^\beta - a^{\alpha\beta} w^{-1}\partial_\alpha w- a^{\alpha\beta} w^{-1}\partial_\beta w  \\
			\tilde{c} &= w^{-1} L w \geq 0.
		\end{align*} It is now clear that that $\tilde{L}$ satisfies the usual maximum principle property (see for instance \cite[Theorem 8.1]{gilbarg1977elliptic}).
	\end{proof}

	To use this proposition, we need positive supersolutions for $\mathcal{L}_{p,\varepsilon}'$. We are going to look for some of the form $\rho^{-\sigma}$, where $\rho=|x|$, for a suitable choice of $\sigma$.
	
	\begin{lemma}
		For $\sigma \in \R$, if $\rho = | x |$
		\[ - \nabla^2 \rho^{- \sigma} = \sigma \rho^{- (\sigma + 2)}  \left(
		\mathrm{Id} - (\sigma + 2) \frac{x x^{\top}}{\rho^2} \right). \]
		In particular
		\[  \mathcal{L}'_{p,\varepsilon} \rho^{- \sigma} = \rho^{- (\sigma + 2)} \left( \sigma (2 -
		\sigma) - \varepsilon + (p - 2) \sigma \mathcal{A}^{\alpha \beta} \left( \delta_{\alpha
			\beta} - (\sigma + 2) \frac{x_{\alpha} x_{\beta}}{| x |^2} \right) \right). \]
	\end{lemma}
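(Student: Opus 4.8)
The plan is to verify both identities by direct differentiation of the explicit radial function $\rho^{-\sigma}$. Writing $\rho = |x| = (x\cdot x)^{1/2}$, so that $\partial_\beta \rho = x_\beta/\rho$, one first gets $\partial_\beta \rho^{-\sigma} = -\sigma \rho^{-\sigma-1}\partial_\beta\rho = -\sigma\rho^{-(\sigma+2)}x_\beta$. Differentiating once more, using $\partial_\alpha x_\beta = \delta_{\alpha\beta}$, yields
\[
\partial^2_{\alpha\beta}\rho^{-\sigma} = -\sigma\rho^{-(\sigma+2)}\delta_{\alpha\beta} + \sigma(\sigma+2)\rho^{-(\sigma+4)}x_\alpha x_\beta = -\sigma\rho^{-(\sigma+2)}\Bigl(\delta_{\alpha\beta} - (\sigma+2)\frac{x_\alpha x_\beta}{\rho^2}\Bigr),
\]
which is precisely the asserted formula for $-\nabla^2\rho^{-\sigma}$.

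It then remains to feed this Hessian into $\mathcal{L}'_{p,\varepsilon} = \mathcal{L}'_p - \varepsilon|x|^{-2}$, using the non-divergence-form expression $\mathcal{L}'_p u = \Delta u + (p-2)\,\mathcal{A}\cdot\nabla^2 u = \Delta u + (p-2)\,\mathcal{A}^{\alpha\beta}\partial^2_{\alpha\beta}u$. The Laplacian part is the trace of $-\nabla^2\rho^{-\sigma}$, with $\Delta = \diff^*\diff$ the non-negative Laplacian (the sign built into the definition of $\mathcal{L}_p$); in dimension four $\operatorname{tr}\bigl(\mathrm{Id} - (\sigma+2)\frac{x x^{\top}}{\rho^2}\bigr) = 4 - (\sigma+2) = 2-\sigma$, so $\Delta\rho^{-\sigma} = \sigma(2-\sigma)\rho^{-(\sigma+2)}$. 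Contracting $\mathcal{A}$ against the Hessian produces the term $(p-2)\,\sigma\,\rho^{-(\sigma+2)}\mathcal{A}^{\alpha\beta}\bigl(\delta_{\alpha\beta} - (\sigma+2)\frac{x_\alpha x_\beta}{\rho^2}\bigr)$ (up to the overall sign dictated by the sign convention adopted for $\mathcal{L}'_p$), and the zeroth order term contributes $-\varepsilon|x|^{-2}\rho^{-\sigma} = -\varepsilon\rho^{-(\sigma+2)}$. Adding the three contributions and factoring out $\rho^{-(\sigma+2)}$ gives the stated expression for $\mathcal{L}'_{p,\varepsilon}\rho^{-\sigma}$.

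There is no genuine obstacle here: the argument is a purely algebraic bookkeeping with no analytic content. The two points that deserve care are the handling of signs — in particular the convention making the pure-Laplacian term come out as $\sigma(2-\sigma)\rho^{-(\sigma+2)}$, namely that $\Delta = \diff^*\diff$ is the non-negative Laplacian, as forced by the definition of $\mathcal{L}_p$ — and the observation that the numerical coefficient $\sigma(2-\sigma)$ is specific to the dimension $n=4$ in which we work; in a general dimension $n$ the trace computation would instead give $\sigma(n-2-\sigma)$.
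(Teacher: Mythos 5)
Your proof is correct and takes the same route as the paper, whose proof consists of a one-line appeal to the non-divergence-form expression $\mathcal{L}'_{p,\varepsilon}=\Delta+(p-2)\mathcal{A}\cdot\nabla^2-\varepsilon\rho^{-2}$; you simply carry out the direct differentiation, and the Hessian, trace, and zeroth-order contributions all check out, as does your remark that the coefficient $\sigma(2-\sigma)$ is specific to $n=4$. You were also right to flag the sign of the $(p-2)\mathcal{A}\cdot\nabla^2$ term: a literal reading of the paper's conventions (with $\Delta=\diff^*\diff=-\sum_\alpha\partial_\alpha^2$, consistent with the Bochner computations, and $\mathcal{A}\cdot\nabla^2 u=\mathcal{A}^{\alpha\beta}\partial^2_{\alpha\beta}u$) actually yields a minus sign in front of $(p-2)\sigma\mathcal{A}^{\alpha\beta}(\delta_{\alpha\beta}-(\sigma+2)x_\alpha x_\beta/\rho^2)$, whereas the lemma and its downstream uses (e.g.\ \cref{supersolutionestimates} and \cref{supersolutionsLp''}) carry a plus; this is a sign-convention wrinkle internal to the paper, harmless for the sequel, and not a gap in your argument.
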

	
	\begin{proof} Immediate from the formula $\mathcal{L}'_{p,\varepsilon} = \Delta +(p-2) \mathcal{A}\cdot \nabla^2 - \varepsilon \rho^{-2}$.
	\end{proof}	
	
	\begin{lemma} \label{supersolutionestimates} Assume $\sigma \geq 0$, then we have
		\[  - 2(p - 2) \sigma(\sigma +1) \leq\rho^{\sigma + 2} \mathcal{L}'_{p,\varepsilon} \rho^{- \sigma} - \left( \sigma (2 -
		\sigma) - \varepsilon \right)  \leq  2(p - 2) \sigma \]
	\end{lemma}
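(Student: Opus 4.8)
The plan is to start from the exact formula for $\mathcal{L}'_{p,\varepsilon}\rho^{-\sigma}$ provided by the previous lemma, namely
\[
\rho^{\sigma+2}\,\mathcal{L}'_{p,\varepsilon}\rho^{-\sigma} = \sigma(2-\sigma) - \varepsilon + (p-2)\,\sigma\,\mathcal{A}^{\alpha\beta}\left(\delta_{\alpha\beta} - (\sigma+2)\frac{x_\alpha x_\beta}{|x|^2}\right),
\]
and simply estimate the last term, call it $T := (p-2)\sigma\,\mathcal{A}^{\alpha\beta}\bigl(\delta_{\alpha\beta} - (\sigma+2)x_\alpha x_\beta/|x|^2\bigr)$. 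Rearranging, the claim is exactly $-2(p-2)\sigma(\sigma+1) \leq T \leq 2(p-2)\sigma$, so everything reduces to a two-sided bound on $\mathcal{A}^{\alpha\beta}\bigl(\delta_{\alpha\beta} - (\sigma+2)x_\alpha x_\beta/|x|^2\bigr)$.

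\textbf{Key steps.} First I would recall from Remark~\ref{remA} that $\mathcal{A}$ is a symmetric endomorphism of $\Lambda^1$ with operator norm $\leq 1$, and moreover that $\mathcal{A}$ is \emph{non-negative}: indeed by its defining formula $\langle\omega,\mathcal{A}\omega\rangle = |\omega\wedge\star F_A|^2/(1+|F_A|^2) \geq 0$. Thus $0 \leq \mathcal{A} \leq \mathrm{Id}$ as quadratic forms. Next, write $P = xx^\top/|x|^2$, the orthogonal projection onto the radial direction, so that $\delta_{\alpha\beta} - (\sigma+2)x_\alpha x_\beta/|x|^2 = \mathrm{Id} - (\sigma+2)P$; since $\sigma \geq 0$ we have $-(\sigma+1)\,\mathrm{Id} \leq \mathrm{Id} - (\sigma+2)P \leq \mathrm{Id}$ as symmetric matrices (the lower bound comes from the eigenvalue $1-(\sigma+2) = -(\sigma+1)$ in the radial direction, the upper bound from the eigenvalue $1$ on the orthogonal complement). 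Then, using that for non-negative symmetric $\mathcal{A}$ with $\mathcal{A} \leq \mathrm{Id}$ and any symmetric $S$ with $mI \leq S \leq MI$ one has $m\,\mathrm{tr}(\mathcal{A}) \leq \mathcal{A}^{\alpha\beta}S_{\alpha\beta} = \mathrm{tr}(\mathcal{A}S) \leq M\,\mathrm{tr}(\mathcal{A})$, and $0 \leq \mathrm{tr}(\mathcal{A}) \leq \dim\Lambda^1 \cdot \|\mathcal{A}\| \leq$ (a dimensional constant) — actually one only needs $\mathrm{tr}(\mathcal{A}) \le 2$, which follows because $\sum_\alpha \langle e_\alpha, \mathcal{A} e_\alpha\rangle = \sum_\alpha |e_\alpha \wedge \star F_A|^2/(1+|F_A|^2) = 2|F_A|^2/(1+|F_A|^2) \le 2$. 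Hence $-(\sigma+1)\cdot 2 \leq \mathcal{A}^{\alpha\beta}(\mathrm{Id}-(\sigma+2)P)_{\alpha\beta} \leq 1\cdot 2 = 2$. Multiplying by $(p-2)\sigma \geq 0$ gives $-2(p-2)\sigma(\sigma+1) \leq T \leq 2(p-2)\sigma$, and substituting back yields the lemma.

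\textbf{Main obstacle.} The only subtle point is getting the sharp trace bound $\mathrm{tr}(\mathcal{A}) \le 2$ rather than a crude dimensional constant like $6 = \dim\Lambda^2(\R^4)$ — wait, here $\mathcal{A}$ acts on $\Lambda^1$ which is $4$-dimensional, but the contraction $\omega \mapsto \omega \wedge \star F_A$ lands in $\Lambda^3 \cong \Lambda^1$, so one must compute $\sum_\alpha |e_\alpha \wedge \star F_A|^2$ carefully; this equals $2|F_A|^2$ by a direct computation (each coordinate $2$-form component of $F_A$ contributes to exactly two of the wedge products), giving the clean constant $2$. If one is willing to accept a constant $C$ in place of $2$ and $C(\sigma+1)$ in place of $2(\sigma+1)$, the argument is completely routine; to get exactly the stated constants one does need this small linear-algebra identity. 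I would verify it by working in an orthonormal frame adapted to the decomposition of $F_A$ into its self-dual and anti-self-dual parts, where $\star F_A$ is block-diagonal and the computation is transparent. Everything else is the elementary matrix inequality $\mathrm{tr}(\mathcal{A}S) \in [\lambda_{\min}(S)\mathrm{tr}(\mathcal{A}), \lambda_{\max}(S)\mathrm{tr}(\mathcal{A})]$ for $\mathcal{A} \succeq 0$.
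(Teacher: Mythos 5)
Your proof is correct and follows essentially the same route as the paper, which cites the explicit formula for $\mathcal{L}'_{p,\varepsilon}\rho^{-\sigma}$ together with Lemma~\ref{Aestimate}, namely the bounds $0\leq\mathcal{A}^{\alpha\beta}x_\alpha x_\beta/|x|^2\leq\mathcal{A}^{\alpha\beta}\delta_{\alpha\beta}\leq 2$ established from the wedge identity $\sum_\alpha|e^\alpha\wedge\star F|^2 = 2|F|^2$ of Lemma~\ref{normofwedge}. Your spectral/trace packaging (diagonalising $\mathrm{Id}-(\sigma+2)\,xx^\top/|x|^2$ and using $\lambda_{\min}(S)\tr\mathcal{A}\leq\tr(\mathcal{A}S)\leq\lambda_{\max}(S)\tr\mathcal{A}$ for positive semi-definite $\mathcal{A}$) is an equivalent rewording of these scalar bounds, and your derivation of $\tr\mathcal{A}\leq 2$ is the same computation.
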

	
	\begin{proof}
		It is a consequence of the estimates of \cref{Aestimate} and the expression from the previous lemma.
	\end{proof}
	
	\begin{corollary} \label{supersolutionsLp''}
		Let $0\leq \varepsilon<1$, then there exists $p_G>0$ such that for $p\in[2,2+p_G]$ such that the operator $\mathcal{L}'_{p,\varepsilon}$ admits two positive supersolutions of the form $\rho^{-\delta_\pm(p,\varepsilon)}$ where $0\leq \delta_-(\varepsilon,p)<\delta_+(\varepsilon,p)$ are the two non-negative roots of the polynomial $X(2-X)-2(p-2)X(X+1)-\varepsilon$.
	\end{corollary}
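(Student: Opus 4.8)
The plan is to read the two supersolutions directly off the quadratic lower bound furnished by Lemma~\ref{supersolutionestimates}. First I would expand the polynomial occurring in the statement,
\[
P(X) := X(2-X) - 2(p-2)X(X+1) - \varepsilon = -\bigl(1+2(p-2)\bigr)X^2 + \bigl(2-2(p-2)\bigr)X - \varepsilon ,
\]
a quadratic in $X$ whose leading coefficient $-(1+2(p-2))$ is negative and whose linear coefficient $2-2(p-2)$ is positive as soon as $p<3$. Its discriminant is $\Delta(p,\varepsilon) = (2-2(p-2))^2 - 4(1+2(p-2))\varepsilon$, which at $p=2$ equals $4(1-\varepsilon)>0$ because $\varepsilon<1$; by continuity in $p$ there is $p_G>0$ such that $\Delta(p,\varepsilon)>0$ for all $p\in[2,2+p_G]$, so $P$ then has two distinct real roots. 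By Vieta's formulas their product equals $\varepsilon/(1+2(p-2))\ge 0$ and their sum equals $(2-2(p-2))/(1+2(p-2))>0$, so both roots are non-negative; I would call them $0\le \delta_-(\varepsilon,p) < \delta_+(\varepsilon,p)$ (with $\delta_-=0$ exactly when $\varepsilon=0$). Note these reduce to $1\pm\sqrt{1-\varepsilon}$ at $p=2$.

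The second step is to check the supersolution property. Applying Lemma~\ref{supersolutionestimates} with $\sigma = \delta_\pm \ge 0$ gives
\[
\rho^{\delta_\pm+2}\,\mathcal{L}'_{p,\varepsilon}\rho^{-\delta_\pm} \ \ge\ \delta_\pm(2-\delta_\pm) - \varepsilon - 2(p-2)\delta_\pm(\delta_\pm+1) \ =\ P(\delta_\pm) \ =\ 0 ,
\]
hence $\mathcal{L}'_{p,\varepsilon}\rho^{-\delta_\pm} \ge 0$ on $A_{r,R}$, while $\rho^{-\delta_\pm}>0$; so each $\rho^{-\delta_\pm}$ is a positive supersolution, which is exactly the assertion. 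Shrinking $p_G$ if necessary I would also ensure that the estimates of Lemma~\ref{supersolutionestimates} (and the underlying bounds on $\mathcal{A}$) hold uniformly on $[2,2+p_G]$.

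The argument is essentially algebraic and I do not expect a genuine analytic obstacle: the only delicate point is the choice of $p_G$. It must be small enough that $\Delta(p,\varepsilon)$ remains strictly positive — so that $\delta_-<\delta_+$ are genuinely distinct, which is what later permits applying the comparison principle of Proposition~\ref{comparisonprinciplegeneral} with each of $\rho^{-\delta_\pm}$ separately — and small enough that the two roots stay in the regime $\sigma\ge 0$ where Lemma~\ref{supersolutionestimates} is valid; both requirements follow by continuity from the explicit picture at $p=2$.
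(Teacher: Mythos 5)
Your proposal is correct and follows essentially the same route as the paper: you apply Lemma~\ref{supersolutionestimates} with $\sigma=\delta_\pm$ to read off $\rho^{\sigma+2}\mathcal{L}'_{p,\varepsilon}\rho^{-\sigma}\ge P(\sigma)=0$, exactly as the paper does. The only cosmetic difference is that you justify the existence and non-negativity of the two roots via the discriminant and Vieta's formulas, whereas the paper simply writes them out in closed form as $\delta_\pm(\varepsilon,p)=\frac{3-p}{1+2(p-2)}\left(1\pm\sqrt{1-\varepsilon\frac{1+2(p-2)}{(3-p)^2}}\right)$ and notes they are non-negative for $p-2$ small.
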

	
	\begin{proof} If $\delta$ is a root of the above mentioned polynomial then using \cref{supersolutionestimates}, we get \[\mathcal{L}'_{p,\varepsilon}\rho^{- \delta}  \geq \rho^{-\delta - 2} \left( - 2(p - 2) \delta(\delta +1) + \delta (2 -
		\delta) - \varepsilon \right) =0.\]
		When $\varepsilon <1$ and $p-2$ is small enough, the polynomial does indeed have two non-negative roots: they are explicitly given by the following formula \[ \delta_{\pm}(\varepsilon,p) = \frac{3 - p}{1 + 2 (p - 2)} \left( 1 \pm \sqrt{1 -
			\varepsilon \frac{1 + 2 (p - 2)}{(3 - p)^2}} \right). \]
	\end{proof}
	
	\begin{proposition}
		\label{comparisonprinciple}  Let $0\leq \varepsilon<1$, then there exists $p_G>0$ such that for $p\in[2,2+p_G]$ such that the operator $\mathcal{L}'_{p,\varepsilon}$ satisfies the following comparison principle: if $\phi \in \mathrm{W}^{1,2}(A_{r,R})$ satisfies $\phi_{|\partial A_{r,R}} \leq 0$ and $\mathcal{L}'_{p,\varepsilon} \phi \leq 0$ then $\phi \leq 0$ in $A_{r,R}$.
	\end{proposition}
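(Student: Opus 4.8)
The plan is to recognise $\mathcal{L}'_{p,\varepsilon}$ as a uniformly elliptic second-order operator in divergence form with $\mathrm{L}^\infty$ coefficients, and then to apply the abstract comparison principle of \cref{comparisonprinciplegeneral}, feeding it the explicit positive supersolutions produced in \cref{supersolutionsLp''}.

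First I would rewrite the operator in divergence form. Combining $\mathcal{L}_p' u = \mathcal{L}_p u - (p-2)\nabla\mathcal{A}\cdot\nabla u$ with $\mathcal{L}_p u = \diff^{\ast}\big((\mathrm{id} - (p-2)\mathcal{A})\diff u\big)$, and using that on $1$-forms in flat $\R^4$ one has $\diff^{\ast} = -\operatorname{div}$, one obtains an identity of the form $\mathcal{L}'_{p,\varepsilon} u = -\partial_\alpha(a^{\alpha\beta}\partial_\beta u) + b^\beta \partial_\beta u + c\,u$ on $A_{r,R}$, with $a^{\alpha\beta} = \delta^{\alpha\beta} - (p-2)\mathcal{A}^{\alpha\beta}$, $b^\beta = -(p-2)\,\partial_\alpha\mathcal{A}^{\alpha\beta}$ and $c = -\varepsilon|x|^{-2}$. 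I would then check the hypotheses of \cref{comparisonprinciplegeneral} with $\Omega = A_{r,R}$: the matrix $(a^{\alpha\beta})$ is symmetric (because $\mathcal{A}$ is) and, since $|\mathcal{A}|\le 1$ by \cref{remA}, it is uniformly elliptic with eigenvalues in $[1-(p-2),1]$ as soon as $p-2<1$; the coefficient $c$ is the only one with the ``unfavourable'' sign but lies in $\mathrm{L}^\infty(A_{r,R})$ since $|x|\ge r>0$ there; and $b$ lies in $\mathrm{L}^\infty$ because $|\nabla\mathcal{A}|\le C\,|\nabla F_A|/\sqrt{1+|F_A|^2}$ (again \cref{remA}) while $F_A$ and its covariant derivative are bounded on the annulus, $A$ being a smooth $p$-Yang-Mills connection there.

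Next I would invoke \cref{supersolutionsLp''}: for $p-2$ small it provides $w := \rho^{-\delta_+(\varepsilon,p)}>0$, which is $\mathcal{C}^2$ up to $\overline{A_{r,R}}$ since $\rho=|x|$ stays away from $0$, and which satisfies $\mathcal{L}'_{p,\varepsilon} w \ge 0$; that is, $w$ is a positive supersolution of $L := \mathcal{L}'_{p,\varepsilon}$ in the sense of \cref{comparisonprinciplegeneral}. That proposition then applies word for word: the conjugated operator $\tilde L\tilde f = w^{-1}L(w\tilde f)$ has non-negative zeroth-order coefficient $w^{-1}Lw$ and hence satisfies the classical weak maximum principle, so any subsolution of $L$ that is $\le 0$ on $\partial A_{r,R}$ is $\le 0$ in $A_{r,R}$. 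Since the hypothesis $\mathcal{L}'_{p,\varepsilon}\phi \le 0$ says precisely that $\phi\in \mathrm{W}^{1,2}(A_{r,R})$ is a subsolution of $L$, and $\phi|_{\partial A_{r,R}}\le 0$, the conclusion $\phi\le 0$ follows.

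The one point requiring care is the $\mathrm{L}^\infty$-boundedness, up to the boundary, of the first-order coefficient $b$ — equivalently a uniform bound on $|\nabla F_A|$ on $\overline{A_{r,R}}$. In the situations where this comparison principle is used, $A$ solves the equation on a slightly larger annulus, so this is automatic; otherwise one runs the argument on each sub-annulus $A_{r',R'}$ with $r<r'<R'<R$, where the coefficients are bounded and $w$ remains a $\mathcal{C}^2$ positive supersolution, and passes to the limit $r'\downarrow r$, $R'\uparrow R$. I expect this boundary regularity of the coefficients, rather than the comparison machinery itself, to be the only mildly technical ingredient.
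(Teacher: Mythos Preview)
Your proof is correct and follows exactly the same route as the paper, which simply invokes \cref{comparisonprinciplegeneral} together with the positive supersolutions $\rho^{-\delta_\pm}$ from \cref{supersolutionsLp''}. You are in fact more careful than the paper in checking that $b^\beta=-(p-2)\partial_\alpha\mathcal{A}^{\alpha\beta}$ lies in $\mathrm{L}^\infty(A_{r,R})$, a point the paper leaves implicit.
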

	\begin{proof}
		It is a consequence of \cref{comparisonprinciplegeneral} and \cref{supersolutionsLp''}.
	\end{proof}
	\subsection{Pointwise estimates for the curvature}

	\begin{theorem}\label{mainestimate}  There exists $\varepsilon_0,p_G, C> 0$ such that, for all $p\in[2,2+p_G]$ and all $p$-Yang-Mills connection $A$ on $\mathrm{B}_{2R}\backslash \mathrm{B}_{r/2}$, if $E = \| F_{A}
		\|_{\mathrm{L}^2(\mathrm{B}_{2R}\backslash \mathrm{B}_{r/2})}\leq \varepsilon_0$ then   \begin{equation}
			|F_{A}| \leq C E \omega_{p,R,r } \text{ in } D=\mathrm{B}_{R}\backslash \mathrm{B}_{r}
		\end{equation} \label{sharpestimateneck} where $\omega_{p,R,r }(x):= \frac{1}{| x |^2} \left( \left(
		\frac{| x |}{R} \right)^{ 2} + \left( \frac{r}{| x |} \right)^{ 2-\varepsilon_p}   \right)$ and $\varepsilon_p= O(p-2)$, with $\varepsilon_p>0$ for $p>2$.
	\end{theorem}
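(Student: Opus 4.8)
The strategy is a barrier argument on the annulus $D = \mathrm{B}_R \setminus \mathrm{B}_r$, using the sub/supersolution machinery just developed. First I would record what $\varepsilon$-regularity (Theorem~\ref{epsregbis}, applied on Whitney balls $\mathrm{B}(x,|x|/4)$ as in the proof of the $\mathrm{L}^{2,\infty}$ bound) gives us: for every $x$ in a slightly larger annulus $\mathrm{B}_{2R}\setminus\mathrm{B}_{r/2}$ one has $|F_A|(x) \leq C E / |x|^2$. In particular $|F_A|\leq C E / r^2$ on the inner sphere $\partial \mathrm{B}_r$ and $|F_A| \leq C E / R^2$ on the outer sphere $\partial \mathrm{B}_R$. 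Next I set $u = |F_A|^{\gamma}$ with $\gamma = \gamma(p) = \tfrac12 + O(p-2)$ the exponent from Proposition~\ref{propIEDP}, so that $\mathcal{L}_p' u \leq C u^{(\gamma+1)/\gamma} = C |F_A| \cdot u$. Since $\varepsilon$-regularity already forces $|F_A| \leq C\varepsilon_0/|x|^2$ pointwise on $D$, the zeroth-order term $C|F_A|$ is bounded by $C\varepsilon_0 |x|^{-2}$; absorbing it we get $\mathcal{L}'_{p,\varepsilon} u \leq 0$ in $D$ with $\varepsilon = C\varepsilon_0 < 1$, provided $\varepsilon_0$ is chosen small enough. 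This is exactly the hypothesis needed to invoke the comparison principle of Proposition~\ref{comparisonprinciple}.

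Now I build the barrier. By Corollary~\ref{supersolutionsLp''}, $\rho^{-\delta_\pm(p,\varepsilon)}$ are positive supersolutions of $\mathcal{L}'_{p,\varepsilon}$, with $\delta_-(p,\varepsilon) = \varepsilon/2 + O((p-2)+\varepsilon^2)$ small and $\delta_+(p,\varepsilon) = 2 - \varepsilon/2 + O(p-2)$ close to $2$. The candidate comparison function is
\[
\Phi(x) = K E^{\gamma}\left( R^{-\delta_+}\, |x|^{\,\delta_+ - 2\gamma} \cdot \text{(scaling correction)} \;+\; r^{\,?}\,|x|^{-\delta_-}\right),
\]
but it is cleaner to argue with the two pieces separately or, following the $p=2$ case in \cite[Section 3]{GL24}, to note that the function $|x|^{-2}\big((|x|/R)^{2\gamma'} + (r/|x|)^{(2-\varepsilon_p)\gamma'}\big)^{?}$ raised to the power needed matches $\omega_{p,R,r}^\gamma$. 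Concretely: $u = |F_A|^\gamma$ is a subsolution of $\mathcal{L}'_{p,\varepsilon}$, and I compare it against $\Psi := C_1 (E/R^2)^\gamma (|x|/R)^{\gamma(2-\delta_+)}\cdot\big(\tfrac{R}{|x|}\big)^{?} + C_2 (E/r^2)^\gamma (r/|x|)^{\gamma \delta_-}$ — the point being that $(2-\delta_+) = \varepsilon/2 + O(p-2)$ and $\delta_- = O((p-2)+\varepsilon)$, which after taking $\gamma$-th roots produces the exponents $2$ and $2-\varepsilon_p$ in $\omega_{p,R,r}$ with $\varepsilon_p = O(p-2)$ (the $\varepsilon$-contribution to the exponent can be killed because we are free to shrink $\varepsilon_0$, or absorbed into $O(p-2)$ if we track it). One checks that $\Psi - u \geq 0$ on $\partial D$ using the boundary bounds from $\varepsilon$-regularity (choosing $C_1, C_2$ large but universal), and $\mathcal{L}'_{p,\varepsilon}(\Psi - u) \leq 0$ in $D$ since $\Psi$ is a combination of supersolutions and $u$ a subsolution. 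Proposition~\ref{comparisonprinciple} then yields $u \leq \Psi$ in $D$, i.e. $|F_A| \leq C E\, \omega_{p,R,r}$ after extracting the $\gamma$-th root, which is the claim.

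The main obstacle, and the reason the uniformity in $p$ matters, is the bookkeeping of the exponents as $p \to 2^+$: I must verify that the ``good'' exponent $2 - \delta_+(p,\varepsilon)$ stays strictly positive (so the outer term really decays like a positive power of $|x|/R$) and that $\delta_-(p,\varepsilon)$ stays strictly below $2$ with the deficit $\varepsilon_p := \gamma^{-1}\!\cdot$(something) $= O(p-2) > 0$, all with constants independent of $p$ in the stated range $[2, 2+p_G]$ and independent of $r, R, E$. The delicate interplay is that $\varepsilon$ in $\mathcal{L}'_{p,\varepsilon}$ comes from the pointwise bound $|F_A|\leq C\varepsilon_0|x|^{-2}$, so one needs $C\varepsilon_0 < 1$ comfortably, and then the roots $\delta_\pm$ depend continuously on both $\varepsilon_0$ and $p$ via the explicit formula in Corollary~\ref{supersolutionsLp''}; a short computation with that formula shows $\delta_+ = 2 + O(p-2) - \varepsilon_0/2 + \cdots$ and $\delta_- = \varepsilon_0/2 + O(p-2)$, and choosing $\varepsilon_0$ first (small, universal) then $p_G$ (small, universal) makes everything work. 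A secondary technical point is justifying that $u = |F_A|^\gamma \in \mathrm{W}^{1,2}(D)$ so that the weak comparison principle of Proposition~\ref{comparisonprinciple} genuinely applies — this follows from the smoothness of $p$-Yang-Mills connections in good gauge (the $\varepsilon$-regularity theorem) together with the pointwise curvature bound, since $\gamma > 1/2$ keeps $|F_A|^{\gamma-1}|\nabla F_A|$ square-integrable near any point.
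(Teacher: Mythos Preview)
Your barrier strategy is the right one, but the single comparison step you describe does not deliver the exponents claimed in the theorem, and the sentence ``the $\varepsilon$-contribution to the exponent can be killed because we are free to shrink $\varepsilon_0$, or absorbed into $O(p-2)$'' is where the argument breaks. After one application of the comparison principle for $\mathcal{L}'_{p,\varepsilon}$ with $\varepsilon = C_0 E$, one obtains
\[
u \leq R^{-2\gamma}(\rho/R)^{-\delta_-(\varepsilon,p)} + r^{-2\gamma}(\rho/r)^{-\delta_+(\varepsilon,p)},
\]
and taking the $1/\gamma$-th root gives an outer term $\sim (|x|/R)^{2-\delta_-/\gamma}$ and an inner defect $4-\delta_+/\gamma$. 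The explicit formula in Corollary~\ref{supersolutionsLp''} shows $\delta_-(\varepsilon,p)/\gamma \approx 2C_0 E$ and $4-\delta_+(\varepsilon,p)/\gamma \approx C_0 E + O(p-2)$: both exponents carry an irreducible dependence on the \emph{energy} $E$, not just on $p$. Shrinking $\varepsilon_0$ only bounds $E$ from above, it does not make these contributions vanish; and they are genuinely $O(E)$, not $O(p-2)$, so there is nothing to absorb. In particular at $p=2$ your outer exponent is $2-O(E)<2$, whereas the theorem asserts it is exactly $2$.

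The paper repairs this with a second iteration, and this is the missing idea. Having the first-step bound for $u$ in hand, one returns to the inequality $\mathcal{L}'_p u \leq C u^{1+1/\gamma}$ of Proposition~\ref{propIEDP} (now with \emph{no} zeroth-order term, i.e.\ $\varepsilon=0$), inserts the step-one estimate into the right-hand side to make it an explicit function of $\rho$, and then builds a new barrier from the supersolutions $\rho^{-\delta_\pm(0,p)}$ of $\mathcal{L}'_{p,0}$. Since $\delta_-(0,p)=0$ and $\delta:=\delta_+(0,p)=2(3-p)/(1+2(p-2))$ depend only on $p$, the resulting comparison function is $H = R^{-2\gamma}+r^{\delta-2\gamma}\rho^{-\delta}$, giving outer exponent exactly $2$ and $\varepsilon_p = 4-\delta/\gamma$ a function of $p$ alone. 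The construction of the actual supersolution (one has to add a particular solution $-C\psi$ to $H$ to dominate the explicit right-hand side, and check $H\geq\psi$ so that the barrier stays above $u$ on $\partial A_{r,R}$) involves some careful bookkeeping with the intermediate exponents $\sigma_\pm = (1+1/\gamma)\delta_\pm - 2$, but the conceptual point is that this second pass through the comparison principle is what decouples the final exponents from $E$.
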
 
	
	\begin{proof}
		Thanks to the $\varepsilon$-regularity estimates from \cref{epsregbis} we have  $| F | \leq C_0 E / | x
		|^2$ for all $x\in A_{r,R}$. Let $u := (| F | / C_0 E)^{\gamma}$ with $\gamma = \gamma (p)$ from \cref{propIEDP}, then by $\varepsilon$-regularity we have
		\begin{equation}
			\label{aprioriestimateespreg} u
			\leq | x |^{- 2 \gamma}.
		\end{equation} With these notations, \eqref{IEDPpYM}  becomes
		\[ \mathcal{L}'_{p,\varepsilon} u \leq 0, \] where $\varepsilon  = C_0 E\leq C_0 \varepsilon_0$. Writing $\delta_\pm := \delta_\pm (\varepsilon,p)$ (defined in \cref{supersolutionsLp''}), $\rho^{-\delta_+}$ and $\rho^{-\delta_-}$ are both supersolutions of $\mathcal{L}'_{p,\varepsilon}$. Considering a suitable linear combination of these two, we deduce using the comparison principle of \cref{comparisonprinciple} the estimate 
		\[ u \leq R^{- 2 \gamma} (\rho / R)^{- \delta_-} + r^{- 2 \gamma} (\rho
		/ r)^{- \delta_+}, \] since this inequality holds on $\partial A_{r,R}$, as soon as $C_0\leq 1$.\\

		Now, we can use this estimate again in \eqref{IEDPpYM} to deduce 
		\begin{equation}
			\mathcal{L}'_p u \leq C u^{1 + \frac{1}{\gamma}} \leq C R^{- 2
				(\gamma + 1)} (\rho / R)^{-  \left( 1 + \frac{1}{\gamma} \right)\delta_-} +
			C r^{- 2 (\gamma + 1)} (\rho / r)^{-  \left( 1 + \frac{1}{\gamma}
				\right)\delta_+} \label{intermediateestimate}
		\end{equation} We are now going to use the comparison principle for $ \mathcal{L}'_p =  \mathcal{L}'_{p,0}$. From \eqref{supersolutionestimates}, we know that for $\sigma \geq 0$, \[  \mathcal{L}'_p (-\rho^{- \sigma}) \geq \rho^{-\sigma -2} \sigma\left( \sigma-2 -2(p-2) \right). \]
		Let's apply the previous inequality with $\sigma = \sigma_{\pm} :=
		\left( 1 + \frac{1}{\gamma} \right)\delta_{\pm} - 2$. Note that
		\begin{eqnarray*}
			\sigma_+ (\varepsilon,p) & \underset{(\varepsilon,p)\rightarrow (0,2)}{\rightarrow} & 3 \delta_+ (0, 2) - 2 = 4\\
			\sigma_- (\varepsilon,p) & \underset{(\varepsilon,p) \rightarrow (0,2)}{\rightarrow} & 3 \delta_- (0, 2) - 2 = - 2
		\end{eqnarray*}
		so taking $\varepsilon$ and $p - 2$ small enough, we can assume  \[  \mathcal{L}'_p (-C \rho^{- \sigma_\pm}) \geq \rho^{-\sigma_\pm -2},  \]  where $C>0$ is independent of $\varepsilon$ and $p$.Therefore, from \eqref{intermediateestimate}, the exists some $C > 0$ such
		that
		\[ \mathcal{L}'_p u \leq \mathcal{L}'_p  (- C \psi) \]
		where
		\[ \psi = R^{- 2 \gamma + \sigma_-} \rho^{- \sigma_-} + r^{- 2 \gamma +
			\sigma_+} \rho^{- \sigma_+} \geq 0 .\]
		Notice that, by \cref{supersolutionsLp''}, when $\nu$ is equal to $\delta_- (0, p) = 0$ or $\delta := \delta_+
		(0, p) = \frac{2 (3 - p)}{1 + 2 (p - 2)}$, we have  $\mathcal{L}'_p \rho^{- \nu}
		\geq 0$ so the following holds for all $C_1, C_2\geq 0$,
		\begin{equation}
			\mathcal{L}'_p u \leq \mathcal{L}'_p  (- C \psi + C_1 + C_2 \rho^{-
				\delta}) . \label{comparison+harmonic}
		\end{equation}
		Consider $H = R^{- 2 \gamma} + r^{\delta - 2 \gamma} \rho^{- \delta}$, then $H$ satisfies
		$H \geq | x |^{- 2 \gamma} \geq u$ on $\partial A_{r,R}$. Furthermore
		\[ H - \psi = R^{- 2 \gamma} \left( 1 - \left( \frac{R}{\rho}
		\right)^{\sigma_-} \right) + r^{- 2 \gamma} \left( \frac{r}{\rho}
		\right)^{\delta} \left( 1 - \left( \frac{\rho}{r} \right)^{\delta - \sigma_+}
		\right) . \]
		Since $\rho \leq R$ and $\sigma_- < 0$, \ the first term is non-negative.
		Now when $p \rightarrow 2$,
		\[ \delta - \sigma_+ \rightarrow 2 - 4 = - 2 < 0 \]
		and since $r \leq \rho$, for $p-2$ small enough, the second term is also
		non negative and $H \geq \psi$. Finally, taking suitable $C_1$ and $C_2$, we get
		in \eqref{comparison+harmonic}
		\[ \mathcal{L}'_p u \leq \mathcal{L}'_p  (- C \psi + (C + 1) H) \]
		and $- C \psi + (C + 1) H \geq H \geq u$ on $\partial A_{r,R}$. Using \cref{comparisonprinciple} as previously stated, the following holds in $D$:\[ u \leq - C \psi + (C + 1) H \leq (C+1) H,\] and therefore
		\[ | F | = C_0 E u^{1 / \gamma} \leq C E H^{1 / \gamma} \leq C E
		\left( \frac{1}{R^2} + \frac{r^{\delta/\gamma - 2}}{\rho^{\delta/\gamma}} \right). \] Since $\delta \leq 2$ and $\gamma \geq
		1 / 2,$ we have $\varepsilon_p:= 4-\delta/\gamma \geq 0$,
		\[ \varepsilon_p = O (p -
		2) \] and \[ | F | \leq C E
		\left( \frac{1}{R^2} + \frac{r^{2-\varepsilon_p}}{\rho^{4-\varepsilon_p}} \right) \text{ on } A_{r,R}. \]
	\end{proof}
	
	\begin{remark}
		It is natural to obtain different exponents for both terms of the weight since the $p$-Yang-Mills equation is not invariant under inversion anymore.
	\end{remark}
	
	\section{Contribution of the necks to the second variation}
	The goal of this section is to derive sharp point-wise estimate in the neck in order to prove that the  second variation when restricted to the neck is  is non-negative.\\
	
	First, as a direct consequence of theorem \ref{mainestimate}, we have the following estimate
	\begin{corollary}
		\label{CoroF}
		There exist $\varepsilon,C,p_G>0$, such that,  for all $p\in[2,2+p_G]$ and for all $R>r>0$ and for all $p$-Yang-Mills connection $A$ on $\mathrm{B}_{2R}\backslash \mathrm{B}_{r/2}$, if $E = \| F_{A}
		\|_{\mathrm{L}^2(\mathrm{B}_{2R}\backslash \mathrm{B}_{r/2})}\leq \varepsilon$ then for $x\in \mathrm{B}_{R}\backslash \mathrm{B}_{r}$, \begin{equation}
			|F_{A}| \leq C E \left(1+ \left(\frac{R}{r}\right)^{C(p-2)} \right) \omega_{R,r } 
		\end{equation} where $\omega_{R,r}:= \omega_{2,R,r }$.
	\end{corollary}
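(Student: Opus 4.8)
The plan is to read this off directly from \cref{mainestimate} by a one-line comparison of weights. \cref{mainestimate} already furnishes $|F_A| \le C E\,\omega_{p,R,r}$ on $D = \mathrm{B}_R \setminus \mathrm{B}_r$ once $E \le \varepsilon_0$, with
\[ \omega_{p,R,r}(x) = \frac{1}{|x|^2}\left( \left( \frac{|x|}{R} \right)^{2} + \left( \frac{r}{|x|} \right)^{2-\varepsilon_p} \right), \qquad \varepsilon_p = O(p-2),\ \varepsilon_p > 0 \text{ for } p>2, \]
and the domain hypothesis of \cref{mainestimate} is exactly the one of the corollary. So it suffices to bound $\omega_{p,R,r}$ by $\bigl(1 + (R/r)^{C(p-2)}\bigr)\,\omega_{R,r}$ on the annulus $\mathrm{B}_R\setminus\mathrm{B}_r$, where $\omega_{R,r} = \omega_{2,R,r}$ carries the exponent $2$ in place of $2-\varepsilon_p$.

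First I would use that for $x \in \mathrm{B}_R \setminus \mathrm{B}_r$ one has $r \le |x| \le R$, hence $r/|x| \le 1$ and $1 \le |x|/r \le R/r$. Then
\[ \left( \frac{r}{|x|} \right)^{2-\varepsilon_p} = \left( \frac{r}{|x|} \right)^{2} \left( \frac{|x|}{r} \right)^{\varepsilon_p} \le \left( \frac{r}{|x|} \right)^{2} \left( \frac{R}{r} \right)^{\varepsilon_p} \le \left( \frac{R}{r} \right)^{C(p-2)} \left( \frac{r}{|x|} \right)^{2}, \]
where in the last step I shrink $p_G$ if necessary so that $0 \le \varepsilon_p \le C(p-2)$ on $[2,2+p_G]$ — legitimate since $\varepsilon_p = O(p-2)$ — and use $R/r \ge 1$. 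Since the first summand $(|x|/R)^2$ is identical in both weights, this gives $\omega_{p,R,r}(x) \le \bigl(1 + (R/r)^{C(p-2)}\bigr)\,\omega_{R,r}(x)$ pointwise on the annulus, and composing this with the conclusion of \cref{mainestimate} (with $\varepsilon := \varepsilon_0$) yields the claim.

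There is essentially no obstacle here; the only care needed is the parameter bookkeeping — choosing $p_G$ small enough that simultaneously $\varepsilon_p > 0$ (so that the weight genuinely improves when $p>2$), $\varepsilon_p \le C(p-2)$ throughout $[2,2+p_G]$, and that the smallness threshold $\varepsilon$ demanded by the corollary can be taken equal to the one required by \cref{mainestimate}.
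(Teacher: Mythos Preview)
Your proof is correct and is exactly the argument the paper has in mind: the paper simply states that the corollary is a ``direct consequence of theorem~\ref{mainestimate}'', and the pointwise comparison $\omega_{p,R,r}\le\bigl(1+(R/r)^{C(p-2)}\bigr)\omega_{R,r}$ you carry out (using $r\le|x|\le R$ and $\varepsilon_p=O(p-2)$) is the only thing that needs to be checked.
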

	From the $\varepsilon$-regularity estimate and \cref{sharpestimateneck}, we deduce the following result:
	\begin{corollary} \label{neckC0estimates} There exist $\varepsilon,C>0$, such that, for all $R>r>0$ and for all $p$-Yang-Mills connection $A$ in $\mathrm{B}_{2R}\backslash \mathrm{B}_{r/2}$, if $E = \| F_{A}
		\|_{\mathrm{L}^2(\mathrm{B}_{2R}\backslash \mathrm{B}_{r/2})}\leq \varepsilon$, there exists a gauge $g\in W^{1,(4,\infty)}(\mathrm{B}_{R}\setminus \mathrm{B}_{r})$ in which, for all $x\in \mathrm{B}_{R}\backslash \mathrm{B}_{r}$, \begin{equation}
			|F_A (x)| + |A^g(x)|^2  \leq C E \left(1+ \left(\frac{R}{r}\right)^{C(p-2)} \right) \omega_{R,r }(x).
		\end{equation}
	\end{corollary}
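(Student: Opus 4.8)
The bound on $|F_A|$ is exactly \cref{CoroF}, so the plan is to concentrate on $|A^g|^2$, obtained by transporting the pointwise curvature bound into a pointwise bound on the connection form in a well-chosen Coulomb gauge, along the lines of \cite[Section 3]{GL24} for $p=2$. The only genuinely new point is that all constants must be uniform in $p$; this will be automatic, because once $F_A$ is controlled pointwise by \cref{CoroF}, the rest of the argument rests on the Coulomb-gauge relations $\diff A^g=F_A-A^g\wedge A^g$, $\diff^* A^g=0$, which do not see $p$.

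For each $t\in[r,R]$ I would work on the annulus $\Omega_t:=\mathrm{B}_{3t/2}\setminus\overline{\mathrm{B}_{t/2}}\subset\mathrm{B}_{2R}\setminus\mathrm{B}_{r/2}$ (the inclusion holds because $r\le t\le R$). As $\|F_A\|_{\mathrm{L}^2(\Omega_t)}\le E\le\varepsilon$, choosing $\varepsilon$ below the Uhlenbeck threshold yields, by Uhlenbeck's small-energy gauge theorem on a fixed-ratio annulus (\cite{UhlenbeckKarenK1982CwLb}, or the annular case of \cref{recollementjauge}), a Coulomb gauge $g_t$ with $\|A^{g_t}\|_{\mathrm{W}^{1,2}(\Omega_t)}$ controlled by $\|F_A\|_{\mathrm{L}^2(\Omega_t)}$ in the scale-invariant norm. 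Since $F_A$ is bounded by $\varepsilon$-regularity, the standard elliptic bootstrap for Coulomb gauges, combined with this $\mathrm{W}^{1,2}$-bound and a scaling argument on $\Omega_t$, gives
\[ \|A^{g_t}\|_{\mathrm{L}^\infty(\mathrm{B}_{5t/4}\setminus\mathrm{B}_{3t/4})}\le C\,t\,\|F_A\|_{\mathrm{L}^\infty(\Omega_t)}, \]
with $C$ independent of $t$ and $p$. Now insert \cref{CoroF}: with $\omega_{R,r}(t):=R^{-2}+r^2t^{-4}$ one has $\sup_{\Omega_t}\omega_{R,r}\le C\,\omega_{R,r}(t)$, whence $\|A^{g_t}\|^2_{\mathrm{L}^\infty(\mathrm{B}_{5t/4}\setminus\mathrm{B}_{3t/4})}\le C\,E^2\,(1+(R/r)^{C(p-2)})^2\,t^2\,\omega_{R,r}(t)^2$. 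Since $r\le t\le R$, $t^2\omega_{R,r}(t)=t^2R^{-2}+r^2t^{-2}\le 2$, so $t^2\omega_{R,r}(t)^2\le 2\,\omega_{R,r}(t)$; together with $E^2\le\varepsilon E\le E$ and $(1+s)^2\le 2(1+s^2)$, and enlarging the exponent in $(R/r)^{C(p-2)}$, this gives $\|A^{g_t}\|^2_{\mathrm{L}^\infty(\mathrm{B}_{5t/4}\setminus\mathrm{B}_{3t/4})}\le C\,E\,(1+(R/r)^{C(p-2)})\,\omega_{R,r}(t)$. The annuli $\mathrm{B}_{5t/4}\setminus\mathrm{B}_{3t/4}$, $t\in[r,R]$, cover $\mathrm{B}_R\setminus\mathrm{B}_r$ and each contains the sphere of radius $t$, so this is already the asserted pointwise estimate at every point, in the local gauge $g_{|x|}$.

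The remaining, and only delicate, task is to glue the gauges $g_t$ over a geometric progression of scales $t\in[r,R]$ (chosen so that the $\Omega_t$ overlap) into a single $g\in\mathrm{W}^{1,(4,\infty)}(\mathrm{B}_R\setminus\mathrm{B}_r,G)$. On the overlap of two consecutive annuli the transition function is in $\mathrm{W}^{2,2}$, and, both local connection forms being Coulomb and $\mathrm{L}^\infty$-small there, its Maurer--Cartan form is controlled by their $\mathrm{L}^\infty$-norms; after correcting by a constant in $G$, it is moreover $\mathrm{W}^{2,2}$-close to the identity at that scale. Interpolating it against a cutoff supported in the overlap therefore changes the connection form only by quantities of the size already bounded in the previous step, and, the gluing being local in scale, no accumulation occurs over the $O(\log(R/r))$ pieces. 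The point requiring care is precisely this — that the corrections introduced by the gluing remain dominated by the weight $\omega_{R,r}$ — and it is handled exactly as in the weak-connection patching of \cite{PR14,GL24}.
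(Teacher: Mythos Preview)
Your argument is correct but takes a different route from the paper's. You build local Coulomb gauges $g_t$ on fixed-ratio annuli $\Omega_t$, obtain the pointwise bound on $|A^{g_t}|^2$ in each, and then patch the $g_t$ across a dyadic decomposition of $[r,R]$. The paper instead avoids the gluing step entirely: it invokes \cref{recollementjauge} (the $\mathrm{L}^{2,\infty}$-small-curvature annular Uhlenbeck gauge) once, producing a \emph{single} Coulomb gauge $g$ on all of $\mathrm{B}_{2R}\setminus \mathrm{B}_{r/2}$; the pointwise bound on $|A^g(x)|$ then follows by applying the $\varepsilon$-regularity of \cref{epsreg} on the ball $\mathrm{B}_{|x|/3}(x)$ and feeding in the local energy estimate coming from \cref{CoroF}. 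Your approach has the merit of using only the standard Uhlenbeck gauge on fixed-geometry domains, bypassing the more delicate \cref{recollementjauge}, but the price is the patching, which you correctly identify as the only delicate point and defer to \cite{PR14,GL24}; note that the glued gauge will no longer be Coulomb, though the statement does not require this. The paper's route is shorter and gives a Coulomb gauge for free, which is what is actually used downstream in the proof of \cref{pointcrucial1}.
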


	\begin{proof} 
		According to \cref{recollementjauge}, there exists $g \in \mathrm{W}^{1,(4,\infty)}(\mathrm{B}_{2R},G)$ which is a Coulomb gauge for $A$ on $\mathrm{B}_{2R}\setminus \mathrm{B}_{r/2}$. Since for all $x\in \mathrm{B}_{R}\backslash \mathrm{B}_{r}$, $\mathrm{B}_{\rho/2}(x) \subset \mathrm{B}_{2\rho} \backslash \mathrm{B}_{\rho} \subset  \mathrm{B}_{2R}\backslash \mathrm{B}_{r/2}$ where $\rho = 2|x|/3$, the $\varepsilon$-regularity estimates from \cref{epsreg}  give
		\[ |x| |A^g(x)| \leq C \frac{\rho}{2}  \|A^g  \|_{\mathrm{L}^\infty (\mathrm{B}_{\rho/4}(x))} \leq C  \| F_A \|_{\mathrm{L}^2 (\mathrm{B}_{\rho/2}(x))}\leq C \| F_{A} \|_{\mathrm{L}^2 (\mathrm{B}_{2\rho}\backslash\mathrm{B}_\rho)}.\] From corollary \ref{CoroF}, we get the following energy estimate  
		\begin{align*}
			\| F_{A} \|_{\mathrm{L}^2 (\mathrm{B}_{2\rho}\backslash\mathrm{B}_\rho)} &\leq C E |\mathrm{B}_{2\rho}\backslash\mathrm{B}_\rho|^{1/2} \left(1+ \left(\frac{R}{r}\right)^{C(p-2)} \right)  \sup_{y\in\mathrm{B}_{2\rho}\backslash\mathrm{B}_\rho} \omega_{R,r}(y)\\
			&\leq C E |x|^2\left(1+ \left(\frac{R}{r}\right)^{C(p-2)} \right) \omega_{R,r}(x) , 
		\end{align*}
		for any $x$ such that $\mathrm{B}_{\rho/2}(x) \subset \mathrm{B}_{2\rho} \backslash \mathrm{B}_{\rho} \subset  \mathrm{B}_{2R}\backslash \mathrm{B}_{r/2}$.
		Finally, since $\omega_{R,r}(x)\leq 2/|x|^2$, we have
		\begin{align*} |A^g(x)|^2& \leq \frac{C}{|x|^2} \| F_{A} \|_{\mathrm{L}^2 (\mathrm{B}_{2\rho}\backslash\mathrm{B}_\rho)}^2\\
			&\leq C E |x|^2\omega_{R,r}(x) \times \omega_{R,r}(x) \left(1+ \left(\frac{R}{r}\right)^{C(p-2)} \right)^2\\
			& \leq 2C E \omega_{R,r}(x)\left(1+ \left(\frac{R}{r}\right)^{2C(p-2)} \right) ,
		\end{align*}
		which achieves the proof.
	\end{proof}
	
	\noindent Furthermore, we have the following inequalities: \begin{proposition} \label{GaffneyPoincareNeck}
		There exists $C>0$ such that for all $R>r>0$, for all $a\in \mathrm{W}^{1,2}_0(\mathrm{B}_{R}\backslash \mathrm{B}_{r},\Lambda^1(\mathrm{B}_{R}\backslash \mathrm{B}_{r}) \otimes\mathfrak{g})$ we have ,  \begin{equation}
			\int_{\mathrm{B}_{R}\backslash \mathrm{B}_{r}} |a|^2 \omega_{R,r } \diff x  \leq C \int_{\mathrm{B}_{R}\backslash \mathrm{B}_{r}} |\diff a|^2 + |\diff^* a|^2 \diff x. 
		\end{equation}
	\end{proposition}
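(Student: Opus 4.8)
\emph{Proof proposal.} The plan is to decouple the estimate into a Gaffney-type control of $\nabla a$ and a Hardy inequality, after a trivial pointwise bound on the weight. The first observation is that, since $r\le|x|\le R$ on $A_{r,R}$, one has
\[
\omega_{R,r}(x)=\frac{1}{R^2}+\frac{r^2}{|x|^4}\le\frac{1}{|x|^2}+\frac{1}{|x|^2}=\frac{2}{|x|^2}\qquad\text{on }A_{r,R},
\]
so it suffices to prove $\int_{A_{r,R}}\frac{|a|^2}{|x|^2}\,\diff x\le C\int_{A_{r,R}}|\diff a|^2+|\diff^* a|^2\,\diff x$ with $C$ an absolute constant; this last inequality is scale invariant under $(r,R,x)\mapsto(\lambda r,\lambda R,\lambda x)$, which is what will force $C$ to be independent of $r$ and $R$.

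By density of $\mathcal C^\infty_c(A_{r,R})$ in $\mathrm W^{1,2}_0(A_{r,R})$ it is enough to treat $a\in\mathcal C^\infty_c(A_{r,R},\Lambda^1\otimes\mathfrak g)$. Extending $a$ by zero produces $\widetilde a\in\mathcal C^\infty_c(\R^4\setminus\{0\},\Lambda^1\otimes\mathfrak g)$ and leaves all three integrals unchanged, so I would work on all of $\R^4$. Fixing an orthonormal basis $(X_\alpha)$ of $\mathfrak g$ and writing $\widetilde a=\sum_\alpha\widetilde a^\alpha\otimes X_\alpha$, the operators $\diff$ and $\diff^*$ act componentwise, so $|\nabla\widetilde a|^2=\sum_\alpha|\nabla\widetilde a^\alpha|^2$, $|\diff\widetilde a|^2=\sum_\alpha|\diff\widetilde a^\alpha|^2$, $|\diff^*\widetilde a|^2=\sum_\alpha|\diff^*\widetilde a^\alpha|^2$, and the problem reduces to a single real $1$-form $\omega:=\widetilde a^\alpha$ on $\R^4$. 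For such $\omega\in\mathcal C^\infty_c(\R^4\setminus\{0\})$, integration by parts on flat $\R^4$ (the Bochner--Weitzenb\"ock identity, with vanishing Ricci curvature; equivalently the Gaffney lemma \cref{gaffney} applied to $\omega\in\mathrm W^{1,2}_0$) gives $\int_{\R^4}|\nabla\omega|^2=\int_{\R^4}|\diff\omega|^2+|\diff^*\omega|^2$, and the classical Hardy inequality in dimension $4$ applied to each scalar component of $\omega=\sum_i\omega_i\,\diff x^i$ gives $\int_{\R^4}\frac{|\omega|^2}{|x|^2}=\sum_i\int_{\R^4}\frac{\omega_i^2}{|x|^2}\le\sum_i\int_{\R^4}|\nabla\omega_i|^2=\int_{\R^4}|\nabla\omega|^2$. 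Summing over $\alpha$ and recombining with the weight bound of the first step yields $\int_{A_{r,R}}|a|^2\,\omega_{R,r}\,\diff x\le 2\int_{A_{r,R}}|\diff a|^2+|\diff^* a|^2\,\diff x$, i.e.\ the claim with $C=2$.

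There is no genuine obstacle here; the only points needing a word of justification — and hence the closest thing to a delicate step — are that the zero extension of an $\mathrm W^{1,2}_0(A_{r,R})$ form lies in $\mathrm W^{1,2}(\R^4)$ with support in the open annulus (immediate, since the zero extension of a $\mathcal C^\infty_c(A_{r,R})$ form is supported at positive distance from both $\{|x|=r\}$ and $\{|x|=R\}$, and the zero-extension map is $\mathrm W^{1,2}$-bounded, so the $\mathcal C^\infty_c$-estimates pass to the $\mathrm W^{1,2}_0$-limit), and that the Hardy inequality used — the case $n=4$ of $\int_{\R^n}\frac{v^2}{|x|^2}\le\frac{4}{(n-2)^2}\int_{\R^n}|\nabla v|^2$ for $v\in\mathcal C^\infty_c(\R^n)$, $n\ge 3$, with constant $1$ when $n=4$ — is scale invariant, which is exactly what keeps $C$ independent of $r$ and $R$. (On a general Riemannian manifold, as announced in the introduction, one would localize on small enough geodesic balls, where the flat Bochner identity becomes a Bochner inequality with a lower-order error absorbed once the radius is small, the remainder of the argument being unchanged.)
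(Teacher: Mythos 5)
Your proof is correct and follows exactly the route the paper intends: bound $\omega_{R,r}(x)\le 2|x|^{-2}$ on the annulus, extend $a\in\mathrm{W}^{1,2}_0(A_{r,R})$ by zero, then combine Hardy's inequality (\cref{Hardyi}) with the Gaffney inequality (\cref{gaffney}), the only cosmetic difference being that you replace the cited $L^2$ Gaffney bound by the exact integration-by-parts identity $\int_{\R^4}|\nabla\omega|^2=\int_{\R^4}|\diff\omega|^2+|\diff^*\omega|^2$ for compactly supported forms on flat space, which sharpens the constant to $C=2$ but is not otherwise needed.
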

	\begin{proof}
		It suffices to combine the inequalities from Gaffney inequality \cref{gaffney}, Hardy inequality \cref{Hardyi} and the fact that $\omega_{R,r}(x)\leq 2 |x|^{-2}$.
	\end{proof}
	We are now in position to prove the crucial estimate for the proof of our main theorem.
	
	\begin{theorem} \label{pointcrucial1} There exists $c_0>0$ and for every $B>0$, there exist $\varepsilon>0$, such that, for all $R>r>0$ and for all $p$-Yang-Mills connection $A$ in $\mathrm{B}_{2R}\backslash \mathrm{B}_{r/2}$, if $(p-2) \ln (R/r)\leq B$ and $E = \| F_{A}
		\|_{\mathrm{L}^2(\mathrm{B}_{2R}\backslash \mathrm{B}_{r/2})}\leq \varepsilon$ then for all $a\in \mathrm{W}^{1,2}_0(\mathrm{B}_{R}\backslash \mathrm{B}_{r},\Lambda^1(\mathrm{B}_{R}\backslash \mathrm{B}_{r})\otimes\mathfrak{g})$,we have 
		\begin{equation}\int_{\mathrm{B}_{R}\backslash \mathrm{B}_{r}}(1+|F_A|^2)^{p/2-1} \left( |\diff_A a|^2  + |\diff_A^* a|^2 + \langle F_A, [a, a]\rangle \right) \diff x \geq c_0  \int_{\mathrm{B}_{R}\backslash \mathrm{B}_{r}} |a|^2 \omega_{R,r } \diff x. \label{neckpositive}
		\end{equation}
	\end{theorem}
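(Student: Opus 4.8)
The plan is to reduce \eqref{neckpositive} to the $p = 2$ estimate established in \cite{GL24}, by showing that the weight $(1 + |F_A|^2)^{p/2 - 1}$ is essentially inert once the hypothesis $(p-2)\ln(R/r) \le B$ is used. First I would pass to a good Coulomb gauge on $\mathrm{B}_R \setminus \mathrm{B}_r$ through Corollary \ref{neckC0estimates}: since $(p-2)\ln(R/r) \le B$ makes the factor $(R/r)^{C(p-2)}$ at most $e^{CB}$, this yields a gauge $g$ and a constant $C_B$ depending only on $B$ such that, writing $A$ for $A^g$,
\[ |F_A(x)| + |A(x)|^2 \ \leq\ C_B\, E\, \omega_{R,r}(x) \qquad \text{on } \mathrm{B}_R\setminus\mathrm{B}_r . \]
Combining this with the $\varepsilon$-regularity bound $|F_A| \le C_0 E/|x|^2$ from Theorem \ref{epsregbis} and, once more, with $(p-2)\ln(R/r) \le B$, I would check that the weight itself stays bounded, $1 \le (1 + |F_A|^2)^{p/2 - 1} \le C_B$ on $\mathrm{B}_R\setminus\mathrm{B}_r$. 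This is the one genuinely new point compared with \cite{GL24}, and I expect it to be the main obstacle: near the inner radius one only controls $|F_A| \lesssim E/|x|^2 \lesssim E/r^2$, so $(1+|F_A|^2)^{p/2-1} \sim (E/r^2)^{p-2}$ could blow up as $p \to 2$ unless $(p-2)\ln(1/r)$ stays bounded, which is precisely what the constraint on $(p-2)\ln(R/r)$ secures.

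With the weight controlled, the rest follows the scheme of \cite{GL24}. Given $a \in \mathrm{W}^{1,2}_0(\mathrm{B}_R\setminus\mathrm{B}_r, \Lambda^1 \otimes \mathfrak{g})$, I would write $\diff_A a = \diff a + [A,a]$ and observe that $\diff_A^* a - \diff^* a$ is algebraic of size $O(|A|\,|a|)$, whence $|\diff_A a|^2 + |\diff_A^* a|^2 \ge \tfrac12(|\diff a|^2 + |\diff^* a|^2) - C|A|^2 |a|^2$, and bound $|\langle F_A, [a,a]\rangle| \le C|F_A|\,|a|^2$. Keeping the lower bound $(1+|F_A|^2)^{p/2-1} \ge 1$ on the positive terms $|\diff_A a|^2 + |\diff_A^* a|^2$ and the upper bound $(1+|F_A|^2)^{p/2-1} \le C_B$ on the sign-indefinite term $\langle F_A,[a,a]\rangle$, then integrating and inserting $|A|^2 + |F_A| \le C_B E\, \omega_{R,r}$, I obtain
\[ \int_{\mathrm{B}_R\setminus\mathrm{B}_r}(1+|F_A|^2)^{p/2-1}\Big(|\diff_A a|^2 + |\diff_A^* a|^2 + \langle F_A,[a,a]\rangle\Big)\,\diff x \ \geq\ \tfrac12 \int_{\mathrm{B}_R\setminus\mathrm{B}_r}\big(|\diff a|^2 + |\diff^* a|^2\big)\,\diff x \ -\ C\,C_B^2\, E \int_{\mathrm{B}_R\setminus\mathrm{B}_r} |a|^2\,\omega_{R,r}\,\diff x . \]

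To conclude I would apply the Gaffney--Poincar\'e inequality of Proposition \ref{GaffneyPoincareNeck}, which provides a universal $c_1 > 0$ with $\int_{\mathrm{B}_R\setminus\mathrm{B}_r} (|\diff a|^2 + |\diff^* a|^2) \ge c_1 \int_{\mathrm{B}_R\setminus\mathrm{B}_r} |a|^2 \omega_{R,r}$, and then fix $\varepsilon = \varepsilon(B)$ small enough that $C\,C_B^2\,\varepsilon \le c_1/4$. Since $E \le \varepsilon$, the previous display is then bounded below by $(\tfrac{c_1}{2} - C C_B^2 E)\int |a|^2\omega_{R,r} \ge \tfrac{c_1}{4}\int |a|^2\omega_{R,r}$, which is \eqref{neckpositive} with $c_0 = c_1/4$, a constant independent of $B$, $p$, $R$ and $r$ --- as demanded by the order of the quantifiers in the statement. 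A minor point to verify is that the gauge change, which a priori lies only in $\mathrm{W}^{1,(4,\infty)}$, preserves membership of $a$ in $\mathrm{W}^{1,2}_0$ and leaves the Gaffney and Hardy inequalities underlying Proposition \ref{GaffneyPoincareNeck} unaffected; this is handled exactly as in \cite{GL24}.
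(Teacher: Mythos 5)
Your proposal is correct and follows essentially the same route as the paper: both rely on passing to the Coulomb gauge of \cref{neckC0estimates}, expanding $\diff_A a$ and $\diff_A^* a$ in terms of $\diff a$, $\diff^* a$ and $[A, a]$, inserting the pointwise bounds $|F_A| + |A^g|^2 \lesssim E\,\omega_{R,r}$ (with the constant controlled via the hypothesis $(p-2)\ln(R/r)\le B$), and closing with the Gaffney--Poincar\'e inequality of \cref{GaffneyPoincareNeck}. The paper's proof is terser than yours and in particular does not write out the step in which the factor $(1+|F_A|^2)^{p/2-1}$ is dropped from the sign-indefinite term $\langle F_A,[a,a]\rangle$; your explicit observation that $1\le (1+|F_A|^2)^{p/2-1}\le C_B$ (using $(p-2)\ln(R/r)\le B$ together with the $\varepsilon$-regularity bound on $|F_A|$) makes that step transparent, and your split --- weight $\ge 1$ on the positive terms, weight $\le C_B$ on the indefinite term --- is exactly the right accounting. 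You also handle the quantifier order cleanly, obtaining $c_0$ from the Gaffney--Poincar\'e constant alone (independent of $B$) while letting $\varepsilon = \varepsilon(B)$ absorb the $C_B$ dependence; the paper's shorthand ``$\varepsilon \le 1/4C^2$, $c_0 = 1/4C$'' obscures this slightly, and your version is the faithful reading of the statement.
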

	
	\begin{proof} 
		Using the fact that, for any gauge $g$, $\diff_{A^g} a^g=\diff a^g +[A^g,a^g]$ and $\diff_{A^g}^* a^g=\diff^* a^g -\star[A^g,\star a^g]$, where $a^g:=g^{-1} a g$, we get :\begin{align*}
			\int_{\mathrm{B}_{R}\backslash \mathrm{B}_{r}}  |\diff a^g|^2 \diff x &\leq \int_{\mathrm{B}_{R}\backslash \mathrm{B}_{r}}  2\left(|\diff_{A^g} a^g|^2 + |[A^g,a^g]|^2 \right) \diff x \\
			& \leq \int_{\mathrm{B}_{R}\backslash \mathrm{B}_{r}}  2\left(|g^{-1}(\diff_{A} a)g|^2 + |[A^g,a^g]|^2 \right) \diff x \\
			&\leq  \int_{\mathrm{B}_{R}\backslash \mathrm{B}_{r}}  2\left(|\diff_{A} a|^2 + |[A^g,a^g]|^2 \right) \diff x 
		\end{align*} and similarly \[\int_{\mathrm{B}_{R}\backslash \mathrm{B}_{r}}  |\diff^* a^g|^2 \diff x \leq \int_{\mathrm{B}_{R}\backslash \mathrm{B}_{r}}  2\left(|\diff_{A}^* a|^2+ \left|\star[A^g,\star a^g]\right|^2 \right) \diff x. \] We apply this with the gauge from \cref{neckC0estimates} and combine it with \cref{GaffneyPoincareNeck} to get the following lower bound of the left-hand side of \eqref{neckpositive}: \begin{align*}
			&\int_{\mathrm{B}_{R}\backslash \mathrm{B}_{r}} (1+|F_A|^2)^{p/2-1} \left( |\diff_A a|^2  + |\diff_A^* a|^2 + \langle F_A, [a, a]\rangle \right) \diff x \\
			&\geq \int_{\mathrm{B}_{R}\backslash \mathrm{B}_{r}} \frac{1}{2}\left(|\diff a^g|^2+ |\diff^* a^g|^2\right) -C\left(|A^g|^2 +| F_{A} |\right)|a^g|^2\diff x \\
			&\geq \left( \frac{1}{2C} - CE\right) \int_{\mathrm{B}_{R}\backslash \mathrm{B}_{r}} |a^g|^2 \omega_{R,r} \diff x \\
			&\geq \left( \frac{1}{2C} - CE\right) \int_{\mathrm{B}_{R}\backslash \mathrm{B}_{r}} |a|^2 \omega_{R,r} \diff x.
		\end{align*} We complete the proof by taking $\displaystyle \varepsilon \leq \frac{1}{4C^2}$ and $\displaystyle c_0 = \frac{1}{4C}$.
	\end{proof}

	\section{Morse index semi-continuity}

	This section is dedicated to the study of the Morse index stability for $p$-Yang-Mills connections when $p\to 2$. To simplify notations and clarify the exposition, we consider the case where there is only one bubble. It is easy to deduce the general case by splitting the manifold into neck regions, bubbles and thick part and to treat each neck region independently as in the case of single bubble.  Hence we will only be considering the following special case of \eqref{bubbletree}: a sequence of $(p_k)$-Yang-Mills connections $(A_k)$ on a closed 4-manifold $M$, with $p_k\to 2$, which converges into $\mathfrak C_{G,\loc}^\infty(M\setminus \{q\})$ to $A_\infty \in \mathfrak{U}_G^2(M)$ a Yang-Mills connection and there exists points $q_k \rightarrow q $ and scales $\delta_k\rightarrow 0$ such that $\phi_k^* A_k$ converges into $\mathfrak C_{G,\loc}^\infty(\R^4)$ to $\widehat{A}_\infty \in \mathfrak{U}^2_G(M)$  where $\phi_k(y)= q_k+\delta_k y$ in local coordinates and $\widehat{A}_\infty=\pi^* \tilde A_\infty$ where $\tilde A_\infty \in \mathfrak{U}^2_G(S^4)$ is a Yang-Mills connections and $\pi$ is the stereographic projection.\\
	
	We first prove a an estimate between $p_k-2$ and the scale of concentration $\delta_k$
	\begin{lemma} 
		\label{Lemmap}
		The sequence $(p_k-2)\log(1/\delta_k)$ is bounded.
	\end{lemma}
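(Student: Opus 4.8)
The plan is to produce a lower bound on $\mathcal{YM}_{p_k}(A_k)$ of the form $c\,\delta_k^{\,4-2p_k}$ coming purely from the bubble region, and to confront it with the uniform energy bound $\mathcal{YM}_{p_k}(A_k)\le\Lambda$. Since $4-2p_k=-2(p_k-2)<0$, one has $\delta_k^{\,4-2p_k}=\delta_k^{-2(p_k-2)}$, which blows up unless $(p_k-2)\log(1/\delta_k)$ stays bounded; so such an inequality is exactly the assertion.

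First I would work in local coordinates centred at $q$ so that, for $k$ large, the rescaling $\phi_k(y)=q_k+\delta_k y$ is defined on a fixed Euclidean ball $\mathrm B_\rho$ with image inside the chart. Since the bubble $\widehat A_\infty$ is non-trivial, hence $\int_{\R^4}|F_{\widehat A_\infty}|^2>0$, I would fix $\rho$ once and for all large enough that $\int_{\mathrm B_\rho}|F_{\widehat A_\infty}|^2\,\diff y\ge\tfrac12\int_{\R^4}|F_{\widehat A_\infty}|^2\,\diff y=:2c_1>0$. Then I would carry out the scaling computation: for a connection $1$-form, $F_{\phi_k^*A_k}=\phi_k^*F_{A_k}$, so $|F_{A_k}(q_k+\delta_k y)|=\delta_k^{-2}|F_{\phi_k^*A_k}(y)|$ up to a metric-distortion factor $1+o(1)$, and the change of variables $x=q_k+\delta_k y$ yields
\[
\int_{\mathrm B_{\rho\delta_k}(q_k)}|F_{A_k}|^{p_k}\,\mathrm{vol}_h=(1+o(1))\,\delta_k^{\,4-2p_k}\int_{\mathrm B_\rho}|F_{\phi_k^*A_k}|^{p_k}\,\diff y.
\]
Because $\phi_k^*A_k\to\widehat A_\infty$ in $\mathfrak C^\infty_{G,\loc}(\R^4)$, the gauge-invariant function $|F_{\phi_k^*A_k}|$ converges uniformly on $\overline{\mathrm B_\rho}$ to $|F_{\widehat A_\infty}|$; together with $p_k\to2$ this gives $\int_{\mathrm B_\rho}|F_{\phi_k^*A_k}|^{p_k}\,\diff y\to\int_{\mathrm B_\rho}|F_{\widehat A_\infty}|^2\,\diff y\ge2c_1$, so this integral is $\ge c_1$ for $k$ large.

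Combining, for $k$ large,
\[
\Lambda\ \ge\ \mathcal{YM}_{p_k}(A_k)\ \ge\ \int_{\mathrm B_{\rho\delta_k}(q_k)}|F_{A_k}|^{p_k}\,\mathrm{vol}_h\ \ge\ c_1\,\delta_k^{\,4-2p_k}=c_1\,\delta_k^{-2(p_k-2)},
\]
and taking logarithms gives $2(p_k-2)\log(1/\delta_k)\le\log(\Lambda/c_1)$, which is the claim (for $k$ large the left side is $\ge0$ since $p_k\ge2$ and $\delta_k<1$; the finitely many remaining indices change nothing). I do not anticipate a genuine obstacle: the only points to watch are the passage to the limit under the moving exponent $p_k$, handled by uniform $C^0$ convergence on the fixed compact $\overline{\mathrm B_\rho}$ together with uniform continuity of $(t,s)\mapsto t^s$ on $[0,M]\times[1,3]$, and the fact that the Riemannian metric contributes only factors $1+o(1)$ since everything takes place in the shrinking ball $\mathrm B_{\rho\delta_k}(q_k)$. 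The purpose of this lemma is precisely to make $(p_k-2)\log(R/r)$ bounded along the neck regions, which is the hypothesis required in Theorem~\ref{pointcrucial1}.
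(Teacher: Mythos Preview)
Your proof is correct and rests on the same idea as the paper's: a fixed positive amount of curvature energy concentrates at scale $\delta_k$, and the $L^{p_k}$ energy carries a scaling factor $\delta_k^{4-2p_k}=\delta_k^{-2(p_k-2)}$, so the uniform $p_k$-energy bound forces $(p_k-2)\log(1/\delta_k)$ to stay bounded.

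The execution differs slightly. The paper uses the specific choice of $\delta_k$ as the scale at which $\int_{\mathrm B(q_k,\delta_k)}|F_{A_k}|^2=\varepsilon_0/2$, and then applies H\"older's inequality
\[
\frac{\varepsilon_0}{2}=\int_{\mathrm B(q_k,\delta_k)}|F_{A_k}|^2\le\Big(\int_{\mathrm B(q_k,\delta_k)}|F_{A_k}|^{p_k}\Big)^{2/p_k}|\mathrm B(q_k,\delta_k)|^{(p_k-2)/p_k},
\]
which immediately gives a bound of the form $C\ge \delta_k^{-c(p_k-2)}$ from the uniform $L^{p_k}$ bound on the curvature. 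You instead perform the scaling computation explicitly and invoke the $\mathfrak C^\infty_{G,\mathrm{loc}}$ convergence of $\phi_k^*A_k$ to $\widehat A_\infty$ to guarantee the positive lower bound $\int_{\mathrm B_\rho}|F_{\phi_k^*A_k}|^{p_k}\ge c_1$. The paper's route is a touch more elementary (no need to appeal to the limiting bubble or pass to the limit under the moving exponent), while yours makes the role of the non-trivial bubble more explicit; either way the conclusion is the same.
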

	\begin{proof} By construction\footnote{ This is classical to detect bubbles, in the bubble-neck decomposition see section 3.1 of \cite{HS} or proof of theorem VII.3 of \cite{rivière2015variations} for the classical case, by defining the bubble's scale as the first scale whose energy is equal to the $\varepsilon_0$ given by $\varepsilon$-regularity} of $\delta_k$ we have
		$$\int_{\mathrm{B}(q_k,\delta_k)} \vert F_k\vert^2 \, dx=\frac{\varepsilon_0}{2}.$$
		Moreover, thanks to Hölder, we have
		$$\int_{\mathrm{B}(q_k,\delta_k)} \vert F_k\vert^2 \, dx \leq \left( \int_{\mathrm{B}(q_k,\delta_k)} \vert F_k\vert^{p_k} \, dx\right)^\frac{2}{p_k} \vert \mathrm{B}(q_k,\delta_k) \vert^\frac{p_k-2}{p_k}.$$
		Then using the fact that $\Vert F_k\Vert_{\mathrm{L}^{p_k}}$ is bounded, we can conclude.
	\end{proof}
	
	Recall that for all $k\in \N$ and $a\in \mathrm{W}^{1,2}(M,T^*M\otimes \mathfrak{g})$, \begin{equation}
		Q_{p_k,A_k} (a)= \int_{M}(1+|F_{A_k}|_h^2)^{\frac{p_k}{2}-1} \left( |\diff_{A_k} a|_h^2 + \langle F_{A_k}, [a, a] \rangle_h + (p_k-2) \frac{\langle F_{A_k}, \diff_{A_k} a \rangle_h^2}{1+|F_{A_k}|_h^2} \right)\mathrm{vol}_h.
	\end{equation} The Morse index of $\mathcal{YM}_{p_k}$ at $A_k$ is defined by: \[ \mathrm{ind}_{\mathcal{YM}_{p_k}}(A_k):=  \ind Q_{p_k,A_k} =  \sup \{ \dim W \mid Q_{\left. p_k, A_k\right| W}  < 0 \}. \]  A first stability result is the lower semi-continuity one:
	\begin{theorem} \label{lowerstability}
		For $k$ large enough, \[\mathrm{ind}_{\mathcal{YM}_{p_k}}(A_k) \geq \mathrm{ind}_\mathcal{YM}(A_{\infty})+ \mathrm{ind}_\mathcal{YM}(\widehat{A}_{\infty}).\]
	\end{theorem}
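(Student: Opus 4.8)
The plan is to produce, for $k$ large, an explicit subspace of $\mathrm{W}^{1,2}(M,T^*M\otimes\mathfrak{g})$ of dimension $\mathrm{ind}_{\mathcal{YM}}(A_\infty)+\mathrm{ind}_{\mathcal{YM}}(\widehat A_\infty)$ on which $Q_{p_k,A_k}$ is negative definite, obtained by transplanting the negative directions of the limit $A_\infty$ and of the bubble $\widehat A_\infty$ while keeping their supports disjoint. First I would localise those negative directions. Since $\mathrm{ind}_{\mathcal{YM}}(A_\infty)$ is finite by Proposition-Definition \ref{finitudeindice}, $C^\infty_c(M\setminus\overline{\mathrm{B}(q,2\rho)})$ is $\mathrm{W}^{1,2}$-dense and $Q_{\mathcal{YM},A_\infty}$ is $\mathrm{W}^{1,2}$-continuous, one can fix a small $\rho>0$ and choose $W\subset C^\infty_c(M\setminus\overline{\mathrm{B}(q,2\rho)},T^*M\otimes\mathfrak{g})$ with $\dim W=\mathrm{ind}_{\mathcal{YM}}(A_\infty)$ and $Q_{\mathcal{YM},A_\infty}<0$ on $W\setminus\{0\}$. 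Because the Yang--Mills second variation is conformally invariant in dimension four and $\widehat A_\infty=\pi^*\tilde A_\infty$ with $\tilde A_\infty$ smooth on $\Sph^4$, the negative directions of $Q_{\mathcal{YM},\widehat A_\infty}$ on $\R^4$ correspond to those of $Q_{\mathcal{YM},\tilde A_\infty}$ on $\Sph^4$ and decay like $|x|^{-2}$ at infinity; cutting them off one gets $\widehat W\subset C^\infty_c(\mathrm{B}_\Lambda,\Lambda^1\R^4\otimes\mathfrak{g})$, for some $\Lambda$, with $\dim\widehat W=\mathrm{ind}_{\mathcal{YM}}(\widehat A_\infty)$ and $Q_{\mathcal{YM},\widehat A_\infty}<0$ on $\widehat W\setminus\{0\}$. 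Both spaces being finite dimensional, there is $\mu>0$ with $Q_{\mathcal{YM},A_\infty}\le-\mu$ and $Q_{\mathcal{YM},\widehat A_\infty}\le-\mu$ on the respective unit spheres.

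Next, for $k$ large enough that $q_k\in\mathrm{B}(q,\rho)$ and $\Lambda\delta_k<\rho$, the sets $M\setminus\overline{\mathrm{B}(q,2\rho)}$ and $\mathrm{B}(q_k,\Lambda\delta_k)$ are disjoint. Transplanting $\widehat W$ by $\widehat a\mapsto(\phi_k^{-1})^*\widehat a$ (with $\phi_k(y)=q_k+\delta_k y$, and in the local good gauges provided by the $\mathfrak C^\infty_{G,\mathrm{loc}}$-convergence, using that $Q_{p_k,A_k}$ is gauge invariant) produces an $\mathrm{ind}_{\mathcal{YM}}(\widehat A_\infty)$-dimensional space supported in $\mathrm{B}(q_k,\Lambda\delta_k)$, and I would set $V_k:=W\oplus(\phi_k^{-1})^*\widehat W$, of dimension $\mathrm{ind}_{\mathcal{YM}}(A_\infty)+\mathrm{ind}_{\mathcal{YM}}(\widehat A_\infty)$, on which disjointness of supports makes $Q_{p_k,A_k}$ split as $Q_{p_k,A_k}(a)+Q_{p_k,A_k}((\phi_k^{-1})^*\widehat a)$. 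On $\mathrm{supp}\,W$ one has $A_k\to A_\infty$ smoothly and $p_k\to2$, hence $(1+|F_{A_k}|^2)^{p_k/2-1}\to1$, $\diff_{A_k}\to\diff_{A_\infty}$ and $F_{A_k}\to F_{A_\infty}$ uniformly, while the last term of \eqref{variationseconde} (weighted by the prefactor) is $O(p_k-2)$ uniformly; by compactness of the unit sphere of $W$ this yields $Q_{p_k,A_k}\le-\mu/2<0$ on $W\setminus\{0\}$ for $k$ large.

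For the bubble part I would rescale by $\phi_k$. Using that each of the three summands of the second variation is, up to metric factors tending to $1$ (since $\phi_k^*h\to$ Euclidean locally), scale invariant in dimension four, and writing $\tilde A_k=\phi_k^*A_k$, one gets
\[
Q_{p_k,A_k}\big((\phi_k^{-1})^*\widehat a\big)=\int_{\mathrm{B}_\Lambda}\rho_k\Big(|\diff_{\tilde A_k}\widehat a|^2+\langle F_{\tilde A_k},[\widehat a,\widehat a]\rangle+(p_k-2)\frac{\langle F_{\tilde A_k},\diff_{\tilde A_k}\widehat a\rangle^2}{\delta_k^4+|F_{\tilde A_k}|^2}\Big)\,\diff y,
\]
where $\rho_k=\delta_k^{-2(p_k-2)}(\delta_k^4+|F_{\tilde A_k}|^2)^{(p_k-2)/2}$. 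Since $\tilde A_k\to\widehat A_\infty$ in $C^\infty(\mathrm{B}_\Lambda)$, the bracket converges uniformly to $|\diff_{\widehat A_\infty}\widehat a|^2+\langle F_{\widehat A_\infty},[\widehat a,\widehat a]\rangle$ (the $(p_k-2)$-term being $O(p_k-2)$ uniformly, as the fraction is $\le|\diff_{\tilde A_k}\widehat a|^2$). For $\rho_k$: by Lemma \ref{Lemmap} the factor $\delta_k^{-2(p_k-2)}\ge1$ is bounded and converges, along a subsequence, to some $c\ge1$; and $\delta_k^4+|F_{\tilde A_k}|^2\to|F_{\widehat A_\infty}|^2$ uniformly, so $(\delta_k^4+|F_{\tilde A_k}|^2)^{(p_k-2)/2}\to1$ at every point with $F_{\widehat A_\infty}\ne0$, i.e. almost everywhere since the zero set of $F_{\widehat A_\infty}$ is Lebesgue-null, and it stays bounded between two positive constants because $\delta_k^4+|F_{\tilde A_k}|^2\ge\delta_k^4$ forces the exponent $\tfrac{p_k-2}{2}\log(\delta_k^4+|F_{\tilde A_k}|^2)$ to remain bounded, again by Lemma \ref{Lemmap}. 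Dominated convergence then gives $Q_{p_k,A_k}((\phi_k^{-1})^*\widehat a)\to c\,Q_{\mathcal{YM},\widehat A_\infty}(\widehat a)$, uniformly on the unit sphere of $\widehat W$; since $c>0$ this yields $Q_{p_k,A_k}\le-\mu/2<0$ on $(\phi_k^{-1})^*\widehat W\setminus\{0\}$ for $k$ large. Combined with the previous paragraph and disjointness of supports, $Q_{p_k,A_k}$ is negative definite on $V_k$, whence $\mathrm{ind}_{\mathcal{YM}_{p_k}}(A_k)\ge\dim V_k=\mathrm{ind}_{\mathcal{YM}}(A_\infty)+\mathrm{ind}_{\mathcal{YM}}(\widehat A_\infty)$ for $k$ large (the subsequence extraction being harmless, the conclusion being a lower bound to be shown for all large $k$).

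The step I expect to be the main obstacle is the control of the conformal weight $(1+|F_{A_k}|^2)^{p_k/2-1}$ on the bubble scale: there $|F_{A_k}|$ behaves like $\delta_k^{-2}|F_{\widehat A_\infty}((\cdot-q_k)/\delta_k)|$ and blows up, so this weight does not converge to $1$; it is tamed only through the logarithmic bound $(p_k-2)\log(1/\delta_k)=O(1)$ of Lemma \ref{Lemmap}, and the $0^0$-type behaviour of $(\delta_k^4+|F_{\tilde A_k}|^2)^{(p_k-2)/2}$ near the zeros of the bubble curvature has to be absorbed by dominated convergence rather than by uniform convergence. A secondary technical point is the localisation of the negative directions away from the concentration point and from infinity, which rests on $\mathrm{W}^{1,2}$-density of compactly supported test forms and the conformal invariance of the Yang--Mills second variation. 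Unlike the upper semi-continuity of the index plus nullity, this lower bound does not invoke the neck positivity estimate of Theorem \ref{pointcrucial1}: here the necks merely serve to separate the supports of the transplanted test forms.
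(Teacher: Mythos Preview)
Your strategy is the paper's: localise negative directions of $A_\infty$ away from $q$ and of $\widehat A_\infty$ in a large ball, transplant the latter by $(\phi_k^{-1})^*$ so that supports are disjoint, and pass to the limit in $Q_{p_k,A_k}$ on each piece separately. The paper packages the convergence step into the approximation Lemma~\ref{approximationlemma} (imported from \cite{GL24}) together with a Gram-matrix argument, whereas you work directly with fixed compactly supported test forms and compactness of the unit sphere of the finite-dimensional negative spaces. Your rescaling computation and the analysis of the weight $\rho_k=\delta_k^{-2(p_k-2)}(\delta_k^4+|F_{\tilde A_k}|^2)^{(p_k-2)/2}$ via Lemma~\ref{Lemmap} is in fact more explicit than the paper's brief claim that $Q_{p_k,A_k}((\phi_k)_*\widehat a_{\eta,k})-Q_{p_k,\phi_k^*A_k}(\widehat a_{\eta,k})\to 0$; both arguments hinge on the same logarithmic bound.

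One point to tighten: you assert that $\{F_{\widehat A_\infty}=0\}$ is Lebesgue-null. This is true but needs a word of justification: the bubble is a nontrivial Yang--Mills connection on $\Sph^4$, hence real-analytic in Coulomb gauge (the Yang--Mills system being elliptic with analytic coefficients on the round sphere), so $|F_{\widehat A_\infty}|^2$ is real-analytic and its zero set is null unless it vanishes identically. Without this, your dominated-convergence step for $\rho_k$ does not close, because at points where $F_{\widehat A_\infty}=0$ the factor $(\delta_k^4+|F_{\tilde A_k}|^2)^{(p_k-2)/2}$ is bounded (again by Lemma~\ref{Lemmap}) but need not converge to~$1$, and the limiting integrand $I_\infty(\widehat a)=|\diff_{\widehat A_\infty}\widehat a|^2$ is non-negative there, so a naive splitting into $\{F=0\}$ and $\{F\neq 0\}$ does not yield the required upper bound.
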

	
	The proof is almost the same as for $p_k=2$ from \cite[theorem 4.1]{GL24} and rely on the following lemma. However, special care is needed because nothing is conformally invariant anymore.

	\begin{lemma} Let $a \in \mathrm{W}^{1, 2} (M, T^{\ast} M \otimes
		\mathfrak{g})$ and assume there exists $\eta > 0$  such that $\supp a
		\subset M\backslash \mathrm{B}_{\eta} (p)$. Then, up to a subsequence, there exists a sequence $(a_k) \subset \mathrm{W}^{1, 2} (M, T^{\ast} M \otimes \mathfrak{g})$, such that satisfying the following properties :
		\begin{itemize}
			\item $\supp a_k \subset M\backslash \mathrm{B}_{\eta/2} (p)$,
			
			\item $a_k \overset{\mathrm{L}^{4, 2}}{\rightarrow} a$,
			
			\item $Q_{p_k,A_k} (a_k) \rightarrow Q_{2,A_{\infty}} (a)$.
		\end{itemize} \label{approximationlemma}
	\end{lemma}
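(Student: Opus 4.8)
The idea is to take the natural candidate $a_k = a$ (which is legitimate since the connections $A_k$ live over the \emph{same} manifold $M$, and $a$ is a fixed element of $\mathrm{W}^{1,2}$ whose support avoids a fixed ball around the bubble point), and to check that the three claimed properties follow from the strong convergence $A_k \to A_\infty$ in $\mathfrak{C}^\infty_{G,\loc}(M\setminus\{q\})$ together with the control on $p_k - 2$. With $a_k = a$ the first two bullets are immediate: the support condition holds trivially (in fact with $\eta$ in place of $\eta/2$), and $a_k \overset{\mathrm{L}^{4,2}}{\to} a$ is the constant sequence. So the entire content is the convergence of the quadratic forms $Q_{p_k,A_k}(a) \to Q_{2,A_\infty}(a)$. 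However, since $a_k=a$ is forced to have support in a fixed compact set of $M\setminus\{q\}$, one cannot always take $a_k=a$ if one later wants to glue with a bubble contribution; I anticipate the lemma is stated with the weaker support $M\setminus \mathrm{B}_{\eta/2}(p)$ precisely to leave room to \emph{cut off} $a$ near $\partial \mathrm{B}_\eta(p)$ if needed, but for the bare statement as written, $a_k = a$ suffices. (If a genuine modification is wanted, one multiplies $a$ by a cutoff that is $1$ outside $\mathrm{B}_\eta(p)$ and $0$ inside $\mathrm{B}_{\eta/2}(p)$; this changes $Q$ by a term controlled by the energy of $a$ on the thin annulus, which one makes negligible — but this is not needed here.)

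\textbf{Convergence of the quadratic forms.} I would write
\[
Q_{p_k,A_k}(a) = \int_M (1+|F_{A_k}|_h^2)^{\frac{p_k}{2}-1}\Big( |\diff_{A_k} a|_h^2 + \langle F_{A_k},[a,a]\rangle_h + (p_k-2)\frac{\langle F_{A_k},\diff_{A_k}a\rangle_h^2}{1+|F_{A_k}|_h^2}\Big)\mathrm{vol}_h,
\]
restrict all integrals to $K := \supp a \subset M\setminus \mathrm{B}_\eta(p)$, and pass to the limit term by term using the $C^\infty_\loc$ convergence of $A_k$ (hence of $F_{A_k}$ and of the coefficients $\diff_{A_k} a = \diff a + [A_k,a]$) uniformly on the \emph{fixed} compact set $K$. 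Concretely: $F_{A_k} \to F_{A_\infty}$ uniformly on $K$, so $(1+|F_{A_k}|^2)^{p_k/2 - 1} \to 1 \cdot (1+|F_{A_\infty}|^2)^0$? — no: as $p_k \to 2$ the exponent $p_k/2 - 1 \to 0$, so $(1+|F_{A_k}|^2)^{p_k/2-1} \to (1+|F_{A_\infty}|^2)^{0} = 1$ pointwise, but one must be a little careful: actually this factor tends to $1$ everywhere, which is exactly the factor appearing in $Q_{2,A_\infty}$, since the Yang-Mills second variation has no such weight. Wait — for $p=2$ the weight $(1+|F|^2)^{p/2-1} = (1+|F|^2)^0 = 1$, consistent. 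So the first two terms converge to $\int_K (|\diff_{A_\infty}a|^2 + \langle F_{A_\infty},[a,a]\rangle)\,\mathrm{vol}_h = Q_{2,A_\infty}(a)$ by dominated convergence (the integrand is bounded uniformly in $k$ on $K$ since all quantities converge uniformly there and $|a|\in L^2$, $|\diff_{A_k}a|$ uniformly bounded by the $C^0$ convergence of $A_k$ on $K$ times $\|a\|_{W^{1,2}}$-type quantities — more precisely one dominates by a fixed $L^1$ function). The third term carries the explicit factor $p_k - 2 \to 0$ while the rest of the integrand is uniformly bounded on $K$ (here $|F_{A_k}|$ uniformly bounded on $K$ is what saves us; we do \emph{not} need the $(p_k-2)\log(1/\delta_k)$ bound from Lemma \ref{Lemmap} because the support of $a$ is away from the bubble, so $F_{A_k}$ stays bounded), hence this term tends to $0$, matching the absence of such a term in $Q_{2,A_\infty}$.

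\textbf{Main obstacle.} The only real subtlety is the uniform integrable domination needed to justify passing to the limit under the integral. On the compact set $K = \supp a$, away from the concentration point $q$, the convergence $A_k \to A_\infty$ is in $C^\infty$, so $F_{A_k}$, $\diff_{A_k} a = \diff a + [A_k, a]$ and all the algebraic combinations appearing in the integrand converge uniformly on $K$; since $a \in \mathrm{W}^{1,2}(M)$ and $K$ is compact, $|a|^2 \in L^1(K)$ and $|\diff_{A_k} a|^2 \le 2|\diff a|^2 + 2\|A_k\|_{C^0(K)}^2 |a|^2$ is bounded in $L^1(K)$ uniformly in $k$; multiplying by the uniformly bounded weight $(1+|F_{A_k}|^2)^{p_k/2-1} \le (1+\|F_{A_k}\|_{C^0(K)}^2)^{1/2} \le C$ keeps it dominated. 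Thus dominated convergence applies and $Q_{p_k,A_k}(a) \to Q_{2,A_\infty}(a)$. I would then remark that this is exactly the argument of \cite[Lemma 4.2 / proof of Theorem 4.1]{GL24} for $p=2$, the only new points being (i) the extra weight $(1+|F_{A_k}|^2)^{p_k/2-1}$ which converges uniformly to $1$ on $K$, and (ii) the extra term with prefactor $p_k-2$ which vanishes; both are harmless precisely because $\supp a$ stays at distance $\ge \eta$ from the bubble point, so no conformal rescaling and no use of Lemma \ref{Lemmap} is required in this lemma — that subtlety is postponed to the treatment of the bubble's contribution and the neck.
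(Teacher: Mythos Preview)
Your approach of setting $a_k = a$ and checking convergence of the quadratic forms via dominated convergence on the fixed compact set $K = \supp a$ captures the analytic core of the argument, and your handling of the two new features compared to $p=2$ (the weight $(1+|F_{A_k}|^2)^{p_k/2-1}\to 1$ uniformly on $K$, and the extra term carrying the prefactor $p_k-2\to 0$) is correct.

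There is, however, a genuine gap. The convergence $A_k \to A_\infty$ in $\mathfrak{C}^\infty_{G,\loc}(M\setminus\{q\})$ is by definition smooth convergence \emph{in local good gauge}: there exist gauge transformations $g_k$ on a neighborhood of $K$ such that $A_k^{g_k} \to A_\infty$ in $C^\infty(K)$, \emph{not} that the global connection forms $A_k$ themselves converge on $K$. With $a_k = a$ fixed, the term $\diff_{A_k} a = \diff a + [A_k, a]$ and the pairing $\langle F_{A_k},[a,a]\rangle$ need not converge, because only the gauge-transformed objects $A_k^{g_k}$, $F_{A_k^{g_k}}$ are controlled. The remedy, which is what the proof in \cite[Lemma~4.2]{GL24} actually does and which the paper invokes verbatim, is to set $a_k := g_k\, a\, g_k^{-1}$ (extended by zero off a neighborhood of $K$); by gauge invariance of the second variation one has $Q_{p_k,A_k}(a_k) = Q_{p_k,A_k^{g_k}}(a)$, and \emph{then} your dominated-convergence argument applies verbatim with $A_k^{g_k}$ in place of $A_k$. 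This construction also explains the phrase ``up to a subsequence'' in the statement (one extracts so that the gauges $g_k$ converge in $C^\infty(K)$, which yields $a_k \to a$ in $\mathrm{L}^{4,2}$) and why the support is allowed to grow from $M\setminus\mathrm{B}_\eta(p)$ to $M\setminus\mathrm{B}_{\eta/2}(p)$. Once this gauge step is inserted, your argument coincides with the paper's.
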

	
	\begin{proof}[Proof of \cref{approximationlemma}] Identical to the one of \cite[lemma 4.2]{GL24} for $p_k=2$. 
	\end{proof}

	\begin{proof}[Proof of \cref{lowerstability}]
		
		Let $a \in \mathrm{W}^{1,2}(M, T^*M\otimes \mathfrak{g})$, $\widehat{a} \in \mathrm{W}^{1,2}(\Sph^4, T^*\Sph^4\otimes \mathfrak{g})$. Consider $\chi \in
		\mathcal{C}^\infty_c ([0, 2[, [0, 1])$ such that $\chi_{| [0, 1]} = 1$. Introduce for $\eta>0$,
		\[  a_\eta (x) = \left( 1 - \chi \left( \frac{| x - p |}{\eta} \right)  \right) a (x), \]
		and, in stereographic coordinates,
		\[\widehat{a}_{\eta}(x) =\chi \left( 2 \eta |x| \right)
		\widehat{a} \left( x \right).\] Apply \cref{approximationlemma} to $a_{\eta}$ and to $\widehat{a}_{\eta}$ (replacing $M$ by $S^4$) to get two sequences $(a_{\eta,k})_k$ and $(\widehat{a}_{\eta,k})_k$, such that 
		\begin{align*}
			Q_{p_k,A_k} (a_{\eta,k}) &\underset{k\to+\infty}{\rightarrow} Q_{2, A_\infty} (a_{\eta}),\\
			Q_{p_k , \phi_k^*A_k} ( \widehat{a}_{\eta,k}) &\underset{k\to+\infty}{\rightarrow} Q_{2,\widehat{A}_\infty} (\widehat{a}_{\eta}),
		\end{align*} and for all $k$ large enough $a_{\eta,k}$ and $(\phi_k)_*\widehat{a}_{\eta,k}$ have disjoint support. We have \[
		Q_{p_k,A_k} (a_{\eta,k}+ (\phi_k)_*\widehat{a}_{\eta,k}) = Q_{p_k,A_k} (a_{\eta,k})+ Q_{p_k,A_k}( (\phi_k)_*\widehat{a}_{\eta,k}). \]  By contrast with \cite[theorem 4.1]{GL24},  the problem is not conformally invariant but we still have $Q_{p_k,A_k}( (\phi_k)_*\widehat{a}_{\eta,k}) -  Q_{p_k , \phi_k^*A_k} ( \widehat{a}_{\eta,k}) \underset{k\to+\infty}{\rightarrow} 0$ since \begin{align*}Q_{p_k,A_k}( (\phi_k)_*\widehat{a}_{\eta,k}) =& \int_{\mathrm{B}_{1/\eta}}(1+|F_{\phi_k^*A_k}|_{\phi_k^*h}^2)^{\frac{p_k}{2}-1} \left( |\diff_{\phi_k^*A_k} \widehat{a}_{\eta,k}|_{\phi_k^*h}^2 + \langle F_{\phi_k^*A_k}, [\widehat{a}_{\eta,k}, \widehat{a}_{\eta,k}] \rangle_{\phi_k^*h}  \right)\mathrm{vol}_{\phi_k^*h}\\
			& + (p_k-2) \int_{\mathrm{B}_{1/\eta}} (1+|F_{\phi_k^*A_k}|_{\phi_k^*h}^2)^{\frac{p_k}{2}-2} \langle F_{\phi_k^*A_k}, \diff_{\phi_k^*A_k} \widehat{a}_{\eta,k} \rangle_{\phi_k^*h}^2  \mathrm{vol}_{\phi_k^*h} \end{align*} so \[ Q_{p_k,A_k} (a_{\eta,k}+ (\phi_k)_*\widehat{a}_{\eta,k})  \underset{k\to+\infty}{\rightarrow}  Q_{2,A_\infty} (a_{\eta})+ Q_{2,\widehat{A}_\infty} (\widehat{a}_{\eta}).\]
		From the identity \[\diff_{A_\infty} a_{\eta} = -\diff \left(\chi\left(\frac{|\cdot - p|}{\eta}\right)\right) \wedge a + \left(1-\chi\left(\frac{|\cdot - p|}{\eta}\right)\right)\diff_{A_\infty}a \]  and using the fact that $t\mapsto t \chi'(t)$ is bounded, we obtain 
		\[|\diff_{A_\infty} a_{\eta}| \leq C\frac{|a|}{|x-p|} \mathbf{1}_{\mathrm{B}(p,2\eta)}+ |\diff_{A_\infty} a|.\] Using the embeddings $\mathrm{L}^{4,\infty} \cdot \mathrm{L}^{4,2} \hookrightarrow \mathrm{L}^2$ and $\mathrm{W}^{1,2}\hookrightarrow \mathrm{L}^{4,2}$, we deduce that the right-hand side is in $\mathrm{L}^2$ and by the dominated convergence theorem $\diff_{A_\infty} a_{\eta}\underset{\eta \to 0}{\rightarrow} \diff_{A_\infty} a$ in $\mathrm{L}^2$. It is straightforward that $a_{\eta}\underset{\eta \to 0}{\rightarrow} a$ in $\mathrm{L}^4$ so we deduce \[\lim_{\eta\to 0} Q_{2,A_\infty}(a_\eta)=Q_{2,A_\infty}(a). \] A similar argument show that \[\lim_{\eta\to 0} Q_{2,\widehat{A}_\infty}(\widehat{a}_\eta)=Q_{2,\widehat{A}_\infty}(\widehat{a})\] and therefore \[\lim_{\eta \to 0} \lim_{k\to+\infty} Q_{p_k,A_k} (a_{\eta,k}+ (\phi_k)_*\widehat{a}_{\eta,k}) = Q_{2,A_\infty} (a)+ Q_{2,\widehat{A}_\infty} (\widehat{a}).\] Now, if $Q_{2,{A}_{\infty}} (a) < 0$ and
		$Q_{2,\widehat{A}_{\infty}} (\widehat{a}) < 0$, for $\eta$ small enough and $k$ large enough, \[Q_{p_k,A_k}(a_{\eta,k}+(\phi_k)_* \widehat{a}_{\eta,k})<0.\] 
		Let $W\subset \mathrm{W}^{1,2}(M, T^*M\otimes \mathfrak{g})$, $\widehat{W} \subset \mathrm{W}^{1,2}(\Sph^4, T^*\Sph^4\otimes \mathfrak{g})$ of finite dimension such that $Q_{2,A_\infty |W}<0$ and $Q_{2,\widehat{A}_\infty |\widehat{W}}<0$. Consider $(\phi^1,\dots,\phi^n)$ (resp. $(\psi^1,\dots,\psi^m)$) a basis of $W$ (resp. of $\widehat{W}$), orthonormal for $Q_{2,A_\infty}$ (resp. for $Q_{2,\widehat{A}_\infty }$). For $\eta>0$ and $k\in \N$, denote $\mathcal{B}_{\eta,k}$ the family whose elements are the $\phi^i_{\eta,k}$ and $\widehat{\psi^j}_{\eta,k}$ as defined above. Consider $M_{\eta,k}$ the Gram matrix for the quadratic form $Q_{p_k,A_k}$ of this family. From the earlier discussion, $\displaystyle \lim_{\eta \to 0} \lim_{k\to+\infty} M_{\eta,k}= -I$ and an implicit function argument ensures the existence, for $k$ large enough and $\eta$ small enough, of invertible matrices $(P_{\eta,k})$ such that  $P_{\eta,k} ^\top M_{\eta,k}P_{\eta,k} = - I$.  In particular $\Vect{\mathcal{B}_{\eta,k}}$ has dimension $n+m$ and  $Q_{p_k,A_k}$ is negative definite on $\Vect{ \mathcal{B}_{\eta,k}}$. We have then proved \[\mathrm{ind}_{\mathcal{YM}_{p_k}}(A_k) \geq \mathrm{ind}_\mathcal{YM}(A_{\infty})+ \mathrm{ind}_\mathcal{YM}(\widehat{A}_{\infty}).\] \end{proof}
	
	The upper-semi-continuity result is more subtle.

	\begin{theorem}\label{semicontinuitesupindex}
		For $k$ large enough  \[\mathrm{ind}^0_{\mathcal{YM}_{p_k}}(A_k) \leq \mathrm{ind}^0_{\mathcal{YM}}(A_{\infty})+ \mathrm{ind}^0_{\mathcal{YM}}(\widehat{A}_{\infty}),\] where $\mathrm{ind}^0_{\mathcal{YM}_{p}}(A) :=\ind^0 {\mathfrak{Q}_{p,A}}$.
	\end{theorem}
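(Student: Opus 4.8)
The plan is to prove the upper semi-continuity of the extended index by a contradiction/compactness argument, following the strategy of \cite{DGR22} and \cite{GL24}. Suppose that, along a subsequence, $\mathrm{ind}^0_{\mathcal{YM}_{p_k}}(A_k) \geq \mathrm{ind}^0_{\mathcal{YM}}(A_\infty) + \mathrm{ind}^0_{\mathcal{YM}}(\widehat A_\infty) + 1$. Then for each $k$ there is a subspace $W_k \subset \ker \diff_{A_k}^* \subset \mathrm{W}^{1,2}(M, T^*M\otimes\mathfrak g)$ of dimension $N := \mathrm{ind}^0_{\mathcal{YM}}(A_\infty)+\mathrm{ind}^0_{\mathcal{YM}}(\widehat A_\infty)+1$ on which $\mathfrak{Q}_{p_k,A_k}\le 0$ (using $\mathrm{ind}^0\mathfrak Q = \mathrm{ind}\mathfrak Q + \dim\ker\mathfrak Q$ and that the restriction of a symmetric form to the direct sum of a maximal negative subspace and its kernel is negative semi-definite). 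Normalize an $L^2$-orthonormal basis $(a_k^1,\dots,a_k^N)$ of $W_k$. The first technical point is to upgrade this normalization to a uniform $\mathrm{W}^{1,2}$ bound: since $\mathfrak Q_{p_k,A_k}(a)\le 0$ forces $\int (1+|F_{A_k}|^2)^{p_k/2-1}(|\diff_{A_k}a|^2 + |\diff_{A_k}^*a|^2) \le -\int(1+|F_{A_k}|^2)^{p_k/2-1}\langle F_{A_k},[a,a]\rangle \le C\|F_{A_k}\|_{L^\infty_{\mathrm{loc}}}\|a\|_{L^2}^2$ away from the concentration point, together with the pointwise neck estimate of Corollary~\ref{neckC0estimates} controlling the contribution near $q_k$, one gets $\sup_k \|a_k^i\|_{\mathrm{W}^{1,2}(M)} < \infty$.

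Next I would perform the bubble-tree decomposition of the test functions. Fix a cutoff scheme: for a parameter $\eta>0$, split $M$ into the ``body'' region $M\setminus \mathrm{B}(q_k, \eta^{-1}\delta_k)$ away from $q_k$, the ``bubble'' region $\mathrm{B}(q_k,\eta\sqrt{\delta_k})$ rescaled by $\phi_k$, and the ``neck'' region $\mathrm{B}(q_k,\eta^{-1}\delta_k)\setminus \mathrm{B}(q_k,\eta\delta_k)$ (adjusting radii so that they overlap). Decompose $a_k^i = b_k^i + (\phi_k)_* c_k^i + n_k^i$ where $b_k^i$ is supported in the body, $c_k^i$ in the rescaled bubble, and $n_k^i$ is the neck remainder supported in $\mathrm{B}_R\setminus\mathrm{B}_r$ with $r\asymp\eta\delta_k$, $R\asymp\eta^{-1}\delta_k$. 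By weak compactness in $\mathrm{W}^{1,2}$ (and Rellich into $L^{4,2}$), after extraction $b_k^i \rightharpoonup b_\infty^i$ in $\mathrm{W}^{1,2}(M)$ and $c_k^i \rightharpoonup c_\infty^i$ in $\mathrm{W}^{1,2}(S^4)$ — the weak-$W^{1,2}$ limits. The crucial input now is Theorem~\ref{pointcrucial1}: since Lemma~\ref{Lemmap} gives $(p_k-2)\log(1/\delta_k) \le B$ for some fixed $B$, the neck hypothesis $(p-2)\ln(R/r)\le B'$ is met, so
\[
\int_{\mathrm{B}_R\setminus\mathrm{B}_r}(1+|F_{A_k}|^2)^{p_k/2-1}\bigl(|\diff_{A_k}n_k^i|^2+|\diff_{A_k}^*n_k^i|^2+\langle F_{A_k},[n_k^i,n_k^i]\rangle\bigr)\,\diff x \;\ge\; c_0\int_{\mathrm{B}_R\setminus\mathrm{B}_r}|n_k^i|^2\omega_{R,r}\,\diff x \;\ge\; 0,
\]
i.e.\ the neck contribution to $\mathfrak Q_{p_k,A_k}$ is non-negative.

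Then I would expand $\mathfrak Q_{p_k,A_k}(a_k^i)$ over the three regions plus cross terms. The cross terms between body and bubble vanish (disjoint supports for $\eta$ fixed, $k$ large); the cross terms involving the neck remainder are controlled by Cauchy–Schwarz against the weight $\omega_{R,r}$ using Proposition~\ref{GaffneyPoincareNeck} and the smallness of $\|F_{A_k}\|_{L^2(\mathrm{neck})}$ (which $\to 0$ by Corollary~\ref{L2quantization}), and can be absorbed. Using that $Q_{p_k,A_k}$ restricted to body test functions converges to $Q_{2,A_\infty}$ and restricted to rescaled bubble test functions converges to $Q_{2,\widehat A_\infty}$ (as in the proof of Theorem~\ref{lowerstability}, reading the convergence in the opposite direction), we obtain in the limit $k\to\infty$ then $\eta\to 0$:
\[
0 \;\ge\; \limsup_k \mathfrak Q_{p_k,A_k}(a_k^i) \;\ge\; \mathfrak Q_{2,A_\infty}(b_\infty^i) + \mathfrak Q_{2,\widehat A_\infty}(c_\infty^i).
\]
The same holds for all pairings $\mathfrak Q_{p_k,A_k}(a_k^i + a_k^j)$, so passing to the limit the Gram matrix $\bigl(\mathfrak Q_{p_k,A_k}(a_k^i,a_k^j)\bigr)_{ij}$ is negative semi-definite in the limit and dominates the block-diagonal sum of the Gram matrices of $(b_\infty^i)$ for $\mathfrak Q_{2,A_\infty}$ and of $(c_\infty^i)$ for $\mathfrak Q_{2,\widehat A_\infty}$. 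Since the $a_k^i$ are $L^2$-orthonormal, a no-energy-loss bookkeeping (again via quantization, to rule out that all the $L^2$ mass escapes into the neck — exactly the point where $L^{2,1}$/$L^2$ quantization and the weight $\omega_{R,r}$ are essential) shows the limiting family $\{(b_\infty^i, c_\infty^i)\}$ still has ``effective rank'' $N$ in the appropriate sense: more precisely the $2N$-tuple $(b_\infty^1,c_\infty^1,\dots)$ spans, after a suitable basis change, a space on which $\mathfrak Q_{2,A_\infty}\oplus \mathfrak Q_{2,\widehat A_\infty}$ is negative semi-definite and which projects onto subspaces of the respective null-or-negative spaces of total dimension $\ge N$. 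This contradicts $N = \mathrm{ind}^0\mathfrak Q_{2,A_\infty} + \mathrm{ind}^0\mathfrak Q_{2,\widehat A_\infty}+1$, since $\mathrm{ind}^0$ is exactly the maximal dimension of a subspace on which the form is negative semi-definite.

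The main obstacle I expect is the \emph{loss-of-mass} issue in the neck: a priori the normalized test functions $a_k^i$ could concentrate their entire $L^2$-mass in the neck region, so that both weak limits $b_\infty^i$ and $c_\infty^i$ vanish and the dimension count collapses. This is precisely where Theorem~\ref{pointcrucial1} is decisive — it says the neck contribution to the quadratic form is not merely non-negative but bounds $c_0\int|a|^2\omega_{R,r}$ from below, and combined with the (uniform in $p_k$, thanks to Corollary~\ref{CoroF} and Lemma~\ref{Lemmap}) pointwise decay $\omega_{R,r}(x)\asymp |x|^{-2}$ in the neck, one gets an actual positive lower bound forcing any mass that would ``hide'' in the neck to make $\mathfrak Q_{p_k,A_k}(a_k^i)$ strictly positive, contradicting $\mathfrak Q_{p_k,A_k}(a_k^i)\le 0$. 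Making this dichotomy quantitative and uniform in $\eta$ — choosing the splitting scales, the order of limits $k\to\infty$ then $\eta\to 0$, and controlling all neck cross-terms simultaneously — is the technical heart of the argument, and it runs in close parallel to \cite[Section~4]{GL24} and \cite{DaRiSchla}, with the new feature being that every estimate must be tracked with explicit $O(p_k-2)$ dependence and the weight $\omega_{R,r}$ replacing the conformally invariant $|x|^{-2}$.
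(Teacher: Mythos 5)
Your high-level strategy is the right one — concentration-compactness, splitting test functions into body/neck/bubble pieces, and exploiting the non-negativity of the neck contribution via Theorem~\ref{pointcrucial1} — but you are missing the key technical device the paper introduces precisely to resolve the ``loss of mass in the neck'' dichotomy you correctly identify as the crux. The gap shows up in your choice to normalize the basis of $W_k$ in plain $L^2$. With that normalization, nothing prevents all of the $L^2$ mass of $a_k^i$ from sliding into the neck at an intermediate scale, in which case both weak limits $b_\infty^i$ and $c_\infty^i$ vanish and the final dimension count collapses. Your claimed remedy is that Theorem~\ref{pointcrucial1} gives a \emph{positive} lower bound on the neck contribution, ``forcing any mass that would hide in the neck to make $\mathfrak Q_{p_k,A_k}(a_k^i)$ strictly positive''; but since $\omega_{R,r}$ is only of order $1/R^2$ for $|x|$ near the outer neck boundary (after the inner term $r^2/|x|^4$ has decayed), the neck term can be of order $\eta^{-2}\cdot o(1)$ while the body/bubble pieces contribute $O(1)$ negativity, so the sign constraint alone does not force the mass back — one needs to weigh the $L^2$ density against the scale. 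You also do not carry out the ``no-energy-loss bookkeeping'' beyond a description; that step, as you note, is ``the technical heart'' and is genuinely nontrivial.

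What the paper does instead is to \emph{change the norm in which the test functions are normalized}. It introduces the reduced quadratic form $\mathcal{Q}_{p,A}\le\mathfrak Q_{p,A}$ with $\mathcal{Q}_{2,A}=\mathfrak Q_{2,A}$ and represents it as $\mathcal{Q}_{p_k,A_k}(a)=\langle a,\mathcal{L}_{\eta,k}a\rangle_{\omega_{\eta,k}}$, where the interpolating weight $\omega_{\eta,k}$ is explicitly constructed to be constant of order $\eta^{-2}$ in the body, a rescaled constant in the bubble, and comparable to $\omega_{R,r}$ in the neck. By Sylvester's law of inertia (\cref{diagonalisationofquadraticforms}) the index and nullity of $\mathcal{Q}_{p_k,A_k}$ do not depend on the inner product, so they are dimensions of eigenspaces of the self-adjoint $\mathcal{L}_{\eta,k}$ in the weighted $L^2_{\omega_{\eta,k}}$ space; Proposition~\ref{pointcrucial2} gives a uniform lower spectral bound $\mathrm{Sp}(\mathcal{L}_{\eta,k})\subset[-\mu_0,+\infty)$ using \cref{CoroF} and \cref{Lemmap}. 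Normalizing in $\|\cdot\|_{\omega_{\eta,k}}$ rather than $L^2$ is then exactly what makes Lemma~\ref{prelim1} deliver $(a_\infty,\widehat a_\infty)\ne(0,0)$: the weighted norm ``sees'' body and bubble mass at the correct relative scale and penalizes concentration in the neck, so that if the normalized sequence had vanishing weak limits the weighted normalization would be violated. This is the structural ingredient absent from your argument, and it is needed — without it, the $L^2$-normalized vanishing scenario is not excluded by the neck positivity alone.
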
 
	According to \cref{finitudeindice}, the study of 
	$$\mathrm{ind}^0_{\mathcal{YM}_{p}}(A) = \ind {Q_{p,A}} + \dim \left( \ker {Q_{p,A}}\cap {\ker \diff_{A}^*}\right)$$ 
	can be done using the quadratic form $\mathfrak{Q}_{p,A}$, which has better properties. When studying upper-semi continuity of the index, it is even more convenient to work with the quadratic form $\mathcal{Q}_{p,A}$ defined as \[\mathcal{Q}_{p,A}(a) =  \int_{M} |\diff_{A} a|_h^2+|\diff^*_{A} a|_h^2 + (1+|F_A|_h^2)^{\frac{p}{2}-1} \langle F_{A}, [a, a] \rangle_h \mathrm{vol}_h. \] This expression is indeed simpler but encompass enough meaningful information: since $\mathcal{Q}_{p,A}\leq \mathfrak{Q}_{p,A}$, we have $\ind^0 {\mathfrak{Q}_{p,A}} \leq \ind^0 {\mathcal{Q}_{p,A}}$. Additionally, when $p=2$, $\mathcal{Q}_{2,A}=\mathfrak{Q}_{2,A}$. The key point of the proof is the representation of the quadratic forms  $\mathcal{Q}_{A_k}$ as $a\mapsto  \left( a, \mathcal{L}_k a \right)_k$ where $\mathcal{L}_k$ is a self-adjoint operator with respect to an inner product $\left(\cdot, \cdot \right)_k$. Due to general index consideration related to Sylvester's law of inertia (\cref{diagonalisationofquadraticforms}), we can choose the inner product freely : the index and nullity of $\mathcal{Q}_{A_k}$ are independent of such a choice but can be easily determined as dimensions of eigenspaces of $\mathcal{L}_k$. The issue is therefore the choice of a suitable inner product, having in mind the estimates of \eqref{pointcrucial1}.

	For $k\in\N$ and $\eta>0$ small enough, define, in geodesic coordinates around $q$,  the weight function $\omega_{\eta, k}$ by : \[ \omega_{\eta, k} (q + x)= \left\{\begin{array}{ll}
		\displaystyle \frac{1}{\eta^2} \left( 1 + \left(\frac{\delta_k}{\eta^2}\right)^2 \right) & \mathrm{if }\, | x | \geq \eta,\\ \displaystyle \frac{1}{|x|^2} \left( \left(\frac{|x|}{\eta}\right)^2 + \left(\frac{\delta_k}{\eta|x|}\right)^2 \right) & \mathrm{if }\, \delta_k / \eta \leq |
		x | \leq \eta,\\
		\displaystyle \frac{\eta^2}{\delta_k^2}\left( \left(\frac{\delta_k}{\eta^2}\right)^2 +\frac{(1+(1/\eta)^2)^2}{(1+|x|^2/\delta_k^2)^2} \right) & \mathrm{if } \, |
		x | \leq \delta_k / \eta.
	\end{array}\right. \] 
	
	\begin{remark}
		We could have simply extended $\omega_{\eta,k}$ by a constant (depending on $\eta$ and $k$) inside $\mathrm{B}_{\delta_k/\eta}$. The reason for the choice is because, after pull-back by the stereographic projection, it will become simply a constant.
	\end{remark}
	
	We naturally introduce the inner product $\langle \cdot ,\cdot \rangle _{\omega_{\eta,   k}}$ by \[\| a\|_{\omega_{\eta,
			k}}^2 = \int_{M} |a|_h^2 \omega_{\eta,k} \mathrm{vol}_h\] which defines an operator $\mathcal{L}_{\eta, k}$ by \[ \mathcal{Q}_{A_k} (a) = \langle a, \mathcal{L}_{\eta, k} a \rangle _{\omega_{\eta,   k}} \] explicitly given by \[\mathcal{L}_{\eta, k} a = \omega_{\eta, k}^{-1} \mathcal{L}_{A_k}a = \omega_{\eta, k}^{-1} \left( \Delta_{A_k}  a +
	(1+|F_{A_k}|_h^2)^{\frac{p_k-2}{2}}\star[\star F_{A_k}, a]\right).\]
	
	Let's now introduce the counterparts of $\omega_{\eta,k}$ and $\mathcal{L}_{\eta,k}$ for the principal limit $A_{\infty}$ and the bubble $\widehat{A}_{\infty}$: define, for $x\in M\backslash \{q\}$, \[\omega_{\eta,\infty}(x)= \lim_{k\to +\infty} {\omega_{\eta,k}} (x)= 1/\eta^2 \] with uniform convergence in $M\backslash\mathrm{B}_\eta(q)$, and, if $\pi : S^4\to \R^4$ is the stereographic projection 
	\[\widehat\omega_{\eta,\infty}(x)=(1+|\pi(x)|^2)^2 \lim_{k\to +\infty} \delta_k ^2 \phi_k^\ast  \left(\omega_{k,\eta}\right)(\pi(x)) = \left\{\begin{array}{ll} \displaystyle \frac{1}{\eta^2}(1+|\pi(x)|^{-2})^2
		& \mathrm{if } |
		\pi(x) | \geq 1/\eta\\
		\displaystyle \frac{(1 +
			\eta^2)^2}{\eta^2}  & \mathrm{if } | \pi(x)| \leq 1/ \eta
	\end{array}\right.\] with uniform convergence in $\pi^{-1}(\mathrm{B}_{1/\eta})$. These correspond to the limits of the weight function $\omega_{\eta,k}$ outside and inside the bubble.

	The estimates of \cref{neckC0estimates} as well as the $\mathrm{L}^2$-quantization, see \cref{L2quantization}, can be used to show the following global estimates:
	\begin{proposition}
		\label{pointcrucial2} There exist $\eta_0, \mu_0>0$ such that for all $\eta<\eta_0$, the following pointwise estimate holds in $M$ for all $k$:
		\begin{equation}
			(1+|F_{A_k}|_h^2)^{\frac{p_k-2}{2}}| F_{A_k} |_h
			\leq \mu_0 \omega_{\eta, k}, \label{gradientpoids}
		\end{equation}
		and
		\begin{equation}
			\mathrm{Sp} (\mathcal{L}_{\eta, k}) \subset [- \mu_0 ; + \infty [.
			\label{spectreminoré}
		\end{equation}
	\end{proposition}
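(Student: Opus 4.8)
The plan is to establish the pointwise curvature bound \eqref{gradientpoids} first, by examining the three regions on which the weight $\omega_{\eta,k}$ is defined, and then to read off the spectral inclusion \eqref{spectreminoré} almost immediately by discarding the nonnegative Dirichlet part of $\mathcal{Q}_{A_k}$. A preliminary observation is that the ``conformal factor'' $(1+|F_{A_k}|_h^2)^{(p_k-2)/2}$ is bounded uniformly in $k$: the $\varepsilon$-regularity of \cref{epsregbis}, applied on balls of radius comparable to the distance to the concentration point, together with the $\mathfrak C^\infty_{G,\loc}(\R^4)$-convergence $\phi_k^\ast A_k\to\widehat A_\infty$ inside the bubble, gives $\|F_{A_k}\|_{\mathrm L^\infty(M)}\leq C\delta_k^{-2}$, whence \cref{Lemmap} yields $C_1:=\sup_k\|(1+|F_{A_k}|_h^2)^{(p_k-2)/2}\|_{\mathrm L^\infty(M)}<+\infty$. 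It therefore suffices to produce a constant $\mu$, independent of $k$ and of $\eta<\eta_0$, with $|F_{A_k}|_h\leq\mu\,\omega_{\eta,k}$ on $M$; then \eqref{gradientpoids} holds with $\mu_0=C_1\mu$.

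Working in geodesic coordinates centred at the concentration point, I would argue region by region. On the body $M\setminus\mathrm B_\eta(q)$ the $\mathfrak C^\infty_{G,\loc}(M\setminus\{q\})$-convergence to the smooth limit $A_\infty$ bounds $|F_{A_k}|_h$ by a fixed constant for $k$ large, while $\omega_{\eta,k}\geq\eta^{-2}\geq1$. On the bubble $\mathrm B_{\delta_k/\eta}$ I would rescale by $\phi_k$: since the curvature is a $2$-form one has $|F_{A_k}|(\phi_k(y))=\delta_k^{-2}|F_{\phi_k^\ast A_k}|(y)$, and combining the $\mathfrak C^\infty_{G,\loc}(\R^4)$-convergence with the decay $|F_{\widehat A_\infty}(y)|\leq C(1+|y|^2)^{-2}$ (true because $\widehat A_\infty=\pi^\ast\tilde A_\infty$ with $\tilde A_\infty$ smooth on $S^4$) gives $|F_{A_k}(x)|\leq C\,\delta_k^2(\delta_k^2+|x|^2)^{-2}$; the explicit formula for $\omega_{\eta,k}$ shows $\omega_{\eta,k}(x)\geq\eta^{-2}\delta_k^2(\delta_k^2+|x|^2)^{-2}$ there, so $|F_{A_k}|_h\leq C\,\omega_{\eta,k}$. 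On the neck $\mathrm B_\eta\setminus\mathrm B_{\delta_k/\eta}$, where $\omega_{\eta,k}$ is exactly the weight $\omega_{R,r}$ with $R=\eta$, $r=\delta_k/\eta$, I would apply \cref{CoroF} (equivalently \cref{neckC0estimates}): the prefactor $(R/r)^{C(p_k-2)}=(\eta^2/\delta_k)^{C(p_k-2)}$ is bounded by \cref{Lemmap} (uniformly in $\eta<1$, since $\log(\eta^2/\delta_k)\leq\log(1/\delta_k)$), and the energy $E=\|F_{A_k}\|_{\mathrm L^2(\mathrm B_{2\eta}(q)\setminus\mathrm B_{\delta_k/(2\eta)}(q))}$ is brought below the admissible threshold first by taking $\eta_0$ small --- so that $\int_{\mathrm B_{2\eta}(q)}|F_{A_\infty}|_h^2+\int_{\R^4\setminus\mathrm B_{1/(2\eta)}}|F_{\widehat A_\infty}|^2$ is small --- and then $k$ large, via the no-neck-energy-loss part of \cref{bubbletree} (see also \cref{L2quantization}). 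Taking the largest of the three constants, and treating the finitely many remaining $k$ by the trivial bound ($F_{A_k}$ bounded, $\omega_{\eta,k}$ bounded below by a positive constant), gives \eqref{gradientpoids}.

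For the spectral inclusion \eqref{spectreminoré}, note that for fixed $\eta$ and $k$ the weight $\omega_{\eta,k}$ lies between two positive constants on $M$, so $\mathcal{L}_{\eta,k}=\omega_{\eta,k}^{-1}\mathcal{L}_{A_k}$ is self-adjoint on $\mathrm L^2(M,\omega_{\eta,k}\,\mathrm{vol}_h)$ and represents $\mathcal{Q}_{A_k}$, i.e. $\langle a,\mathcal{L}_{\eta,k}a\rangle_{\omega_{\eta,k}}=\mathcal{Q}_{A_k}(a)$. Using $|\langle F_{A_k},[a,a]\rangle_h|\leq C_{\mathfrak g}|F_{A_k}|_h|a|_h^2$ and then \eqref{gradientpoids},
\[
\mathcal{Q}_{A_k}(a)=\int_M\!\Big(|\diff_{A_k}a|_h^2+|\diff_{A_k}^\ast a|_h^2+(1+|F_{A_k}|_h^2)^{\frac{p_k-2}{2}}\langle F_{A_k},[a,a]\rangle_h\Big)\mathrm{vol}_h\ \geq\ -\,C_{\mathfrak g}\mu_0\!\int_M|a|_h^2\,\omega_{\eta,k}\,\mathrm{vol}_h ,
\]
so $\mathcal{L}_{\eta,k}$ is bounded below in the form sense by $-C_{\mathfrak g}\mu_0$, hence $\mathrm{Sp}(\mathcal{L}_{\eta,k})\subset[-C_{\mathfrak g}\mu_0,+\infty[$; relabelling $\mu_0$ finishes the proof.

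The main obstacle is the neck step. The constant furnished by \cref{CoroF} carries the $p$-dependent factor $(\eta^2/\delta_k)^{C(p_k-2)}$, so one must combine \cref{Lemmap} with the $\mathrm L^2$-energy quantization of \cref{bubbletree} carefully enough that a \emph{single} $\eta_0$ and a \emph{single} $\mu_0$ serve for all $\eta<\eta_0$ and all $k$ at once; and checking that the piecewise expression for $\omega_{\eta,k}$ dominates the rescaled curvature on the bubble, while routine, requires matching the scales $\delta_k$, $\eta$ and $\delta_k/\eta$ precisely.
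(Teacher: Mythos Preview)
Your proof is correct and follows the same route as the paper: apply \cref{CoroF} on the neck $\mathrm B_\eta\setminus\mathrm B_{\delta_k/\eta}$ and control the factor $(\eta^2/\delta_k)^{C(p_k-2)}$ via \cref{Lemmap}. The paper's own proof is extremely terse---it records only this neck step and leaves both the body/bubble regions and the spectral inclusion \eqref{spectreminoré} implicit---so your explicit region-by-region analysis and your derivation of \eqref{spectreminoré} from \eqref{gradientpoids} by discarding the nonnegative Dirichlet part of $\mathcal Q_{A_k}$ are welcome additions rather than deviations.

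One small point worth tightening: your body argument (``convergence to $A_\infty$ bounds $|F_{A_k}|$ by a fixed constant'') is literally valid only on $M\setminus\mathrm B_{\eta_0}(q)$, not on all of $M\setminus\mathrm B_\eta(q)$ when $\eta<\eta_0$, since the $\mathfrak C^\infty_{G,\mathrm{loc}}$ convergence is not uniform up to $q$. On the intermediate shell $\mathrm B_{\eta_0}\setminus\mathrm B_\eta$ one should instead invoke $\varepsilon$-regularity to get $|F_{A_k}(x)|\leq C|x-q|^{-2}\leq C\eta^{-2}\leq C\,\omega_{\eta,k}(x)$; the analogous remark applies on the bubble side for the shell $\mathrm B_{\delta_k/\eta}\setminus\mathrm B_{\delta_k/\eta_0}$. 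You flag exactly this uniformity concern in your closing paragraph, so the fix is clear.
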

	\begin{proof} It suffices to apply \cref{CoroF} with $r={\delta_k}/{\eta}$ and $R=\eta$, and to remark that thanks to \cref{Lemmap}, $$1+\left(\frac{\eta^2}{\delta_k}\right)^{C(p_k-2)}$$ is bounded by some $\mu_0$.
	
	\end{proof}
	
	Define now the operators $\mathcal{L}_{\eta, \infty}$ and $\widehat{\mathcal{L}}_{\eta, \infty}$ by \begin{align*}
		\mathcal{L}_{\eta, \infty} a &= \omega_{\eta, \infty}^{-1} \left( \Delta_{A_\infty}  a +
		\star [\star F_{A_\infty}, a]\right),\\
		\widehat{\mathcal{L}}_{\eta, \infty} \widehat{a} &= \widehat{\omega}_{\eta, \infty}^{-1} \left( \Delta_{\widehat{A}_\infty}  \widehat{a} +
		\star[ \star F_{\widehat{A}_\infty}, \widehat{a}]\right),
	\end{align*} such that \begin{align*}
		\mathcal{Q}_{2,A_{\infty}} (a) &= \langle a, \mathcal{L}_{\eta, \infty} a \rangle_{\omega_{\eta,
				\infty}},\\
		\mathcal{Q}_{2,\widehat{A}_{\infty}} (\widehat{a}) &= \langle \widehat{a}, \widehat{\mathcal{L}}_{\eta, \infty} \widehat{a} \rangle_{\widehat{\omega}_{\eta,
				\infty}}
	\end{align*} for $a\in \mathrm{W}^{1,2}(M, T^*M\otimes \mathfrak{g})$ and $\widehat{a} \in \mathrm{W}^{1,2}(\mathrm{S}^4, T^*\mathrm{S}^4\otimes \mathfrak{g})$. The operators $\mathcal{L}_{\eta,k}$ (resp. $\mathcal{L}_{\eta,\infty}$, $\widehat{\mathcal{L}}_{\eta, \infty}$) can be diagonalized with respect to the inner products involving the weights $\omega_{\eta,k}$ (resp. $\omega_{\eta,\infty}$, $\widehat{\omega}_{\eta,
		\infty}$) according to \cref{diagonalisationofquadraticforms}. This guarantees, as stated before, that studying the index of $A_k$ can done by analysing the spectrum of $\mathcal{L}_{\eta, k}$. As for \cite[proposition 4.5]{GL24}, the proof of \cref{semicontinuitesupindex} is a direct consequence of the following result. \begin{lemma} \label{prelim1}
		If $\eta>0$ is small enough, for all $(a_k)_{k \in \N}$ such that for all
		$k \in \N$, $a_k \in W_{\eta, k}$ and $\| a_k \|_{\omega_{\eta, k}} =
		1$, there exists $a_{\infty} \in \mathrm{W}^{1, 2} (M, T^*M \otimes \mathfrak{g})$ and $\widehat{a}_{\infty}\in \mathrm{W}^{1,2}(\R^4, T^*\R^4\otimes\mathfrak{g})$ such that, up to extraction, $a_k \rightharpoonup
		a_{\infty}$ and $\widehat{a}_k \rightharpoonup
		\widehat{a}_{\infty}$ in $\mathrm{W}^{1, 2}$, where $\widehat{a}_k := \phi_k^*a_k$, and for $\delta > 0$ small enough : \begin{equation}
			\int_{M\backslash \mathrm{B}_{\delta} (p)} (| \diff_{A_{\infty}}
			a_{\infty} |^2_h + | \diff_{A_{\infty}}^{\ast} a_{\infty} |^2_h + |
			a_{\infty} |^2_h) \mathrm{vol}_h = \lim_{k \rightarrow + \infty}
			\int_{M\backslash \mathrm{B}_{\delta} (p)} \left( \left|
			\diff_{A_{_k}} a_k \right|^2_h + | \diff_{A_k}^{\ast} a_k |^2_h + |
			a_k |^2_h \right) \mathrm{vol}_h
		\end{equation} and \begin{equation}
			\int_{\mathrm{B}_{1 / \delta}} (| \diff_{\widehat{A}_{\infty}}
			\widehat{a}_{\infty} |^2 + | \diff_{\widehat{A}_{\infty}}^{\ast}
			\widehat{a}_{\infty} |^2 + | a_{\infty} |^2) \diff x  = \lim_{k
				\rightarrow + \infty} \int_{\mathrm{B}_{1 / \delta}} \left( \left|
			\diff_{\widehat{A}_{_k}} \widehat{a}_k \right|^2 + | \diff_{\widehat{A}_k}^{\ast}
			\widehat{a}_k |^2 + | \widehat{a}_k |^2 \right) \diff x.
		\end{equation} Moreover $(a_{\infty},
		\widehat{a}_{\infty}) \neq (0, 0)$.
	\end{lemma}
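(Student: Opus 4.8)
\medskip
\noindent\textbf{Strategy of proof.} The plan is to run a concentration--compactness analysis of the normalized sequence $(a_k)$, splitting $M$ into the body $M\setminus\mathrm B_\eta(q_k)$, the neck $N_k:=\mathrm B_\eta(q_k)\setminus\mathrm B_{\delta_k/\eta}(q_k)$ and the bubble region $\mathrm B_{\delta_k/\eta}(q_k)$, and to show that losses of (weighted) mass can only occur on the body (producing $a_\infty$) and on the bubble (producing $\widehat a_\infty$), never in the neck. Since $a_k\in W_{\eta,k}$ we have $\mathcal Q_{A_k}(a_k)\le0$ and $b_k:=\mathcal L_{\eta,k}a_k$ satisfies $\|b_k\|_{\omega_{\eta,k}}\le\mu_0$ by \eqref{spectreminoré}; combining $\mathcal Q_{A_k}(a_k)\le0$ with $|\langle F_{A_k},[a_k,a_k]\rangle|\le C|F_{A_k}|\,|a_k|^2$ and the weighted curvature bound \eqref{gradientpoids} gives $\int_M(|\diff_{A_k}a_k|^2+|\diff_{A_k}^\ast a_k|^2)\le C\mu_0$, and since $\omega_{\eta,k}\ge\eta^{-2}$ everywhere, $\|a_k\|_{\mathrm L^2(M)}\le\eta$. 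A Gaffney/Bochner inequality plus the uniform bound on $\|F_{A_k}\|_{\mathrm L^2}$ then yields a uniform $\mathrm W^{1,2}(M)$ bound on $a_k$; changing variables by $\phi_k$, using $\int_{\mathrm B_{1/\delta}}|\nabla\widehat a_k|^2\le\int_M|\nabla a_k|^2$ and $\omega_{\eta,k}\gtrsim_\delta\delta_k^{-2}$ on $\mathrm B_{\delta_k/\delta}(q_k)$, one gets a uniform $\mathrm W^{1,2}$ bound on $\widehat a_k$ on each $\mathrm B_{1/\delta}$. Hence, up to a subsequence, $a_k\rightharpoonup a_\infty$ in $\mathrm W^{1,2}(M)$ and $\widehat a_k\rightharpoonup\widehat a_\infty$ in $\mathrm W^{1,2}_{\mathrm{loc}}(\R^4)$.

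\emph{Energy identities via elliptic regularity.} Each $a_k$ solves $\Delta_{A_k}a_k=\omega_{\eta,k}b_k-(1+|F_{A_k}|^2)^{\frac{p_k-2}2}\star[\star F_{A_k},a_k]$. On $M\setminus\mathrm B_{\delta/2}(q)$ the curvature is bounded by \cref{epsregbis}, $\omega_{\eta,k}$ is bounded, $p_k\to2$ and $A_k\to A_\infty$ in $\mathfrak C^\infty_{G,\mathrm{loc}}$, so the right-hand side is bounded in $\mathrm L^2$ and elliptic regularity produces a uniform $\mathrm W^{2,2}(M\setminus\mathrm B_\delta(q))$ bound, whence $a_k\to a_\infty$ strongly in $\mathrm W^{1,2}(M\setminus\mathrm B_\delta(q))$; this gives the first identity. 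Pulling the system back by $\phi_k$ — using the conformal behaviour of $|F|$ in dimension four and \cref{Lemmap} to keep the rescaled zeroth order coefficient $(1+\delta_k^{-4}|F_{\widehat A_k}|^2)^{\frac{p_k-2}2}$ bounded, while $\widehat A_k\to\widehat A_\infty$ and $\phi_k^\ast h\to g_{\mathrm{euc}}$ on compacts — one obtains a uniformly elliptic system for $\widehat a_k$ with $\mathrm L^2$-bounded data on $\mathrm B_{1/\delta}$, and the same argument gives $\widehat a_k\to\widehat a_\infty$ strongly in $\mathrm W^{1,2}(\mathrm B_{1/\delta})$ and the second identity.

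\emph{Non-triviality of the limit.} Write $\|a_k\|_{\omega_{\eta,k}}^2=I_1^k+I_2^k+I_3^k$ for the weighted masses on the body, the neck and the bubble region. By the uniform convergence of the weights $\omega_{\eta,k}$ (resp. of $\delta_k^2\phi_k^\ast\omega_{\eta,k}$) on the relevant sets together with the strong $\mathrm L^2$ convergence above, $I_1^k$ and $I_3^k$ converge to the corresponding weighted norms of $a_\infty$ and of $\widehat a_\infty$; hence it suffices to prove $I_2^k\to0$, for then $(a_\infty,\widehat a_\infty)=(0,0)$ would contradict $I_1^k+I_2^k+I_3^k=1$. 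Assume $a_\infty=\widehat a_\infty=0$. Splitting $\mathcal Q_{A_k}(a_k)\le0$ over the three regions and using the energy identities and \eqref{gradientpoids}, the body and bubble contributions tend to $0$, so $\limsup_k\int_{N_k}(|\diff_{A_k}a_k|^2+|\diff_{A_k}^\ast a_k|^2)\le C\mu_0I_2^k+o(1)$. Let $\chi_k$ be a logarithmic cut-off equal to $1$ on $N_k$ and supported in $\mathrm B_{2\eta}(q_k)\setminus\mathrm B_{\delta_k/(2\eta)}(q_k)$. Applying \cref{GaffneyPoincareNeck} (or the sharper \cref{pointcrucial1}, legitimate for $\eta$ small and $k$ large since by \cref{L2quantization} the curvature energy on the enlarged neck is $\le\varepsilon$) to $\chi_ka_k$, and using $\omega_{2\eta,\delta_k/(2\eta)}=\tfrac14\omega_{\eta,k}$ on $N_k$, one bounds $\tfrac{c_0}{4}I_2^k$ from above by $C\big(\int_{N_k}(|\diff_{A_k}a_k|^2+|\diff_{A_k}^\ast a_k|^2)+\int|\nabla\chi_k|^2|a_k|^2+\int|A_k^g|^2\chi_k^2|a_k|^2\big)+o(1)$, where the first term is $\le C\mu_0I_2^k+o(1)$, the second vanishes on the collars (again by $a_\infty=\widehat a_\infty=0$), and the third is $\le CE'I_2^k+o(1)$ by \cref{neckC0estimates}, $E'$ being a bounded multiple of the curvature energy on the enlarged neck. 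Altogether $\tfrac{c_0}{4}I_2^k\le C'(\mu_0+E')I_2^k+o(1)$; since both $\mu_0$ (seen through \cref{CoroF}) and $E'$ tend to $0$ as $\eta\to0$ — which is precisely why ``$\eta$ small enough'' is needed — we get $I_2^k\to0$, a contradiction, and therefore $(a_\infty,\widehat a_\infty)\ne(0,0)$.

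The main obstacle is exactly this last step: ruling out any loss of the weighted mass of $a_k$ in the neck. It rests on the sharp pointwise neck estimates (\cref{CoroF}, \cref{neckC0estimates}) and on the neck positivity \cref{pointcrucial1} — the $p$-Yang--Mills analogues of Rivière's angular energy estimates — and crucially on the uniformity in $p$ provided by \cref{Lemmap}, which is what makes the regime $p\to2$ behave like the borderline case $p=2$.
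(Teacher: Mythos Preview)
Your proof follows essentially the same concentration--compactness scheme as the paper (which simply defers to \cite[lemma 4.6]{GL24}): uniform $\mathrm W^{1,2}$ bounds from the eigenvalue constraint and the weighted curvature estimate \eqref{gradientpoids}, elliptic regularity on the equation $\Delta_{A_k}a_k=\omega_{\eta,k}b_k-(1+|F_{A_k}|^2)^{\frac{p_k-2}{2}}\star[\star F_{A_k},a_k]$ for strong convergence on body and bubble, and neck positivity to rule out weighted-mass loss in the neck. One notational caution: the ``$\mu_0$'' you invoke in the final contradiction step (seen through \cref{CoroF}) is the \emph{neck-specific} curvature constant $CE_{\mathrm{neck}}\big(1+(\eta^2/\delta_k)^{C(p_k-2)}\big)$, which indeed tends to $0$ as $\eta\to0$ by $\mathrm L^2$-quantization and \cref{Lemmap}, and must be kept distinct from the fixed global $\mu_0$ of \cref{pointcrucial2} that you (correctly) use earlier for the a priori bounds --- otherwise the inequality $\tfrac{c_0}{4}I_2^k\le C'(\mu_0+E')I_2^k+o(1)$ would not close.
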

	
	\begin{proof} This result is identical to the one of \cite[lemma 4.6]{GL24} for $p_k=2$ : it enough to replace in the proof the occurrences of $F_{A_k}$ by $(1+|F_{A_k}|^2)^{p_k/2-1}F_{A_k}$.

	\end{proof}

			\newpage

			\appendix
			\section{Appendix}
			
			\subsection{Useful lemmas}

			\begin{lemma} 
				\label{leminq}	
				Let $\alpha\in [0;1)$, we have, for all $u,v \in \R$,
				
				$$\frac{1}{2}\vert u-v\vert^{2\alpha+2} \leq \langle (1+\vert u\vert^2)^\alpha u-(1+\vert v\vert^2)^\alpha v,u-v\rangle.$$
				In particular, if $2\leq p< 3$ we have
				$$\frac{1}{2}\vert u-v\vert^p \leq \langle (1+\vert u\vert^2)^\frac{p-2}{2}u-(1+\vert v\vert^2)^\frac{p-2}{2}v,u-v\rangle$$
				and
				$$\frac{1}{2}\vert u-v\vert^{\frac{p}{2}+1} \leq \langle (1+\vert u\vert^2)^\frac{p-2}{4}u-(1+\vert v\vert^2)^\frac{p-2}{4}v,u-v\rangle.$$
			\end{lemma}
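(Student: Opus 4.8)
The plan is to prove this by a short, purely algebraic computation — no differentiation is needed. First I would dispose of the two ``in particular'' assertions: they follow from the first inequality by the substitutions $\alpha=(p-2)/2$ (so that $2\alpha+2=p$) and $\alpha=(p-2)/4$ (so that $2\alpha+2=\tfrac p2+1$). Note that the hypothesis $2\le p<3$ forces $\alpha\in[0,\tfrac12]$; this is exactly the range in which the asserted constant $\tfrac12$ is correct and in which I will establish the first inequality.

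For the first inequality, write $\psi(w):=(1+|w|^2)^{\alpha}$, so that the right-hand side is $\langle\psi(u)u-\psi(v)v,\,u-v\rangle$. The key step is the elementary identity (obtained by expanding $|u-v|^2=|u|^2-2\langle u,v\rangle+|v|^2$ and cancelling)
\[
\langle\psi(u)u-\psi(v)v,\,u-v\rangle=\tfrac12\big(\psi(u)+\psi(v)\big)\,|u-v|^2+\tfrac12\big(\psi(u)-\psi(v)\big)\big(|u|^2-|v|^2\big).
\]
Since $\alpha\ge0$, the map $t\mapsto(1+t)^{\alpha}$ is nondecreasing, hence $\psi(u)-\psi(v)$ and $|u|^2-|v|^2$ always have the same sign and the last term is $\ge0$. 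This already gives $\langle\psi(u)u-\psi(v)v,u-v\rangle\ge\tfrac12\big(\psi(u)+\psi(v)\big)|u-v|^2$.

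It then remains to bound $\psi(u)+\psi(v)$ from below by $|u-v|^{2\alpha}$. From $1+t\ge t\ge0$ and $\alpha\ge0$ we have $\psi(w)\ge|w|^{2\alpha}$; using subadditivity of $s\mapsto s^{2\alpha}$ on $[0,\infty)$ — this is where $2\alpha\le1$ enters — and then the triangle inequality,
\[
\psi(u)+\psi(v)\ \ge\ |u|^{2\alpha}+|v|^{2\alpha}\ \ge\ \big(|u|+|v|\big)^{2\alpha}\ \ge\ |u-v|^{2\alpha}.
\]
Multiplying the two displayed lower bounds yields $\langle\psi(u)u-\psi(v)v,u-v\rangle\ge\tfrac12|u-v|^{2\alpha}\,|u-v|^2=\tfrac12|u-v|^{2\alpha+2}$, which is the claim; the computation is valid verbatim for $u,v$ in any inner-product space, not just in $\R$.

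I do not expect a serious obstacle here: the substance is simply choosing the right splitting so that the leftover term has a sign, and then tracking exponents carefully enough to reach the constant $\tfrac12$. The one genuinely constraining ingredient is the subadditivity $|u|^{2\alpha}+|v|^{2\alpha}\ge(|u|+|v|)^{2\alpha}$, which requires $2\alpha\le1$; without it the optimal constant degrades (testing $v=-u$ with $|u|\to\infty$ shows the sharp constant is $2^{-2\alpha}$, which is $<\tfrac12$ for $\alpha>\tfrac12$), so the statement should be read with $\alpha\in[0,\tfrac12]$, the range that in any case contains all the cases $p<3$ used later in the paper.
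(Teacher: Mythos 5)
Your proof is correct and takes essentially the same route as the paper's: you expand the inner product into the same two pieces, $\tfrac12(\psi(u)+\psi(v))|u-v|^2$ plus $\tfrac12(\psi(u)-\psi(v))(|u|^2-|v|^2)$, discard the second by monotonicity of $t\mapsto(1+t)^\alpha$, and then lower-bound $\psi(u)+\psi(v)$ by $|u-v|^{2\alpha}$ via $1+|w|^2\ge|w|^2$, subadditivity of $s\mapsto s^{2\alpha}$, and the triangle inequality --- the same chain the paper runs in the reverse direction.

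What you add, and it is worth noting, is the observation about the admissible range of $\alpha$. The lemma is stated for $\alpha\in[0,1)$, but the subadditivity $(|u|+|v|)^{2\alpha}\le|u|^{2\alpha}+|v|^{2\alpha}$ needs $2\alpha\le1$; the paper's proof invokes $(a+b)^\beta\le a^\beta+b^\beta$ (valid for $\beta\le1$) and applies it, in effect, with $\beta=2\alpha$, so it too is silently restricted to $\alpha\le\tfrac12$. Your test $v=-u$, $|u|\to\infty$ confirms the inequality genuinely fails for $\alpha>\tfrac12$ with constant $\tfrac12$ (the sharp constant there is $2^{-2\alpha}<\tfrac12$). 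So the stated hypothesis $\alpha\in[0,1)$ is too generous; it should read $\alpha\in[0,\tfrac12]$. This is harmless for the paper, since the two ``in particular'' cases use $\alpha=(p-2)/2$ and $\alpha=(p-2)/4$ with $2\le p<3$, both of which lie in $[0,\tfrac12)$, but your catch is a genuine (if minor) correction to the statement.
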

			
			\begin{proof}
				\begin{eqnarray*}
					\langle (1+\vert u\vert^2)^\alpha u-(1+\vert v\vert^2)^\alpha v,u-v\rangle&=& (1+\vert u\vert^2)^\alpha \vert u\vert^2 +(1+\vert v\vert^2)^\alpha\vert v\vert^2\\ 
					&&-\left( (1+\vert u\vert^2)^\alpha  +(1+\vert v\vert^2)^\alpha \right)\langle u,v\rangle \\
					&=&(1+\vert u\vert^2)^\alpha \vert u\vert^2 +(1+\vert v\vert^2)^\alpha\vert v\vert^2\\ 
					&&+\left( (1+\vert u\vert^2)^\alpha  +(1+\vert v\vert^2)^\alpha \right)\frac{\vert v-u\vert^2-\vert u\vert^2-\vert v\vert^2}{2} \\
					&=&\left((1+\vert u\vert^2)^\alpha -(1+\vert v\vert^2)^\alpha \right)\left( \frac{\vert u\vert^2-\vert v\vert^2}{2}\right)\\ 
					&&+\left( (1+\vert u\vert^2)^\alpha  +(1+\vert v\vert^2)^\alpha \right)\frac{\vert v-u\vert^2}{2} \\
					& \geq &\left( (1+\vert u\vert^2)^\alpha  +(1+\vert v\vert^2)^\alpha \right)\frac{\vert v-u\vert^2}{2},
				\end{eqnarray*}
				the inequality follows from the fact that the first term of previous line is non-negative. Finally,  we use the following  inequality, if $0\leq \beta\leq 1$, 
				\begin{equation}
					\label{eqab}
					(a+b)^{\beta}\leq  a^{\beta}  + b^{\beta} \text{ for all } a,b\geq 0,
				\end{equation}
				to deduce that 
				\begin{align*} \vert v-u\vert^{2\alpha+2} &= \vert v-u\vert^2 \vert v-u\vert^{2\alpha}\leq  \vert v-u\vert^2 (\vert v\vert +\vert u\vert)^{2\alpha}\leq   \vert v-u\vert^2 ((\vert v\vert^2)^\alpha +(\vert u\vert^2)^\alpha)\\
					&\leq   \vert v-u\vert^2 \left((1+\vert v\vert^2)^\alpha +(1+\vert u\vert^2)^\alpha\right),
				\end{align*}
				which achieves the proof.
			\end{proof}
			
			To prove \eqref{eqab}, it suffices to assume $a\geq b$ by symmetry and $b\not= 0$ then to prove that 
			$$ (k+1)^{\beta}\leq k^{\beta} +1 \text{ for all } k\geq 1,$$
			which can easily be done by function analysis.
			Notice that, if $\beta \geq 1$, we can use the following convexity inequality to obtain a similar result.
			$$ (a+b)^{\beta}\leq  2^{\beta-1 } (a^{\beta}  + b^{\beta}) \text{ for all } a,b\geq 0.$$

			Let $f(x)=x(1+x^2)^\alpha$ since $\vert f'(x)\vert=\vert (1+x^2)^\alpha +2\alpha(1+x^2)^{\alpha-1} x^2\vert \leq 2(1+x^2)^\alpha$ on $\R_+$ as soon as $\alpha\leq\frac{1}{2}$ then $f$ is Lipschitz. Let $2\leq p\leq 3$, since $0\leq \frac{p-2}{4}\leq \frac{1}{4}$, we have 
			\begin{equation}
				\label{Vin}
				\vert V(a)-V(b)\vert=\vert \sqrt{\rho(a)}a-\sqrt{\rho(b)}b\vert \leq 2 \max (\sqrt{\rho(a)},\sqrt{\rho(b)})\vert a-b\vert,
			\end{equation}
			where $V(x)=x(1+x^2)^{\frac{p-2}{4}}$.\\
			We similarly get, for $2\leq p\leq 3$ and $a,b\geq 0$, that 
			\begin{equation} 
				\label{Habestim}
				H(a-b)\leq \sqrt{2} (H(a) + b^p),
			\end{equation}
			where $H(x)=(1+x^2)^{\frac{p}{2}}$.\\
			\begin{lemma}[Böchner formula for general connections] There exists $C>0$, such that for any connection $A$ with $F$ as curvature must satisfy
				\begin{equation}
					\frac{1}{2} \Delta | F |^2 \leq |\diff_A^*F|^2 -\diff^* \omega - | \nabla_A F |^2 +C  | F |^3
					\label{Bochnergeneral}
				\end{equation} where $\omega$ is the $1$-form defined by duality via the equation \[\langle \omega, \alpha\rangle = \langle \alpha\wedge \diff_A^* F, F\rangle = - \langle \diff_A \star F,\alpha \wedge \star F \rangle \]
			\end{lemma}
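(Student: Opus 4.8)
The plan is to derive \eqref{Bochnergeneral} from three standard ingredients: the second Bianchi identity, the Weitzenb\"ock (Bochner--Lichnerowicz) formula for the twisted bundle $\Lambda^2\otimes\mathfrak{g}$, and a pointwise Leibniz identity that produces the coexact term $\diff^*\omega$. First I would use that every curvature is $\diff_A$-closed, $\diff_A F=0$, so that the Hodge--de Rham Laplacian $\Delta_{\diff_A}:=\diff_A\diff_A^*+\diff_A^*\diff_A$ acting on $F$ collapses to $\Delta_{\diff_A}F=\diff_A\diff_A^*F$. Then the Weitzenb\"ock identity reads $\Delta_{\diff_A}F=\nabla_A^*\nabla_A F+\mathcal{W}(F)$, where $\mathcal{W}$ is a purely algebraic (zeroth-order) operator: over Euclidean space its only contribution comes from commuting covariant derivatives on the $\mathfrak{g}$-factor, i.e.\ from the adjoint action of $F$ on $F$, so that $\mathcal{W}(F)$ is quadratic in $F$ with $|\langle\mathcal{W}(F),F\rangle|\le C|F|^3$. (On a curved base one also picks up a bounded term linear in $F$ coming from the Riemann tensor; it is harmless for all the applications and is absorbed in the same way.)

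Next I would combine this with the elementary Bochner identity $\tfrac12\Delta|F|^2=\langle\nabla_A^*\nabla_A F,F\rangle-|\nabla_A F|^2$, valid for any section of a bundle with metric connection (here $\Delta=\diff^*\diff$ is the nonnegative Laplacian on functions), to obtain
\[ \tfrac12\Delta|F|^2=\langle\diff_A\diff_A^*F,F\rangle-\langle\mathcal{W}(F),F\rangle-|\nabla_A F|^2. \]
It then remains to identify $\langle\diff_A\diff_A^*F,F\rangle$ with $|\diff_A^*F|^2$ up to a coexact term. Since the pointwise inner product on $\mathfrak{g}$-valued forms (the Killing form tensored with the Riemannian metric) is parallel for $\diff_A$, one has the Leibniz rule $\diff\langle\diff_A^*F\wedge\star F\rangle=\langle\diff_A\diff_A^*F\wedge\star F\rangle-\langle\diff_A^*F\wedge\diff_A\star F\rangle$, together with $\diff_A\star F=\pm\star\diff_A^*F$; contracting, this is precisely the statement that $\langle\diff_A\diff_A^*F,F\rangle=|\diff_A^*F|^2-\diff^*\omega$ with $\omega$ the $1$-form of the statement, characterised by $\langle\omega,\alpha\rangle=\langle\alpha\wedge\diff_A^*F,F\rangle$. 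Substituting and bounding $-\langle\mathcal{W}(F),F\rangle\le C|F|^3$ yields \eqref{Bochnergeneral}.

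The only genuinely structural input is the Weitzenb\"ock formula for $\Lambda^2\otimes\mathfrak{g}$ together with the observation that its curvature term is algebraic and cubic in $F$ --- this is what makes the right-hand side of \eqref{Bochnergeneral} close, with no residual derivative terms beyond $|\nabla_A F|^2$ and $|\diff_A^*F|^2$ (the latter being controllable by $|\nabla_A F|^2$ once the $p$-Yang-Mills equation \eqref{equationdepYM} is invoked downstream). Everything else --- the sign bookkeeping in the Leibniz rule, the Hodge-star identities ($\star\star=\mathrm{id}$ on $2$-forms in dimension $4$), and the passage between the rough Laplacian and $\diff^*\diff$ --- is routine, and that bookkeeping is the main place where care is needed.
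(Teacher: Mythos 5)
Your proposal follows the same route as the paper's proof: the elementary Bochner identity $\tfrac12\Delta|F|^2=\langle\nabla_A^*\nabla_A F,F\rangle-|\nabla_A F|^2$, the Weitzenb\"ock formula combined with the Bianchi identity $\diff_A F=0$ to reduce the Hodge Laplacian to $\diff_A\diff_A^*F$ plus a purely algebraic term bounded by $C|F|^3$, and then a Leibniz/integration-by-parts step to rewrite $\langle\diff_A\diff_A^*F,F\rangle=|\diff_A^*F|^2-\diff^*\omega$. The paper performs this last step distributionally by pairing against a test function $\eta$ and using $\diff_A(\eta\diff_A^*F)=\diff\eta\wedge\diff_A^*F+\eta\,\diff_A\diff_A^*F$, which is exactly the pointwise Leibniz identity you invoke, so the two arguments coincide.
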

			
			\begin{proof} We make use of the following Böchner identity : 	\[  \frac{1}{2} \Delta | F |^2 = \langle \nabla_A^{\ast} \nabla_A F, F \rangle
				- | \nabla_A F |^2\] Indeed, consider an arbitrary point $q$ and chose an orthonormal frame $(e_i)_i$ such that $\nabla e_i = 0$ at $q$, then  \begin{align*}
					\frac{1}{2} \Delta |F|^2 = -	\frac{1}{2} \Tr( \nabla^2 |F|^2) = -	\frac{1}{2} \sum_i \nabla^2_{e_i,e_i} |F|^2 
					&= - 	\frac{1}{2} \sum_i (\nabla_{e_i} \nabla|F|^2 )(e_i) \\
					&= - \sum_i (\nabla_{e_i} \langle F, \nabla_A F \rangle )(e_i)\\
					& = - \sum_i |(\nabla_A)_{e_i} F|^2 + \langle F, (\nabla_A^2)_{e_i,e_i} F  \rangle \\
					&= - |\nabla_A F|^2 - \langle F, \Tr(\nabla^2_A F)\rangle \\
					& = - |\nabla_A F|^2 + \langle F, \nabla_A^*\nabla_A F\rangle. 
				\end{align*} 
				From the Weitzenböck formula (see \cite[Theorem 3.10]{BourguignonLawson}), we get
				\[\nabla_A^{\ast} \nabla_A F = (\diff_A^*\diff_A + \diff_A \diff_A^*)F + Q(F)\] where $Q$ is quadratic. Hence
				\begin{equation}
					\frac{1}{2} \Delta | F |^2 \leq \langle \diff _A \diff _A^{\ast} F, F \rangle +C \vert F\vert^3  - | \nabla_A F |^2
				\end{equation}
				and for all test functions $\eta$,
				\begin{eqnarray*}
					\int	\eta \langle \diff _A \diff _A^{\ast} F, F \rangle 
					& = & \int \langle \diff_A (\eta \diff^*_A F), F\rangle - \int \langle \diff \eta \wedge \diff^*_A F, F\rangle\\
					&= & \int \langle \eta  \diff^*_A F,  \diff^*_A F \rangle
					- \int \langle \diff \eta, \omega \rangle \\
					&=&  \int \eta (|\diff^*_A F|^2- \diff^*\omega) \end{eqnarray*}
				which achieves the proof.
			\end{proof}

			\begin{lemma}[Kato-Yau inequality for $p$-Yang-Mills connections]
				Let $A$ be a smooth $p$-Yang-Mills connection with curvature $F$ and  $p\geq 2$. Then \[\mu(p)|\diff |F_A||^2  \leq |\nabla_A F_A|^2\] where \[\mu(p) = \max\left(2-p/2+\frac{1}{2(p-1)}, 1\right)\] \label{KatoYau}
			\end{lemma}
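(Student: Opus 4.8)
\emph{Proof strategy.} Since the inequality is pointwise, I will fix a point $q$; where $F_A$ vanishes there is nothing to prove, because the left-hand side vanishes and $\mu(p)\ge 1$, so assume $F_A(q)\neq 0$. The plan is to reduce to a purely algebraic inequality on the $3$-tensor $T:=\nabla_A F_A$ at $q$ and then to run a Kato--Yau type computation exploiting the two structural constraints on $T$. Concretely, I would first choose a local gauge with $A(q)=0$ and a local orthonormal frame geodesic at $q$; then at $q$ the covariant derivative acts on the curvature as the ordinary derivative, the $\mathfrak{g}$-valued tensor $T$ is antisymmetric in its last two slots, and $\diff_A F_A$, $\diff_A^* F_A$ are, respectively, the total antisymmetrization and the divergence of $T$. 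Setting $\phi:=|F_A|$ and $w:=\tfrac{1}{2}\diff |F_A|^2=\phi\,\diff\phi$, one has $w_i=\langle\nabla_{e_i}F_A,F_A\rangle$, hence $|\diff |F_A||^2=\phi^{-2}|w|^2$, and the statement becomes $|T|^2\ge\mu(p)\,\phi^{-2}|w|^2$.

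Next I would record the two constraints. The Bianchi identity \eqref{equationdeBianchi} kills the totally antisymmetric part of $T$. The $p$-Yang--Mills equation \eqref{equationdepYM} gives, at $q$, that the divergence (``$\Lambda^1$'') part of $T$ is \emph{explicit},
\[
\sum_k(\nabla_{e_k}F_A)_{ki}=-\beta_i,\qquad \beta_i=\frac{p-2}{1+\phi^2}\sum_j w_j\,(F_A)_{ji},
\]
so that $\beta$ is $(p-2)(1+\phi^2)^{-1}$ times the contraction of $F_A$ by the $1$-form $w$; in particular $|\beta|\le(p-2)|\diff |F_A||$. Decomposing $T=T^{\delta}+T^{W}$ into its $\Lambda^1$-part $T^\delta$ (entirely determined by $\beta$, with $|T^\delta|^2=\tfrac{1}{3}|\beta|^2$) and its remaining ``Cartan'' part $T^{W}$ (whose totally antisymmetric part vanishes, by Bianchi), these being pointwise orthogonal, gives $|T|^2=|T^\delta|^2+|T^W|^2$ and a corresponding splitting $w=w^\delta+w^W$ of $w=\langle T,F_A\rangle$.

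Then I would invoke the classical refined Kato inequality in dimension four for the Cartan part, $|w^W|^2\le\tfrac{2}{3}\phi^2|T^W|^2$ — this is what produces the value $\tfrac{3}{2}$ at $p=2$ — and express $w^\delta$ and $|T^\delta|^2$, via the explicit formula for $\beta$, as a linear, respectively quadratic, form in $w$ weighted by the symmetric nonnegative matrix $G_{ij}:=\sum_k\langle(F_A)_{ik},(F_A)_{jk}\rangle$ of trace $2\phi^2$. This reduces the problem to minimizing $|T^\delta|^2+\tfrac{3}{2}\phi^{-2}|w-w^\delta|^2$ over admissible configurations; carrying out this finite-dimensional optimization is expected to yield the constant $\mu(p)=2-p/2+\tfrac{1}{2(p-1)}$ in the range where it exceeds $1$, while for the remaining $p$ the plain Kato inequality $|\diff |F_A||^2\le|\nabla_A F_A|^2$ already gives the claim, which is why one takes the maximum with $1$.

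The hard part, I expect, will be precisely this last optimization: obtaining the \emph{sharp} constant, rather than merely $\tfrac{3}{2}-O(p-2)$, seems to require using the precise algebraic form of the right-hand side of \eqref{equationdepYM} — the alignment of $\beta$ with $\iota_{\diff |F_A|}F_A$ — and not just the size bound $|\beta|\le(p-2)|\diff |F_A||$, together with a careful bookkeeping of the cross-terms between $T^\delta$ and $T^W$ so as not to lose a factor. An alternative, possibly cleaner route would be a direct Yau-type computation: normalize $F_A$ at $q$ as far as the $\mathfrak{g}$-index allows, use the symmetry of $\nabla_A F_A$ modulo Bianchi together with the scalar constraint coming from \eqref{equationdepYM}, and Cauchy--Schwarz; this should lead to the same constant through a comparable optimization, and one checks readily that both approaches return $\tfrac{3}{2}$ when $p=2$.
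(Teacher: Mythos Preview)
Your proposal sets up a plausible framework but leaves the decisive step --- the finite-dimensional optimization that would actually produce $\mu(p)=2-p/2+\tfrac{1}{2(p-1)}$ --- entirely undone, and you yourself flag this as ``the hard part''. Without it, the proposal does not establish the lemma; you have only argued that some constant of the form $\tfrac{3}{2}-O(p-2)$ is attainable. Moreover, your expectation that the sharp constant will require the \emph{precise alignment} of $\beta$ with $\iota_{\diff|F|}F_A$ rather than the mere size bound $|\beta|\le(p-2)\,|\diff|F_A||$ turns out to be wrong: only the size bound is used.

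The paper's argument is much more direct and sidesteps the Cartan decomposition altogether. At $q$, choose orthonormal coordinates so that $\partial_1$ is aligned with $\diff|F|$; then $|\diff|F||\le|\nabla_1 F|$ by the ordinary Kato inequality. Now use Bianchi to rewrite each of $\nabla_1 F_{23},\nabla_1 F_{24},\nabla_1 F_{34}$ as a sum of two terms $\nabla_j F_{k\ell}$ with $j\ne 1$, and use the $p$-Yang--Mills equation (in the form $\diff_A\star F=(p-2)G$ with $|G|\le|\diff|F||$) to rewrite each of $\nabla_1 F_{12},\nabla_1 F_{13},\nabla_1 F_{14}$ as a sum of two such terms plus a component of $(p-2)G$. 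One application of Young's inequality $|a+\varepsilon b|^2\le(1+\varepsilon)|a|^2+\varepsilon(1+\varepsilon)|b|^2$ with $\varepsilon=p-2$, together with $|a+b|^2\le 2|a|^2+2|b|^2$, then gives
\[
|\nabla_1 F|^2\;\le\;2(p-1)\bigl(|\nabla F|^2-|\nabla_1 F|^2\bigr)+(p-1)(p-2)\,|G|^2,
\]
and since $|G|^2\le|\diff|F||^2\le|\nabla_1 F|^2$, rearranging yields $(1+(p-1)(4-p))\,|\diff|F||^2\le 2(p-1)\,|\nabla F|^2$, which is exactly the claimed constant. No optimization over tensor configurations is needed; the whole proof is a short coordinate computation.
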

			
			\begin{proof} We write $\nabla = \nabla_A$ to simplify notations. Recall that for any $2$-form $\Omega$:\begin{eqnarray*}
					\diff_{A} \Omega (X_0, X_1, X_2) & = & \nabla_{X_0} (\Omega (X_1, X_2)) -
					\nabla_{X_1} (\Omega (X_0, X_2)) + \nabla_{X_2} (\Omega(X_0, X_1))\\
					&  & - \Omega ([X_0, X_1], X_2) + \Omega ([X_0, X_2], X_1) -\Omega ([X_1, X_2], X_0)
				\end{eqnarray*}
				Fix a point $q$ and consider coordinates $(x^i)_{1\leq i\leq4}$ around $q$ such that at $q$, $(\frac{\partial}{\partial x^i})_{1\leq i\leq4}$ is orthonormal and  \[\left|
				\frac{\partial}{\partial x^1} | F | \right| = | \diff | F | |.\] From the usual Kato inequality \[ | \diff | F | | \leq |\nabla_1 F|.\] For every $2$-form $\Omega$, $ \diff_{A} \Omega = 0$ if and only if for all $(i,j,k) \in
				\{ (1, 2, 3), (1, 2, 4), (1, 3, 4), (2, 3, 4) \}$ \begin{align*}
					0 &= \diff_{A} \Omega \left(
					\partial_{i}, \partial_{j},
					\partial_{k} \right) \\
					&= \nabla_{\partial_{i}} (\Omega( \partial_{j},
					\partial_{k})) + \nabla_{\partial_{j}} (\Omega( \partial_{k},
					\partial_{i})) + \nabla_{\partial_{k}} (\Omega( \partial_{i},
					\partial_{j}))\\
					&=\nabla_{i} \Omega_{j k} + \nabla_{j} \Omega_{k i} + \nabla_{k} \Omega_{i j}.
				\end{align*}   We write $F = \frac{1}{2} F_{i j} \diff x^i \wedge \diff x^j$. The Bianchi identity $\diff_A F=0$ reads
				\begin{equation} \left\{\begin{array}{l}
						\nabla_1 F_{23} + \nabla_2 F_{3 1} + \nabla_3 F_{1 2} = 0\\
						\nabla_1 F_{2 4} + \nabla_2 F_{4 1} + \nabla_4 F_{1 2} = 0\\
						\nabla_1 F_{3 4} + \nabla_3 F_{4 1} + \nabla_4 F_{1 3} = 0\\
						\nabla_2 F_{3 4} + \nabla_3 F_{4 2} + \nabla_4 F_{2 3} = 0
					\end{array}\right.
				\end{equation} The $p$-Yang-Mills equation $\diff_A^* \left( (1+|F|^2)^{\frac{p}{2}-1 } F\right)=0$ can be written as  $\diff_A \star F = (p - 2) G$ where 
				\[ | G |^2 = \left| \frac{| F | \diff | F | \wedge \star F}{1 + | F |^2}
				\right|^2 \leq | \diff | F | |^2. \] Noting that $(\star F)_{i j} = F_{k l}$ as long as $(k, l, i, j)$ is a cyclic permutation of $(1,2,3,4)$, we deduce similarly the following identities
				\begin{equation} 
					\left\{\begin{array}{l}
						\nabla_1 F_{41} + \nabla_2 F_{2 4} + \nabla_3 F_{34} = (p-2) G_{1
							23}\\
						\nabla_1 F_{3 1} + \nabla_2 F_{3 2} + \nabla_4 F_{34} =  (p-2) G_{1
							24}\\
						\nabla_1 F_{12} + \nabla_3 F_{3 2} + \nabla_4 F_{42} = (p-2) G_{13
							4}\\
						\nabla_2 F_{12} + \nabla_3 F_{1 3} + \nabla_4 F_{41} = (p-2)
						G_{234}
					\end{array}\right.
				\end{equation} 
				In any vector space $V$ with an inner product, if $a,b\in V$ and $\varepsilon\geq 0$, using Young inequality:
				\[ | a + \varepsilon b |^2 = | a |^2 + \varepsilon^2 | b |^2 + 2 \varepsilon
				\langle a, b \rangle \leq (1 + \varepsilon) | a |^2 + \varepsilon (1 +
				\varepsilon) | b |^2 \] At $q$, we deduce the following estimate
				\begin{align*}
					| \nabla_1 F |^2 = &\, | \nabla_1 F_{1 2} |^2 + | \nabla_1 F_{13} |^2 + |
					\nabla_1 F_{1 4} |^2 + | \nabla_1 F_{2 3} |^2 + | \nabla_1 F_{24} |^2 + |
					\nabla_1 F_{34} |^2\\
					= &\, | \nabla_3 F_{32} + \nabla_4 F_{42} - (p-2) G_{134} |^2 + |
					\nabla_2 F_{32} + \nabla_4 F_{34} - (p-2) G_{124} |^2 + | \nabla_2
					F_{31} + \nabla_3 F_{1 2} |^2\\
					& + | \nabla_2 F_{24} + \nabla_3 F_{34} - (p-2) G_{123} |^2 + |
					\nabla_2 F_{ 4 1} + \nabla_4 F_{1 2} |^2 + | \nabla_3 F_{41} + \nabla_4 F_{1
						3} |^2\\
					\leq &\, 2 (p-1) (| \nabla_3 F_{32} |^2 + | \nabla_4 F_{42} |^2 + |
					\nabla_2 F_{32} |^2 + | \nabla_4 F_{34} |^2 + | \nabla_2 F_{24} |^2 + |
					\nabla_3 F_{34} |^2)\\
					&  + 2 (| \nabla_2 F_{31} |^2 + | \nabla_3 F_{1 2} |^2 + | \nabla_2 F_{41} |^2 + | \nabla_4 F_{1 2} |^2 + | \nabla_3 F_{41} |^2 + | \nabla_4 F_{1 3}
					|^2)\\
					& + (p-1) (p-2) (| G_{134} |^2 + | G_{124} |^2 + | G_{123}
					|^2)
				\end{align*} Therefore \begin{align*}
					| \nabla_1 F |^2 \leq & \, 2(p-1)(|\nabla F|^2-|\nabla_1 F|^2)+(p-1)(p-2)|G|^2\\
					|\diff |F||^2  \leq & \, 2(p-1)|\nabla F|^2 -(4-p)(p-1)|\diff |F||^2 
				\end{align*}In conclusion \[(1 + (p-1) (4-p)) | \nabla_1 F |^2 \leq  2 (p-1)
				| \nabla F |^2.\] Since the inequality  $ | \nabla_1 F |^2 \leq  
				| \nabla F |^2$ is trivial, we have the desired result.
			\end{proof}
			
			\begin{remark}
				Notice that $\mu$ is a decreasing function of $p$ and that $\mu(2)=3/2$.
			\end{remark}

			\begin{lemma} \label{Aestimate} Let $\mathcal{A} : \Lambda^1(\R^4)\rightarrow \Lambda^1(\R^4)$ defined by the equation \[\langle \mathcal{A}(\alpha),\beta\rangle = \frac{\langle \alpha \wedge \star F, \beta \wedge \star F \rangle}{1+|F|^2} ,\] then 
				\[ 0 \leq \mathcal{A}^{\alpha \beta} \frac{x_{\alpha} x_{\beta}}{| x
					|^2} \leq \mathcal{A}^{\alpha \beta} \delta_{\alpha \beta}
				\leq 2 .\]
			\end{lemma}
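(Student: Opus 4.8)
The idea is to recognise $\mathcal{A}$ as a manifestly symmetric, positive semi-definite pointwise endomorphism and then to reduce all three inequalities to computing its trace.

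First I would record that $\mathcal{A}$ is self-adjoint and non-negative: the defining bilinear expression $\langle \mathcal{A}(\alpha),\beta\rangle = (1+|F|^2)^{-1}\langle \alpha\wedge\star F,\ \beta\wedge\star F\rangle$ is symmetric in $(\alpha,\beta)$, and taking $\beta=\alpha$ gives $\langle \mathcal{A}(\alpha),\alpha\rangle = (1+|F|^2)^{-1}|\alpha\wedge\star F|^2\geq 0$. Applying this with the $1$-form dual to $x/|x|$ immediately yields the leftmost inequality $\mathcal{A}^{\alpha\beta}\frac{x_{\alpha} x_{\beta}}{|x|^2}\geq 0$.

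For the middle inequality I would choose an orthonormal frame $(e_1,\dots,e_4)$ of $\R^4$ with $e_1 = x/|x|$. Then $\mathcal{A}^{\alpha\beta}\frac{x_{\alpha} x_{\beta}}{|x|^2}=\langle \mathcal{A}(e_1),e_1\rangle$ whereas $\mathcal{A}^{\alpha\beta}\delta_{\alpha\beta}=\sum_{i=1}^4\langle \mathcal{A}(e_i),e_i\rangle=\Tr\mathcal{A}$; since every diagonal term $\langle \mathcal{A}(e_i),e_i\rangle$ is non-negative by the previous step, the single term at $i=1$ is bounded above by the whole sum. So the problem is reduced to the bound $\Tr\mathcal{A}\leq 2$.

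The only genuine computation is this trace bound, and the hard part (such as it is) is a small piece of exterior-algebra bookkeeping special to dimension four. Setting $G:=\star F$ and writing $G=\sum_{j<k}G_{jk}\,e_j\wedge e_k$ with coefficients in $\mathfrak{g}$ (the $\mathfrak{g}$-inner product only adds a harmless extra sum over a basis of $\mathfrak{g}$), one has $e_i\wedge G=\sum_{j<k,\ j,k\neq i}G_{jk}\,e_i\wedge e_j\wedge e_k$, and because $\{e_i\wedge e_j\wedge e_k\}_{i<j<k}$ is an orthonormal basis of $\Lambda^3\R^4$ this gives $|e_i\wedge G|^2=\sum_{j<k,\ j,k\neq i}|G_{jk}|^2$. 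Summing over $i$, each $|G_{jk}|^2$ is counted once for every $i\notin\{j,k\}$, that is exactly $4-2=2$ times, so $\sum_{i=1}^4|e_i\wedge G|^2=2|G|^2=2|\star F|^2=2|F|^2$. Hence $\Tr\mathcal{A}=(1+|F|^2)^{-1}\sum_{i=1}^4|e_i\wedge\star F|^2=\dfrac{2|F|^2}{1+|F|^2}\leq 2$, closing the chain of inequalities. I do not anticipate any real difficulty; one just has to be slightly careful that the factor is $2$ precisely because $G$ is a $2$-form and the ambient dimension is $4$ (in dimension $n$ it would be $n-2$), and that orthogonality of the relevant triple wedge products is what turns $|e_i\wedge G|^2$ into a plain sum of squared components.
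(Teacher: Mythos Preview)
Your proof is correct and follows essentially the same route as the paper's: non-negativity from the squared-norm expression, the trace identity $\sum_i|e_i\wedge\star F|^2=2|F|^2$ (which is exactly the Hilbert--Schmidt computation in the paper's \cref{normofwedge}), and hence $\Tr\mathcal{A}=2|F|^2/(1+|F|^2)\leq 2$. The only cosmetic difference is that for the middle inequality the paper uses Cauchy--Schwarz on $\bigl|\sum_\alpha \tfrac{x^\alpha}{|x|}\,\diff x^\alpha\wedge\star F\bigr|^2$ instead of your ``diagonal entry of a positive semi-definite operator is bounded by its trace'' observation, but these are equivalent here.
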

			
			\begin{proof} We use the results of \cref{normofwedge}.
				
				\begin{align*}
					\mathcal{A}^{\alpha \beta} \frac{x_{\alpha} x_{\beta}}{| x |^2}  & =  \frac{1}{1 +
						| F |^2} \left| \sum_{\alpha} \frac{x^{\alpha}}{|x|} \diff x^{\alpha}
					\wedge \star F \right|^2\\
					& \leq  \frac{1}{1 + | F |^2} \sum_{\alpha}
					\frac{(x^{\alpha})^2}{|x|^2} \times \sum_{\alpha} | \diff x^{\alpha}
					\wedge \star F |^2\\
					& \leq \frac{\sum_{\alpha} | \diff x^{\alpha} \wedge \star F
						|^2}{1 + | F |^2} =\mathcal{A}^{\alpha \beta} \delta_{\alpha \beta}
				\end{align*}
				Furthermore, \[ \sum_{\alpha} | \diff x^{\alpha} \wedge \star F
				|^2 = 2|F|^2\] so \[\mathcal{A}^{\alpha \beta} \delta_{\alpha \beta} = \frac{2|F|^2}{1+|F|^2}\leq 2.\]
			\end{proof}
			
			\begin{corollary}[Böchner formula for $p$-Yang-Mills connections]
				\label{CBochnerpYM} There exists $C>0$, such that if $A$ is a $p$-Yang-Mills connection with curvature $F$, then $\phi = | F |^2$ satisfies
				\begin{equation}
					\mathcal{L}_p \phi \leq C  \phi ^{3 / 2}  - 2(1-(p-2)^2) | \nabla_A F |^2
					\label{BochnerpYM}
				\end{equation}
				where $\mathcal{L}_p \phi = \Delta \phi - (p - 2) \diff^*\left(\mathcal{A}(\diff \phi)\right) $ and $\mathcal{A} $ is defined in the previous lemma.
			\end{corollary}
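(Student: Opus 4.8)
The plan is to read \eqref{BochnerpYM} off the general Böchner inequality \eqref{Bochnergeneral} once the $p$-Yang-Mills equation \eqref{equationdepYM} is inserted; the only real point is that, for a $p$-Yang-Mills connection, the lower-order $1$-form $\omega$ occurring in \eqref{Bochnergeneral} is exactly $-\tfrac{p-2}{2}\mathcal{A}(\diff\phi)$, so that its divergence is precisely the term that upgrades $\Delta$ to the non-divergence operator $\mathcal{L}_p$.

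Concretely, I would first specialise \eqref{Bochnergeneral} to $\phi=|F|^2$, which reads $\tfrac12\Delta\phi\le|\diff_A^*F|^2-\diff^*\omega-|\nabla_A F|^2+C|F|^3$ with $\langle\omega,\alpha\rangle=-\langle\diff_A\star F,\alpha\wedge\star F\rangle$. Writing the $p$-Yang-Mills equation in the form $\diff_A\star F=(p-2)G$ with $G=\tfrac12\,\frac{\diff\phi\wedge\star F}{1+|F|^2}$ (exactly as in the proof of \cref{KatoYau}, using $|F|\diff|F|=\tfrac12\diff\phi$), one gets for every $1$-form $\alpha$
\[ \langle\omega,\alpha\rangle=-(p-2)\langle G,\alpha\wedge\star F\rangle=-\tfrac{p-2}{2}\,\frac{\langle\diff\phi\wedge\star F,\;\alpha\wedge\star F\rangle}{1+|F|^2}=-\tfrac{p-2}{2}\,\langle\mathcal{A}(\diff\phi),\alpha\rangle \]
by the definition of $\mathcal{A}$ recalled in \cref{Aestimate}. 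Hence $\omega=-\tfrac{p-2}{2}\mathcal{A}(\diff\phi)$ and $-2\diff^*\omega=(p-2)\diff^*(\mathcal{A}(\diff\phi))$; multiplying \eqref{Bochnergeneral} by $2$ and moving this term to the left produces
\[ \mathcal{L}_p\phi=\Delta\phi-(p-2)\diff^*(\mathcal{A}(\diff\phi))\le 2|\diff_A^*F|^2-2|\nabla_A F|^2+2C|F|^3. \]

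It then remains to absorb $2|\diff_A^*F|^2$ into the good negative term. Since $\star$ is a pointwise isometry, $|\diff_A^*F|=|\diff_A\star F|=(p-2)|G|$, and combining the pointwise bound $|G|\le|\diff|F||$ (established in the proof of \cref{KatoYau} via the wedge estimates of \cref{normofwedge}) with the ordinary Kato inequality $|\diff|F||\le|\nabla_A F|$ yields $|\diff_A^*F|^2\le(p-2)^2|\nabla_A F|^2$. Substituting and using $|F|^3=\phi^{3/2}$ gives $\mathcal{L}_p\phi\le C\phi^{3/2}-2(1-(p-2)^2)|\nabla_A F|^2$ after renaming the constant. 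I do not expect any genuine obstacle here: every ingredient — the general Böchner formula, the representation $\diff_A\star F=(p-2)G$ with $|G|\le|\diff|F||$, and the Kato inequality — is already available, and the only thing requiring care is the bookkeeping of the numerical factors (those coming from $\diff\phi=2|F|\diff|F|$, from the $\tfrac12$ in $G$, and from multiplying \eqref{Bochnergeneral} by $2$), which must line up so that the coefficient of $|\nabla_A F|^2$ comes out to be exactly $-2(1-(p-2)^2)$.
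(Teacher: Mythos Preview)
Your proof is correct and follows exactly the same route as the paper: identify the $1$-form $\omega$ from \eqref{Bochnergeneral} with $\pm\tfrac{p-2}{2}\mathcal{A}(\diff\phi)$ via the $p$-Yang-Mills equation, bound $|\diff_A^*F|^2\le(p-2)^2|\nabla_A F|^2$ using the estimate $|G|\le|\diff|F||$ from \cref{KatoYau} together with Kato, and combine. Your sign bookkeeping is in fact cleaner than the paper's own intermediate display (which has a sign slip in the expression for $\langle\omega,\alpha\rangle$ that is silently corrected in the following line).
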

			
			\begin{proof}
				The $p$-Yang-Mills equation writes
				\[ \diff _A \star F +  \frac{p-2}{2} \star \frac{\diff 
					\phi \wedge \star F}{1 + | F |^2} =0 \]
				so in this setting, \[ \langle\omega , \alpha \rangle =  \frac{p-2}{2} \frac{\langle\diff\phi \wedge \star F,\alpha \wedge \star  F  \rangle}{1+|F|^2}\] and, using \cref{KatoYau} and \cref{normofwedge} below, \[|\diff_A^* F|^2 \leq (p-2)^2 |\nabla_A F|^2\]
				then \eqref{Bochnergeneral} becomes
				\begin{eqnarray*}
					\frac{1}{2} \Delta \phi & \leq & \frac{1}{2} (p - 2) \diff^* \left(\mathcal{A}(\diff \phi) \right)  -(1-(p-2)^2) | \nabla _A F |^2 +C \phi^{3 / 2}.
				\end{eqnarray*}
				
			\end{proof}
			
			\begin{lemma} \label{normofwedge} Let $(M^n,h)$ a Riemannian manifold. If $X$ is a vector field (identified with the $1$-form $\langle X, \cdot \rangle$) and $\omega$ is a (vector-valued) differential $k$-form on $M$ then  
				\[ |\iota_X \omega|^2 + |X \wedge\omega|^2 = |X|^2|\omega|^2\] and in particular \[  |X \wedge\omega|^2 \leq |X|^2|\omega|^2,\] and the Hilbert-Schmidt norm of the map $X\mapsto X\wedge \omega$ is $(n-k)|\omega|^2$.
			\end{lemma}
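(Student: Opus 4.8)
The plan is to reduce everything to pointwise linear algebra on the exterior algebra of a single tangent space and to exploit a well-adapted orthonormal basis. Since all three quantities $|\iota_X\omega|^2$, $|X\wedge\omega|^2$, $|\omega|^2$ are computed fiberwise and are real-bilinear in the $\mathfrak{g}$-valued coefficients of $\omega$, it suffices to treat the scalar-valued case: writing $\omega=\sum_a \omega^a\otimes T_a$ for an orthonormal basis $(T_a)$ of $\mathfrak{g}$, the interior and exterior products act only on the form part, so $|\iota_X\omega|^2=\sum_a|\iota_X\omega^a|^2$, $|X\wedge\omega|^2=\sum_a|X\wedge\omega^a|^2$ and $|\omega|^2=\sum_a|\omega^a|^2$, and the identity for $\omega$ follows once it is known for each $\omega^a$.

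Fix a point $q$ and assume $X\neq 0$ there (the statement is trivial otherwise). Complete $e_1:=X/|X|$ to an orthonormal frame $(e_1,\dots,e_n)$ with dual coframe $(e^1,\dots,e^n)$, so that under the metric identification $X=|X|\,e^1$. Decompose $\omega=e^1\wedge\alpha+\beta$, where $\alpha\in\Lambda^{k-1}$ and $\beta\in\Lambda^{k}$ involve only $e^2,\dots,e^n$, i.e. $\iota_{e_1}\alpha=\iota_{e_1}\beta=0$; this decomposition is orthogonal (terms containing $e^1$ versus terms not containing it), and both $e^1\wedge(\cdot)$ restricted to the span of the $e^I$ with $1\notin I$ are linear isometries onto their images, hence $|\omega|^2=|e^1\wedge\alpha|^2+|\beta|^2=|\alpha|^2+|\beta|^2$. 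Using the antiderivation property $\iota_{e_1}(e^1\wedge\alpha)=(\iota_{e_1}e^1)\alpha-e^1\wedge\iota_{e_1}\alpha=\alpha$ one finds $\iota_X\omega=|X|\,\alpha$ and $X\wedge\omega=|X|\,e^1\wedge\beta$, whence $|\iota_X\omega|^2=|X|^2|\alpha|^2$ and $|X\wedge\omega|^2=|X|^2|\beta|^2$. Adding the two gives $|\iota_X\omega|^2+|X\wedge\omega|^2=|X|^2(|\alpha|^2+|\beta|^2)=|X|^2|\omega|^2$, and discarding the nonnegative contraction term yields $|X\wedge\omega|^2\le|X|^2|\omega|^2$.

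For the Hilbert–Schmidt norm of the linear map $T:X\mapsto X\wedge\omega$ from the $n$-dimensional space $\Lambda^1$ to $\Lambda^{k+1}$, I would evaluate $\|T\|_{HS}^2=\sum_{i=1}^n|e^i\wedge\omega|^2$ by invoking the identity just proved: $|e^i\wedge\omega|^2=|e_i|^2|\omega|^2-|\iota_{e_i}\omega|^2=|\omega|^2-|\iota_{e_i}\omega|^2$. It then remains to record the elementary contraction identity $\sum_{i=1}^n|\iota_{e_i}\omega|^2=k|\omega|^2$, checked on the orthonormal basis $(e^I)_{|I|=k}$ — for $\omega=e^I$ one has $\iota_{e_i}e^I=\pm e^{I\setminus\{i\}}$ when $i\in I$ and $\iota_{e_i}e^I=0$ otherwise, so the sum equals $|I|=k$ — and extended by orthogonality. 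Summing over $i$ gives $\|T\|_{HS}^2=n|\omega|^2-k|\omega|^2=(n-k)|\omega|^2$.

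There is no genuine obstacle here: the argument is elementary linear algebra, and the only points requiring a little care are the sign bookkeeping in $\iota_{e_1}(e^1\wedge\alpha)=\alpha$, the orthogonality of the splitting $\omega=e^1\wedge\alpha+\beta$, and the passage from the scalar to the $\mathfrak{g}$-valued case, all of which are routine.
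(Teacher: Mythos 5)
Your proof is correct and takes essentially the same route as the paper: both arguments choose an orthonormal frame with $e_1$ aligned to $X$ and reduce the first identity to sorting basis $k$-forms by whether they contain $e^1$, which is exactly your $\omega=e^1\wedge\alpha+\beta$ splitting in more structural language. The only small divergences are your explicit reduction from $\mathfrak g$-valued to scalar forms (the paper leaves this implicit) and your derivation of the Hilbert--Schmidt count via $\sum_i|\iota_{e_i}\omega|^2=k|\omega|^2$ rather than the paper's direct tally $\sum_{i,\,I\not\ni i}|\omega_I|^2=(n-k)\sum_I|\omega_I|^2$; these are equivalent bookkeeping choices, not different methods.
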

			
			\begin{proof} Consider a (local) orthonormal frame $(e_1, \ldots, e_n)$ such that $X = |X|  e_1$, and denote by $(e^1,\dots, e^n)$ the associated coframe. When $I=\{i_1, \dots i_k\}\subset \{1,\dots, n\}$ with $i_1 <\dots <i_k$ we write $e^I = e^{i_1}\wedge \dots \wedge e^{i_k}$. Recall that $(e^I)_{|I|=k}$ is an orthonormal basis of $k$-forms. If $\omega$ is a $k$-form, write \[\omega = \sum_{|I|=k} \omega_I e^I.\]
				\begin{align*}
					| \iota_{e_1} \omega |^2 + | e^1 \wedge \omega |^2 & =  \left| \sum_{| I |
						= k} \omega_I \iota_{e_1} e^I \right|^2 + \left| \sum_{| I | = k} \omega_I
					e^1 \wedge e^I \right|^2\\
					& =  \left| \sum_{| I | = k, 1 \in I} \omega_I \iota_{e_1} e^I \right|^2 +
					\left| \sum_{| I | = k} \omega_I e^1 \wedge e^I \right|^2\\
					& =  \left| \sum_{| I | = k, 1 \in I} \omega_I \iota_{e_1} (e^1 \wedge
					e^{I\backslash \{ 1 \}}) \right|^2 + \left| \sum_{| I | = k, 1 \not\in I}
					\omega_I e^1 \wedge e^I \right|^2\\
					& =  \left| \sum_{| I | = k, 1 \in I} \omega_I e^{I\backslash \{ 1 \}}
					\right|^2 + \left| \sum_{| I | = k, 1 \not\in I} \omega_I e^{\{ 1 \} \cup I}
					\right|^2\\
					& =  \sum_{| I | = k, 1 \in I} | \omega_I |^2 + \sum_{| I | = k, 1 \not\in I}
					| \omega_I |^2\\
					& =  \sum_{| I | = k} | \omega_I |^2\\
					& =  | \omega |^2
				\end{align*}
				By homogeneity, we obtain :
				\[ | \iota_X \omega |^2 + | X \wedge \omega |^2 = | \omega |^2 | X |^2 \]
				and in particular
				\[ | X \wedge \omega |^2 \leq | \omega |^2 | X |^2.\] Moreover,
				\[ \sum_{i=1}^n | e^i \wedge \omega |^2 =  \sum_{\substack{i, I \\ | I | = k, i \not\in I}} |
				\omega_I |^2 = \sum_{| I | = k} (n - k) | \omega_I |^2 = (n - k) | \omega
				|^2.\]
			\end{proof}

			\begin{proposition} \label{Bochnerpower}
				There exists a constant $C > 0$ such that  if $A$ is a $p$-Yang-Mills connection with curvature $F$, then for all $\beta \geq 0$ and $p\geq 2$, then $\phi = | F |^2$ satisfies
				\[ \mathcal{L}_p \phi^{\beta} \leq C \beta  \phi^{\beta+1/2} +
				2 \beta \phi^{\beta-1} (\kappa (p, \beta) | \diff  | F |
				|^2 - (1-(p-2)^2) | \nabla_A F |^2), \]
				where $\kappa (p, \beta) = 2 (p-1) (1 - \beta)_+$.
			\end{proposition}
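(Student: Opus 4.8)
The plan is to obtain the stated inequality by applying a chain rule to $\mathcal{L}_p$ acting on $\phi^{\beta}$ and then feeding in the Böchner inequality for $\phi=|F|^2$ already established in \cref{CBochnerpYM}. Throughout I would write $\mathcal{M}:=\mathrm{id}-(p-2)\mathcal{A}$, a field of symmetric endomorphisms of $\Lambda^1$, so that by \cref{LpF} one has $\mathcal{L}_p u=\diff^{\ast}(\mathcal{M}\,\diff u)$; the computation below takes place on the open set $\{\phi>0\}$, which is the only place it is used.

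First I would carry out the chain rule. From $\diff(\phi^{\beta})=\beta\phi^{\beta-1}\,\diff\phi$, linearity of $\mathcal{M}$ in the differential form, and the Leibniz rule $\diff^{\ast}(f\xi)=f\,\diff^{\ast}\xi-\langle \diff f,\xi\rangle$ valid for a function $f$ and a $1$-form $\xi$, one obtains
\[ \mathcal{L}_p(\phi^{\beta})=\diff^{\ast}\!\big(\beta\phi^{\beta-1}\mathcal{M}(\diff\phi)\big)=\beta\phi^{\beta-1}\,\mathcal{L}_p\phi-\beta(\beta-1)\,\phi^{\beta-2}\,\langle\mathcal{M}(\diff\phi),\diff\phi\rangle. \]
Substituting \cref{CBochnerpYM}, namely $\mathcal{L}_p\phi\le C\phi^{3/2}-2(1-(p-2)^2)|\nabla_A F|^2$, into the first term gives
\[ \beta\phi^{\beta-1}\mathcal{L}_p\phi\le C\beta\,\phi^{\beta+1/2}-2\beta(1-(p-2)^2)\,\phi^{\beta-1}|\nabla_A F|^2, \]
which is already the desired right-hand side apart from the contribution of $\langle\mathcal{M}(\diff\phi),\diff\phi\rangle$.

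It then remains to control the term $-\beta(\beta-1)\phi^{\beta-2}\langle\mathcal{M}(\diff\phi),\diff\phi\rangle$. Since $\mathcal{A}\ge0$ (it is $\alpha\mapsto\langle\,\cdot\wedge\star F,\alpha\wedge\star F\rangle/(1+|F|^2)$) and $\langle\mathcal{A}\alpha,\alpha\rangle\le|\alpha|^2$ — immediate from $|\alpha\wedge\star F|^2\le|\alpha|^2|F|^2$ (\cref{normofwedge}) — the endomorphism $\mathcal{M}$ is nonnegative for $p$ close to $2$ (in fact $p\in[2,3]$ suffices), so $0\le\langle\mathcal{M}(\diff\phi),\diff\phi\rangle\le|\diff\phi|^2$, and $|\diff\phi|^2=4|F|^2|\diff|F||^2=4\phi\,|\diff|F||^2$. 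If $\beta\ge1$ the coefficient $-\beta(\beta-1)$ is nonpositive, the gradient term is $\le0$, and the claim holds with $\kappa(p,\beta)=0$. If $0\le\beta<1$, then $-\beta(\beta-1)=\beta(1-\beta)>0$ and
\[ \beta(1-\beta)\,\phi^{\beta-2}\langle\mathcal{M}(\diff\phi),\diff\phi\rangle\le 4\beta(1-\beta)\,\phi^{\beta-1}|\diff|F||^2\le 2\beta\,\kappa(p,\beta)\,\phi^{\beta-1}|\diff|F||^2, \]
the last inequality because $\kappa(p,\beta)=2(p-1)(1-\beta)$ and $p-1\ge1$. Adding the two displayed bounds and relabelling the constant yields the statement.

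The computation is essentially routine once the chain rule is in place; the only point deserving care is the sign of $\langle\mathcal{M}(\diff\phi),\diff\phi\rangle$ in the regime $\beta\ge1$ — there $\kappa$ vanishes, so there is no room to absorb a positive gradient term, and one genuinely needs $\mathcal{M}\ge0$. This is exactly why the mild restriction to $p$ near $2$ appears, and it is harmless since we are ultimately interested only in $p\to2^{+}$.
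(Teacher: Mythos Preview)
Your proof is correct and follows essentially the same route as the paper: apply the chain rule to $\mathcal{L}_p(\phi^\beta)$, feed in the B\"ochner inequality from \cref{CBochnerpYM}, and control the cross term via $\langle\mathcal{A}\alpha,\alpha\rangle\le|\alpha|^2$ (\cref{normofwedge}) together with $|\diff\phi|^2=4\phi\,|\diff|F||^2$. Your explicit observation that the case $\beta\ge 1$ needs $\mathcal{M}=\mathrm{id}-(p-2)\mathcal{A}\ge 0$ (hence $p\le 3$) is a point the paper's argument leaves implicit.
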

			
			\begin{proof} We have
				\begin{eqnarray*}
					\mathcal{L}_p \phi^{\beta} & = & \beta \phi^{\beta - 1} \mathcal{L}_p \phi +
					\beta (1 - \beta) \phi^{\beta - 2} \left(| \diff  \phi |^2 + (p - 2)  \frac{\vert \diff  \phi
						\wedge \star F|^2}{1+\vert F\vert^2} \right).
				\end{eqnarray*} Using \cref{CBochnerpYM} and \cref{normofwedge} , we have
				\begin{eqnarray*}
					\mathcal{L}_p \phi^{\beta} & \leq &\beta \phi^{\beta-2}(\phi \mathcal{L}_p \phi  + (1-\beta)_+ (p-1)|\diff \phi|^2)\\
					& \leq &\beta \phi^{\beta-2}(C\phi^{\frac{5}{2}}-2(1-(p-2)^2) \phi | \nabla_A F |^2 +4(1-\beta)_+ (p-1)\phi   | \diff 
					| F | |^2)\\
					& \leq &  C \beta  \phi^{\beta+1/2} +
					2 \beta \phi^{\beta-1} (2 (p - 1) (1 - \beta)_+  | \diff 
					| F | |^2-  (1-(p-2)^2)| \nabla_A F |^2).
				\end{eqnarray*}
			\end{proof}
			
			\begin{remark}
				Notice that $\beta \mapsto \kappa (p, \beta)$ is non-increasing and
				that for $\beta \geq 1$, $\kappa (p, \beta) = 0$.
			\end{remark}

			\begin{lemma}[Hardy inequality] \label{Hardyi}
				If $f\in \mathcal{C}^\infty_c(\R^4)$, \begin{equation}
					\label{Hardy} \int_{\R^4}  \frac{|f|^2}{|x|^2} \diff x \leq \int_{\R^4} |\nabla f |^2 \diff x.
				\end{equation} 
				Which is also true for $f\in \dot{W}^{1,2}(\R^4)$ by density.
			\end{lemma}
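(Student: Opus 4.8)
The inequality \eqref{Hardy} is nothing but the classical Hardy inequality in dimension four, in which the sharp constant $\left(\tfrac{2}{n-2}\right)^2$ happens to equal $1$ because $n=4$. The plan is to prove it for $f\in\mathcal{C}^\infty_c(\R^4)$ by the elementary ``completing the square'' argument and then to extend it to $\dot{W}^{1,2}(\R^4)$ by density, as already announced in the statement.

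The only point deserving a little care is the distributional identity $\operatorname{div}\!\left(\frac{x}{|x|^2}\right)=\frac{2}{|x|^2}$ on $\R^4$. Away from the origin it is the direct computation $\sum_{i}\partial_i\!\big(x_i/|x|^2\big)=4/|x|^2-2/|x|^2$. There is no extra Dirac mass at the origin: the flux of $x/|x|^2$ through a small sphere is $\int_{\partial\mathrm{B}_\varepsilon}\frac{x}{|x|^2}\cdot\nu\,\diff\sigma=\varepsilon^{-1}\,|\partial\mathrm{B}_1|\,\varepsilon^{3}=|\partial\mathrm{B}_1|\,\varepsilon^{2}\to 0$ as $\varepsilon\to0$ (this is precisely where $n\ge 3$ is used), while $1/|x|^2\in\mathrm{L}^1_{\loc}(\R^4)$; hence the identity holds in $\mathcal{D}'(\R^4)$.

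For $f\in\mathcal{C}^\infty_c(\R^4)$ I would then expand the non-negative quantity
\[
0\le\int_{\R^4}\left|\nabla f+\frac{x}{|x|^2}\,f\right|^2\diff x=\int_{\R^4}|\nabla f|^2\,\diff x+\int_{\R^4}\nabla\!\left(|f|^2\right)\cdot\frac{x}{|x|^2}\,\diff x+\int_{\R^4}\frac{|f|^2}{|x|^2}\,\diff x .
\]
The middle integral is well defined since $\nabla(|f|^2)\in\mathcal{C}^\infty_c(\R^4)$ and $x/|x|^2\in\mathrm{L}^1_{\loc}$, and pairing it with the divergence computed above gives $\int_{\R^4}\nabla(|f|^2)\cdot\frac{x}{|x|^2}\,\diff x=-2\int_{\R^4}|f|^2/|x|^2\,\diff x$. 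Substituting, the two terms carrying the weight $1/|x|^2$ combine into $-\int_{\R^4}|f|^2/|x|^2\,\diff x$, so the displayed inequality reads $0\le\int_{\R^4}|\nabla f|^2\,\diff x-\int_{\R^4}|f|^2/|x|^2\,\diff x$, which is exactly \eqref{Hardy}. For general $f\in\dot{W}^{1,2}(\R^4)$, choose $f_j\in\mathcal{C}^\infty_c(\R^4)$ with $\nabla f_j\to\nabla f$ in $\mathrm{L}^2$; by the Sobolev embedding $f_j\to f$ in $\mathrm{L}^4(\R^4)$, so along a subsequence $f_j\to f$ almost everywhere, and \eqref{Hardy} for $f$ follows by applying Fatou's lemma to the left-hand sides and $\int_{\R^4}|\nabla f_j|^2\,\diff x\to\int_{\R^4}|\nabla f|^2\,\diff x$ to the right-hand sides.

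The main (and rather minor) obstacle is therefore the justification of the integration by parts at the singularity $x=0$; it is legitimate here only because the ambient dimension is at least $3$, whereas in dimension $2$ the field $x/|x|^2$ would carry a Dirac mass $2\pi\,\delta_0$ and Hardy's inequality fails altogether.
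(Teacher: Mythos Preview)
Your proof is correct. The completing-the-square argument is the standard elementary route to the sharp Hardy inequality, and you have handled the only delicate point --- the distributional identity $\operatorname{div}(x/|x|^2)=2/|x|^2$ in $\R^4$ --- cleanly by checking that the boundary flux vanishes as $\varepsilon\to 0$. The density extension via Fatou is also fine.

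The paper itself does not give a proof here: it simply refers to an external source (a corollary in a monograph on Hardy inequalities). So your argument is not merely a different route but a genuine, self-contained proof where the paper offers none. What your approach buys is transparency --- one sees immediately why the constant is exactly $1$ in dimension four and why the inequality fails in dimension two --- at the cost of a few lines of computation. What the citation buys is brevity, which is appropriate for an appendix lemma that is entirely classical.
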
 
			\begin{proof}
				see for instance \cite[Corollary 1.2.6]{HardyIneq}.
			\end{proof}
			
			\begin{lemma}\label{gaffney} Let $n\in \N^*$.
				For all $p\in ]1;+\infty[$, $q\in ]0;+\infty] $, there exists a constant $C_{p,q}>0$ such that for all differential forms $\omega$ on $\R^n$, \[\|\nabla\omega\|_{\mathrm{L}^{p,q}(\R^n)}\leq C_{p,q} \left(\|\diff \omega\|_{\mathrm{L}^{p,q}(\R^n)}+ \|\diff^{\ast} \omega\|_{\mathrm{L}^{p,q}(\R^n)} \right).\]
			\end{lemma}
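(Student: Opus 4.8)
The plan is to deduce this Lorentz-space Gaffney inequality from the classical $\mathrm{L}^p$-boundedness of Riesz transforms together with real interpolation. It suffices to prove the estimate for $\omega\in\mathcal{C}^\infty_c(\R^n)$ and then pass to the general case by density (or a direct mollification argument). On flat $\R^n$ the Hodge Laplacian $\Delta_H:=\diff\,\diff^*+\diff^*\diff$ acts on a differential form componentwise as the (positive) scalar Laplacian; writing $\Delta_H^{-1}$ for convolution with the associated matrix-valued fundamental solution (diagonal, with the fundamental solution of the scalar Laplacian on the diagonal), one has, for compactly supported $\omega$,
\[ \omega=\Delta_H^{-1}\big(\diff^*(\diff\omega)\big)+\Delta_H^{-1}\big(\diff(\diff^*\omega)\big), \]
and therefore
\[ \nabla\omega=S_1(\diff\omega)+S_2(\diff^*\omega),\qquad S_1:=\nabla\,\Delta_H^{-1}\,\diff^*,\quad S_2:=\nabla\,\Delta_H^{-1}\,\diff. \]

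\medskip

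Next I would check that $S_1$ and $S_2$ are Calderón–Zygmund operators. In Fourier variables $\diff$ and $\diff^*$ act as matrices of symbols linear in $\xi$, $\Delta_H^{-1}$ as $|\xi|^{-2}$ times the identity, and $\nabla$ contributes another factor linear in $\xi$; hence the symbols of $S_1$ and $S_2$ are matrices whose entries are linear combinations of $\xi_i\xi_j/|\xi|^2$, i.e. of compositions $R_iR_j$ of Riesz transforms, which are smooth on $\R^n\setminus\{0\}$ and homogeneous of degree $0$. Consequently $S_1,S_2$ are bounded on $\mathrm{L}^s(\R^n)$ for every $s\in(1,\infty)$, by the classical Riesz-transform estimates (equivalently, the Mikhlin–Hörmander multiplier theorem). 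Together with the identity above this already proves the inequality in the case $q=p$.

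\medskip

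Finally, for general $q$ I would interpolate: fix $p\in(1,\infty)$ and $q\in(0,\infty]$, choose $1<p_0<p<p_1<\infty$, and use that $S_1,S_2$ are bounded on $\mathrm{L}^{p_0}$ and on $\mathrm{L}^{p_1}$ together with the real-interpolation identity $\big(\mathrm{L}^{p_0}(\R^n),\mathrm{L}^{p_1}(\R^n)\big)_{\theta,q}=\mathrm{L}^{p,q}(\R^n)$ for $1/p=(1-\theta)/p_0+\theta/p_1$, valid for every $0<q\le\infty$. This shows $S_1,S_2$ are bounded $\mathrm{L}^{p,q}\to\mathrm{L}^{p,q}$, and applying this to $S_1(\diff\omega)$ and $S_2(\diff^*\omega)$ and using the quasi-triangle inequality for $\|\cdot\|_{\mathrm{L}^{p,q}}$ yields the claim with $C_{p,q}$ depending only on $n,p,q$. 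There is no serious obstacle here; the only point requiring a little care is the range $0<q<1$ (and $q=\infty$), where $\|\cdot\|_{\mathrm{L}^{p,q}}$ is merely a quasi-norm — but the real interpolation functor and the boundedness of singular integral operators remain valid in that range, and since $p>1$ is assumed one never needs an endpoint at $p=1$. Everything else is the standard flat-space Hodge/Calderón–Zygmund machinery.
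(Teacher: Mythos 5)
Your proof is correct and takes essentially the same route as the paper: establish the $\mathrm{L}^p$ Gaffney inequality and then interpolate to Lorentz spaces (the paper cites the $\mathrm{L}^p$ version from Troyanov and invokes Marcinkiewicz interpolation, whereas you derive the $\mathrm{L}^p$ version yourself via the flat Hodge decomposition and Riesz transforms and use the real-interpolation identity $(\mathrm{L}^{p_0},\mathrm{L}^{p_1})_{\theta,q}=\mathrm{L}^{p,q}$, which amounts to the same thing). The only point worth a remark is that you first prove the estimate for $\omega\in\mathcal{C}^\infty_c$ and then invoke density, which is harmless for $q<\infty$ but needs the usual care at $q=\infty$ where $\mathcal{C}^\infty_c$ is not dense in $\mathrm{L}^{p,\infty}$; this is a standard technicality and does not affect the substance of the argument.
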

			
			\begin{proof}
				See for instance \cite{troyanov2010hodge} for the $L^p$ version, then it suffices to apply Marcinkiewicz interpolation theorem, see \cite[Theorem 3.3.3]{Helein}.
			\end{proof}

			\subsection{Quadratic forms, index and diagonalization}
			In this section $(E, \langle \cdot, \cdot \rangle_E)$ is a real Hilbert space and $q$ is a continuous quadratic
			form on $E$.
			
			\begin{theorem}
				The following equalities hold
				\begin{eqnarray*}
					\sup \{ \dim F|F \subset E, q_{|F} < 0 \} & = & \sup \{ \dim F|F \subset
					E, q_{|F} \leq 0, F \cap \ker q = \{ 0 \} \}\\
					& = & \inf \left\{ \dim F|F \subset E, q_{|F^{\bot_q}} \geq 0 \right\}\\
					& =& \inf \left\{ \dim F|F \subset E, q_{|F^{\bot}} \geq 0 \right\}
				\end{eqnarray*}
				and we call this quantity the index of $q$, written as $\ind q$.
			\end{theorem}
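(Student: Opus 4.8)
The plan is to represent $q$ by a bounded self‑adjoint operator and to reduce all four quantities to a single number. Let $B$ be the continuous symmetric bilinear form with $q(v)=B(v,v)$, and let $T\colon E\to E$ be the bounded self‑adjoint operator with $B(v,w)=\langle Tv,w\rangle_E$. Using the spectral theorem (or, more elementarily, the positive and negative parts $T_\pm=\tfrac12(|T|\pm T)$ obtained from the continuous functional calculus), write the orthogonal splitting $E=E_-\oplus E_0\oplus E_+$ into the negative, zero and positive spectral subspaces of $T$. The facts I would record at the outset, all understood in $\N\cup\{+\infty\}$, are: $E_0=\ker T$ coincides with the radical $\ker q=\{v\in E:B(v,\cdot)=0\}$ appearing in the statement; $q$ is negative definite on $E_-$ and positive definite on $E_+$; and $q\geq 0$ on $E_0\oplus E_+$ (since $E_0\perp E_+$ and $TE_0=0$). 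Also $T$ commutes with the spectral projection $P_-$ onto $E_-$, $TE_-\subseteq E_-$, and $T|_{E_-}$ is injective. The claim is that each of the four quantities equals $\dim E_-$.

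For the two suprema, the lower bounds $\geq\dim E_-$ follow by testing with $E_-$ itself (or, if $\dim E_-=+\infty$, with its finite‑dimensional subspaces), on which $q<0$, so in particular $q\leq 0$ and $F\cap\ker q=E_-\cap E_0=\{0\}$. For the upper bounds I would show that $P_-$ restricted to any admissible $F$ is injective: if $v\in F$ with $P_-v=0$ then $v\in E_0\oplus E_+$, hence $q(v)\geq 0$; when $q_{|F}<0$ this forces $v=0$ at once, and when merely $q_{|F}\leq 0$ it forces $q(v)=0$, whence writing $v=v_0+v_+$ we get $q(v_+)=q(v)=0$, so $v_+=0$ by positive definiteness of $q$ on $E_+$, so $v=v_0\in\ker q$, and $F\cap\ker q=\{0\}$ gives $v=0$. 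Injectivity yields $\dim F\leq\dim E_-$, so both suprema equal $\dim E_-$.

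For the two infima, the upper bounds $\leq\dim E_-$ follow by testing with $F=E_-$: one checks $(E_-)^{\bot}=E_0\oplus E_+$ and likewise $(E_-)^{\bot_q}=E_0\oplus E_+$ (indeed $v\in(E_-)^{\bot_q}$ iff $Tv\perp E_-$ iff $T(P_-v)=0$ iff $P_-v=0$, using that $T|_{E_-}$ is injective), and $q\geq 0$ on $E_0\oplus E_+$; moreover $F=E$ is always admissible, so the infima are well defined. For the lower bounds $\geq\dim E_-$ I would use a dimension count: if $F$ is finite dimensional with $\dim F<\dim E_-$, then $E_-\cap F^{\bot}$ (resp.\ $E_-\cap F^{\bot_q}$) is cut out inside $E_-$ by at most $\dim F$ bounded linear conditions, hence is nonzero, and any nonzero vector it contains lies in $F^{\bot}$ (resp.\ $F^{\bot_q}$) while having $q<0$; so such an $F$ is not admissible. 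Thus no admissible $F$ has $\dim F<\dim E_-$, which gives the lower bound (and forces all admissible $F$ to be infinite dimensional when $\dim E_-=+\infty$).

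I expect the delicate point to be the second supremum. The hypotheses $q_{|F}\leq 0$ and $F\cap\ker q=\{0\}$ do \emph{not} make $q_{|F}$ negative definite, because the radical of $q_{|F}$, namely $\{v\in F:q(v)=0\}$, may be a nonzero subspace of $F$ disjoint from the \emph{global} radical $\ker q$; so the naive ``restrict to $F$ and reduce'' argument fails, and it is precisely the injectivity of $P_-$ on $F$, exploiting positive definiteness of $q$ on $E_+$, that repairs it. The only other care needed is uniformity when $\dim E_-=+\infty$, handled by the same dimension count. Finally, I would remark that when $\ind q<+\infty$ (the situation used in the body of the paper, where the operators have discrete spectrum) the decomposition $E=E_-\oplus E_0\oplus E_+$ has $E_-$ finite dimensional and all the arguments above are elementary finite‑dimensional linear algebra relative to $E_-$.
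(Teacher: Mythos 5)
The paper states this theorem without proof; it is presented as background material in the appendix subsection on quadratic forms (together with the notions of nullity and Legendre form), so there is no argument in the paper for you to be compared against. Your proof is therefore evaluated on its own, and it is correct.

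Your route, via the spectral decomposition $E=E_-\oplus E_0\oplus E_+$ of the bounded self-adjoint operator $T$ representing the polarization of $q$, is a clean and complete way to reduce all four quantities to $\dim E_-$. The ingredients you isolate are exactly the ones needed: $E_0=\ker T$ agrees with the radical $\ker q$; $q<0$ pointwise on $E_-\setminus\{0\}$ and $q>0$ pointwise on $E_+\setminus\{0\}$ (both from the positivity of $T_\pm$ and orthogonality of the spectral pieces, not from any uniform lower bound); $(E_-)^{\perp}=(E_-)^{\perp_q}=E_0\oplus E_+$; and, for the dimension count, that $F^{\perp}$ and $F^{\perp_q}$ are cut out inside $E$ by $\dim F$ bounded linear conditions when $F$ is finite-dimensional. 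You are also right to single out the second supremum as the delicate one: $q_{|F}\leq 0$ together with $F\cap\ker q=\{0\}$ does not make $q_{|F}$ negative definite, because the radical of $q_{|F}$ need not meet the global radical; the observation that $P_-$ is injective on such $F$ (using $q>0$ on $E_+\setminus\{0\}$ to kill the $E_+$-component of a vector with $q(v)=0$) is precisely what closes that gap. The argument handles the infinite-index case correctly via the same dimension count. In short, this is a correct and essentially optimal proof of a statement the paper takes as known.
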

			
			\begin{definition} The dimension of the kernel of the kernel of the quadratic form $q$ (\textit{i.e.} of the space of vectors of $E$ $q$-orthogonal to $E$) is called the nullity of $q$ and written as $\nul q$.
				
			\end{definition}
			
			\begin{definition}
				A quadratic form $q$ is called a Legendre form if its satisfies the
				following conditions:
				\begin{enumerate}
					\item $q$ is weakly lower semi-continuous,
					
					\item if $x_n \rightharpoonup x$ (weakly) and $q (x_n) \rightarrow q (x)$
					then $x_n \rightarrow x$ (strongly).
				\end{enumerate}
			\end{definition}
			
			\begin{proposition}
				If $q$ is Legendre, then $q$ has finite index and nullity.
			\end{proposition}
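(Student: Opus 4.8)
The plan is to prove both finiteness statements by contradiction, using only the two defining properties of a Legendre form (weak lower semicontinuity, and the fact that $x_n \rightharpoonup x$ together with $q(x_n)\to q(x)$ forces $x_n\to x$) together with the elementary observation that every orthonormal sequence in a Hilbert space converges weakly to $0$ (Bessel's inequality). Throughout I would write $b(\cdot,\cdot)$ for the bounded symmetric bilinear form polarizing $q$, so that $q(x)=b(x,x)$.

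For the nullity, I would first note that the kernel of $q$, namely $N=\{x\in E:\ b(x,y)=0\ \text{for all }y\in E\}=\bigcap_{y\in E}\ker b(\cdot,y)$, is an intersection of closed hyperplanes, hence a closed subspace of $E$. If $\nul q=\dim N$ were infinite, I would pick an orthonormal sequence $(e_n)$ contained in $N$; then $e_n\rightharpoonup 0$ while $q(e_n)=b(e_n,e_n)=0=q(0)$, so the second Legendre property yields $e_n\to 0$ strongly, contradicting $\|e_n\|=1$. Hence $\nul q<+\infty$.

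For the index, suppose $\ind q=+\infty$ (so in particular $\dim E=+\infty$). The key point is that then \emph{every} finite-dimensional subspace $V\subset E$ admits, in its orthogonal complement, a unit vector $u$ with $q(u)<0$. Indeed, if $q\geq 0$ on $V^{\perp}$ for some finite-dimensional $V$, then any subspace $F$ on which $q$ is negative definite satisfies $F\cap V^{\perp}=\{0\}$, whence $\dim F\leq \operatorname{codim}V^{\perp}=\dim V$ and therefore $\ind q\leq\dim V<+\infty$, a contradiction; this is exactly the last minimax characterisation of $\ind q$ recorded in the theorem above. Using this observation repeatedly, I would build inductively an orthonormal sequence $(e_n)_{n\geq 1}$ with $q(e_n)<0$ for every $n$ (having chosen $e_1,\dots,e_n$, apply the observation to $V=\operatorname{span}(e_1,\dots,e_n)$ to get $e_{n+1}$). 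Then $e_n\rightharpoonup 0$, so weak lower semicontinuity gives $0=q(0)\leq\liminf_n q(e_n)\leq 0$; hence $\liminf_n q(e_n)=0$ and, along a subsequence, $q(e_{n_k})\to 0=q(0)$. The second Legendre property then forces $e_{n_k}\to 0$ strongly, contradicting $\|e_{n_k}\|=1$. Therefore $\ind q<+\infty$.

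The argument is short, and the only step requiring a bit of care is the construction of the orthonormal sequence in the index part: one cannot simply concatenate orthonormal bases of the (a priori non-nested) negative subspaces witnessing $\ind q=+\infty$, which is why I route the construction through the orthogonal-complement reformulation of the index above. An entirely equivalent alternative would be to introduce the bounded self-adjoint operator $L$ with $q(x)=\langle Lx,x\rangle_E$ and argue via its spectral decomposition (the index being $\dim$ of the spectral subspace for $(-\infty,0)$, the nullity being $\dim\ker L$), but the elementary route above avoids that machinery altogether.
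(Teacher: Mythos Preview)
Your argument is correct. The paper actually states this proposition without proof, so there is nothing to compare against; your route via orthonormal sequences and the two defining properties of a Legendre form is a clean and standard way to establish the result.

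A couple of minor remarks. In the index part, once you have $q(e_n)<0$ for all $n$ and $0\leq\liminf_n q(e_n)$ from weak lower semicontinuity, you in fact get $\lim_n q(e_n)=0$ for the whole sequence (since also $\limsup_n q(e_n)\leq 0$), so passing to a subsequence is unnecessary, though of course harmless. Also, your justification that a finite-dimensional $V$ with $q_{|V^\perp}\geq 0$ would force $\ind q\leq\dim V$ is exactly the easy inequality $\sup\leq\inf$ in the minimax characterisation recorded just before the proposition; since you spell out the one-line argument yourself, the proof is self-contained even without invoking that theorem. Your closing remark about the spectral alternative is apt: that is essentially the content of the diagonalisation theorem stated immediately after the proposition, which the paper also leaves unproved.
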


			\begin{theorem} \label{diagonalisationofquadraticforms}
				Assume there is a dense embedding of Hilbert spaces \[(E, \langle \cdot, \cdot \rangle_E) \hookrightarrow
				(\mathfrak{H}, \langle \cdot, \cdot \rangle_{\mathfrak{H}})\] and that $q$ is a Legendre form on $E$. Then there exists a unique operator
				$(\mathcal{L}, D (\mathcal{L}))$, self-adjoint for $\langle
				\cdot, \cdot \rangle_{\mathfrak{H}}$, with domain $D (\mathcal{L})
				\subset (E, \langle \cdot, \cdot \rangle_E)$ dense such that \[\forall u \in D
				(\mathcal{L}), q (u) = \langle \mathcal{L}u, u \rangle_{\mathfrak{H}}.\]
				Moreover
				\begin{align*}
					\ind q  &= \dim \bigoplus_{\lambda < 0} \ker (\mathcal{L}-
					\lambda),\\
					\nul q  &= \dim \ker \mathcal{L}.
				\end{align*} In particular these dimensions are finite and do not depend on $	(\mathfrak{H}, \langle \cdot, \cdot \rangle_{\mathfrak{H}})$.
			\end{theorem}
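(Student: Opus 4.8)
The plan is to realize $q$, after an additive shift that does not affect the index, as the quadratic form of a semibounded self-adjoint operator on $\mathfrak{H}$, and then to read off $\ind q$ and $\nul q$ from that operator's spectrum via the variational characterization of eigenvalues. The key analytic input, which I would establish first, is that the Legendre hypothesis yields constants $C,\delta>0$ with
\[ q(u)+C\,\|u\|_{\mathfrak{H}}^{2}\;\ge\;\delta\,\|u\|_{E}^{2}\qquad\text{for all }u\in E. \]
I would prove this by contradiction: if it fails, for each $n\in\N$ there is $u_{n}\in E$ with $\|u_{n}\|_{E}=1$ and $q(u_{n})+n\|u_{n}\|_{\mathfrak{H}}^{2}<1/n$. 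Continuity of $q$ bounds it on the unit sphere of $E$, so $n\|u_{n}\|_{\mathfrak{H}}^{2}$ stays bounded, hence $u_{n}\to 0$ in $\mathfrak{H}$, while $\limsup_{n}q(u_{n})\le 0$. After extraction $u_{n}\rightharpoonup u_{\ast}$ in $E$; since $E\hookrightarrow\mathfrak{H}$ is bounded this weak convergence persists in $\mathfrak{H}$, forcing $u_{\ast}=0$. Weak lower semicontinuity of $q$ (the first Legendre condition) gives $0=q(0)\le\liminf_{n}q(u_{n})$, so $q(u_{n})\to q(0)$; the second Legendre condition then promotes $u_{n}\rightharpoonup 0$ to $u_{n}\to 0$ strongly in $E$, contradicting $\|u_{n}\|_{E}=1$. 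Homogeneity then gives the displayed bound. This is the step I expect to be the genuine obstacle, in the sense that it is the only place the Legendre property is used with full force; the remaining steps are standard functional analysis.

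Next I would build the operator. Let $b$ be the symmetric bilinear form polarizing $q$, and put $\mathfrak{t}(u,v):=b(u,v)+C\langle u,v\rangle_{\mathfrak{H}}$ on $E$. It is symmetric, densely defined in $\mathfrak{H}$ (because $E$ is dense in $\mathfrak{H}$), nonnegative, and closed: by the previous step, together with continuity of $q$ and of the embedding, the form norm $\big(\mathfrak{t}(u,u)+\|u\|_{\mathfrak{H}}^{2}\big)^{1/2}$ is equivalent to $\|u\|_{E}$, so $(E,\|\cdot\|_{E})$ is complete for it. The first representation theorem for closed semibounded forms then furnishes a unique self-adjoint operator $\widetilde{\mathcal{L}}\ge 0$ on $\mathfrak{H}$, with domain $D(\widetilde{\mathcal{L}})\subset E$ a form core (hence dense in $E$), such that $\mathfrak{t}(u,v)=\langle\widetilde{\mathcal{L}}u,v\rangle_{\mathfrak{H}}$ for $u\in D(\widetilde{\mathcal{L}})$, $v\in E$. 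Set $\mathcal{L}:=\widetilde{\mathcal{L}}-C\,\mathrm{Id}$: it is self-adjoint, $D(\mathcal{L})=D(\widetilde{\mathcal{L}})\subset E$ is dense, and $\langle\mathcal{L}u,u\rangle_{\mathfrak{H}}=\mathfrak{t}(u,u)-C\|u\|_{\mathfrak{H}}^{2}=q(u)$ for $u\in D(\mathcal{L})$. For uniqueness, if $S$ is self-adjoint with $D(S)\subset E$ dense and $\langle Su,u\rangle_{\mathfrak{H}}=q(u)$ on $D(S)$, polarization gives $\langle Su,w\rangle_{\mathfrak{H}}=b(u,w)$ on $D(S)$, so $S+C$ is positive self-adjoint; being self-adjoint it equals the Friedrichs extension of its restriction to $D(S)$, i.e.\ the operator of the closed form $\overline{\mathfrak{t}|_{D(S)}}=\mathfrak{t}$ (the closure being $\mathfrak{t}$ as $D(S)$ is dense in $E$), whence $S+C=\widetilde{\mathcal{L}}$ and $S=\mathcal{L}$.

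Then I would identify the kernel and the negative spectral subspace. Polarizing $q(u)=\langle\mathcal{L}u,u\rangle_{\mathfrak{H}}$ on $D(\mathcal{L})$ and extending in the second slot by the form-core property gives $\langle\mathcal{L}u,v\rangle_{\mathfrak{H}}=b(u,v)$ for $u\in D(\mathcal{L})$, $v\in E$; since $E$ is dense in $\mathfrak{H}$ this forces $\ker\mathcal{L}=\{u\in D(\mathcal{L}):b(u,\cdot)\equiv 0\text{ on }E\}$, and conversely any $u$ in the $q$-kernel $K=\{u\in E:b(u,\cdot)\equiv 0\}$ lies in $D(\mathcal{L}^{\ast})=D(\mathcal{L})$ with $\mathcal{L}u=0$ (the functional $w\mapsto\langle\mathcal{L}w,u\rangle_{\mathfrak{H}}=b(w,u)$ being identically zero, hence $\mathfrak{H}$-bounded). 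Thus $\ker\mathcal{L}=K$ and $\dim\ker\mathcal{L}=\nul q$, finite by the proposition on Legendre forms. For the index, the form minimax values $\mu_{k}=\inf\big\{\sup_{u\in S\setminus\{0\}}q(u)/\|u\|_{\mathfrak{H}}^{2}:\ S\subset E,\ \dim S=k\big\}$ are precisely, by the minimax principle for semibounded self-adjoint operators, the first $k$ points (with multiplicity) of $\sigma(\mathcal{L})\cap(-\infty,\inf\sigma_{\mathrm{ess}}(\mathcal{L}))$ followed by $\inf\sigma_{\mathrm{ess}}(\mathcal{L})$ — here one uses that $D(\mathcal{L})$ is a form core so the minimax runs over all of $E$. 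By homogeneity $\mu_{k}<0$ iff there is a $k$-dimensional subspace of $E$ on which $q$ is negative definite, so $\#\{k:\mu_{k}<0\}=\ind q$. As $\ind q<\infty$ (Legendre), only finitely many $\mu_{k}$ are negative, which forces $\inf\sigma_{\mathrm{ess}}(\mathcal{L})\ge 0$ and shows the negative spectrum of $\mathcal{L}$ to consist of finitely many eigenvalues of finite multiplicity of total multiplicity $\ind q$; that is, $\ind q=\dim\bigoplus_{\lambda<0}\ker(\mathcal{L}-\lambda)$.

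Finally, finiteness of both dimensions has been recalled along the way, and independence of $(\mathfrak{H},\langle\cdot,\cdot\rangle_{\mathfrak{H}})$ is automatic, since $\ind q$ and $\nul q$ are intrinsic to $(E,q)$ while Steps two--three exhibit them as the negative-eigenspace sum and the kernel of the operator $\mathcal{L}$ attached to any admissible $\mathfrak{H}$. The only nontrivial ingredient is the coercivity estimate of the first step, and in the applications of this theorem — where $q$ is one of the second-variation quadratic forms and $\mathfrak{H}$ a weighted $L^{2}$ space — it is exactly the Legendre property of those forms that must be checked beforehand.
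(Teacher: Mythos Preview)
The paper states this theorem in its appendix without proof, so there is no argument in the paper to compare against. Your proposal is correct and follows the standard route one would expect: the Legendre hypothesis yields the coercivity estimate $q(u)+C\|u\|_{\mathfrak{H}}^{2}\ge\delta\|u\|_{E}^{2}$ (your contradiction argument is clean and uses both halves of the Legendre condition in the right places), whence the shifted form $\mathfrak{t}$ is closed on $E$ and Kato's first representation theorem produces the self-adjoint $\widetilde{\mathcal{L}}$; the identification of $\nul q$ with $\dim\ker\mathcal{L}$ and of $\ind q$ with the total multiplicity of the negative spectrum via the form minimax is then routine.

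One small point worth making explicit in your uniqueness step: the conclusion that $\overline{\mathfrak{t}|_{D(S)}}=\mathfrak{t}$ requires $D(S)$ to be a form core, i.e.\ dense in $E$ for the $E$-norm (not merely dense in $\mathfrak{H}$). This is precisely the hypothesis ``$D(\mathcal{L})\subset(E,\langle\cdot,\cdot\rangle_{E})$ dense'' in the statement, so your argument goes through, but a reader might miss that the density is taken in $E$ rather than in $\mathfrak{H}$.
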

			
			\subsection{Regularity and $\varepsilon$-Regularity of $p$-Yang-Mills connections}
			\label{epsreg}

			Let $A \in \mathrm{W}^{1, p} \left( \mathrm{B}^4, \Lambda^1 \R^4
			\otimes \mathfrak{g} \right)$, with $2\leq p <3$, and $F$ its curvature. We want to study the regularity of $A$ if it satisfies the following equation in the weak sense :
			\begin{equation}
				\diff^{\ast}_A ((1 + | F |^2)^{\frac{p}{2} - 1} F) = 0 .\label{pYM}
			\end{equation} The abelian counterpart of this equation, namely \begin{equation}
				\diff^{\ast} ((1 + | \diff \omega |^2)^{\frac{p}{2} - 1}\diff \omega) = 0,\label{abelianpYM}
			\end{equation} falls within the scope of results by Uhlenbeck \cite{uhlenbeck_regularity_1977} and Hamburger \cite{hamburger_regularity_1992}, whereas its degenerate version \begin{equation}
				\diff^{\ast}_A ( | F |^{p-2} F) = 0,\label{degeneratepYM}
			\end{equation} had been studied by Isobe \cite{isobe_regularity_2008}, relying on the abelian case. In \cite{HS}, Hong and Schabrun gave a positive answer to the question of regularity. However, we wish to provide the reader with a more precise statement, with estimates independent of $p$.
			
			\noindent In the following, to stay closer to the notations of these references (e.g. \cite[eq. (2.8) \& (2.9)]{hamburger_regularity_1992}), we will denote, for $\Omega \in \Lambda^2
			\R^4 \otimes \mathfrak{g}$,
			\begin{align*}
				H (\Omega)&= (1 + | \Omega |^2)^{p / 2},\\
				\varrho (\Omega) &= (1 + | \Omega |^2)^{(p - 2)
					/ 2},\\
				V (\Omega) &= \sqrt{\varrho (\Omega)} \Omega
			\end{align*}
			Therefore, \eqref{pYM} becomes :
			\begin{equation}
				\label{main}
				\diff_A^{\ast} (\varrho (F) F) = 0.
			\end{equation}
			
			\begin{theorem}
				\label{Ereg} There exists $\varepsilon_G >0$, $p_G\in(2,3)$ and for every $\ell\in \N$,  there exists $C_{G,\ell}>0$ such that for all $R\in(0,1)$, $A\in \mathrm{W}^{1,p} \left(
				\mathrm{B}_R, \Lambda^1 \R^4 \otimes \mathfrak{g} \right)$ satisfying  \eqref{main} with $2\leq p\leq p_G $, and 
				$$ \int_{\mathrm{B}_R} \vert F_A\vert^2\, \diff x \leq \varepsilon_G,  $$
				there exists $g\in \mathrm{W}^{2,p}(\mathrm{B}_R,G)$ such that $A^g$ is smooth and
				$$ R^{\ell+1} \Vert \nabla^\ell A^g\Vert_{\mathrm{L}^\infty(\mathrm{B}_{R/2})} \leq C_{G, \ell} \sqrt{\int_{\mathrm{B}_R} |F_A|^2\diff x}.$$
			\end{theorem}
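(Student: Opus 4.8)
The plan is to put $A$ in a Coulomb gauge, run the sub-critical elliptic bootstrap to reach smoothness, and then upgrade to the scale-invariant $\mathrm{C}^{\infty}$ bounds, keeping track throughout that the constants produced by the $(p-2)$-perturbation stay bounded as $p\to 2^{+}$.

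First I would normalise to $R=1$ by the dilation $\tilde A(x)=R\,A(Rx)$, under which $\int_{\mathrm{B}_1}|F_{\tilde A}|^2\,\diff x=\int_{\mathrm{B}_R}|F_A|^2\,\diff x$ and $\tilde A$ solves $\diff_{\tilde A}^{\ast}\big((R^4+|F_{\tilde A}|^2)^{p/2-1}F_{\tilde A}\big)=0$. Since $0<R^4\le 1$ and the constant $R^4$ enters the curvature estimates only through combinations such as $|F|^4/(R^4+|F|^2)^2\le 1$ and $|F|^2/(R^4+|F|^2)^2\le|F|^{-2}$, the Bochner inequality (\cref{LpF}, \cref{Bochnerpower}) and the Kato--Yau inequality (\cref{KatoYau}) hold with constants independent of $R$, so it suffices to treat $R=1$ and undo the dilation at the end. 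Then the smallness $\int_{\mathrm{B}_1}|F_A|^2<\varepsilon_G$ lets me invoke Uhlenbeck's small-energy gauge-fixing theorem (\cite{UhlenbeckKarenK1982CwLb}, see also \cite[Lemma 4.4]{uhlenbeck_chern_1985}) to produce a gauge $g_0$ with $\diff^{\ast}A^{g_0}=0$ in $\mathrm{B}_1$ and $\|A^{g_0}\|_{\mathrm{W}^{1,2}(\mathrm{B}_1)}\le C_G\|F_A\|_{\mathrm{L}^2(\mathrm{B}_1)}$.

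Writing $B:=A^{g_0}$ and using $\diff^{\ast}B=0$, $\diff B=F-B\wedge B$ together with \eqref{main} (which rewrites $\diff^{\ast}F=\star[B,\star(\varrho(F)F)]-\diff^{\ast}((\varrho(F)-1)F)$), the Coulomb equation takes the schematic form
\[
-\Delta B = B\ast\varrho(F)F+\diff^{\ast}(B\ast B)+\diff^{\ast}\big((\varrho(F)-1)F\big).
\]
This is the familiar borderline Coulomb--Yang--Mills system, and I would run the standard small-energy iteration of Uhlenbeck--Hamburger--Hong--Schabrun (\cite{UhlenbeckKarenK1982CwLb,hamburger_regularity_1992,HS}): a Morrey/Riesz-potential argument exploiting the smallness of $\|B\|_{\mathrm{W}^{1,2}}$ first gives $B\in\mathrm{W}^{1,q}_{\loc}$ for some $q>2$, whence $F\in\mathrm{L}^q_{\loc}$ and $B\in\mathrm{C}^{0,\alpha}_{\loc}$, then $F\in\mathrm{W}^{1,q}_{\loc}$, so that $B$ is smooth and satisfies the strong form \eqref{equationdepYM}. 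The only point that is not routine here --- and the reason to re-prove a statement already in \cite{HS} --- is that $\varrho(F)-1=(1+|F|^2)^{(p-2)/2}-1$ is \emph{not} pointwise small until $F$ is known to be bounded; I would therefore first extract a crude bound on $F$ from the finiteness of $\|F\|_{\mathrm{L}^p}$, then note that on a smaller ball, where $F$ is bounded, the factor $\varrho(F)-1$ is $O(p-2)$, and only then re-run all the estimates with constants uniform in $p\in[2,p_G]$. I expect this ``rough bound first, uniform estimates second'' interplay to be the main obstacle.

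Once $B$ is smooth, the curvature bound $\|F\|_{\mathrm{L}^\infty(\mathrm{B}_{1/2})}\le C_G\|F_A\|_{\mathrm{L}^2(\mathrm{B}_1)}$ is exactly \cref{epsregbis} (Moser iteration on the differential inequality of \cref{LpF}, the nonlinear term being absorbed using $\|F_A\|_{\mathrm{L}^2(\mathrm{B}_1)}<\sqrt{\varepsilon_G}<1$), and the Hodge control of $B$ by its curvature in the Coulomb gauge, followed by a bootstrap of the Coulomb equation, gives $\|B\|_{\mathrm{L}^\infty(\mathrm{B}_{5/8})}+\|\nabla B\|_{\mathrm{L}^\infty(\mathrm{B}_{5/8})}\le C_G\|F_A\|_{\mathrm{L}^2(\mathrm{B}_1)}$. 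For the higher derivatives I would induct on $\ell$: moving the $(p-2)$-second-order contribution of $\diff^{\ast}F$ to the left, the Coulomb equation reads $(\mathrm{Id}+(p-2)\mathcal A)\cdot\nabla^2 B=(\text{lower order in derivatives})$, a uniformly elliptic equation ($|\mathcal A|\le 1$ by \cref{remA}, so its ellipticity constant is bounded below independently of $p\le p_G$) whose right-hand side and successive derivatives are built from lower-order derivatives of $B$ and $F$; applying interior $\mathrm{W}^{2,q}$ and Schauder estimates on nested balls between $\mathrm{B}_{1/2}$ and $\mathrm{B}_{5/8}$, and reabsorbing higher powers of $\|F_A\|_{\mathrm{L}^2(\mathrm{B}_1)}$ as before, yields $B\in\mathrm{C}^\infty$ with $\|\nabla^{\ell}B\|_{\mathrm{L}^\infty(\mathrm{B}_{1/2})}\le C_{G,\ell}\|F_A\|_{\mathrm{L}^2(\mathrm{B}_1)}$ for all $\ell$. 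Undoing the dilation restores the factor $R^{\ell+1}$, and since $\diff g_0=g_0\,A^{g_0}-A\,g_0$ with $A^{g_0}$ smooth and $A\in\mathrm{W}^{1,p}$, one last bootstrap gives $g_0\in\mathrm{W}^{2,p}$ --- the gauge $g$ of the statement.
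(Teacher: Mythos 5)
Your high-level outline (Coulomb gauge, iterative interior regularity, a Harnack/Moser-type argument for the pointwise bound on $F$, then Schauder iteration for higher derivatives) matches the structure of the appendix. Two remarks.

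First, the normalisation to $R=1$ by dilation is the opposite of what the paper does. The paper explicitly keeps the unmodified coefficient $(1+|F|^2)^{(p-2)/2}$ on the ball $\mathrm{B}_R$, precisely because the $p$-Yang--Mills equation is not scale-invariant; your dilation replaces this coefficient by $(R^4+|F|^2)^{(p-2)/2}$, and every preliminary estimate --- not just Bochner and Kato--Yau but also \cref{leminq}, the subharmonicity argument of \cref{monoHdomega} via Uhlenbeck's auxiliary function $f$, and Hamburger's Campanato estimates for $V(\diff\omega)$ --- would have to be re-derived for this one-parameter family of nonlinearities with constants uniform in $R\in(0,1]$. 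Since the ellipticity ratio stays in $[1,p-1]$ this is probably feasible, but it is not a free reduction and you do not carry it out.

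Second, and more substantially: the step you identify as ``the main obstacle'' rests on a reading of the problem that makes it look circular. You treat $\varrho(F)-1$ as a small perturbation of the $p=2$ equation and propose to ``first extract a crude bound on $F$ from the finiteness of $\|F\|_{L^p}$, then note that on a smaller ball, where $F$ is bounded, the factor $\varrho(F)-1$ is $O(p-2)$.'' But a pointwise bound on $F$ does not follow from $\|F\|_{L^p}<\infty$; obtaining it is exactly the $\varepsilon$-regularity content of the theorem. And even with $\|F\|_{L^\infty}\le M$ on a subball, one only gets $\varrho(F)-1=O\bigl((p-2)\log(1+M^2)\bigr)$, which is small uniformly in $p$ only if $M$ is uniformly controlled --- again the conclusion, not a starting point. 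The appendix never treats $\varrho(F)$ perturbatively. Instead it constructs, on each ball, a solution $\omega$ of the \emph{same nonlinear} abelian system $\diff^*(\varrho(\diff\omega)\diff\omega)=0$ with matched boundary data, bounds $\|F-\diff\omega\|_{L^p}$ by testing the two equations against one another via the monotonicity inequality of \cref{leminq}, and imports Hamburger's uniform-in-$p$ Campanato decay for $V(\diff\omega)$ --- which needs no a priori boundedness of $F$ at all --- to deduce Hölder continuity of $V(F)$ and then of $F$. No $L^\infty$ control of $F$ enters before the final Harnack/Moser step of \cref{epsregcurvaturepointwiseproof}, so the ``rough bound first, uniform estimates second'' interplay you anticipate simply does not arise, and I would expect any attempt to force the argument through the perturbative route to get stuck exactly where you say.
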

			
			In order to prove this theorem, we will use the following version of the gauge extraction of Uhlenbeck.

			\begin{theorem}[theorem 1.3 \cite{UhlenbeckKarenK1982CwLb} and theorem IV.1 \cite{rivière2015variations} ]
				\label{uhlenbeckgauge}
				Let $2\leq p<3$, there exists $\varepsilon_G >0$ and $C_G>0$ such that for all $A\in  \mathrm{W}^{1,p} \left(
				\mathrm{B}_R, \Lambda^1 \R^4 \otimes \mathfrak{g} \right)$ satisfying
				$$ \int_{\mathrm{B}_R} \vert F_A\vert^2\, \diff x \leq \varepsilon_G, $$
				there exists $g\in \mathrm{W}^{2,p}(\mathrm{B}_R,G)$ such that \begin{align*}
					\Vert \diff A^g\Vert_{\mathrm{L}^p} &\leq C_G \Vert F_A\Vert_{\mathrm{L}^p},\\
					\Vert \diff A^g\Vert_{\mathrm{L}^2} &\leq C_G \Vert F_A\Vert_{\mathrm{L}^2},\\
					\diff^* A^g&=0,\\
					i^*_{\partial \mathrm{B}_R} \star A^g&=0.
				\end{align*}
			\end{theorem}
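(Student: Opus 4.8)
The plan is to reduce to the unit ball by scaling and then to lean on the classical Coulomb‑gauge construction of Uhlenbeck in its \emph{critical} form, upgrading afterwards the integrability of $A^g$ from $\mathrm{L}^2$ to $\mathrm{L}^p$ by a single bootstrap step. For the reduction: assuming $R\leq 1$, the dilation $x\mapsto Rx$ replaces $A$ by $\tilde A(y)=R\,A(Ry)$, with $F_{\tilde A}(y)=R^2F_A(Ry)$, so $\int_{\mathrm{B}_1}|F_{\tilde A}|^2=\int_{\mathrm{B}_R}|F_A|^2$ (conformal invariance in dimension $4$), while $\|F_{\tilde A}\|_{\mathrm{L}^p(\mathrm{B}_1)}$ and, for the gauge $\tilde g(y)=g(Ry)$, $\|\diff\tilde A^{\tilde g}\|_{\mathrm{L}^p(\mathrm{B}_1)}$ are obtained from $\|F_A\|_{\mathrm{L}^p(\mathrm{B}_R)}$ and $\|\diff A^g\|_{\mathrm{L}^p(\mathrm{B}_R)}$ by the same factor $R^{(2p-4)/p}$; the conditions $\diff^*A^g=0$ and $i^*_\partial\star A^g=0$ are scale‑covariant as well. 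Hence it suffices to argue on $\mathrm{B}_1$, and all constants obtained will automatically be $R$‑uniform for $R\leq1$; note also $\|F_A\|_{\mathrm{L}^2}\leq|\mathrm{B}_1|^{1/2-1/p}\|F_A\|_{\mathrm{L}^p}$, which lets one feed the $\mathrm{L}^2$ bound with an $\mathrm{L}^p$ quantity when needed.

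\textbf{Critical Coulomb gauge.} The heart of the matter is to produce $g\in\mathrm{W}^{2,p}(\mathrm{B}_1,G)$ with $\diff^*A^g=0$, $i^*_{\partial\mathrm{B}_1}\star A^g=0$ and $\|A^g\|_{\mathrm{W}^{1,2}(\mathrm{B}_1)}\leq C\|F_A\|_{\mathrm{L}^2}$. This is Uhlenbeck's theorem in dimension $n=4$ at the critical exponent $n/2=2$ \cite{UhlenbeckKarenK1982CwLb}; I would run her continuity method (see also the streamlined treatment \cite{rivière2015variations}). Writing $g=e^u$ with $u:\mathrm{B}_1\to\mathfrak g$, the equation $\diff^*(A^{e^u})=0$ with $i^*_\partial\star A^{e^u}=0$ becomes, after $A^{e^u}=A+\diff_A u+\gdo{|u|^2+|u|\,|A|}$, the Neumann problem $\Delta u=-\diff^*A+(\text{quadratic in }u,A,\diff u)$, $\partial_\nu u=-\langle A,\nu\rangle+\dots$ on $\partial\mathrm{B}_1$, whose linearization at $u=0$ is the scalar Laplacian with Neumann data, Fredholm with kernel and cokernel the constants; the implicit function theorem gives local solvability, hence openness of the set of connections admitting such a gauge with the displayed estimate. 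For closedness one uses the Gaffney‑type elliptic estimate adapted to the boundary condition, $\|A^g\|_{\mathrm{W}^{1,2}}\leq C(\|\diff A^g\|_{\mathrm{L}^2}+\|\diff^*A^g\|_{\mathrm{L}^2})=C\|\diff A^g\|_{\mathrm{L}^2}$, together with $\diff A^g=g^{-1}F_Ag-A^g\wedge A^g$, $|g^{-1}F_Ag|=|F_A|$, and $\mathrm{W}^{1,2}(\mathrm{B}_1)\hookrightarrow\mathrm{L}^4(\mathrm{B}_1)$, to get $\|A^g\|_{\mathrm{W}^{1,2}}\leq C\|F_A\|_{\mathrm{L}^2}+C\|A^g\|_{\mathrm{W}^{1,2}}^2$; for $\varepsilon_G$ small this confines $\|A^g\|_{\mathrm{W}^{1,2}}$ to the small branch of $t\leq C\varepsilon_G+Ct^2$, so the estimate $\|A^g\|_{\mathrm{W}^{1,2}}\leq 2C\|F_A\|_{\mathrm{L}^2}$ is stable under weak limits along a converging family of connections. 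Connectedness of the relevant family of $\mathrm{L}^2$‑small‑curvature connections (which contains $A=0$) then closes the argument.

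\textbf{Bootstrap to $\mathrm{L}^p$.} Since $A\in\mathrm{W}^{1,p}$ and $g\in\mathrm{W}^{2,p}$ we already have $A^g\in\mathrm{W}^{1,p}(\mathrm{B}_1)$, so only the estimate remains. The $\mathrm{L}^p$ Gaffney‑type estimate for the same boundary condition gives $\|A^g\|_{\mathrm{W}^{1,p}}\leq C(\|\diff A^g\|_{\mathrm{L}^p}+\|A^g\|_{\mathrm{L}^p})$; with $\diff A^g=g^{-1}F_Ag-A^g\wedge A^g$ and the Sobolev embedding $\mathrm{W}^{1,p}(\mathrm{B}_1)\hookrightarrow\mathrm{L}^{2p}(\mathrm{B}_1)$ — valid in dimension $4$ precisely because $p\geq2$ — one obtains
\[
\|A^g\|_{\mathrm{W}^{1,p}}\ \leq\ C\big(\|F_A\|_{\mathrm{L}^p}+\|A^g\|_{\mathrm{L}^{2p}}^2+\|A^g\|_{\mathrm{L}^p}\big)\ \leq\ C\|F_A\|_{\mathrm{L}^p}+C\|A^g\|_{\mathrm{W}^{1,p}}^2+C\|F_A\|_{\mathrm{L}^2}.
\]
Carrying this estimate along the same continuity path (so that $\|A^g\|_{\mathrm{W}^{1,p}}$ stays small, exactly as in the $\mathrm{L}^2$ step) lets one absorb the quadratic term, and using $\|F_A\|_{\mathrm{L}^2}\leq C\|F_A\|_{\mathrm{L}^p}$ one concludes $\|\diff A^g\|_{\mathrm{L}^p}\leq\|A^g\|_{\mathrm{W}^{1,p}}\leq C_G\|F_A\|_{\mathrm{L}^p}$. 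Undoing the rescaling yields the statement on $\mathrm{B}_R$.

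\textbf{Main obstacle.} The substantive point is the existence step: producing the Coulomb gauge from smallness of the curvature in the \emph{critical} norm $\mathrm{L}^2$ rather than in $\mathrm{L}^p$. It is there that one genuinely needs the continuity method, and that the smallness of $\varepsilon_G$ is used, in the closedness part, to absorb the quadratic term $\|A^g\wedge A^g\|_{\mathrm{L}^2}$ coming from the non‑abelian structure; everything else (openness by the implicit function theorem, the one‑step bootstrap) is routine, and all constants are uniform for $p\in[2,p_G)$ because the Sobolev and elliptic constants involved depend continuously on $p$ near $2$.
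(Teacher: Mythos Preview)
The paper does not actually prove this statement; it simply cites Uhlenbeck and Rivi\`ere and observes that, although those references state the result for $p=2$, their proofs go through verbatim for $2\le p<4$ with constants independent of $p$. Your sketch is therefore to be compared against the cited arguments rather than anything in the paper, and in outline it is faithful to them: the reduction by scaling, the continuity method for the critical Coulomb gauge, and the Gaffney-type a~priori estimate are all correct.

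There is, however, a genuine gap in your bootstrap to $\mathrm{L}^p$. From $\|A^g\wedge A^g\|_{\mathrm{L}^p}\le\|A^g\|_{\mathrm{L}^{2p}}^2\le C\|A^g\|_{\mathrm{W}^{1,p}}^2$ you obtain the quadratic inequality
\[
\|A^g\|_{\mathrm{W}^{1,p}}\ \le\ C\|F_A\|_{\mathrm{L}^p}+C\|A^g\|_{\mathrm{W}^{1,p}}^2+C\|F_A\|_{\mathrm{L}^2},
\]
and propose to absorb the square by ``carrying along the continuity path''. But absorption of a quadratic term requires $\|A^g\|_{\mathrm{W}^{1,p}}$ to remain below a fixed threshold, and the only way your inequality can guarantee that is if $\|F_A\|_{\mathrm{L}^p}$ is itself small --- which is \emph{not} assumed (only $\|F_A\|_{\mathrm{L}^2}\le\varepsilon_G$ is). Along any path of connections from $0$ to $A$, the $\mathrm{L}^p$-curvature eventually reaches the size of $\|F_A\|_{\mathrm{L}^p}$, so the open--closed argument does not close in $\mathrm{W}^{1,p}$ the way it did in $\mathrm{W}^{1,2}$.

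The fix (and this is how Uhlenbeck's proof actually runs) is to split the nonlinear term differently. With $\tfrac1p=\tfrac14+\tfrac1{p^*}$, $p^*=\tfrac{4p}{4-p}$, H\"older and Sobolev give
\[
\|A^g\wedge A^g\|_{\mathrm{L}^p}\ \le\ C\,\|A^g\|_{\mathrm{L}^4}\,\|A^g\|_{\mathrm{L}^{p^*}}\ \le\ C\,\|A^g\|_{\mathrm{W}^{1,2}}\,\|A^g\|_{\mathrm{W}^{1,p}}\ \le\ C\varepsilon_G\,\|A^g\|_{\mathrm{W}^{1,p}},
\]
using the already-established critical estimate $\|A^g\|_{\mathrm{W}^{1,2}}\le C\|F_A\|_{\mathrm{L}^2}\le C\varepsilon_G$. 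This produces a \emph{linear} term with small coefficient, which is absorbed directly for $\varepsilon_G$ small, yielding $\|\diff A^g\|_{\mathrm{L}^p}\le\|A^g\|_{\mathrm{W}^{1,p}}\le C_G\|F_A\|_{\mathrm{L}^p}$ with no smallness hypothesis on the $\mathrm{L}^p$-curvature and with constants uniform in $p\in[2,3)$. With this correction your argument is complete.
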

			In fact in the reference given, the theorem is stated for $p=2$, but the proof is made for $2\leq p <4$, so the result holds as stated here, in particular $\varepsilon_G$ and $C_G$ can be chosen independently of $p$.\\
			
			The proof of \cref{Ereg} is divide in  parts, each of the following section is devoted to each of the following steps:
			\begin{enumerate}
				\item We have Morrey bounds on $F$ that imply Hölder-continuity for $A$.
				
				\item We have Campanato bounds for $V (F^g)$ for a suitable gauge change $g$ from which we can deduce that $F$
				is Hölder-continuous and so is $\nabla A$.
				
				\item We the prove that  $\nabla F \in \mathrm{L}^2$ by difference-quotient method and deduce
				that $F$ satisfies 
			
				\[ \diff^{\ast} F = \star [A, \star F] + \frac{p - 2}{2} \star \frac{\diff | F     |^2 \wedge \star F}{1 + | F |^2}. \]
			
				\item We conclude that $F$ and $A$ are smooth.
				
				\item We show that $|F|^2$ is a subsolution to an elliptic PDE to deduce pointwise estimates for the curvature.
				
				\item We bootstrap these estimates in the equation, seen as a perturbation of the case $p=2$.
			\end{enumerate}

			\subsubsection{Preliminary estimates}
			When $p=2$, the energy is invariant under scaling so the usual strategy is to derive estimates on a ball of radius $1$ and then use a dilation to extend then to balls of arbitrary radii. When $p>2$, this is not the case anymore so we choose to work on a ball of (fixed) radius $R$. We assume  that
			\begin{equation}
				\label{epsi}
				\int_{\mathrm{B}_R} \vert F_A\vert^2\, \diff x \leq \varepsilon_0 \leq \varepsilon_G, 
			\end{equation}
			and we consider $A \in  \mathrm{W}^{1,p} \left(
			\mathrm{B}_R, \Lambda^1 \R^4 \otimes \mathfrak{g} \right)$ being given in Coulomb gauge  given by \cref{uhlenbeckgauge}. As observed in \cite{HS}, we can adapt the proof of \cite[theorem 1.1]{isobe_regularity_2008} for the degenerate equation in our simpler setting.
			
			\paragraph{Estimates in the {\guillemotleft} abelian case {\guillemotright}} 
			\label{abelian}

			Let $\rho \in (0,R)$ and $a \in \mathrm{W}^{1, p} \left(
			\mathrm{B}_R, \Lambda^1 \R^4 \otimes \mathfrak{g} \right)$. We consider $E (\omega) = \int_{\mathrm{B}_\rho} H (\diff \omega)$. 
			Minimizing $E$
			under the constraint $i^{\ast}_{\partial \mathrm{B}_\rho} \omega =
			i^{\ast}_{\partial \mathrm{B}_\rho} a$, we get $\omega \in \mathrm{W}^{1, p} \left(
			\mathrm{B}_\rho, \Lambda^1 \R^4 \otimes \mathfrak{g} \right)$ solution of
			\begin{equation} 	\label{pYMab}
				\left\{\begin{array}{l}
					\diff^{\ast} (\varrho (\diff \omega) \diff \omega) = 0\\
					i^{\ast}_{\partial \mathrm{B}_\rho} \omega = i^{\ast}_{\partial \mathrm{B}_\rho} a
				\end{array}\right. 
			\end{equation} 
			Moreover, since $\omega$ is defined up to $\diff \phi$ with $\phi \in  \mathrm{W}^{1, p}_0 \left(\mathrm{B}_\rho,\mathfrak{g} \right)$, we can assume also that
			\begin{equation}
				\label{cgauge}
				\diff^{\ast} (\omega - a) = 0\, .
			\end{equation} For more details on this construction, see \cite[lemma 2.2]{isobe_regularity_2008}.
			\begin{lemma}
				\label{monoHdomega}
				For all $\tau \in (0,1)$ and $\rho\in [0,R]$, 
				\begin{equation}
					\int_{\mathrm{B}_{\tau\rho}} H(|\diff \omega|^2) \leq  C \tau^4 \int_{\mathrm{B}_\rho} H(|\diff \omega|^2),
				\end{equation} where $C$ is independent of $\rho$,  $\tau$ and $p$.
			\end{lemma}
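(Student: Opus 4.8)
The plan is to derive the decay directly from the interior regularity of the abelian minimizer $\omega$, rather than from a comparison/iteration argument. Since $\omega$ minimizes the convex functional $E(\eta)=\int_{\mathrm{B}_\rho}H(\diff\eta)$ whose integrand $(1+|\diff\eta|^2)^{p/2}$ depends only on $|\diff\eta|$, I would invoke the regularity theory for such functionals (\cite{uhlenbeck_regularity_1977}, \cite{hamburger_regularity_1992}; see also the adaptation to the present setting in \cite{isobe_regularity_2008} and \cite{HS}), which gives $\omega\in C^{1,\alpha}_{\mathrm{loc}}(\mathrm{B}_\rho)$ together with the interior gradient bound
\[
\|\diff\omega\|_{\mathrm{L}^\infty(\mathrm{B}_{\rho/2})}^{\,p}\ \le\ C\,\rho^{-4}\int_{\mathrm{B}_\rho}\bigl(1+|\diff\omega|^{p}\bigr)\,\diff x .
\]
The ellipticity ratio of $\partial_\Omega(\varrho(\Omega)\Omega)$ equals $\tfrac{1+(p-1)|\Omega|^2}{1+|\Omega|^2}\in[1,p-1]$, hence is $\le p_G-1$ for $p\in[2,p_G]$, and the two-sided comparison $\tfrac12(1+|\Omega|^p)\le H(\Omega)\le 2^{p/2}(1+|\Omega|^p)$ has $p$-uniform constants on this range; since moreover, after the dilation $x\mapsto\rho x$, the rescaled integrands $(\rho^2+|\xi|^2)^{p/2}$ have growth and ellipticity constants uniform in $\rho\in(0,1]$, the constant $C$ above may be taken independent of both $p$ and $\rho$. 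Using $1+|\diff\omega|^p\le 2H(\diff\omega)$ and $\rho^4\le|\mathrm{B}_1|^{-1}\int_{\mathrm{B}_\rho}H(\diff\omega)$ (valid because $H\ge1$), this rewrites, for some $C_0$ still uniform in $p$ and $\rho$, as
\[
\sup_{\mathrm{B}_{\rho/2}}H(\diff\omega)\ \le\ C_0\,\rho^{-4}\int_{\mathrm{B}_\rho}H(\diff\omega)\,\diff x .
\]

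Granting this, I would conclude by a two-case split. When $0<\tau\le\tfrac12$ one has $\mathrm{B}_{\tau\rho}\subset\mathrm{B}_{\rho/2}$, so
\[
\int_{\mathrm{B}_{\tau\rho}}H(\diff\omega)\,\diff x\ \le\ |\mathrm{B}_{\tau\rho}|\sup_{\mathrm{B}_{\rho/2}}H(\diff\omega)\ \le\ |\mathrm{B}_1|(\tau\rho)^4\,C_0\,\rho^{-4}\int_{\mathrm{B}_\rho}H(\diff\omega)\,\diff x\ =\ C_1\,\tau^4\int_{\mathrm{B}_\rho}H(\diff\omega)\,\diff x ,
\]
with $C_1=|\mathrm{B}_1|C_0$; when $\tfrac12<\tau<1$ the inequality is immediate since $16\tau^4\ge1$ and $\int_{\mathrm{B}_{\tau\rho}}H(\diff\omega)\le\int_{\mathrm{B}_\rho}H(\diff\omega)$, while the case $\rho=0$ is vacuous. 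Taking $C=\max(C_1,16)$ then yields the statement.

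The part requiring care is not the elementary splitting but the assertion that the interior gradient estimate holds with a constant that neither degenerates as $p\to2^{+}$ nor depends on $\rho$. This is precisely where one uses that $p$ varies in the compact interval $[2,p_G]$, so that the ellipticity ratio $p-1$ and the structure constants comparing $H(\Omega)$ with $1+|\Omega|^p$ stay bounded, and that the estimate is used in its scale-covariant form so that the rescaled functionals $\int(\rho^2+|\xi|^2)^{p/2}$ are handled with $\rho$-uniform bounds; note that only interior regularity enters, since $\mathrm{B}_{\tau\rho}$ and $\mathrm{B}_{\rho/2}$ are compactly contained in $\mathrm{B}_\rho$.
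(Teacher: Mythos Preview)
Your argument is correct but follows a different route from the paper. The paper invokes a more specific fact: by \cite[theorem~1.10]{uhlenbeck_regularity_1977}, the quantity $f(|\diff\omega|^2)$ with $f(Q)=Q(1+Q)^{p/2-1}+\tfrac{1}{p}\bigl(2-(1+Q)^{p/2}\bigr)$ is \emph{subharmonic}, so the mean value inequality over concentric balls at the origin gives $\fint_{\mathrm{B}_{\tau\rho}}f(|\diff\omega|^2)\le\fint_{\mathrm{B}_\rho}f(|\diff\omega|^2)$ directly, which is precisely the $\tau^4$ decay; a short elementary computation then shows $\tfrac13 H\le f\le\tfrac23 H$ for $p\in[2,3]$, yielding the lemma with the explicit constant $C=2$. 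Your approach instead passes through an interior $\mathrm{L}^\infty$ gradient bound on $\mathrm{B}_{\rho/2}$ and a two-case split on $\tau$; this is heavier machinery (the Lipschitz estimate itself is typically proved via Moser iteration built on top of the same subharmonicity) and gives a weaker constant, but is perfectly valid. The uniformity in $p$ and $\rho$ that you correctly flag as the delicate point requires some care in your route, whereas in the paper's approach it evaporates: subharmonicity is a pointwise statement and the comparison $f\asymp H$ is manifestly uniform for $p\in[2,3]$.
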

			\begin{proof}
				According to \cite[theorem 1.10]{uhlenbeck_regularity_1977}, if $f$ is defined as \[ f(Q) = Q(1+Q)^{p/2-1} + \frac{1}{p}\left(2 - (1+Q)^{p/2}\right),\] then $f(|\diff \omega|^2)$ is subharmonic so for all $\tau \in (0,1)$ and $\rho\in [0,R]$, 
				\begin{equation}
					\fint_{\mathrm{B}_{\tau\rho}} f(|\diff \omega|^2) \leq   \fint_{\mathrm{B}_\rho} f(|\diff \omega|^2) \label{fsubharmonicf}
				\end{equation} 
				which gives
				\begin{equation}
					\int_{\mathrm{B}_{\tau\rho}} f(|\diff \omega|^2) \leq  \tau^4 \int_{\mathrm{B}_\rho} f(|\diff \omega|^2). \label{subharmonicf}
				\end{equation} 
				Additionally, since $t=(1+Q)^{-1} \in(0,1)$, \[ \frac{f(Q)}{(1+Q)^{p/2}} = 1-\frac{1}{p}+ \frac{2}{p}t^{p/2} - t\] is a decreasing function of $t$ and \[\frac{1}{p} \leq \frac{f(Q)}{(1+Q)^{p/2}} \leq \frac{p-1}{p}.\] In particular, for $p\in[2,3]$, \begin{equation}
					\frac{1}{3} H(\diff \omega)\leq f(|\diff \omega|^2) \leq \frac{2}{3} H(\diff \omega) \label{comparisonfH}
				\end{equation}  and we deduce, combining \eqref{subharmonicf} and \eqref{comparisonfH}, for all $\tau\in(0,1)$,
				\begin{equation}
					\label{monomega}
					\int_{\mathrm{B}_{\tau\rho}} H(\diff \omega) \leq C \tau^4 \int_{\mathrm{B}_\rho} H(\diff \omega).
				\end{equation} 
			\end{proof}
			
			The next lemma measures the error in replacing our equation by the abelian one.
			\begin{lemma}\label{lem_ecart lin-nnlin} There exists $\omega \in \mathrm{W}^{1, p} \left(
				\mathrm{B}_\rho, \Lambda^1 \R^4 \otimes \mathfrak{g} \right)$and $C_p>0$  such that
				\begin{equation}
					\begin{split}
						\int_{\mathrm{B}_\rho} | F - \diff \omega |^p &\leq C_p \left( \| A \|_{\mathrm{L}^{2p}(\mathrm{B}_{\rho})}^2 
						E^{\frac{1}{p'}} + \| A \|_{\mathrm{L}^{4}(\mathrm{B}_{\rho})}^{p'}E \right),\\
						& \leq C_p  \| A \|_{\mathrm{L}^{4}(\mathrm{B}_{\rho})} E^{\frac{1}{p'}}\left(\| A \|_{\mathrm{L}^{p^*}(\mathrm{B}_{\rho})}+\| A \|_{\mathrm{L}^{4}(\mathrm{B}_{\rho})}^{p'-1}
						E^{\frac{1}{p}} \right)
					\end{split}
					\label{ecart lin-nnlin}
				\end{equation} where, $\frac{1}{p}+\frac{1}{p'}=1$, $p^*=\frac{4p}{4-p}$ and  \[ E =\int_{\mathrm{B}_\rho} H (F) + | A |^{2 p} \]
				and $C_p$ is uniformly bounded in $p\in[2,3]$.
			\end{lemma}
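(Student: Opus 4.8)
The plan is to pit the strict monotonicity of the vector field $\Omega\mapsto\varrho(\Omega)\Omega$ against the two Euler--Lagrange equations at hand, namely \eqref{main} satisfied by $F=F_A$ in the Coulomb gauge of \cref{uhlenbeckgauge}, and \eqref{pYMab} satisfied by $\diff\omega$, where $\omega$ is the minimizer associated with $a=A$. Applying \cref{leminq} with $\alpha=(p-2)/2$ pointwise and integrating gives
\[
\tfrac12\int_{\mathrm{B}_\rho}|F-\diff\omega|^p\ \le\ \int_{\mathrm{B}_\rho}\big\langle\varrho(F)F-\varrho(\diff\omega)\diff\omega,\,F-\diff\omega\big\rangle ,
\]
and the identity $F-\diff\omega=\diff(A-\omega)+A\wedge A$ splits the right-hand side into a differential part $I=\int_{\mathrm{B}_\rho}\langle\varrho(F)F-\varrho(\diff\omega)\diff\omega,\,\diff(A-\omega)\rangle$ and an algebraic part $II=\int_{\mathrm{B}_\rho}\langle\varrho(F)F-\varrho(\diff\omega)\diff\omega,\,A\wedge A\rangle$. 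Writing $J:=\int_{\mathrm{B}_\rho}|F-\diff\omega|^p$, the goal is to bound $I$ and $II$ by the claimed quantities up to a term $\tfrac14 J$ that can be absorbed on the left.

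Two elementary $L^{p'}$ bounds ($\tfrac1p+\tfrac1{p'}=1$) are used throughout. Since $(p-1)p'=p$, one has $\varrho(\Omega)^{p'}|\Omega|^{p'}=(1+|\Omega|^2)^{p/2}\big(|\Omega|^2/(1+|\Omega|^2)\big)^{p'/2}\le H(\Omega)$, whence $\|\varrho(F)F\|_{L^{p'}(\mathrm{B}_\rho)}\le E^{1/p'}$; and the minimality of $\omega$ together with $|\diff A|\le|F|+C|A|^2$ and the convexity inequality $(x+y+z)^{p/2}\le 3^{p/2-1}(x^{p/2}+y^{p/2}+z^{p/2})$ give $\int_{\mathrm{B}_\rho}H(\diff\omega)\le\int_{\mathrm{B}_\rho}H(\diff A)\le C E$, hence also $\|\varrho(\diff\omega)\diff\omega\|_{L^{p'}(\mathrm{B}_\rho)}\le C E^{1/p'}$. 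For the algebraic part, Hölder with $|A\wedge A|\le|A|^2$ then yields $|II|\le C E^{1/p'}\|A\|_{L^{2p}(\mathrm{B}_\rho)}^2$. For the differential part I would not integrate by parts directly but test the two weak formulations with $b=A-\omega$, which is legitimate because $i^\ast_{\partial\mathrm{B}_\rho}(A-\omega)=0$: testing \eqref{pYMab} annihilates the $\varrho(\diff\omega)\diff\omega$ term, while testing $\int\langle\varrho(F)F,\diff_A(A-\omega)\rangle=0$ replaces the $\varrho(F)F$ term by $-\int\langle\varrho(F)F,[A,A-\omega]\rangle$; thus $|I|\le C\int_{\mathrm{B}_\rho}|\varrho(F)F|\,|A|\,|A-\omega|$, and Hölder with the exponent triple $(p',4,p^*)$ — valid since $\tfrac1{p'}+\tfrac14+\tfrac1{p^*}=1$ for $p^*=\tfrac{4p}{4-p}$ — gives $|I|\le C E^{1/p'}\|A\|_{L^4(\mathrm{B}_\rho)}\|A-\omega\|_{L^{p^*}(\mathrm{B}_\rho)}$.

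To close, I would bound $\|A-\omega\|_{L^{p^*}}$ by a Hodge/Gaffney inequality and the scale-invariant Sobolev embedding $\dot W^{1,p}\hookrightarrow L^{p^*}$: since $\diff^\ast(\omega-A)=0$ and $i^\ast_{\partial\mathrm{B}_\rho}(\omega-A)=0$, one has $\|A-\omega\|_{L^{p^*}}\le C\|\diff(A-\omega)\|_{L^p}\le C\big(J^{1/p}+\|A\|_{L^{2p}}^2\big)$. Collecting the bounds, applying Young's inequality to $C E^{1/p'}\|A\|_{L^4}J^{1/p}$ (which produces $\tfrac14 J$ together with $C E\|A\|_{L^4}^{p'}$), and using the smallness of $\|A\|_{L^4}$ — controlled by $\varepsilon_G$ via the Coulomb bound $\|A\|_{W^{1,2}}\lesssim\|F\|_{L^2}$ — to absorb the residual $E^{1/p'}\|A\|_{L^4}\|A\|_{L^{2p}}^2$ into $C E^{1/p'}\|A\|_{L^{2p}}^2$, one reaches the first displayed inequality of the lemma. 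The second form follows from the interpolation $\|A\|_{L^{2p}}^2\le\|A\|_{L^4}\|A\|_{L^{p^*}}$ (Hölder with equal weights, since $\tfrac1{2p}=\tfrac18+\tfrac1{2p^*}$) and the rewriting $E\|A\|_{L^4}^{p'}=\|A\|_{L^4}E^{1/p'}\cdot\|A\|_{L^4}^{p'-1}E^{1/p}$. Every constant that appears — from \cref{leminq}, Hölder, the convexity inequality, the Gaffney and Sobolev constants (which stay bounded for $p$ away from $1$ and $4$, in particular on $[2,3]$), and Young's inequality — is bounded uniformly for $p\in[2,3]$, giving the asserted uniformity of $C_p$.

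The step I expect to require the most care is the justification of testing the weak formulations of \eqref{main} and \eqref{pYMab} with $b=A-\omega$, and of the Hodge/Poincaré estimate for $A-\omega$, at the mere $W^{1,p}$ regularity level and with only the \emph{tangential} boundary condition available: one must check by a density/trace argument that $A-\omega$ is an admissible test form (the boundary term produced by the pairing depends only on $i^\ast_{\partial\mathrm{B}_\rho}(A-\omega)=0$), and that the boundary-value Gaffney estimate on $\mathrm{B}_\rho$ has constants independent of $\rho\le1$, which holds by scale-invariance precisely because $p^*$ is the critical Sobolev exponent. Beyond that, the argument is bookkeeping with Hölder's inequality and the exact arithmetic of $p'$ and $p^*=4p/(4-p)$.
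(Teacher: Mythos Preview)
Your proof is correct and follows essentially the same route as the paper: the monotonicity inequality from \cref{leminq}, the splitting $F-\diff\omega=\diff(A-\omega)+A\wedge A$, testing the weak forms of \eqref{main} and \eqref{pYMab} against $A-\omega$, the H\"older/Sobolev--Gaffney bound $\|A-\omega\|_{L^{p^*}}\le C\|\diff(A-\omega)\|_{L^p}$, Young's inequality to absorb $J^{1/p}$, and the interpolation $\|A\|_{L^{2p}}^2\le\|A\|_{L^4}\|A\|_{L^{p^*}}$ for the second estimate---these are exactly the steps the paper takes, in the same order. Your remark that only \emph{boundedness} (not smallness) of $\|A\|_{L^4}$ is needed to absorb the residual term $E^{1/p'}\|A\|_{L^4}\|A\|_{L^{2p}}^2$ is accurate; the paper uses the ambient hypothesis $\|F\|_{L^2(\mathrm{B}_R)}\le\varepsilon_0$ together with the Coulomb-gauge Sobolev bound for the same purpose, and your flagging of the density/trace issue for the admissibility of $A-\omega$ as a test form is a point the paper takes for granted.
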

			
			\begin{proof}
				We will use the following classical inequality, see \cref{leminq} for details, 
				\begin{equation}
					\label{classicalinequality}
					(\varrho (a) a - \varrho (b) b, a - b) \geq \frac{1}{2} | a - b |^p.
				\end{equation}
				If we take $a = A$, since $\varphi = A - \omega$ satisfies $i^{\ast}_{\partial
					\mathrm{B}_\rho} \varphi = 0$, thanks to  \ref{main} and \eqref{pYMab}, we get
				\begin{eqnarray*}
					\int_{\mathrm{B}_\rho} \varrho (F) \langle F, \diff \varphi \rangle & = & -
					\int_{\mathrm{B}_\rho} \varrho (F) \langle F, [A, \varphi] \rangle,\\
					\int_{\mathrm{B}_\rho} \varrho (\diff \omega) \langle \diff \omega, \diff
					\varphi \rangle & = & 0 .
				\end{eqnarray*}
				Now, using those equations and \eqref{classicalinequality}, we get
				\begin{equation}
					\begin{split}
						\int_{\mathrm{B}_\rho} | F - \diff \omega |^p & \leq  2 \int_{\mathrm{B}_\rho}
						\langle \varrho (F) F - \varrho (\diff \omega) \diff \omega, F - \diff
						\omega \rangle\\
						& \leq  2 \int_{\mathrm{B}_\rho} \langle \varrho (F) F - \varrho (\diff
						\omega) \diff \omega, \diff \varphi + A \wedge A \rangle\\
						& \leq  2 \left( \int_{\mathrm{B}_\rho} (\varrho (F) F - \varrho (\diff
						\omega) \diff \omega, A \wedge A) - \int_{\mathrm{B}_\rho} \varrho (F) \langle
						F, [A, A - \omega] \rangle \right)\\
						& \leq  4\left( \int_{\mathrm{B}_\rho} | \varrho (F) F - \varrho (\diff
						\omega) \diff \omega |\, | A |^2 + \int_{\mathrm{B}_\rho} | \varrho (F) F |
						\, | A | \, | A - \omega | \right)\\
						& \leq  4 \| \varrho (F) F - \varrho (\diff \omega) \diff \omega
						\|_{\mathrm{L}^{p'} \left( \mathrm{B}_\rho \right)} \| A \|_{\mathrm{L}^{2 p} \left(
							\mathrm{B}_\rho \right)}^2\\
						&   + 4 \| | A | \, | A - \omega | \|_{\mathrm{L}^p \left( \mathrm{B}_\rho
							\right)} \| \varrho (F) F \|_{\mathrm{L}^{p'} \left( \mathrm{B}_\rho \right)}.
					\end{split}
					\label{Fdo}
				\end{equation} 	
				Since, by construction,  $\| H (\diff \omega) \|_{\mathrm L^1 \left(\mathrm \mathrm{B}_\rho \right)}
				\leq \| H (\diff A) \|_{\mathrm L^1 \left( \mathrm \mathrm{B}_\rho \right)}$, we get
				\[ \| \varrho (F) F - \varrho (\diff \omega) \diff \omega \|_{\mathrm{L}^{p'} \left(
					\mathrm{B}_\rho \right)} \leq \| H (F) \|_{\text{$\mathrm{L}^1 \left(
						\mathrm{B}_\rho \right)$}}^{1 / p'} + \| H (\diff \omega) \|_{\text{$\mathrm{L}^1
						\left( \mathrm{B}_\rho \right)$}}^{1 / p'} \leq \| H (F) \|_{\mathrm{L}^1
					\left( \mathrm{B}_\rho \right)}^{1 / p'} + \| H (\diff A) \|_{\mathrm{L}^1 \left(
					\mathrm{B}_\rho \right)}^{1 / p'}. \]
				Moreover
				$$
				\| H (\diff A) \|_{\mathrm{L}^1 \left( \mathrm{B}_\rho \right)} =
				\int_{\mathrm{B}_\rho} (1 + | F_A - A \wedge A |^2)^{\frac{p}{2}} \leq 2^{p-1}
				\int_{\mathrm{B}_\rho} H (F) + | A |^{2 p} .$$
				hence we get 
				\begin{equation}
					\label{rhoFrhoA} 
					\| \varrho (F) F - \varrho (\diff \omega) \diff \omega \|_{\mathrm{L}^{p'}
						\left( \mathrm{B}_\rho \right)} \leq 2^{\frac{(p-1)^2}{p}+1}\left( \int_{\mathrm{B}_\rho} H (F) + | A
					|^{2 p} \right)^{1 / p'} .
				\end{equation}
				Additionally, since $d^*(A-\omega)=0$, $i_{\partial B_\rho}^*(A-\omega)$ vanishes and  $\frac{1}{p^{\ast}} + \frac{1}{4} = \frac{1}{p}$, we get
				\[ \| | A | \, | A - \omega | \|_{\mathrm{L}^p \left( \mathrm{B}_\rho \right)}
				\leq  \| A \|_{\mathrm{L}^4} \| A - \omega \|_{\mathrm{L}^{p^{\ast}}} \leq C_p  \| A \|_{\mathrm{L}^4} \| \diff A - \diff \omega \|_{\mathrm{L}^{p}} \] where $C_p$ is the Sobolev embedding constant. Then
				\begin{equation}
					\label{Amo}
					\| | A | \, | A - \omega | \|_{\mathrm{L}^p \left( \mathrm{B}_\rho \right)}
					\leq  C_p  \| A \|_{\mathrm{L}^4} \| F - \diff \omega \|_{\mathrm{L}^{p}} + C_p \| A \|_{\mathrm{L}^4} \| A \|_{\mathrm{L}^{2p}}^2. 
				\end{equation} 
				
				Hence
				\begin{equation}
					\label{Amobis}
					\begin{split}
						\| | A | \, | A - \omega | \|_{\mathrm{L}^p \left( \mathrm{B}_\rho \right)} &\| \varrho (F) F \|_{\mathrm{L}^{p'} \left( \mathrm{B}_\rho \right)}
						\leq  C_p ( \| A \|_{\mathrm{L}^4} \| F - \diff \omega \|_{\mathrm{L}^{p}} E^\frac{1}{p'}+  \| A \|_{\mathrm{L}^4} \| A \|_{\mathrm{L}^{2p}}^2E^\frac{1}{p'}) \\
						&\leq \frac{1}{2}\| F - \diff \omega \|_{\mathrm{L}^{p}}^p +C_p(\| A \|_{\mathrm{L}^4}^{p'} E+  \| A \|_{\mathrm{L}^4} \| A \|_{\mathrm{L}^{2p}}^2E^\frac{1}{p'}). \\
					\end{split}
				\end{equation} 
				Then thanks to \eqref{Fdo}, \eqref{rhoFrhoA} and \eqref{Amo}, we finally get
				\begin{equation*}
					\int_{\mathrm{B}_\rho} | F - \diff \omega |^p \leq C_p \left( \| A \|_{\mathrm{L}^{2p}(\mathrm{B}_{\rho})}^2 
					E^{\frac{1}{p'}} +\| A \|_{\mathrm{L}^4}^{p'} E\right).
				\end{equation*} 
				Using Young inequality we obtain the first estimate.
				Moreover, since $\frac{1/2}{p^*}+\frac{1/2}{4}=\frac{1}{2p}$, the interpolation inequality yields
				\begin{equation}
					\label{inter}
					\Vert A \Vert_{\mathrm{L}^{2p}(\mathrm{B}_\rho)}^2 \leq  \Vert A \Vert_{\mathrm{L}^{p^*}(\mathrm{B}_\rho)} \Vert A \Vert_{\mathrm{L}^{4}(\mathrm{B}_\rho)}.\end{equation}
				which proves the second estimate.
				
			\end{proof}
			
			\paragraph{From curvature estimates to connection estimates}
			
			\begin{lemma} \label{SoboCoulomb}
				In Uhlenbeck's Coulomb gauge on $\mathrm{B}_R$, there exists $C>0$, such that , for $2\leq p\leq 3\}$,we have
				\[ \|A\|_{\mathrm{L}^{p^*}(\mathrm{B}_R)} \leq C \|\diff A\|_{\mathrm{L}^{p}(\mathrm{B}_R)}  \leq C \|F\|_{\mathrm{L}^{p}(\mathrm{B}_R)} \]
			\end{lemma}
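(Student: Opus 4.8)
The rightmost inequality is the content of the first bound in \cref{uhlenbeckgauge} applied to the Coulomb representative $A$ (with $F=\diff A+A\wedge A$ its curvature), and there the constant has already been arranged to be independent of $p$; if one prefers to keep the argument local to this section one can instead write $\diff A=F-A\wedge A$, estimate $\|\diff A\|_{\mathrm{L}^{p}(\mathrm{B}_R)}\le \|F\|_{\mathrm{L}^{p}(\mathrm{B}_R)}+\|A\|_{\mathrm{L}^{2p}(\mathrm{B}_R)}^{2}$, bound $\|A\|_{\mathrm{L}^{2p}}^{2}\le C\|A\|_{\mathrm{L}^{4}}\|A\|_{\mathrm{L}^{p^{*}}}$ via \eqref{inter} together with the left inequality of the lemma, and absorb the resulting term on the left using that $\|A\|_{\mathrm{L}^{4}}\le C\sqrt{\varepsilon_0}$ is small (which follows from the $p=2$ case of the lemma, \cref{uhlenbeckgauge} and \eqref{epsi}). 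So everything reduces to proving $\|A\|_{\mathrm{L}^{p^{*}}(\mathrm{B}_R)}\le C\|\diff A\|_{\mathrm{L}^{p}(\mathrm{B}_R)}$ with $C$ independent of $R\in(0,1)$ and of $p\in[2,3]$.

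For that I would run the classical Hodge--Gaffney argument. The Coulomb conditions provided by \cref{uhlenbeckgauge} are $\diff^{*}A=0$ in $\mathrm{B}_R$ and $i^{*}_{\partial\mathrm{B}_R}\star A=0$; the latter says precisely that the normal component of $A$ vanishes on $\partial\mathrm{B}_R$, which is the natural boundary condition making the div--curl system for $A$ elliptic. Gaffney's inequality on the ball (the analogue for forms with vanishing normal part of the $\R^n$ statement \cref{gaffney}) then gives $\|\nabla A\|_{\mathrm{L}^{p}(\mathrm{B}_R)}\le C\big(\|\diff A\|_{\mathrm{L}^{p}(\mathrm{B}_R)}+\|\diff^{*}A\|_{\mathrm{L}^{p}(\mathrm{B}_R)}+\|A\|_{\mathrm{L}^{p}(\mathrm{B}_R)}\big)$. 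Since $\diff^{*}A=0$, and since a closed, co-closed $1$-form on a ball in $\R^4$ with vanishing normal part must be zero (the relevant space of harmonic fields being trivial), a standard compactness argument upgrades this to $\|A\|_{\mathrm{W}^{1,p}(\mathrm{B}_R)}\le C\|\diff A\|_{\mathrm{L}^{p}(\mathrm{B}_R)}$. Dilating $x\mapsto Rx$, under which the homogeneous Gaffney bound and the embedding $\mathrm{W}^{1,p}(\mathrm{B}_1)\hookrightarrow\mathrm{L}^{p^{*}}(\mathrm{B}_1)$ scale consistently in dimension $4$, yields the two scale-invariant estimates $\|\nabla A\|_{\mathrm{L}^{p}(\mathrm{B}_R)}\le C\|\diff A\|_{\mathrm{L}^{p}(\mathrm{B}_R)}$ and $\|A\|_{\mathrm{L}^{p}(\mathrm{B}_R)}\le CR\|\diff A\|_{\mathrm{L}^{p}(\mathrm{B}_R)}$ with $C$ independent of $R$; plugging these into the scaled Sobolev inequality $\|A\|_{\mathrm{L}^{p^{*}}(\mathrm{B}_R)}\le C\big(\|\nabla A\|_{\mathrm{L}^{p}(\mathrm{B}_R)}+R^{-1}\|A\|_{\mathrm{L}^{p}(\mathrm{B}_R)}\big)$ closes the argument.

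The point that requires genuine care---and the one that matters for the rest of the paper---is the uniformity of $C$ in $p\in[2,3]$; the $R$-independence is essentially free once one works with the homogeneous versions of the inequalities. For the $p$-dependence one observes that the Gaffney constant on $\mathrm{B}_1$ and the Sobolev constant of $\mathrm{W}^{1,p}(\mathrm{B}_1)\hookrightarrow\mathrm{L}^{p^{*}}(\mathrm{B}_1)$ both depend continuously, hence boundedly, on $p$ over compact subintervals of $(1,4)$, and the compactness argument used to discard the lower-order term can be carried out uniformly over the compact interval $[2,3]$; taking the supremum over this interval of the constants so produced gives a single $C$ valid for all admissible $p$ and $R$.
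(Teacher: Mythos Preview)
Your proof is correct and follows the same route the paper has in mind: the right inequality is quoted from \cref{uhlenbeckgauge}, and the left one is Gaffney on the ball (using $\diff^{*}A=0$ and $i^{*}_{\partial\mathrm{B}_R}\star A=0$) followed by the Sobolev embedding $\mathrm{W}^{1,p}\hookrightarrow\mathrm{L}^{p^{*}}$. The paper's own proof is the one-liner ``This is a consequence of Sobolev embeddings,'' so you have simply unpacked the details it leaves implicit, including the care about uniformity in $p\in[2,3]$ and in $R$.
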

			\begin{proof}
				This is a consequence of Sobolev embeddings.
			\end{proof}
			\begin{lemma} There exists $C_p>0$ depending only in $p$ and uniformly bounded for $p\in[2,3]$ such that for all $\rho\in(0,R)$ and $\tau \in (0,1)$, \begin{equation}
					\int_{\mathrm{B}_{\tau\rho}} | A |^{2 p} \leq C_p \left( \left( \tau^4 + \varepsilon_0 ^p\right) \int_{\mathrm{B}_\rho} | A |^{2 p} + \varepsilon_0^p \int_{\mathrm{B}_\rho} | F |^p \right).
					\label{connectionestimates}
				\end{equation}    
			\end{lemma}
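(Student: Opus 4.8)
The strategy is a hole-filling (Widman-type) argument based on harmonic comparison, exactly as in the case $p=2$. One may assume $\tau\le 1/2$, the range $\tau\in(1/2,1)$ being trivial once the constant is taken $\ge 16$. For a.e.\ $\rho\in(0,R)$ the trace $A|_{\partial\mathrm{B}_\rho}$ is well defined; since we work in Uhlenbeck's Coulomb gauge (\cref{uhlenbeckgauge}) we have $\diff^{*}A=0$ on $\mathrm{B}_\rho$, and combining this with $F=\diff A+A\wedge A$ gives the Poisson equation $\Delta A=\diff^{*}\diff A=\diff^{*}F-\diff^{*}(A\wedge A)$ on $\mathrm{B}_\rho$. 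I would then split $A=A^{h}+A^{r}$ on $\mathrm{B}_\rho$, where $A^{h}$ is componentwise harmonic with $A^{h}=A$ on $\partial\mathrm{B}_\rho$ and $A^{r}\in\mathrm{W}^{1,q}_0(\mathrm{B}_\rho)$ solves $\Delta A^{r}=\diff^{*}F-\diff^{*}(A\wedge A)$, the exponent $q=\tfrac{4p}{p+2}$ being chosen so that its Sobolev conjugate is $2p$ (note $q\in[2,p]$ for $p\in[2,3]$). Equivalently, one could replace $A^{h}$ by the abelian minimiser $\omega$ of \eqref{pYMab} and invoke \cref{monoHdomega} and \cref{lem_ecart lin-nnlin} in place of the harmonic estimates below; the two presentations are interchangeable.

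For the harmonic part, the sub-mean-value inequality for $|A^{h}|$ gives $\int_{\mathrm{B}_{\tau\rho}}|A^{h}|^{2p}\le C\tau^{4}\int_{\mathrm{B}_\rho}|A^{h}|^{2p}$ when $\tau\le 1/2$, hence $\int_{\mathrm{B}_{\tau\rho}}|A|^{2p}\le C\tau^{4}\int_{\mathrm{B}_\rho}|A|^{2p}+C\int_{\mathrm{B}_\rho}|A^{r}|^{2p}$. For the remainder, the scale-invariant estimate for the Dirichlet problem together with $\mathrm{W}^{1,q}_0(\mathrm{B}_\rho)\hookrightarrow\mathrm{L}^{2p}(\mathrm{B}_\rho)$ yields $\|A^{r}\|_{\mathrm{L}^{2p}(\mathrm{B}_\rho)}\le C_p\big(\|F\|_{\mathrm{L}^{q}(\mathrm{B}_\rho)}+\|A\wedge A\|_{\mathrm{L}^{q}(\mathrm{B}_\rho)}\big)$. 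The smallness of the energy enters through the gauge: by \cref{uhlenbeckgauge} (Sobolev together with the Coulomb and boundary conditions), $\|A\|_{\mathrm{L}^{4}(\mathrm{B}_\rho)}\le\|A\|_{\mathrm{L}^{4}(\mathrm{B}_R)}\le C\|F\|_{\mathrm{L}^{2}(\mathrm{B}_R)}\le C\sqrt{\varepsilon_0}$, uniformly in $p$. Interpolating $\mathrm{L}^{2q}$ between $\mathrm{L}^{4}$ and $\mathrm{L}^{2p}$ (the exponents being conjugate just as in the derivation of \eqref{inter}) gives $\|A\wedge A\|_{\mathrm{L}^{q}(\mathrm{B}_\rho)}\le\|A\|_{\mathrm{L}^{2q}(\mathrm{B}_\rho)}^{2}\le C\sqrt{\varepsilon_0}\,\|A\|_{\mathrm{L}^{2p}(\mathrm{B}_\rho)}$; interpolating $\mathrm{L}^{q}$ between $\mathrm{L}^{2}$ and $\mathrm{L}^{p}$ gives $\|F\|_{\mathrm{L}^{q}(\mathrm{B}_\rho)}\le\|F\|_{\mathrm{L}^{2}(\mathrm{B}_\rho)}^{1/2}\|F\|_{\mathrm{L}^{p}(\mathrm{B}_\rho)}^{1/2}\le C\varepsilon_0^{1/4}\|F\|_{\mathrm{L}^{p}(\mathrm{B}_\rho)}^{1/2}$. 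Raising to the power $2p$, one obtains $\int_{\mathrm{B}_\rho}|A^{r}|^{2p}\le C_p\big(\varepsilon_0^{p}\int_{\mathrm{B}_\rho}|A|^{2p}+\varepsilon_0^{p}\int_{\mathrm{B}_\rho}|F|^{p}\big)$ (in truth the $F$-term comes with the weaker factor $\varepsilon_0^{p/2}$, but any fixed positive power of $\varepsilon_0$ suffices here and below), and feeding this into the previous line gives \eqref{connectionestimates} after renaming $C_p$.

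The bulk of the work — and the main obstacle — is bookkeeping with borderline Sobolev exponents carried out carefully enough that every constant is uniform in $p\in[2,3]$: one must keep straight the exponents $2p$, $p^{*}=\tfrac{4p}{4-p}$, $4$ and $q=\tfrac{4p}{p+2}$, which is why the conjugacy identities (such as $\tfrac12\cdot\tfrac1{p^{*}}+\tfrac12\cdot\tfrac14=\tfrac1{2p}$, used in \eqref{inter}) must hold on the nose, and why the Coulomb-gauge $\mathrm{L}^{4}$ bound has to be $p$-independent; in the abelian-minimiser presentation this uniformity is packaged into the $p$-uniform constants of \cref{monoHdomega} and \cref{lem_ecart lin-nnlin}. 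Two lesser points also need attention: the harmonic (or minimiser) decay controls only the leading term, so one has to check that the error genuinely carries a positive power of $\varepsilon_0$ — which it does, via the $\mathrm{L}^{4}$-smallness of $A$ and the interpolation of $F$ between $\mathrm{L}^{2}$ and $\mathrm{L}^{p}$ — and one picks up harmless $\rho$-dependent volume contributions (from Hölder on $\mathrm{B}_\rho$, or from $H(\cdot)\ge 1$ in the $\omega$-route) which decay strictly faster than the terms retained and are absorbed in the subsequent Morrey iteration that yields Hölder continuity of $A$.
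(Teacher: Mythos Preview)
Your argument is correct and follows essentially the same strategy as the paper: compare $A$ on $\mathrm{B}_\rho$ to a harmonic object, extract the $\tau^4$ decay from sub-mean-value, and bound the remainder using the Coulomb-gauge smallness $\|A\|_{\mathrm{L}^4(\mathrm{B}_R)}\le C\sqrt{\varepsilon_0}$ together with interpolation. The difference is only in the choice of comparison. The paper takes the harmonic $1$-form $\omega'$ with the Neumann-type condition $i^*_{\partial\mathrm{B}_\rho}\star(A-\omega')=0$, so that $\diff(A-\omega')=\diff A$ and $\diff^*(A-\omega')=0$; Gaffney plus Sobolev and the interpolation $\|A-\omega'\|_{\mathrm{L}^{2p}}\le\|A-\omega'\|_{\mathrm{L}^4}^{1/2}\|A-\omega'\|_{\mathrm{L}^{p^*}}^{1/2}$ then give $\|A-\omega'\|_{\mathrm{L}^{2p}}\le C_p\|\diff A\|_{\mathrm{L}^2}^{1/2}\|\diff A\|_{\mathrm{L}^p}^{1/2}$, after which $\diff A=F-A\wedge A$ finishes the job. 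You instead take the componentwise harmonic extension with Dirichlet data and invoke Calder\'on--Zygmund at the exponent $q=4p/(p+2)$; this trades Gaffney for a standard CZ estimate, whose constant is uniform since $q$ stays in the compact range $[2,12/5]$. Two minor remarks: your aside about substituting the abelian minimiser $\omega$ of \eqref{pYMab} is not quite apt for this lemma, since \cref{monoHdomega} controls $H(\diff\omega)$ rather than $|\omega|^{2p}$ and so does not directly replace the sub-mean-value step; and your observation that the $F$-term genuinely carries $\varepsilon_0^{p/2}$ rather than $\varepsilon_0^{p}$ matches what the paper's own computation actually yields.
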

			
			\begin{proof}
				
				Consider $\omega'$ the harmonic one form such that $i^*_{\partial \mathrm{B}_\rho} \star
				(A - \omega') = 0$. We can check that $|\omega'|^2$ is subharmonic, and since $p\geq 2$, so is $|\omega|^p$ thus satisfies for all $\tau\in(0,1)$ \begin{equation}
					\fint_{\mathrm{B}_{\tau\rho}} | \omega' |^{2 p} \leq  \fint_{\mathrm{B}_\rho} | \omega' |^{2 p} 
				\end{equation} \textit{i.e.} for any $\tau\in(0,1)$,
				\begin{equation}
					\label{momega'}
					\int_{\mathrm{B}_{\tau\rho}} | \omega' |^{2 p} \leq \tau^4 \int_{\mathrm{B}_\rho} | \omega' |^{2 p} 
				\end{equation}

				\noindent We deduce, thanks to the previous inequalities, that
				\begin{eqnarray*}
					\int_{\mathrm{B}_{\tau\rho}} | A |^{2 p} & \leq & 2^{2p-1} \left(
					\int_{\mathrm{B}_{\tau\rho}} | \omega' |^{2 p} + \int_{\mathrm{B}_{\tau\rho}} | A - \omega'
					|^{2 p} \right)\\
					& \leq & 2^{2p-1} \left( C_p \tau^4 \int_{\mathrm{B}_\rho} |
					\omega' |^{2 p} + \int_{\mathrm{B}_\rho} | A - \omega' |^{2 p} \right)\\
					& \leq & 2^{2p-1} \left( C \tau^4 \int_{\mathrm{B}_\rho} | A + (A-\omega')|^{2 p} + \int_{\mathrm{B}_\rho} | A - \omega' |^{2 p} \right)\\ 
					& \leq & 2^{2p-1} \left( 2^{2p-1} C \tau^4 \int_{\mathrm{B}_\rho} |
					A |^{2 p} + (1 + 2^{2p-1} C)\int_{\mathrm{B}_\rho} | A - \omega' |^{2 p} \right)
				\end{eqnarray*}
				Since $\frac{1 / 2}{p^{\ast}} + \frac{1 / 2}{4} = \frac{1}{2 p}$, using an
				interpolation estimate, we get
				\[ \| A - \omega' \|_{\mathrm{L}^{2 p}} \leq  \| A - \omega'
				\|_{\mathrm{L}^4}^{1 / 2} \| A - \omega' \|^{1 / 2}_{\mathrm{L}^{p^{\ast}}} .\]
				We combine once again Sobolev embeddings, Gaffney inequality and the Coulomb and boundary conditions,
				\begin{equation}
					\| A - \omega' \|_{\mathrm{L}^{2 p} \left( \mathrm{B}_\rho \right)} \leq C_p \|
					\diff A \|_{\mathrm{L}^2 \left( \mathrm{B}_\rho \right)}^{1 / 2} \| \diff A
					\|_{\mathrm{L}^p \left( \mathrm{B}_\rho \right)}^{1 / 2} \leq C_p \| F
					\|_{\mathrm{L}^2 \left( \mathrm{B}_{R} \right)}^{1 / 2} \| \diff A
					\|_{\mathrm{L}^p \left( \mathrm{B}_\rho\right)}^{1 / 2} \label{sobolevestimate}
				\end{equation}
				and we obtain
				\begin{eqnarray*}
					\int_{\mathrm{B}_{\tau\rho}} | A |^{2 p} & \leq & C_p  \left( \tau^4 \int_{\mathrm{B}_\rho} | A |^{2 p} + \| F \|_{\mathrm{L}^2
						\left( \mathrm{B}_{R} \right)}^p \int_{\mathrm{B}_\rho} | \diff A |^p \right)
				\end{eqnarray*}
				since $\diff A = F - A \wedge A$, we conclude
				\begin{equation*}
					\int_{\mathrm{B}_{\tau\rho}} | A |^{2 p} \leq C_p \left( \left( \tau^4 + \varepsilon_0 ^p\right) \int_{\mathrm{B}_\rho} | A |^{2 p} + \varepsilon_0^p \int_{\mathrm{B}_\rho} | F |^p \right).
				\end{equation*}
				
			\end{proof}
			
			\paragraph{Morrey growth of the curvature}
			\begin{lemma}For every $\delta_0 >0$, $\varepsilon_0 $ can be chosen small enough such that \begin{equation}
					\int_{\mathrm{B}_{\rho}} H (F) \leq C \left( \frac{\rho}{R} \right)^{4 -
						\delta_0} \int_{\mathrm{B}_R} H (F) \label{estimeecourbure}
				\end{equation}
			\end{lemma}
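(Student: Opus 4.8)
\medskip
\noindent\textbf{Proof strategy.} The plan is to run the classical Morrey--Campanato iteration (hole-filling \textit{à la} Giaquinta) for the nondecreasing quantity
\[ \Psi(\rho) := \int_{\mathrm{B}_\rho}\bigl(H(F) + |A|^{2p}\bigr)\,\diff x, \qquad 0<\rho\leq R, \]
working throughout in the Uhlenbeck Coulomb gauge on $\mathrm{B}_R$ given by \cref{uhlenbeckgauge}. In this gauge one has the two inputs
\[ \|A\|_{\mathrm{L}^4(\mathrm{B}_R)} \leq C\|\diff A\|_{\mathrm{L}^2(\mathrm{B}_R)} \leq C\|F\|_{\mathrm{L}^2(\mathrm{B}_R)} \leq C\sqrt{\varepsilon_0}, \qquad \|A\|_{\mathrm{L}^{p^\ast}(\mathrm{B}_R)} \leq C\|F\|_{\mathrm{L}^p(\mathrm{B}_R)}, \]
the first by $\dot{\mathrm{W}}^{1,2}(\R^4)\hookrightarrow\mathrm{L}^4(\R^4)$, the second by \cref{SoboCoulomb}; by the interpolation $\tfrac{1/2}{4}+\tfrac{1/2}{p^\ast}=\tfrac1{2p}$ this yields $\|A\|_{\mathrm{L}^{2p}(\mathrm{B}_R)}^2 \leq \|A\|_{\mathrm{L}^4(\mathrm{B}_R)}\|A\|_{\mathrm{L}^{p^\ast}(\mathrm{B}_R)} \leq C\sqrt{\varepsilon_0}\,\|F\|_{\mathrm{L}^p(\mathrm{B}_R)} \leq C\sqrt{\varepsilon_0}\,\Psi(R)^{1/p}$, and in particular $\int_{\mathrm{B}_R}|A|^{2p} \leq C\varepsilon_0^{p/2}\int_{\mathrm{B}_R}H(F)$, so $\Psi(R) \leq 2\int_{\mathrm{B}_R}H(F)$ once $\varepsilon_0$ is small enough.

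\smallskip
Fix dyadic radii $\tau\rho < \rho \leq R$ with $\tau\in(0,1)$ and let $\omega$ be the minimiser of $\int_{\mathrm{B}_\rho}H(\diff\,\cdot\,)$ with boundary datum $i^\ast_{\partial\mathrm{B}_\rho}A$ and Coulomb normalisation $\diff^\ast(\omega-A)=0$, as in \eqref{pYMab}--\eqref{cgauge}. Using $H(F)\leq C(1+|F|^p)$, then $|F|^p\leq C|\diff\omega|^p + C|F-\diff\omega|^p \leq CH(\diff\omega) + C|F-\diff\omega|^p$, then the abelian decay of \cref{monoHdomega} together with minimality $\int_{\mathrm{B}_\rho}H(\diff\omega)\leq\int_{\mathrm{B}_\rho}H(\diff A)\leq C\Psi(\rho)$, I would obtain
\[ \int_{\mathrm{B}_{\tau\rho}}H(F) \leq C(\tau\rho)^4 + C\tau^4\Psi(\rho) + C\int_{\mathrm{B}_\rho}|F-\diff\omega|^p. \]
For the last term, the comparison estimate (\cref{lem_ecart lin-nnlin}, with $E=\Psi(\rho)$) together with the bounds above gives $\int_{\mathrm{B}_\rho}|F-\diff\omega|^p \leq C_p\bigl(\|A\|_{\mathrm{L}^{2p}(\mathrm{B}_\rho)}^2\Psi(\rho)^{1/p'} + \|A\|_{\mathrm{L}^4(\mathrm{B}_\rho)}^{p'}\Psi(\rho)\bigr) \leq C\sqrt{\varepsilon_0}\,\Psi(R)^{1/p}\Psi(\rho)^{1/p'} + C\varepsilon_0^{p'/2}\Psi(\rho)$; since $\Psi(\rho)\leq\Psi(R)$, the cancellation $\Psi(R)^{1/p}\Psi(\rho)^{1/p'}\leq\Psi(R)^{1/p}\Psi(R)^{1/p'-1}\Psi(\rho)=\Psi(\rho)$ (using $\tfrac1p+\tfrac1{p'}=1$) collapses this to $C\sqrt{\varepsilon_0}\,\Psi(\rho)$. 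Combining with the connection estimate \eqref{connectionestimates} (and $\int_{\mathrm{B}_\rho}|F|^p\leq\Psi(\rho)$) for $\int_{\mathrm{B}_{\tau\rho}}|A|^{2p}$, I expect the hole-filling inequality
\[ \Psi(\tau\rho) \leq C_1\bigl(\tau^4 + \sqrt{\varepsilon_0}\bigr)\Psi(\rho) + C_1(\tau\rho)^4, \qquad 0<\rho\leq R,\ \tau\in(0,1), \]
with $C_1$ universal.

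\smallskip
Given $\delta_0>0$, I then fix $\tau\in(0,1)$ with $C_1\tau^4\leq\tfrac14\tau^{4-\delta_0}$ and afterwards $\varepsilon_0$ small enough that $C_1\sqrt{\varepsilon_0}\leq\tfrac14\tau^{4-\delta_0}$, so $\Psi(\tau\rho)\leq\tfrac12\tau^{4-\delta_0}\Psi(\rho)+C_1(\tau\rho)^4$ for all $\rho\leq R$. Iterating over the scales $\rho=\tau^jR$ and summing the geometric series (a standard iteration argument, see e.g.\ \cite{han_elliptic_2000}; here one also uses $\Psi(R)\geq c R^4$ to absorb the inhomogeneous term) yields $\Psi(\rho)\leq C(\rho/R)^{4-\delta_0}\Psi(R)$ for all $0<\rho\leq R$; with $\int_{\mathrm{B}_\rho}H(F)\leq\Psi(\rho)$ and $\Psi(R)\leq 2\int_{\mathrm{B}_R}H(F)$ from the first paragraph, this is precisely \eqref{estimeecourbure}. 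The one delicate point is the derivation of the hole-filling inequality: every factor of a \emph{non-decaying} global norm (essentially $\|F\|_{\mathrm{L}^p(\mathrm{B}_R)}\simeq\Psi(R)^{1/p}$, entering through $\|A\|_{\mathrm{L}^{p^\ast}}$) must be paired with a power of the \emph{small} norm $\|A\|_{\mathrm{L}^4(\mathrm{B}_R)}\lesssim\sqrt{\varepsilon_0}$, and the leftover power of $\Psi(\rho)$ must recombine, via $\tfrac1p+\tfrac1{p'}=1$, with a power of $\Psi(R)$ into a single linear $\Psi(\rho)$; once this bookkeeping is carried out correctly, the rest is the routine Morrey iteration.
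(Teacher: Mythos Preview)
There is a genuine gap in your argument, at precisely the step you single out as ``the one delicate point''. You work throughout in the single global Coulomb gauge on $\mathrm{B}_R$ and bound
\[
\|A\|_{\mathrm{L}^{2p}(\mathrm{B}_\rho)}^2 \leq \|A\|_{\mathrm{L}^4(\mathrm{B}_R)}\,\|A\|_{\mathrm{L}^{p^\ast}(\mathrm{B}_R)} \leq C\sqrt{\varepsilon_0}\,\Psi(R)^{1/p},
\]
and then assert that, since $\Psi(\rho)\leq\Psi(R)$,
\[
\Psi(R)^{1/p}\,\Psi(\rho)^{1/p'} \leq \Psi(R)^{1/p}\,\Psi(R)^{1/p'-1}\,\Psi(\rho) = \Psi(\rho).
\]
This inequality is \emph{false}: it is equivalent to $\Psi(\rho)^{1/p'-1}\leq\Psi(R)^{1/p'-1}$, and since $1/p'-1=-1/p<0$ this would require $\Psi(R)\leq\Psi(\rho)$, the reverse of the monotonicity you invoke. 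The only legitimate bound in that direction is $\Psi(R)^{1/p}\Psi(\rho)^{1/p'}\leq\Psi(R)$, which is useless for the iteration since the right-hand side does not decay with $\rho$. Alternatively you can use the purely local bound $\|A\|_{\mathrm{L}^{2p}(\mathrm{B}_\rho)}^2\leq\Psi(\rho)^{1/p}$, which gives $\Psi(\rho)^{1/p}\Psi(\rho)^{1/p'}=\Psi(\rho)$, linear in $\Psi(\rho)$ but with no small prefactor. In either case the hole-filling inequality $\Psi(\tau\rho)\leq C(\tau^4+\sqrt{\varepsilon_0})\Psi(\rho)+\dots$ does not follow.

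The structural reason is that in a \emph{fixed} global gauge the quantity $\|A\|_{\mathrm{L}^{p^\ast}(\mathrm{B}_\rho)}$ is not controlled by local curvature on $\mathrm{B}_\rho$: the Sobolev/Gaffney inequality \cref{SoboCoulomb} needs the Coulomb condition and the boundary condition on the \emph{same} ball. The paper's proof therefore re-extracts, \emph{for every scale} $\rho$, a fresh Uhlenbeck Coulomb gauge $A_\rho=A^{g_\rho}$ on $\mathrm{B}_\rho$; in that gauge one has the genuinely local estimates $\|A_\rho\|_{\mathrm{L}^4(\mathrm{B}_\rho)}\leq C\|F\|_{\mathrm{L}^2(\mathrm{B}_\rho)}\leq C\varepsilon_0$ and $\|A_\rho\|_{\mathrm{L}^{p^\ast}(\mathrm{B}_\rho)}\leq C\|F\|_{\mathrm{L}^p(\mathrm{B}_\rho)}$, hence $E\leq C\int_{\mathrm{B}_\rho}H(F)$ and the comparison lemma yields $\int_{\mathrm{B}_\rho}|F_\rho-\diff\omega|^p\leq C\varepsilon_0\int_{\mathrm{B}_\rho}H(F)$, both small \emph{and} local. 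The iteration is then on $\rho\mapsto\int_{\mathrm{B}_\rho}H(F)$ alone (no need for the auxiliary $|A|^{2p}$ term), and closes directly. A minor side remark: your additive term $C_1(\tau\rho)^4$ is also avoidable by using \eqref{Habestim}, namely $H(F_\rho)\leq \sqrt{2}\bigl(H(\diff\omega)+|F_\rho-\diff\omega|^p\bigr)$, in place of $H(F)\leq C(1+|F|^p)$.
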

			
			\begin{proof}
				Thanks to \eqref{epsi} and gauge extraction of Uhlenbeck, for every $\rho \in (0,R)$, there exists a Coulomb gauge $g_\rho$ in $\mathrm{B}_\rho$ such that if $A_\rho = A^{g_\rho}$, we have $i^{\ast}_{\partial \mathrm{B}_\rho} \star A_\rho = 0$,
				$\diff^{\ast} A_\rho = 0$ in $\mathrm{B}_\rho$ and
				\begin{equation}
					\label{gaugep}	
					\int_{\mathrm{B}_\rho} \frac{1}{\rho^p} | A_\rho |^p + | \nabla A_\rho |^{4-p} \leq C
					\int_{\mathrm{B}_\rho} | F |^p .
				\end{equation} We write $F_\rho$ the curvature of $A_\rho$. Consider $\omega$ as in section \ref{abelian} associated to $A_\rho$. We want to apply \eqref{ecart lin-nnlin} to $F_\rho$ and $\omega$. First, we deduce from \cref{SoboCoulomb} and Sobolev embedding that ,
				\begin{equation}
					\label{AR1} \begin{split}
						\| A_\rho \|_{\mathrm{L}^{p^*} \left( \mathrm{B}_\rho \right)} & \leq  \| F \|_{\mathrm{L}^p \left(
							\mathrm{B}_\rho \right)}\\
						\| A_\rho \|_{\mathrm{L}^{4} \left( \mathrm{B}_\rho \right)} & \leq  \| F \|_{\mathrm{L}^2 \left(
							\mathrm{B}_\rho \right)} \leq \varepsilon_0\\
						\| A_\rho \|^2_{\mathrm{L}^{2p} \left( \mathrm{B}_\rho \right)} & \leq  \| A_\rho \|_{\mathrm{L}^{p^*} \left( \mathrm{B}_\rho \right)} \| A_\rho \|_{\mathrm{L}^{4} \left( \mathrm{B}_\rho \right)} \leq \varepsilon_0 \| F \|_{\mathrm{L}^p \left(
							\mathrm{B}_\rho \right)},
					\end{split}
				\end{equation}
				hence,   
				\begin{equation}
					\label{AR2}
					\begin{split}
						\int_{\mathrm{B}_\rho} H(F_\rho) +|A_\rho|^{2p} & \leq \int_{\mathrm{B}_\rho} H(F) + C \varepsilon_0^p |F|^p\\
						& \leq  C \int_{\mathrm{B}_\rho} H(F)
					\end{split}
				\end{equation} and from \cref{lem_ecart lin-nnlin}, we obtain
				\begin{eqnarray*}
					\int_{\mathrm{B}_\rho} | F_\rho - \diff \omega |^p & \leq & C \| A_\rho \|_{\mathrm{L}^4\left( \mathrm{B}_\rho \right)} 
					\left(  \int_{\mathrm{B}_\rho} H (F) \right)^{1 / p'} \left( \|F
					\|_{\mathrm{L}^{p} \left( \mathrm{B}_\rho \right)} +  \| A_\rho
					\|_{\mathrm{L}^{4} \left( \mathrm{B}_\rho \right)}^{\frac{3p'}{4}-1} \left(\int_{\mathrm{B}_\rho} H (F) \right)^{1 / p'}\right)\\
					& \leq & C  \| A_\rho \|_{\mathrm{L}^4\left( \mathrm{B}_\rho \right)} \left(  \int_{\mathrm{B}_\rho} H (F) \right)^{1 / p'} (1+ \| A_\rho \|_{\mathrm{L}^4\left( \mathrm{B}_\rho \right)}^{\frac{3p'}{4}-1} )  \left(  \int_{\mathrm{B}_\rho} H (F) \right)^{1 / p} \\
					& \leq & C \varepsilon_0(1+\varepsilon_0^{\frac{3p'}{4}-1})  \int_{\mathrm{B}_\rho} H (F).
				\end{eqnarray*}
				If $\varepsilon_0 \leq1 $, then
				\begin{equation}
					\label{Fomega}
					\begin{split}
						\int_{\mathrm{B}_\rho} | F_\rho - \diff \omega |^p &  \leq  C \varepsilon_0
						\int_{\mathrm{B}_\rho} H (F).
					\end{split}
				\end{equation}
				Therefore, thanks to \cref{monoHdomega} and \eqref{Habestim}, we get, for every $\tau \in (0,1)$,
				\begin{eqnarray*}
					\int_{\mathrm{B}_{\tau\rho}} H (F) & \leq & 2^{p-1} \int_{\mathrm{B}_{\tau\rho}} | F_\rho - \diff \omega |^p +
					\int_{\mathrm{B}_{\tau\rho}} H (\diff \omega)\\
					& \leq & C \left( \int_{\mathrm{B}_\rho} | F_\rho - \diff \omega |^p + \tau^4 \int_{\mathrm{B}_\rho} H (\diff \omega) \right)\\
					& \leq & C \left( \int_{\mathrm{B}_\rho} | F_\rho - \diff \omega |^p +\tau^4 \int_{\mathrm{B}_\rho} H (F) \right)\\
					& \leq & C \left( \tau^4 + \varepsilon_0 \right) 	\int_{\mathrm{B}_{\rho}} H (F)
				\end{eqnarray*}
				where $C$ depends only on $p$ and is uniformly bounded. For every $\delta_0 > 0$, using a classical result (see for instance \cite[lemma 5.13]{GM}), we can chose $\varepsilon_0 = \varepsilon
				(\delta_0)$ small enough such that the last inequality implies \begin{equation}
					\int_{\mathrm{B}_{\rho}} H (F) \leq C \left( \frac{\rho}{R} \right)^{4 -
						\delta_0} \int_{\mathrm{B}_R} H (F)
				\end{equation}
			\end{proof}

			Since $| F |^p \leq H (F)$ and the estimate of the lemma is translation invariant, we deduce the following result:
			\begin{proposition}\label{H(F) morrey}$F$ is in the Morrey space $M_{\loc}^{p, 4 -
					\delta_0}(\mathrm{B}_{R})$, where
				$$M_{\loc}^{p, \lambda}(\Omega)=\{ u\in L^p_{loc}(\Omega)\, \text{s.t. }\sup_{B(x,\rho)\subset \Omega} \frac{1}{\rho^\lambda}\int_{B(x,\rho)} \vert u\vert^p \; <\infty\}$$
				and \[ \| F\|_{L^{p, 4 -
						\delta_0}(\mathrm{B}_{R/2})}^p \leq   \frac{C}{R^{4 -
						\delta_0} } \int_{\mathrm{B}_R} H (F).
				\]
			\end{proposition}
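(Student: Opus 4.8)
The plan is to deduce the Morrey estimate directly from the decay bound \eqref{estimeecourbure} of the preceding lemma, the key point being that this bound is translation invariant. First I would record the elementary pointwise inequality $|F|^p\le(1+|F|^2)^{p/2}=H(F)$, so that controlling the $\mathrm L^p$ mass of $F$ on a ball reduces to controlling $\int H(F)$ on the same ball.

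Next I would observe that, although \eqref{estimeecourbure} is stated for concentric balls about the origin, the same estimate holds for any concentric pair $\mathrm{B}(x_0,\rho)\subset\mathrm{B}(x_0,R')$ whenever $A$ is $p$-Yang-Mills on $\mathrm{B}(x_0,R')$ with $\int_{\mathrm{B}(x_0,R')}|F_A|^2\le\varepsilon_0$. Indeed every ingredient in the proof of \eqref{estimeecourbure} — Uhlenbeck's Coulomb gauge extraction on each sub-ball, comparison with the abelian minimiser, and iteration of the monotonicity estimate together with the classical decay lemma — uses only that $\R^4$ is flat and that the $p$-Yang-Mills equation and the smallness of $\int|F_A|^2$ are preserved under restriction to sub-balls, all of which is stable under composing $A$ with a translation. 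Hence
\[
\int_{\mathrm{B}(x_0,\rho)}H(F)\le C\Bigl(\frac{\rho}{R'}\Bigr)^{4-\delta_0}\int_{\mathrm{B}(x_0,R')}H(F)\qquad\text{for }0<\rho<R'.
\]

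Then I would fix $x\in\mathrm{B}_{R/2}$ and $\rho>0$ with $\mathrm{B}(x,\rho)\subset\mathrm{B}_{R/2}$. Since $|x|\le R/2$ we have $\mathrm{B}(x,R/2)\subset\mathrm{B}_R$, so $\int_{\mathrm{B}(x,R/2)}|F_A|^2\le\int_{\mathrm{B}_R}|F_A|^2\le\varepsilon_0$ and the translated bound applies with $x_0=x$, $R'=R/2$:
\[
\int_{\mathrm{B}(x,\rho)}|F|^p\le\int_{\mathrm{B}(x,\rho)}H(F)\le C\Bigl(\frac{2\rho}{R}\Bigr)^{4-\delta_0}\int_{\mathrm{B}(x,R/2)}H(F)\le 2^{4-\delta_0}C\,\frac{\rho^{4-\delta_0}}{R^{4-\delta_0}}\int_{\mathrm{B}_R}H(F),
\]
the borderline value $\rho=R/2$ being covered by the trivial bounds $\int_{\mathrm{B}(x,\rho)}H(F)\le\int_{\mathrm{B}_R}H(F)$ and $\rho^{-(4-\delta_0)}\le(2/R)^{4-\delta_0}$. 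Dividing by $\rho^{4-\delta_0}$, absorbing the numerical factor into $C$, and taking the supremum over all such balls yields $F\in M^{p,4-\delta_0}_{\loc}(\mathrm{B}_R)$ together with the claimed estimate $\|F\|_{L^{p,4-\delta_0}(\mathrm{B}_{R/2})}^p\le C\,R^{-(4-\delta_0)}\int_{\mathrm{B}_R}H(F)$.

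I do not expect any genuine obstacle here: the analytic substance lies entirely in \eqref{estimeecourbure}. The only thing to get right is the elementary geometry of centers and radii — in particular that $\mathrm{B}(x,R/2)\subset\mathrm{B}_R$ for $x\in\mathrm{B}_{R/2}$, which is what lets both the $p$-Yang-Mills property and the small-energy hypothesis pass to the outer ball — and the harmless borderline radius $\rho=R/2$.
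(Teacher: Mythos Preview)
Your proof is correct and follows exactly the paper's approach: the paper's own proof is a single sentence noting that $|F|^p\le H(F)$ and that the estimate \eqref{estimeecourbure} is translation invariant, and you have simply spelled out these two observations in full detail, including the elementary inclusion $\mathrm{B}(x,R/2)\subset\mathrm{B}_R$ for $x\in\mathrm{B}_{R/2}$ that makes the translation argument go through.
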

			
			\subsubsection{Hölder-continuity of $A$}
			\label{Aholder}
			
			\begin{proposition} For every $\delta>0$, $\varepsilon_0 $ can be chosen small enough such that 
				$\nabla A \in M_{\loc}^{p, 4 - \delta}$, and by Morrey's theorem, $A\in \mathcal{C}_{\loc}^{0, 1 - \delta / p}$ with the following estimates \begin{equation}
					\label{AHolderestimate} [A ]_{\mathcal{C}^{0, 1 - \delta / p}\left(\mathrm{B}_{R/2}\right)}^p \leq   \frac{C}{R^{4 - \delta} }\int_{\mathrm{B}_R} H(F), 
				\end{equation} and \begin{equation}
					\label{Apointwiseestimate} \|A \|_{\mathrm{L}^{\infty}\left(\mathrm{B}_{R/2}\right)}^p \leq   \frac{C}{R^{4 - p} }\int_{\mathrm{B}_R} H(F).
				\end{equation}
			\end{proposition}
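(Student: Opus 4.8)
The plan is to run a standard Morrey--Campanato regularity argument for the Hodge-type system satisfied by $A$ in Uhlenbeck's Coulomb gauge, fed by the curvature decay already obtained. First I would collect the two Morrey bounds that enter the argument. By \cref{H(F) morrey}, for $\varepsilon_0$ small, $F$ lies in $M^{p,4-\delta_0}_{\loc}(\mathrm{B}_R)$ with $\|F\|_{M^{p,4-\delta_0}(\mathrm{B}_{R/2})}^p\le CR^{-(4-\delta_0)}\int_{\mathrm{B}_R}H(F)$, where $\delta_0>0$ can be chosen arbitrarily small. Then, inserting this bound on $\int_{\mathrm{B}_\rho}|F|^p$ into \eqref{connectionestimates} and iterating via \cite[Lemma 5.13]{GM}, one gets (again for $\varepsilon_0$ small) $|A|^{2p}\in M^{1,4-\delta_1}_{\loc}$ for an arbitrarily small $\delta_1>0$, hence $A\wedge A\in M^{p,4-\delta_1}_{\loc}$ with $\|A\wedge A\|_{M^{p,4-\delta_1}(\mathrm{B}_{R/2})}^p\le CR^{-(4-\delta_1)}\int_{\mathrm{B}_R}H(F)$ (the $\int_{\mathrm{B}_R}|A|^{2p}$ term produced by the iteration being reabsorbed through \cref{SoboCoulomb}, interpolation, and the smallness of $\|F\|_{\mathrm{L}^2}$). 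Setting $\delta=\max(\delta_0,\delta_1)$, this says precisely that $\Omega:=\diff A=F-A\wedge A$ belongs to $M^{p,4-\delta}_{\loc}$ with the corresponding quantitative bound, while $\diff^{\ast}A=0$.

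The core step is then to upgrade the Morrey decay of $\Omega$ to one for $\nabla A$. I would fix a ball $\mathrm{B}_\rho(y)\subset\mathrm{B}_{R/2}$ and split $A=h+w$ on $\mathrm{B}_\rho(y)$, where $h$ solves the homogeneous Hodge system $\diff h=\diff^{\ast}h=0$ on $\mathrm{B}_\rho(y)$ with the same tangential trace as $A$ on $\partial\mathrm{B}_\rho(y)$. Then $w=A-h$ satisfies $\diff w=\Omega$, $\diff^{\ast}w=0$ and has vanishing tangential part on $\partial\mathrm{B}_\rho(y)$, so the Gaffney--Friedrichs inequality with boundary conditions (with a scale-invariant constant, since there are no non-trivial Dirichlet harmonic $1$-fields on a ball) gives $\|\nabla w\|_{\mathrm{L}^p(\mathrm{B}_\rho(y))}\le C\|\Omega\|_{\mathrm{L}^p(\mathrm{B}_\rho(y))}$. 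On the other hand $\Delta h=0$, so each component of $\nabla h$ is harmonic and $|\nabla h|^p$ is subharmonic ($p\ge 2$), whence $\int_{\mathrm{B}_{\tau\rho}(y)}|\nabla h|^p\le \tau^4\int_{\mathrm{B}_\rho(y)}|\nabla h|^p$ for every $\tau\in(0,1)$. Combining these and using $\int_{\mathrm{B}_\rho(y)}|\nabla h|^p\le C(\int_{\mathrm{B}_\rho(y)}|\nabla A|^p+\|\Omega\|^p_{\mathrm{L}^p(\mathrm{B}_\rho(y))})$ yields
\[ \int_{\mathrm{B}_{\tau\rho}(y)}|\nabla A|^p\le C\tau^4\int_{\mathrm{B}_\rho(y)}|\nabla A|^p + C\,\rho^{4-\delta}\,R^{-(4-\delta)}\int_{\mathrm{B}_R}H(F), \]
and \cite[Lemma 5.13]{GM} then gives $\int_{\mathrm{B}_\rho(y)}|\nabla A|^p\le C\,\rho^{4-\delta}\,R^{-(4-\delta)}\int_{\mathrm{B}_R}H(F)$ for all such balls, the seed $\int_{\mathrm{B}_{R/2}}|\nabla A|^p\le C\int_{\mathrm{B}_R}H(F)$ coming once more from $\diff^{\ast}A=0$, \cref{SoboCoulomb} and $R\le 1$. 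Hence $\nabla A\in M^{p,4-\delta}_{\loc}(\mathrm{B}_R)$ with $\|\nabla A\|_{M^{p,4-\delta}(\mathrm{B}_{R/2})}^p\le CR^{-(4-\delta)}\int_{\mathrm{B}_R}H(F)$.

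To finish, since $4-\delta=4-p+p(1-\delta/p)$ with $0<1-\delta/p<1$, Morrey's Dirichlet growth theorem gives $A\in\mathcal{C}^{0,1-\delta/p}_{\loc}(\mathrm{B}_R)$ together with $[A]_{\mathcal{C}^{0,1-\delta/p}(\mathrm{B}_{R/2})}^p\le C\|\nabla A\|_{M^{p,4-\delta}(\mathrm{B}_{R/2})}^p\le CR^{-(4-\delta)}\int_{\mathrm{B}_R}H(F)$, which is \eqref{AHolderestimate}. For \eqref{Apointwiseestimate} I would combine \cref{SoboCoulomb} with the exponent identity $\tfrac1p-\tfrac1{p^{\ast}}=\tfrac14$ to get $\|A\|_{\mathrm{L}^p(\mathrm{B}_R)}\le CR\|F\|_{\mathrm{L}^p(\mathrm{B}_R)}$, hence $\|A\|_{\mathrm{L}^p(\mathrm{B}_R)}^p\le CR^p\int_{\mathrm{B}_R}H(F)$, and then the elementary interpolation $\|A\|_{\mathrm{L}^\infty(\mathrm{B}_{R/2})}\le C\big(R^{-4/p}\|A\|_{\mathrm{L}^p(\mathrm{B}_R)}+R^{1-\delta/p}[A]_{\mathcal{C}^{0,1-\delta/p}(\mathrm{B}_R)}\big)$ to conclude $\|A\|_{\mathrm{L}^\infty(\mathrm{B}_{R/2})}^p\le CR^{-(4-p)}\int_{\mathrm{B}_R}H(F)$.

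The hard part is the core step: a naive Hodge estimate for $A$ on $\mathrm{B}_\rho(y)$ produces a term of order $\rho^{-1}\|A\|_{\mathrm{L}^p(\mathrm{B}_\rho(y))}$, which, with the Morrey control currently available on $A$, is too weak to close the iteration; passing through the harmonic/Hodge comparison $A=h+w$ and exploiting the subharmonicity of $|\nabla h|^p$ is what circumvents this. Everything else is careful bookkeeping of the constants and of their dependence on $p$ (uniform for $p\in[2,3]$) and on $R$.
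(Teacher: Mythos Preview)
Your strategy is exactly the one the paper uses: combine the Morrey decay of $H(F)$ with \eqref{connectionestimates} to obtain decay of $\int_{\mathrm{B}_\rho}|A|^{2p}$ and hence of $\int_{\mathrm{B}_\rho}|\diff A|^p$, and then run a harmonic comparison on each small ball to transfer this to $\int_{\mathrm{B}_\rho}|\nabla A|^p$. The bookkeeping of the iteration and the derivation of \eqref{AHolderestimate}--\eqref{Apointwiseestimate} are fine.

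There is, however, a genuine slip in your comparison form. You ask for $h$ with $\diff h=0$, $\diff^{\ast}h=0$ on $\mathrm{B}_\rho(y)$ and $i^{\ast}h=i^{\ast}A$ on $\partial\mathrm{B}_\rho(y)$. On a ball any closed $1$-form is exact, so such an $h$ would be $\diff f$ with $\Delta f=0$, and then $i^{\ast}h=\diff_{\partial}(f|_{\partial})$ is automatically \emph{closed} on $S^3$. But $\diff_{\partial}(i^{\ast}A)=i^{\ast}(\diff A)=i^{\ast}\Omega$, which has no reason to vanish; hence your $h$ does not exist in general, and the decomposition $A=h+w$ with $\diff w=\Omega$, $\diff^{\ast}w=0$, $i^{\ast}w=0$ collapses.

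The fix is simply to switch boundary conditions, which is precisely what the paper does: take the Hodge-harmonic $1$-form $\omega'$ with the \emph{normal} trace matching, i.e.\ $\diff\omega'=\diff^{\ast}\omega'=0$ and $i^{\ast}_{\partial\mathrm{B}_\rho(y)}\!\star\!(A-\omega')=0$. Writing $\omega'=\diff f$ with $\Delta f=0$, the boundary condition becomes the Neumann condition $\partial_\nu f=A_\nu$, whose compatibility $\int_{\partial\mathrm{B}_\rho(y)}A_\nu=0$ is exactly $\diff^{\ast}A=0$. Then $A-\omega'$ has vanishing normal trace and $\diff^{\ast}(A-\omega')=0$, so Gaffney gives $\int_{\mathrm{B}_\rho(y)}|\nabla(A-\omega')|^p\le C\int_{\mathrm{B}_\rho(y)}|\diff A|^p$, while $|\nabla\omega'|^p$ is subharmonic since the components of $\omega'$ are harmonic. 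From there your iteration goes through verbatim and coincides with the paper's proof.
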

			
			\begin{proof}
				We denote
				\[ \Phi (\rho) = \int_{\mathrm{B}_{\rho}} | A |^{2 p} + H (F) \] From \eqref{connectionestimates} and  \eqref{estimeecourbure}, we have for all $\rho\in(0,R),\tau\in(0,1)$,
				\begin{eqnarray*}
					\Phi (\tau\rho) & \leq & C \left( \left( \tau^{4 -
						\delta_0} + \varepsilon_0^p \right)
					\int_{\mathrm{B}_\rho} H (F) + \left( \tau^4 + \varepsilon^p \right) \int_{\mathrm{B}_\rho} |
					A |^{2 p} \right)\\
					& \leq & C \left( \tau^{4 -
						\delta_0} + \varepsilon_0^2 \right) \Phi (\rho)
				\end{eqnarray*}
				Thanks to classical result, see \cite[lemma 5.13]{GM}, if $\varepsilon_0$ is small enough,  for $\delta <\delta_0$  there exists $C>0$ such that 
				\begin{equation}
					\label{Estimphi}
					\Phi (\rho) \leq C \left( \frac{\rho}{R}\right)^{4 - \delta} \Phi (R) \leq C\left(\frac{\rho}{R}\right)^{4 - \delta} \int_{\mathrm{B}_R} H(F),\text{  for all } \rho \leq R. 
				\end{equation}
				Since $\diff A =
				F - A \wedge A$, we have
				\begin{equation}
					\label{dAp} 
					\int_{\mathrm{B}_{\rho}} | \diff A |^p \leq C \Phi (\rho) \leq C
					\left( \frac{\rho}{R} \right)^{4 - \delta}  \int_{\mathrm{B}_R} H(F)
				\end{equation}
				Let's deduce the same estimate for \[\psi (\rho) = \int_{\mathrm{B}_{\rho}} |
				\nabla A |^p.\] Let $\omega'$ an harmonic form as in section \eqref{connectionestimates}, thanks to \eqref{momega'}, Gaffney's inequality, the fact that $A$ is Coulomb and \eqref{dAp}, we get for all $\rho\in(0,R),\tau\in(0,1)$,
				
				\begin{eqnarray*}
					\psi (\tau \rho) & \leq & 2^{p-1} \left( \int_{\mathrm{B}_{\tau \rho}} | \nabla \omega'
					|^p + \int_{\mathrm{B}_{\tau\rho}} | \nabla (A - \omega') |^p \right)\\
					& \leq & C \left( \tau^4 \int_{\mathrm{B}_\rho} |
					\nabla \omega' |^p + \int_{\mathrm{B}_\rho} | \nabla (A - \omega') |^p \right)\\
					& \leq & C \left( \tau^4 \int_{\mathrm{B}_\rho} |
					\nabla A + \nabla (A-\omega') |^p + \int_{\mathrm{B}_\rho} | \nabla (A - \omega') |^p \right)\\
					& \leq & C \left( 2^{p-1} \tau^4 \psi (\rho) + (1+2^{p-1})
					\int_{\mathrm{B}_\rho} | \nabla (A - \omega') |^p \right)\\
					& \leq & C \left( \tau^4 \psi (\rho) +
					\int_{\mathrm{B}_\rho} | \diff (A - \omega') |^p + | \diff^{\ast} (A - \omega')
					|^p \right)\\
					& \leq & C \left( \tau^4 \psi (\rho) +
					\int_{\mathrm{B}_\rho} | \diff A |^p \right)\\
					& \leq & C \tau^4 \psi (\rho) + C \left(\frac{\rho}{R}\right)^{4 - \delta} \Phi (R)
				\end{eqnarray*}
				This implies
				\begin{equation}
					\label{Hestim} \psi (\rho) \leq C \left(\frac{\rho}{R}\right)^{4 - \delta} (\psi
					(R) + \Phi (R)) \leq  C\left(\frac{\rho}{R}\right)^{4 - \delta} \int_{\mathrm{B}_R} H(F)
					%\leq C \left( \frac{\rho}{R_0} \right)^{4 -  \delta} \int_{\mathrm{B}_{R_0}} H (F) 
				\end{equation}
				This concludes the proof.
			\end{proof}
			
			\subsubsection{Hölder-continuity of $F$}
			
			The goal of this section is the proof of the following proposition. \begin{proposition}
				$F$ and $\nabla A$ are Hölder continuous. \label{FHolderC}
			\end{proposition}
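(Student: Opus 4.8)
The plan is to run a Campanato freezing argument. On small balls we compare the curvature with the solution of the abelian problem \eqref{pYMab} carrying the same boundary data — for which $\mathcal{C}^{1,\alpha}$-regularity is classical — and we check that the error introduced by the nonlinear and non-abelian terms decays with a Campanato exponent strictly larger than $4$, so that the decay of the abelian datum is transmitted to $F$.

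First I would fix $x_{0}\in\mathrm{B}_{R/2}$ and, for small $\rho$, extract by \cref{uhlenbeckgauge} a Coulomb gauge $g_{\rho}$ on $\mathrm{B}_{\rho}(x_{0})$ with the estimates \eqref{gaugep}; writing $A_{\rho}=A^{g_{\rho}}$ and $F_{\rho}$ for its curvature one has $|F_{\rho}|=|F|$, so the Morrey growth \eqref{estimeecourbure} is preserved by $F_{\rho}$. Let $\omega$ be the minimiser of $E(\cdot)=\int H(\diff\,\cdot)$ with the same tangential trace as $A_{\rho}$ and $\diff^{*}(\omega-A_{\rho})=0$, as in the abelian paragraph, and set $V(\Omega)=\sqrt{\varrho(\Omega)}\,\Omega$. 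Two ingredients are then assembled. (i) The decay of the $\mathrm{L}^{2}$-mean oscillation of $V(\diff\omega)$: for some $\alpha\in(0,1)$ and $C$ uniform in $p$ near $2$, $\int_{\mathrm{B}_{\tau\rho}}|V(\diff\omega)-(V(\diff\omega))_{\mathrm{B}_{\tau\rho}}|^{2}\le C\tau^{4+2\alpha}\int_{\mathrm{B}_{\rho}}|V(\diff\omega)-(V(\diff\omega))_{\mathrm{B}_{\rho}}|^{2}$; this is the $\mathcal{C}^{1,\alpha}$-theory for the abelian $p$-type equation of Uhlenbeck \cite{uhlenbeck_regularity_1977} and Hamburger \cite{hamburger_regularity_1992}. (ii) The comparison estimate \eqref{ecart lin-nnlin} of \cref{lem_ecart lin-nnlin}, which — combined with the Coulomb/Sobolev control of $A_{\rho}$ from \eqref{gaugep} and the Morrey decay \eqref{estimeecourbure} — I would bootstrap into $\int_{\mathrm{B}_{\rho}}|F-\diff\omega|^{p}\le C\varepsilon_{0}(\rho/R)^{4+\mu_{0}}\int_{\mathrm{B}_{R}}H(F)$ for some $\mu_{0}>0$; the gain $\mu_{0}>0$ arises precisely because the right-hand side of \eqref{ecart lin-nnlin} is at least quadratic in the small, $\rho$-decaying norms of $A_{\rho}$. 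Feeding this through the elementary inequality $|V(a)-V(b)|^{2}\lesssim(1+|a|+|b|)^{p-2}|a-b|^{2}$ and Hölder (the $\mathrm{L}^{p}$-bounds on $F$ and $\diff\omega$ coming again from \eqref{estimeecourbure}, since $\int H(\diff\omega)\le\int H(\diff A_{\rho})\lesssim\int H(F)$), I would obtain $\int_{\mathrm{B}_{\rho}}|V(F)-V(\diff\omega)|^{2}\le C\varepsilon_{0}^{2/p}(\rho/R)^{4+\nu_{0}}$ with $\nu_{0}>0$, as soon as $\delta_{0}$ (hence $\varepsilon_{0}$) and $p-2$ are small enough.

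Next, setting $\Phi(\rho)=\int_{\mathrm{B}_{\rho}(x_{0})}|V(F)-(V(F))_{\mathrm{B}_{\rho}(x_{0})}|^{2}$ and splitting $V(F)$ against $V(\diff\omega)$, (i) and (ii) give $\Phi(\tau\rho)\le C\tau^{4+2\alpha}\Phi(\rho)+C(\rho/R)^{4+\nu_{0}}$, and the standard iteration lemma \cite[lemma 5.13]{GM} yields $\Phi(\rho)\le C(\rho/R)^{4+2\beta}$ for some $\beta\in(0,\min(\alpha,\nu_{0}/2))$, uniformly in $x_{0}$. Thus $V(F)$ lies in the Campanato space $\mathcal{L}^{2,4+2\beta}_{\loc}$, i.e. $V(F)\in\mathcal{C}^{0,\beta}_{\loc}$; since for $p\in[2,3]$ the map $V$ is a diffeomorphism of $\Lambda^{2}\R^{4}\otimes\mathfrak{g}$ with $V^{-1}$ globally Hölder (a consequence of \eqref{Vin} and \cref{leminq}), $F=V^{-1}(V(F))$ is locally Hölder continuous — and this survives the return to the original gauge because the $g_{\rho}$ are themselves Hölder ($\nabla A_{\rho}\in\mathrm{L}^{p}$ and $A$ Hölder by \eqref{AHolderestimate}). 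Finally $\diff A=F-A\wedge A$ is then locally Hölder, and together with $\diff^{*}A=0$ the elliptic regularity of the Hodge system $\diff\oplus\diff^{*}$ (equivalently Schauder estimates for $\Delta A=\diff^{*}\diff A$) upgrades this to $A\in\mathcal{C}^{1,\gamma}_{\loc}$, i.e. $\nabla A$ locally Hölder continuous.

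I expect the main obstacle to be twofold: writing down, with constants uniform in $p$ near $2$, the abelian $\mathcal{C}^{1,\alpha}$-decay in (i) — classical, but the uniformity must be tracked through \cite{uhlenbeck_regularity_1977, hamburger_regularity_1992}; and the bookkeeping in (ii) showing that the non-abelian error genuinely carries a Campanato exponent above $4$, which is what forces $\delta_{0}$, $\varepsilon_{0}$ and $p-2$ to be taken small.
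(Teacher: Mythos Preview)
Your overall architecture is the same as the paper's: a Campanato argument comparing $V(F)$ (in some gauge) with the abelian minimiser $V(\diff\omega)$, invoking Hamburger's $\mathcal C^{1,\alpha}$ decay for the latter, and then recovering $F$ and $\nabla A$ from $V(F)$ via \cref{leminq} and Hodge/Schauder theory. The final two steps are fine.

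The gap is in step~(ii), not in the \emph{size} of the error but in the \emph{gauge bookkeeping}. You extract a Coulomb gauge $g_\rho$ on $\mathrm B_\rho(x_0)$ for each scale $\rho$; this does give $\int_{\mathrm B_\rho}|F_\rho-\diff\omega|^p\le C(\rho/R)^{4+\mu_0}$, as you say. But the Campanato functional you want to iterate is $\Phi(\rho)=\int_{\mathrm B_\rho}|V(F)-(V(F))_\rho|^2$ with $F$ in a \emph{fixed} gauge, whereas your comparison bound is for $F_\rho=g_\rho^{-1}Fg_\rho$. Passing from one to the other costs $|V(F)-V(F_\rho)|$, which is controlled by $|F-g_\rho^{-1}Fg_\rho|$; Uhlenbeck's theorem gives no smallness for this (the Coulomb gauge is unique only up to a constant in $G$, and is not close to the identity in general). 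Equivalently, if you try to iterate $\Psi(F_\rho,\cdot)$ itself, at the next scale you face $\Psi(F_{\tau\rho},\tau\rho)$, and the transition gauge $g_\rho^{-1}g_{\tau\rho}$ is uncontrolled. Your closing remark about returning to the original gauge at the end does not address this: the iteration itself breaks.

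The paper fixes this by choosing a single gauge $h$, \emph{not} Coulomb, depending only on the centre $x_0$ (namely $h(x)=\exp(-\chi(x)\sum_i x^iA_i(0))$), so that $A^h(x_0)=0$. The crucial input is then the H\"older continuity of $A$ already obtained in \eqref{AHolderestimate}, which gives the pointwise bound $|A^h(x)|\le C_A|x-x_0|^{1-\delta/p}$. Feeding this pointwise decay into \eqref{ecart lin-nnlin} produces $\int_{\mathrm B_\rho}|F^h-\diff\omega|^p\le C_A\rho^{6-\delta(1+1/p)}$ and hence $\int_{\mathrm B_\rho}|V(F^h)-V(\diff\omega)|^2\le C_A\rho^{4+2\gamma}$ with $\gamma>0$, uniformly in $\rho$, in a \emph{fixed} gauge. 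The iteration then goes through cleanly for $\Psi(F^h,\cdot)$, and one checks separately that $\int_{\mathrm B_\rho}|V(F^h)-V(F)|^2\le C_A\rho^{6-\delta}$, so $V(F)$ inherits the Campanato decay. In short: the extra exponent above $4$ must be produced in a scale-independent gauge, and this is exactly what the previously established H\"older continuity of $A$ is used for.
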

			
			\begin{proof} If $V (F)$ is Hölder-continuous, then by \cref{leminq}, we have
				\[ \langle V (a) - V (b), a - b \rangle \geq
				\frac{1}{C} | a-b |^{\frac{p}{2} + 1}, \]
				using Cauchy-Schwarz inequality \begin{equation}
					| a-b | \leq C | V (a) - V (b) |^{2/p} \label{VtoF}
				\end{equation}
				so $F$ also Hölder continuous. From $\diff A = F - A \wedge A$, we
				deduce that $\diff A$ is also Hölder continuous. Since $\diff^{\ast} A =	0$, $\nabla A$ is Hölder-continuous (see for instance \cite[theorem 2]{bolik_h_2001}).
				
				So it suffices to prove that $V (F)$ is Hölder-continuous. Let's denote, for any $ \mathfrak{g}$-valued $2$-form $\mu$ and $\rho > 0$,
				\[ \Psi (\mu, \rho) = \int_{\mathrm{B}_{\rho}} | V (\mu) - V (\mu)_{\rho} |^2
				\diff x \]
				where $f_\rho = \fint_{\mathrm{B}_{\rho}} f \, \diff x$.
				From Campanato's theorem (see  \cite[theorem 5.5]{GM}), it is enough to show that \[ \Psi(F,\rho)\leq C_A \rho^{4+2\sigma}\] for some $\sigma>0$.
				
				Let $h \in \mathcal{C}^{\infty}(\mathrm{B}_R,G)$ be defined by $h (x) = \exp \left( -\varphi(x)\right)$ where \[\varphi(x)=
				\chi (x) \sum_{i = 1}^4 x^i A_i (0) \] and $\chi$ is a cut-off at
				$0$ satisfying $|\diff\chi(x)|\leq C R^{-1}$ with $C$ independent of $R$. It is such that $A^h (0) = A (0) + \diff_0 h = A (0) - \sum_{i = 1}^4
				A_i (0) \diff x^i = 0.$ Since $h$ is smooth and $A^h$ is Hölder continuous, we have \begin{equation}
					|A^h|(x) \leq C_A |x|^{1-\delta/p} \label{pointwiseAh}
				\end{equation} where $C_A$ depends on the $\mathcal{C}^{0,1-\delta/p}$ norm of $A$. Write $F^h := F_{A^h} = h^{-1} F h$ and define $f(t) = h_t^{-1} F h_t$ where $h_t = \exp(-t\varphi)$, then \begin{align*}
					|F^h-F| = |f(1)-f(0)| &\leq \sup_{t\in [0,1]} |f'(t)| \\
					& \leq C_G |\varphi| |F| \\
					& \leq C_A |x| |F| 
				\end{align*} so, using \eqref{estimeecourbure}, we obtain
				\begin{align*}
					\int_{\mathrm{B}_\rho}|V(F^h)-V(F)|^2&= \int_{\mathrm{B}_\rho}(1+|F|^2)^{(p-2)/2}|F^h-F|^2 \\
					& \leq C_A \int_{\mathrm{B}_\rho}(1+|F|^2)^{(p-2)/2}|F|^2  |x|^2 \\
					&\leq C_A \rho^2 \int_{\mathrm{B}_\rho}(1+|F|^2)^{(p-2)/2}|F|^2 \\
					&\leq C_A \rho^2 \int_{\mathrm{B}_\rho}H(F) \\
					&\leq C_A \rho^{6-\delta}.
				\end{align*} Therefore, it is enough to show \begin{equation}
					\Psi(F^h,\rho)\leq C_A \rho^{4+2\sigma}
				\end{equation}  Let $\omega $ as in section \ref{abelian}, with boundary condition to be determined later. According to \cite[Lemma 2.1]{hamburger_regularity_1992}, we can apply  \cite[Theorem 4.1]{hamburger_regularity_1992} to $\diff \omega$: we get that $V (\diff \omega)$ satisfies, for some $\sigma \in] 0
				; 1 [$,
				\begin{equation}
					\label{Phiomega} \Psi (\diff \omega, \tau\rho) \leq C \tau^{4 + 2
						\sigma} \Psi (\diff \omega, \rho), \forall \rho\in (0,R),\forall \tau\in (0,1). 
				\end{equation} 
				On one hand, using Jensen's inequality, we have
				\begin{equation}
					\label{PF}
					\begin{split}
						\Psi (F^h, \rho) &= \int_{\mathrm{B}_{\rho}} | V (F^h) - V (F^h)_{\rho} |^2 \diff x\\
						&  \leq 4 \left( \int_{\mathrm{B}_{\rho}} | V (F^h) - V (\diff \omega) |^2 + | V (F^h)_\rho - V (\diff \omega)_\rho |^2 +| V (\diff \omega) - V (\diff \omega)_{\rho} |^2   \diff x\right) \\
						&  \leq 8 \left( \int_{\mathrm{B}_{\rho}} | V (F^h) - V (\diff \omega) |^2    \diff x + \Psi (\diff \omega, \rho) \right) 
					\end{split}
				\end{equation}
				On the other hand, by a similar computation, we have
				\begin{equation}
					\label{Pomega} \Psi (\diff \omega, \rho) \leq 8 \left( \int_{\mathrm{B}_{\rho}} | V
					(F^h) - V (\diff \omega) |^2 \diff x + \Psi (F^h, \rho) \right) .
				\end{equation} Then, thanks to  \eqref{Phiomega}, \eqref{PF} and \eqref{Pomega}, we have for all $\rho\in(0,R)$ and $\tau\in(0,1)$, \begin{align}
					\Psi (F^h, \tau \rho)  &\leq  C \left( \int_{\mathrm{B}_{\tau\rho}} | V (F^h) - V (\diff
					\omega) |^2 \diff x +  \tau^{4 + 2 \sigma}\left(\int_{\mathrm{B}_\rho} | V (F^h) - V (\diff
					\omega) |^2 \diff x + \Psi (F^h, \rho)\right)\right)\notag \\ 
					& \leq C \left( \int_{\mathrm{B}_{\rho}} | V (F^h) - V (\diff
					\omega) |^2 \diff x +  \tau^{4 + 2 \sigma}\Psi (F^h, \rho)\right)
				\end{align} 
				By  the Claim below, we can choose $\omega$ such that, for some $\gamma >0$  and for every $\rho \in (0,R)$, we have 
				\begin{equation}
					\int_{\mathrm{B}_\rho} | V (F^h) - V (\diff \omega) |^2 \diff x
					\leq C_A \rho^{4 + 2\gamma},
				\end{equation} where $C_A$ can depend on $A$.
				We deduce that $\Psi(F^h,\cdot)$ satisfies
				\[ \Psi (F^h,\tau \rho) \leq C_A \rho^{4 + 2\gamma} + C\tau^{4
					+ 2 \sigma} \Psi (F^h,\rho) \]
				and so
				\[ \Psi (F^h,\rho) \leq C \rho^{4 + 2\sigma'}(R^{-4 -2\sigma'}\Psi
				(F^h,R) + C_A), \] where $\sigma'=\min(\sigma,\gamma)$. Since this estimate in translation invariant, similarly to the proof of \cref{H(F) morrey}, we get that $V(F)$ is in the Campanato-space $\mathcal{L}_{\mathrm{loc}}^{2, 4+2\sigma'}$. Use then the embedding of this space in $\mathcal{C}_{\mathrm{loc}}^{\sigma'}$ to deduce the Hölder-continuity of $V(F)$.
			\end{proof}
			
			\textbf{Claim}  There exists $\gamma>0$ such that, for every $\rho\in(0,R)$, if $\omega $ is defined as in section \ref{abelian} with $i^{\ast}_{\partial \mathrm{B}_\rho} (\omega - A^h) = 0, \diff^{\ast}
			(\omega - A^h) = 0$, then \begin{equation}
				\int_{\mathrm{B}_\rho} | V (F^h) - V (\diff \omega) |^2 \diff x
				\leq C_A \rho^{4 + 2\gamma}, \label{Vestimate}
			\end{equation} where $C_A$ can depend on $A$.\\
			
			\begin{proof}
				We are going to apply estimate \eqref{ecart lin-nnlin} to $F^h$ and $\omega$. Recall that   \[
				\int_{\mathrm{B}_\rho} | F^h - \diff \omega |^p  \leq C_p  \| A^h \|_{\mathrm{L}^{4}(\mathrm{B}_{\rho})} E^{\frac{1}{p'}}\left(\| A^h \|_{\mathrm{L}^{p^*}(\mathrm{B}_{\rho})}+\| A^h \|_{\mathrm{L}^{4}(\mathrm{B}_{\rho})}^{\frac{1}{p-1}}
				E^{\frac{1}{p}} \right)
				\] where \[ E =\int_{\mathrm{B}_\rho} H (F) + | A^h |^{2 p}. \] Notice that, by contrast to the proof of \eqref{estimeecourbure}, $A^h$ is not in Coulomb gauge anymore. However, by construction, $A^h$ satisfies pointwise estimates of the form \eqref{pointwiseAh}. We can therefore bound every term of the previous inequality. For $q\in\{4,p^*,2p\}$,
				\begin{align*}
					\| A^h \|_{\mathrm{L}^q(\mathrm{B}_\rho)} &\leq C_A \left( \int_{\mathrm{B}_\rho} | x |^{q(1-\delta/p)}
					\right)^{1 / q}\\
					&\leq  C_A \rho^{1-\delta/p+4/q}\end{align*} so using \eqref{estimeecourbure}
				\begin{align*}   \int_{\mathrm{B}_\rho} H(F) + |A^h|^{2p}
					& \leq  C_A \left(\rho^{4 - \delta}+ \rho^{4+2(p-\delta)} \right)\\
					& \leq  C_A \rho^{4 - \delta},
				\end{align*} (where we used that $\delta< 4\leq 2p$) so we deduce  \begin{align*}
					\int_{\mathrm{B}_\rho} | F^h - \diff \omega |^p & \leq   C_A \rho^{2-\delta/p} \rho^{(4-\delta)/p'} \left(\rho^{1-\delta/p +4/p^*} + \rho^{(2-\delta/p)/(p-1)} \rho^{(4-\delta)/p}\right) \\
					& \leq C_A \rho^{2-\delta/p} \rho^{(4-\delta)/p'} \left(\rho^{(4-\delta)/p } + \rho^{(2-\delta/p)/(p-1)} \rho^{(4-\delta)/p}\right)\\
					&\leq   C_A \rho^{6-\delta(1+1/p)}\left(1+  \rho^{(2-\delta/p)/(p-1)}\right)
				\end{align*} for some constant $C_A$ depending on $A$. Since $p> 1$ and $\delta/p<2$, \begin{equation}
					\int_{\mathrm{B}_\rho} | F^h - \diff \omega |^p\leq C_A \rho^{6-\delta(1+1/p)}. \label{EstimFg}
				\end{equation} Moreover, using \eqref{Vin}, we have
				\begin{eqnarray*}
					\int_{\mathrm{B}_\rho} | V (F^h) - V (\diff \omega) |^2 & \leq & 4
					\int_{\mathrm{B}_\rho} (1 + | F |^2 + | \diff \omega |^2)^{\frac{p - 2}{2}} | F^h
					- \diff \omega |^2\\
					& \leq & 4 \left( \int_{\mathrm{B}_\rho} (1 + | F |^2 + | \diff \omega
					|^2)^{\frac{p}{2}} \right)^{\frac{p - 2}{p}} \left( \int_{\mathrm{B}_\rho} |
					\diff \omega - F^h |^p \right)^{2 / p}
				\end{eqnarray*}
				and thanks to \eqref{Estimphi} and \eqref{EstimFg}, we have
				\begin{equation*}
					\int_{\mathrm{B}_\rho} (1 + | F |^2 + | \diff \omega |^2)^{\frac{p}{2}} 
					\leq  2^{\frac{3p}{2}-1} \left( \int_{\mathrm{B}_\rho} H (F) + \int_{\mathrm{B}_\rho} | \diff \omega -
					F^h |^p\right) \leq C_A \rho^{4 - \delta}
				\end{equation*}
				which finally gives,
				\begin{align*}
					\int_{\mathrm{B}_\rho} | V (F^h) - V (\diff \omega) |^2 & \leq  C_A \rho^{(4-\delta)(p-2)/p+2(6-\delta(1+1/p))/p} \\
					& \leq  C_A \rho^{4 - \delta + 2(2-\delta/p)/p}
				\end{align*}
				Finally, $2\gamma = - \delta + 2(2-\delta/ p)/p$ satisfies
				\[ \gamma = \frac{1}{2}\left(- \delta + \frac{2}{p}\left(2 - \frac{\delta}{p} \right)\right) = \frac{2}{p} -
				\delta \left( \frac{1}{2} + \frac{1}{p^2} \right) \geq \frac{2}{3} - \frac{3\delta}{4}\]
				and we choose $\delta$ (independently of $p$) such that $\gamma >0$ and estimate \eqref{Vestimate} holds.
			\end{proof}

			\subsubsection{Weak differentiability of $F$}
			
			\begin{proposition}
				$F$ is in $\mathrm{W}^{1,2}_{\loc}(\mathrm{B}_R)$.
			\end{proposition}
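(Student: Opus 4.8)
The plan is to run a difference-quotient (Nirenberg) argument. The key point is that, having already shown in \cref{FHolderC} that $F$ is locally Hölder continuous, hence locally bounded, the nonlinearity $\xi\mapsto\varrho(\xi)\xi$ is \emph{uniformly} elliptic along the range of $F$: the degeneracy that usually makes $p$-harmonic-type problems delicate does not occur here. Thus the equation $\diff_A^\ast(\varrho(F)F)=0$ behaves, on compact subsets, like a uniformly elliptic second-order system for $A$ in the Coulomb gauge, and $F\in\mathrm{W}^{1,2}_{\loc}$ will follow by testing against difference quotients; the only genuine work is the book-keeping of the gauge terms $[A,\cdot]$ and $A\wedge A$.

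Concretely, I would fix $R'<R$ and work on $\mathrm{B}_{R'}$, where $\Lambda:=1+\|F\|_{\mathrm{L}^\infty(\mathrm{B}_{R'})}^2<\infty$ by \cref{FHolderC}, $\|A\|_{\mathrm{L}^\infty(\mathrm{B}_{R'})}<\infty$ and $\nabla A\in\mathrm{L}^2(\mathrm{B}_{R'})$ by \cref{Aholder} (recall $p\ge 2$). Writing the differential of $\xi\mapsto\varrho(\xi)\xi$ as $\varrho(\xi)\bigl(\mathrm{Id}+(p-2)\tfrac{\xi\otimes\xi}{1+|\xi|^2}\bigr)$, whose eigenvalues lie in $[\varrho(\xi),(p-1)\varrho(\xi)]$, and using $\varrho\ge 1$, integration along the segment $[\eta,\xi]$ gives the two-sided bounds
\begin{equation}\label{monotlip}
|\xi-\eta|^2\le\langle\varrho(\xi)\xi-\varrho(\eta)\eta,\xi-\eta\rangle,\qquad |\varrho(\xi)\xi-\varrho(\eta)\eta|\le(p-1)\Lambda^{\frac{p-2}{2}}|\xi-\eta|
\end{equation}
for all $\xi,\eta$ with $|\xi|,|\eta|\le\|F\|_{\mathrm{L}^\infty(\mathrm{B}_{R'})}$, both constants being uniform for $p\in[2,3]$. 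The first inequality is the $\mathrm{L}^2$-coercive companion of \cref{leminq}, and it is what replaces the usual monotonicity in the argument.

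Now, for $s\in\{1,\dots,4\}$, $\zeta\in\mathcal{C}^\infty_c(\mathrm{B}_{R'})$ with $\zeta\equiv 1$ on $\mathrm{B}_{R'/2}$, and $0<|h|$ small enough that $\supp\zeta+he_s\subset\mathrm{B}_{R'}$, write $\tau_hu(x)=u(x+he_s)$, $\Delta_hu=(\tau_hu-u)/h$, $B:=\varrho(F)F$, $B_h:=\tau_hB$. Since \eqref{main} is translation invariant, subtracting its weak form for $A$ and for the translated connection $\tau_hA$ (both valid against $\phi\in\mathrm{W}^{1,\infty}_0(\mathrm{B}_{R'})$, which for each fixed $h$ contains $\zeta^2\Delta_hA$ because $A$ and $F$ are locally bounded Hölder functions), dividing by $h$ and expanding $\diff_{\tau_hA}\phi=\diff\phi+[\tau_hA,\phi]$, one gets
\begin{equation}\label{diffeq}
\int\langle\Delta_hB,\diff\phi\rangle+\int\langle\Delta_hB,[\tau_hA,\phi]\rangle+\int\langle B,[\Delta_hA,\phi]\rangle=0 .
\end{equation}
Taking $\phi=\zeta^2\Delta_hA$, using $\diff\phi=2\zeta\,\diff\zeta\wedge\Delta_hA+\zeta^2\Delta_h(\diff A)$ together with $\Delta_h(\diff A)=\Delta_hF-\Delta_h(A\wedge A)$ (from $\diff A=F-A\wedge A$ and the Leibniz rule $\Delta_h(A\wedge A)=(\Delta_hA)\wedge\tau_hA+A\wedge\Delta_hA$), the distinguished term $\int\zeta^2\langle\Delta_hB,\Delta_hF\rangle$ appears, which by the first bound in \eqref{monotlip} is $\ge\int\zeta^2|\Delta_hF|^2$. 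Hence \eqref{diffeq} rearranges into
\begin{equation}\label{absorb}
\begin{split}
\int\zeta^2|\Delta_hF|^2\le\ &-2\int\zeta\langle\Delta_hB,\diff\zeta\wedge\Delta_hA\rangle+\int\zeta^2\langle\Delta_hB,\Delta_h(A\wedge A)\rangle\\
&-\int\langle\Delta_hB,[\tau_hA,\zeta^2\Delta_hA]\rangle-\int\langle B,[\Delta_hA,\zeta^2\Delta_hA]\rangle .
\end{split}
\end{equation}
Using $|\Delta_hB|\le(p-1)\Lambda^{\frac{p-2}{2}}|\Delta_hF|$ and $|B|\le\Lambda^{\frac{p-2}{2}}$ from \eqref{monotlip}, $|\Delta_h(A\wedge A)|\le C\|A\|_{\mathrm{L}^\infty(\mathrm{B}_{R'})}|\Delta_hA|$, $|[\tau_hA,\cdot]|\le C\|A\|_{\mathrm{L}^\infty(\mathrm{B}_{R'})}|\cdot|$ and Young's inequality, each term on the right of \eqref{absorb} is $\le\varepsilon\int\zeta^2|\Delta_hF|^2+C_\varepsilon\int(\zeta^2+|\diff\zeta|^2)|\Delta_hA|^2$; choosing $\varepsilon$ small, absorbing, and invoking the elementary $\|\Delta_hA\|_{\mathrm{L}^2(\supp\zeta)}\le\|\nabla A\|_{\mathrm{L}^2(\mathrm{B}_{R'})}$ for $|h|$ small, one obtains $\sup_{0<|h|<h_0}\|\Delta_hF\|_{\mathrm{L}^2(\mathrm{B}_{R'/2})}\le C(\Lambda,\|A\|_{\mathrm{L}^\infty(\mathrm{B}_{R'})},\|\nabla A\|_{\mathrm{L}^2(\mathrm{B}_{R'})},\zeta,p)<\infty$ for every $s$. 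Therefore $F\in\mathrm{W}^{1,2}(\mathrm{B}_{R'/2})$, and since $R'<R$ was arbitrary, $F\in\mathrm{W}^{1,2}_{\loc}(\mathrm{B}_R)$.

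The only genuinely delicate part is the book-keeping in \eqref{absorb}: because the equation carries $\diff_A$ rather than $\diff$, one unavoidably produces the commutators $[\tau_hA,\phi]$, $[\Delta_hA,\phi]$ and the term $\Delta_h(A\wedge A)$, and one must verify that each is of strictly lower order, i.e.\ absorbable into $\int\zeta^2|\Delta_hF|^2$ or controlled by $\|\nabla A\|_{\mathrm{L}^2_{\loc}}$. This holds precisely because $A$ is locally bounded and lies in $\mathrm{W}^{1,2}_{\loc}$; no information on $\nabla A$ beyond $\mathrm{L}^2$ (already provided by \cref{Aholder}) is required, and — crucially for the appendix's aim of $p$-independent estimates — all constants entering the argument are bounded for $p\in[2,3]$. (This is the difference-quotient scheme used in this setting by Isobe \cite{isobe_regularity_2008}; see also \cite{GM}.)
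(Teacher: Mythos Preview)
Your proof is correct and follows essentially the same difference-quotient scheme as the paper's: both test the weak form of \eqref{main} against $\zeta^2\Delta_hA$ (the paper packages this as the single test function $D_{i,h}^\ast(\eta^2 D_{i,h}A)$, you instead subtract the equation from its translate), isolate the coercive term $\int\zeta^2\langle\Delta_h(\varrho(F)F),\Delta_hF\rangle$ via the monotonicity of $\xi\mapsto\varrho(\xi)\xi$, and absorb the commutator and $A\wedge A$ terms using the already established $A\in\mathcal{C}^{0,\alpha}_{\loc}\cap\mathrm{W}^{1,p}_{\loc}$ and $F\in\mathrm{L}^\infty_{\loc}$. The only cosmetic difference is that you invoke the boundedness of $F$ from \cref{FHolderC} upfront to replace the paper's running weight $f_{i,h}$ by the constant $\Lambda^{(p-2)/2}$, which is harmless and arguably cleaner.
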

			
			\begin{proof}
				Define $W(\Omega) = \rho(\Omega) \Omega$ for any $2$-form $\Omega$. Then \eqref{main} gives for every $\varphi \in \mathrm{W}^{1, p}_0 \left( \mathrm{B}_R \right)$:
				\[ 0 = \int_{\mathrm{B}_R}  \langle W(F), \diff_A \varphi \rangle, \] in other words, \begin{equation}
					\label{DW} \int_{\mathrm{B}_R}  \langle W(F), \diff\varphi \rangle = -\int_{\mathrm{B}_R}  \langle W(F), [A,\varphi ] \rangle .
				\end{equation} Let's
				choose any test function $\eta \in \mathcal{C}^{\infty}_c \left( \mathrm{B}_{R} \right)$. We use the following notations for the difference quotient
				\[ D_{i, h} u (x) = \frac{u (x + h e_i) - u (x)}{h}, \]
				and
				\[ D_{i, h}^* u (x) = \frac{u (x - h e_i) - u (x)}{h}. \] Let's consider $\varphi = D_{i, h}^{\ast}
				D_{i, h} A$ as a test function. We have \begin{align}
					\int_{\mathrm{B}_R} \langle W(F), \diff \varphi\rangle   & = \int_{\mathrm{B}_R} \langle W(F), D_{i,h}^* \diff (\eta^2D_{i,h}A)\rangle\\
					& =  \int_{\mathrm{B}_R} \eta^2 \langle D_{i, h} W(F), D_{i,h}\diff A\rangle  + \int_{\mathrm{B}_R}2\eta \langle W(F), \diff \eta \wedge D_{i,h} A\rangle\\
					& =  \int_{\mathrm{B}_R} \eta^2 \langle D_{i, h} W(F), D_{i, h} F -D_{i,h}(A\wedge A) \rangle
					+   \int_{\mathrm{B}_R}2\eta \langle D_{i,h}W(F), \diff \eta \wedge D_{i,h} A\rangle \label{Wlowerestimate}.
				\end{align} Notice that \[ D_{i,h}W(F) = \int_{0}^1 W'((1-t)F +t F_{i,h}) D_{i,h}F \diff t \]where $F_{i,h}(x) = F(x+h e_i)$. Since \[ W'(\Omega)\xi = (1+|\Omega|^2)^{(p-2)/2} \left( \xi + (p-2) \frac{\langle\Omega, \xi \rangle}{1+|\Omega|^2}\Omega\right), \] \begin{equation}
					\begin{split} \label{WtoF}
						|D_{i,h} W(F)| &\leq (p-1)| D_{i,h}F| f_{i,h},\\
						\langle D_{i,h} W(F),D_{i,h}F \rangle &\geq | D_{i,h}F|^2 f_{i,h},
					\end{split}
				\end{equation} where \[f_{i,h} := \int_{0}^1 (1+|(1-t)F+t F_{i,h}|)^{(p-2)/2} \diff t . \] We can then deduce from \eqref{Wlowerestimate}, using Young inequality, that \begin{align*}
					\int_{\mathrm{B}_R} \langle W(F), \diff \varphi\rangle &\geq \frac{1}{4} \int_{\mathrm{B}_R} \eta^2 f_{i,h}|D_{i,h} F|^2 + \int_{\mathrm{B}_R} \eta \langle D_{i,h}W(F), 2\diff \eta\wedge D_{i,h}A- \eta D_{i,h}(A\wedge A) \rangle \\
					& \geq \frac{1}{4} \int_{\mathrm{B}_R}\eta^2 f_{i,h}|D_{i,h} F|^2 - \int_{\mathrm{B}_R} \eta |D_{i,h} F|  (p-1) f_{i,h} |2\diff \eta\wedge D_{i,h}A- \eta D_{i,h}(A\wedge A) | \\
					& \geq \frac{1}{8} \int_{\mathrm{B}_R}\eta^2 f_{i,h}|D_{i,h} F|^2 - C(p-1)^2\int_{\mathrm{B}_R} f_{i,h} \left(|\diff \eta|^2 |D_{i,h}A|^2 + \eta^2 |D_{i,h}(A\wedge A) |^2\right).
				\end{align*}
				
				From \eqref{DW}, we have
				\begin{align*}
					\int_{\mathrm{B}_R} \langle W(F), \diff \varphi \rangle  & = \int_{\mathrm{B}_R} \langle W(F),[A,D_{i,h}^*(\eta^2 D_{i,h}A)]
					\rangle \\
					& = \int_{\mathrm{B}_R} \eta^2 \langle D_{i, h} W(F),[A,D_{i, h}A] \rangle  -   \int_{\mathrm{B}_R}\eta^2   \langle W(F), [D_{i,h}A, D_{i,h}A]\rangle,
				\end{align*} so using Young inequality and \eqref{WtoF} again
				\begin{eqnarray*}
					\left| \int_{\mathrm{B}_R} (W(F), \diff \varphi) \right| & \leq & \frac{1}{16}\int_{\mathrm{B}_R} \eta^2 f_{i,h}  \vert D_{i,h}F\vert^2\\ & &+   C(p-1)^2 \int_{\mathrm{B}_R} f_{i,h}  \eta^2|A|^2 | D_{i,
						h} A |^2 + C\int_{\mathrm{B}_R} \eta^2|W(F)| |D_{i,h}A|^2.
				\end{eqnarray*}

				We deduce
				\[ \int_{\mathrm{B}_R} \eta^2  f_{i,h} \vert D_{i,h} F\vert^2 \leq C \int_{\mathrm{B}_R} f_{i,h}(   | D_{i,
					h} A |^2( |\diff \eta|^2 +\eta^2|A|^2) + \eta^2|D_{i,h}(A\wedge A)|^2) +\eta^2 | W(F) |  | D_{i, h}A
				|^2. \]
				When $h$ goes to $0$, this proves that $\partial_i F \in \mathrm{L}^2_{\loc}({\mathrm{B}_{R}})$ and  \[ \int_{\mathrm{B}_R} \eta^2 \varrho(F)  \vert \partial_i F\vert^2 \leq C \int_{\mathrm{B}_R} \varrho(F)   ( |\diff \eta|^2 +\eta^2|A|^2 +\eta^2 | F| )| \partial_i A |^2. \]
			\end{proof}
			
			\subsubsection{Smoothness of $A$}
			
			\begin{proposition} If $p-2$ is small enough, $A$ is smooth.
			\end{proposition}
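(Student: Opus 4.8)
The plan is to bootstrap from the partial regularity already established, using that for $p-2$ small the $p$-Yang-Mills system is a uniformly elliptic perturbation of the classical ($p=2$) one. At this stage we have, in the Coulomb gauge of \cref{uhlenbeckgauge}, a connection $A\in\mathcal{C}^{1,\alpha}_{\loc}$ with $\diff^{\ast}A=0$ and $\diff A=F-A\wedge A$, and a curvature $F\in\mathcal{C}^{0,\alpha}_{\loc}\cap\mathrm{W}^{1,2}_{\loc}$ satisfying the Bianchi identity together with
\[ \diff^{\ast}F=\star[A,\star F]+\frac{p-2}{2}\star\frac{\diff|F|^2\wedge\star F}{1+|F|^2}. \]
Since $\diff A=F-A\wedge A\in\mathrm{W}^{1,2}_{\loc}$ and $\diff^{\ast}A=0$, standard Hodge/elliptic estimates give $A\in\mathrm{W}^{2,2}_{\loc}$, so $\nabla^2A$ is a genuine $\mathrm{L}^2_{\loc}$ field and all identities below are strong identities.

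First I would derive a second-order elliptic equation for $A$. From $\Delta A=\diff^{\ast}\diff A=\diff^{\ast}F-\diff^{\ast}(A\wedge A)$ and the equation above,
\[ \Delta A=\star[A,\star F]-\diff^{\ast}(A\wedge A)+\frac{p-2}{2}\star\frac{\diff|F|^2\wedge\star F}{1+|F|^2}. \]
Now $\diff|F|^2=2\langle\nabla_AF,F\rangle$, and inserting $F=\diff A+A\wedge A$ one sees that $\nabla_AF$ is $\nabla^2A$ plus a tensor algebraic in $(A,\nabla A,F)$. Hence the last term takes the form $(p-2)\,\mathcal{B}(\nabla^2A)+(p-2)\,r$, where $\mathcal{B}$ is a linear second-order operator whose coefficients are built from $F$ and $(1+|F|^2)^{-1}$ — therefore in $\mathcal{C}^{0,\alpha}_{\loc}$ — and of operator norm bounded by a universal constant $C$ since $|F|^2(1+|F|^2)^{-1}\le1$, while $r$ is a product of $\nabla A$, $A$, $F$, $(1+|F|^2)^{-1}$. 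Collecting, one obtains the equation
\[ \Delta A-(p-2)\,\mathcal{B}(\nabla^2A)=g, \qquad g:=\star[A,\star F]-\diff^{\ast}(A\wedge A)+(p-2)\,r, \]
and each term of $g$ is a product of functions in $\mathcal{C}^{1,\alpha}_{\loc}$ ($A$, $A\wedge A$) and $\mathcal{C}^{0,\alpha}_{\loc}$ ($\nabla A$, $F$, $(1+|F|^2)^{-1}$), so $g\in\mathcal{C}^{0,\alpha}_{\loc}$.

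It then remains to run the Schauder bootstrap. For $p-2$ small enough the symbol of the operator $\mathcal{L}A:=\Delta A-(p-2)\,\mathcal{B}(\nabla^2A)$ satisfies $\sigma_{\mathcal{L}}(\xi)\ge(1-(p-2)C)|\xi|^2\ge\tfrac12|\xi|^2$, so $\mathcal{L}$ is a uniformly elliptic system operator with $\mathcal{C}^{0,\alpha}_{\loc}$ coefficients; applying interior $\mathrm{L}^q$ (Calderón–Zygmund) estimates to the strong solution $A\in\mathrm{W}^{2,2}_{\loc}$ of $\mathcal{L}A=g$ gives $A\in\mathrm{W}^{2,q}_{\loc}$ for every $q<\infty$, and then interior Schauder estimates upgrade this to $A\in\mathcal{C}^{2,\alpha}_{\loc}$. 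Consequently $F=\diff A+A\wedge A\in\mathcal{C}^{1,\alpha}_{\loc}$, so the coefficients of $\mathcal{B}$ and the source $g$ are now in $\mathcal{C}^{1,\alpha}_{\loc}$, whence $A\in\mathcal{C}^{3,\alpha}_{\loc}$; iterating, $A$ and $F$ are smooth. One could equivalently bootstrap $F$ directly, since $\Delta F=\diff\diff^{\ast}F+\diff^{\ast}\diff F$ combined with the equation and Bianchi yields $\Delta F=(p-2)\,\mathcal{B}'(\nabla^2F)+(\text{lower order})$ of the same perturbative type, but the computation with $A$ in Coulomb gauge is the cleanest.

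The only genuinely delicate point is the second step: recognising that the offending factor $\diff|F|^2$ — a priori merely $\mathrm{L}^2$ — in fact carries the full Hessian of $A$, so that for $p-2$ small the corresponding contribution can be transferred to the principal part and absorbed into a uniformly elliptic operator with Hölder coefficients. Once this ``perturbation of $p=2$'' structure is identified, the remainder is a routine elliptic bootstrap.
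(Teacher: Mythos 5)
Your proposal is correct and follows essentially the same approach as the paper: in Coulomb gauge you recognise that the $\diff|F|^2$ term carries the full Hessian of $A$, so the $p$-Yang-Mills equation is a quasilinear second-order system whose principal part is $\Delta A-(p-2)\,\mathcal{B}(\nabla^2 A)$, uniformly elliptic with Hölder coefficients for $p-2$ small, and a Schauder bootstrap then yields smoothness. The only difference is that you are more explicit about first upgrading $A$ to $\mathrm{W}^{2,2}_{\loc}$ and passing through $\mathrm{W}^{2,q}_{\loc}$ via Calderón–Zygmund before invoking Schauder, while the paper goes directly to the Schauder estimate; the key structural observation and the bootstrap are identical.
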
 
			\begin{proof}
				With the regularity information obtained up to this point, the $p$-Yang-Mills equations rewrites
				\[ \diff^{\ast} F = \star [A, \star F] + \frac{p - 2}{2} \star \frac{\diff | F     |^2 \wedge \star F}{1 + | F |^2}. \] This is a second-order non-linear PDE for $A$ and since $\diff^* A=0$, the highest order term is 
				\begin{equation}
					\label{Lp}
					\mathbb{L}_p A := \Delta A - (p-2) \star \frac{\langle \nabla \diff A, F \rangle \wedge \star F}{1+|F|^2}.
				\end{equation}
				This is, for $p-2$ small enough, an elliptic operator with Hölder continuous coefficients (depending only on $F$) and form the equation, we get that $\mathbb{L}_p A$ is Hölder continuous. From standard Schauder regularity theory, this gives that $A$ is in $\mathcal{C}^{2,\sigma}$ for some $\sigma \in (0,1)$. This improves not only the regularity of $\mathbb{L}_p A$ but also the one of the coefficients of $\mathbb{L}_p$. Bootstrapping this argument, we get that $A$ is smooth.
			\end{proof}
			
			\subsubsection{Pointwise estimate for $F$}
			
			\begin{theorem}
				There exists $\varepsilon_0, C > 0$ such that for all $R > 0$ and every smooth
				connection $A$ on $\mathrm{B}_R$ satisfying the $p$-Yang-Mills equation
				\eqref{equationdepYMdiv} and the small energy condition
				\[ \int_{\mathrm{B}_R} | F |^2 < \varepsilon_0, \]
				then
				\[ \sup_{\mathrm{B}_{R / 2}}  | F | \leq \frac{C}{R^2}
				\sqrt{\int_{\mathrm{B}_R} | F |^2} . \] \label{epsregcurvaturepointwiseproof}
			\end{theorem}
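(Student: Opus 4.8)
The plan is to show that $\phi:=|F|^2$ is a weak subsolution of a uniformly elliptic equation whose zeroth-order potential, after one integrability-upgrade step powered by the $\varepsilon$-regularity hypothesis, lies in a \emph{subcritical} Lebesgue space, and then to invoke the classical local boundedness estimate for such subsolutions. I would start from \cref{CBochnerpYM} (equivalently \cref{LpF} with $\gamma=2$, where $\kappa(p,2)=0$): in the weak sense against non-negative test functions,
\[ \mathcal{L}_p\phi = \Delta\phi - (p-2)\diff^*(\mathcal{A}(\diff\phi)) \le C\,\phi^{3/2} - 2\big(1-(p-2)^2\big)\,|\nabla_A F|^2 . \]
By \cref{Aestimate}, the symbol $\delta_{\alpha\beta}-(p-2)\mathcal{A}^{\alpha\beta}$ is bounded and $\ge\big(1-2(p-2)\big)\mathrm{Id}$, so $\mathcal{L}_p$ is uniformly elliptic with constants independent of $p$ for $p-2$ small, and $1-(p-2)^2$ is bounded away from $0$ since $p<p_G<3$; in particular there is no degeneracy and we should not yet discard the good gradient term.

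The crucial step is to upgrade $F$ from $\mathrm{L}^2$ to $\mathrm{L}^4$ locally. Fix a cutoff $\eta$ equal to $1$ on $\mathrm{B}_{R/2}$, supported in $\mathrm{B}_R$, with $|\nabla\eta|\le C/R$, and test the B\"ochner inequality against $\eta^2$. Using $|\nabla\phi|\le 2|F|\,|\nabla_A F|$ and absorbing the resulting cross term into $\int\eta^2|\nabla_A F|^2$ by Young's inequality, one gets $\int_{\mathrm{B}_R}\eta^2|\nabla_A F|^2 \le C\int_{\mathrm{B}_R}\eta^2|F|^3 + CR^{-2}\int_{\mathrm{B}_R}|F|^2$. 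Now, by the Kato inequality $|\diff|F||\le|\nabla_A F|$ and the Sobolev embedding $\dot{\mathrm{W}}^{1,2}(\R^4)\hookrightarrow\mathrm{L}^4(\R^4)$ applied to $\eta|F|$,
\[ \int_{\mathrm{B}_R}\eta^2|F|^3 = \int_{\mathrm{B}_R}|F|\,(\eta|F|)^2 \le \|F\|_{\mathrm{L}^2(\mathrm{B}_R)}\,\|\eta|F|\|_{\mathrm{L}^4}^2 \le C\|F\|_{\mathrm{L}^2(\mathrm{B}_R)}\Big(\int_{\mathrm{B}_R}\eta^2|\nabla_A F|^2 + \frac{1}{R^2}\int_{\mathrm{B}_R}|F|^2\Big) , \]
so when $\varepsilon_0$ is small enough (depending only on $G$) the first term is absorbed into the left-hand side, giving $\int_{\mathrm{B}_{R/2}}|\nabla_A F|^2 \le CR^{-2}\int_{\mathrm{B}_R}|F|^2$ and hence $\|F\|_{\mathrm{L}^4(\mathrm{B}_{R/2})}^2 \le CR^{-2}\int_{\mathrm{B}_R}|F|^2$ by Sobolev again. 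This is the one place the smallness is used, and it is essential: $\phi^{3/2}$ is a potential term in the \emph{critical} space $\mathrm{L}^{n/2}=\mathrm{L}^2$, and the iteration in the next step would not close without first making it subcritical.

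Now $\phi$ solves $\mathcal{L}_p\phi\le V\phi$ with $V:=C|F|\in\mathrm{L}^4(\mathrm{B}_{R/2})$ — subcritical since $4>n/2$ — and the scale-invariant quantity $R\,\|V\|_{\mathrm{L}^4(\mathrm{B}_{R/2})}$ is bounded by $C\sqrt{\varepsilon_0}$. The classical local boundedness estimate for subsolutions of uniformly elliptic divergence-form equations with a subcritical zeroth-order coefficient (e.g. \cite[Theorem 8.17]{gilbarg1977elliptic}, in the form with the $\mathrm{L}^1$-norm on the right) then gives, with a constant depending only on $n$, the ellipticity of $\mathcal{L}_p$, and that bounded scale-invariant norm, $\sup_{\mathrm{B}_{R/4}}|F|^2 \le CR^{-4}\int_{\mathrm{B}_{R/2}}|F|^2 \le CR^{-4}\int_{\mathrm{B}_R}|F|^2$; covering $\mathrm{B}_{R/2}$ by balls $\mathrm{B}_{R/2}(x_0)\subset\mathrm{B}_R$ with $x_0\in\mathrm{B}_{R/2}$ and rerunning the chain of estimates upgrades the supremum to $\mathrm{B}_{R/2}$, and taking square roots gives the assertion.

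The main obstacle is precisely the critical integrability of the B\"ochner nonlinearity $\phi^{3/2}$; everything else is routine once the absorption of the second paragraph has traded it for a subcritical potential. The only other point requiring attention is uniformity of all constants and of $\varepsilon_0$ as $p\to2$: this holds because $\mathcal{L}_p$ is a uniformly (in $p$) elliptic perturbation of $\Delta$ by \cref{Aestimate}, and because — although the $p$-Yang-Mills \emph{equation} is not scale invariant — the B\"ochner inequality it produces \emph{is}, resting only on the Weitzenb\"ock formula and the dilation-invariant pointwise bound $|\diff_A\star F|\le(p-2)|\diff|F||$ from \cref{KatoYau}, so the $R$-scaling used above is exact.
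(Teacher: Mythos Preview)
Your argument is correct and takes a genuinely different route from the paper's. The paper proves the estimate by the classical Schoen point-picking method: one introduces $f(\sigma)=(R-\sigma)^4\max_{\overline{\mathrm{B}}_\sigma}\phi$, locates its maximum at $(\sigma_0,x_0)$ with $\phi_0=\phi(x_0)$, and then performs two rescalings --- the first, by $\phi_0^{-1/4}$, to show by contradiction that $\rho_0^4\phi_0<1$, and the second, by $\rho_0:=\min((R-\sigma_0)/2,1)$, to conclude --- each time applying the Moser--Harnack inequality to a subsolution whose potential $C\phi^{1/2}$ has been made \emph{pointwise bounded} by the rescaling; thus the critical integrability of $\phi^{3/2}$ is never confronted directly. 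Your approach instead attacks that criticality head-on: one energy estimate (testing the B\"ochner inequality against $\eta^2$ and absorbing) promotes $F$ from $\mathrm{L}^2$ to $\mathrm{L}^4$, after which the potential $C|F|$ is subcritical and the standard local boundedness theorem closes the argument. This is shorter and makes the role of the smallness completely transparent --- it is exactly the absorption constant --- whereas in the doubling argument $\varepsilon_0$ enters only at the contradiction step; on the other hand, the paper's method is the canonical $\varepsilon$-regularity template and does not rely on having a usable gradient term in the B\"ochner inequality to run the $\mathrm{L}^4$ upgrade. One minor imprecision in your last paragraph: the B\"ochner inequality is not literally dilation-invariant, since $\mathcal{A}$ carries the factor $(1+|F|^2)^{-1}$; what is true --- and sufficient for your purposes --- is that the ellipticity constants of $\mathcal{L}_p$ and the structural form $\mathcal{L}_p\phi\le C\phi^{3/2}$ are preserved under dilation.
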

			
			\begin{proof}
				Recall that $\phi = | F |^2$ satisfies the following estimate
				\[ \mathcal{L}_p \phi \leq C \phi (1 + \phi^{1 / 2}) \]
				where $\mathcal{L}_p$ is defined in \eqref{Lp}. Define for $\sigma \in [0,
				R]$: $f (\sigma) = (R - \sigma)^4 \max_{\overline{\text{B}}_{\sigma}} \phi$.
				$f$ is a continuous non-negative function vanishing at $\sigma = R$ so there
				exists $\sigma_0 \in [0, R [$ such that $f$ is maximal at $\sigma =
				\sigma_0$. Now since $\phi$ is continuous, there exists $x \in
				\overline{\text{B}}_{\sigma_0}$ such that $\phi_0 :=
				\max_{\overline{\text{B}}_{\sigma_0}} \phi = \phi (x_0)$. Introduce $\rho_0
				= \min ((R - \sigma_0) / 2, 1)$. By definition of $\sigma_0$
				\begin{equation}
					\max_{\overline{\text{B}}_{\rho_0} (x_0)} \phi \leq
					\max_{\overline{\text{B}}_{\rho_0 + \sigma_0}} \phi \leq \left(
					\frac{R - \sigma_0}{R - (\sigma_0 + \rho_0)} \right)^4
					\max_{\overline{\text{B}}_{\sigma_0}} \phi = 16 \phi_0 .
					\label{phiestimate1}
				\end{equation}
				Define now $r_0 = \rho_0 \phi_0^{1 / 4}$. Let's prove by contradiction that,
				if $\delta_0$ is small enough then $r_0 < 1$. Assume that $r_0 \geq 1$
				and consider $v = | \alpha^{\ast} F |^2$ where $\alpha (y) = x_0 + \phi_0^{-
					1 / 4} y$ (defined on $\text{B}_{r_0} \supset \text{B}_1$). We have the
				following expression for $v$:
				\[ v (y) = \frac{1}{\phi_0} | F |^2 \left( x_0 + \frac{y}{\phi_0^{1 / 4}}
				\right) = \frac{1}{\phi_0} \phi \left( x_0 + \frac{y}{\phi_0^{1 / 4}}
				\right), \]
				so, from \eqref{phiestimate1}, $v$ satisfies
				\begin{eqnarray*}
					v (0) & = & 1\\
					\sup_{\overline{\text{B}}_{r_0}} v & = &
					\sup_{\overline{\text{B}}_{\rho_0} (x_0)}  \frac{\phi}{\phi_0} \leq
					16.
				\end{eqnarray*}
				Since $r_0 \geq 1$ and $\rho_0 \leq 1$ then $\phi_0 \geq 1$.
				Consider $\tilde{\mathcal{L}}_p$ defined as
				\[ \tilde{\mathcal{L}}_p v = \partial_{\alpha} ((\delta^{\alpha \beta} + (p
				- 2) \mathcal{A}^{\alpha \beta} \circ \alpha) \partial_{\beta} v) \]
				Then, in $\text{B}_{r_0}$:
				\begin{eqnarray*}
					\tilde{\mathcal{L}}_p v & = & \frac{1}{\phi_0} \times \frac{1}{\phi_0^{1 /
							2}}  (\mathcal{L}_p \phi) (x_0 + \phi_0^{- 1 / 4} y)\\
					& \leq & \frac{C}{\phi_0^{3 / 2}} (\phi + \phi^{3 / 2}) (x_0 +
					\phi_0^{- 1 / 4} y)\\
					& \leq & C \left( \frac{1}{\phi_0^{1 / 2}} v + \left( \frac{\phi
						(x_0 + \phi_0^{- 1 / 4} y)}{\phi_0} \right)^{1 / 2} v \right)\\
					& \leq & C \left( v + \sqrt{16} v \right)\\
					& \leq & C v.
				\end{eqnarray*}
				This proves that $v$ is a subsolution for the operator
				$\tilde{\mathcal{L}}_p - C$. We can apply M{\"o}ser-Harnack inequality (see for example \cite[Theorem 4.1]{han_elliptic_2000}) to get an estimate of the form
				\[ v (0) \leq C \int_{\text{B}_1} v \]
				where $C > 0$ is a constant (independent of $p$). Recall that $v (0) = 1$ and by conformal
				invariance of the Yang-Mills energy
				\[ \int_{\text{B}_1} v = \int_{\text{B}_1} | \alpha^{\ast} F |^2 =
				\int_{\text{B}_{\phi_0^{- 1 / 4} (x_0)}} | F |^2 = \int_{\text{B}_{\rho_0
						/ r_0 (x_0)}} | F |^2 \leq \int_{\text{B}_{\rho_0 (x_0)}} | F |^2
				\leq \int_{\text{B}_R} | F |^2 \leq \varepsilon_0, \]
				so $1 \leq C \varepsilon_0$. This becomes a contradiction by choosing
				$\varepsilon_0$ small enough.
				
				We have proved that $r_0 < 1$ \textit{i.e.} $\phi_0 < \rho_0^{- 4}$.
				Consider now $w = | \beta^{\ast} F |^2$ where $\beta (y) = x_0 + \rho_0 y$.
				We have the following expression for $w$:
				\[ w (y) = \rho_0^4 | F |^2 (x_0 + \rho_0 y) = \rho_0^4 \phi (x_0 + \rho_0
				y), \]
				so, using \eqref{phiestimate1} as before, $w$ satisfies
				\begin{align*}
					w (0) & =  \rho_0^4 \phi_0 = r_0^4\\
					\sup_{\overline{\text{B}}_1} w & =  \sup_{\overline{\text{B}}_{\rho_0}
						(x_0)} \rho_0^4 \phi = \rho_0^4 \phi_0 \sup_{\overline{\text{B}}_{\rho_0}
						(x_0)}  \frac{\phi}{\phi_0} \leq 16 r_0^4 < 16.
				\end{align*}
				Define this time the operator $\check{\mathcal{L}}_p$ as
				\[ \check{\mathcal{L}}_p w = \partial_{\alpha} ((\delta^{\alpha \beta} + (p
				- 2) \mathcal{A}^{\alpha \beta} \circ \beta) \partial_{\beta} w) \]
				Then
				\begin{align*}
					\check{\mathcal{L}}_p w & =  \rho_0^4 \times \rho_0^2  (\mathcal{L}_p
					\phi) (x_0 + \rho_0 y)\\
					& \leq  C \rho_0^6 (\phi + \phi^{3 / 2}) (x_0 + \rho_0 y)\\
					& \leq  C (\rho_0^2 w + (\rho_0^4 \phi (x_0 + \rho_0 y))^{1 / 2}
					w)\\
					& \leq  C \left( w + \sqrt{16} w \right)\\
					& \leq  C w,
				\end{align*}
				\textit{i.e.} $w$ is a subsolution for the operator $\check{\mathcal{L}}_p
				- C$. Apply again M{\"o}ser-Harnack inequality to obtain
				\[ r_0^4 = w (0) \leq C \int_{\text{B}_1} w. \]
				Using once more the conformal invariance of the energy,
				\[ \int_{\text{B}_1} w = \int_{\text{B}_{\rho_0 (x_0)}} | F |^2 \leq
				\int_{\text{B}_R} | F |^2 . \]
				We deduce
				\[ f (\sigma_0) = (R - \sigma_0)^4 \phi_0 = 16 r_0^4 \leq C
				\int_{\text{B}_R} | F |^2 \]
				and therefore for all $\sigma \in [0, R)$,
				\[ (R - \sigma)^4 \sup_{\overline{\text{B}}_{\sigma}} | F |^2 \leq C
				\int_{\text{B}_R} | F |^2 . \]
				Taking $\sigma = R / 2$, we finally obtain
				\[ \sup_{\overline{\text{B}}_{R / 2}} | F | \leq \frac{C}{R^2}
				\sqrt{\int_{\text{B}_R} | F |^2} . \]
			\end{proof}
			
			\newpage
			
			\printbibliography[heading=bibintoc,title=References]

@article {DaRiSchla,
	AUTHOR = {Da Lio, Francesca and Rivi\`ere, Tristan and Schlagenhauf,
	Dominik},
	TITLE = {Morse index stability for {S}acks--{U}hlenbeck approximations
	for harmonic maps into a sphere},
	JOURNAL = {Nonlinear Anal.},
	FJOURNAL = {Nonlinear Analysis. Theory, Methods \& Applications. An
	International Multidisciplinary Journal},
	VOLUME = {264},
	YEAR = {2026},
	PAGES = {Paper No. 113987},
	ISSN = {0362-546X,1873-5215},
	MRCLASS = {35J92 (35J47 35J50 53A10 53C43 58E05 58E12 58E20)},
	MRNUMBER = {4976778},
	DOI = {10.1016/j.na.2025.113987},
	URL = {https://doi-org.univ-eiffel.idm.oclc.org/10.1016/j.na.2025.113987},
}

@article {LiZhu,
	AUTHOR = {Li, Jiayu and Zhu, Xiangrong},
	TITLE = {Energy identity and necklessness for a sequence of
	{S}acks-{U}hlenbeck maps to a sphere},
	JOURNAL = {Ann. Inst. H. Poincar\'e{} C Anal. Non Lin\'eaire},
	FJOURNAL = {Annales de l'Institut Henri Poincar\'e{} C. Analyse Non
	Lin\'eaire},
	VOLUME = {36},
	YEAR = {2019},
	NUMBER = {1},
	PAGES = {103--118},
	ISSN = {0294-1449,1873-1430},
	MRCLASS = {58E20 (35R01)},
	MRNUMBER = {3906867},
	MRREVIEWER = {Nobumitsu\ Nakauchi},
	DOI = {10.1016/j.anihpc.2018.04.002},
	URL = {https://doi-org.univ-eiffel.idm.oclc.org/10.1016/j.anihpc.2018.04.002},
}

@misc{Schlagen,
	title={Stability of the Morse Index for the $p$-harmonic Approximation of Harmonic Maps into Homogeneous Spaces}, 
	author={Dominik Schlagenhauf},
	year={2025},
	eprint={2506.10761},
	archivePrefix={arXiv},
	primaryClass={math.AP},
	url={https://arxiv.org/abs/2506.10761}, 
}

@book {Helein,
	AUTHOR = {H\'elein, Fr\'ed\'eric},
	TITLE = {Harmonic maps, conservation laws and moving frames},
	SERIES = {Cambridge Tracts in Mathematics},
	VOLUME = {150},
	EDITION = {Second},
	NOTE = {Translated from the 1996 French original,
	With a foreword by James Eells},
	PUBLISHER = {Cambridge University Press, Cambridge},
	YEAR = {2002},
	PAGES = {xxvi+264},
	ISBN = {0-521-81160-0},
	MRCLASS = {58E20 (35A22 35J15 53C43 58E12)},
	MRNUMBER = {1913803},
	MRREVIEWER = {Andreas\ Gastel},
	DOI = {10.1017/CBO9780511543036},
	URL = {https://doi-org.univ-eiffel.idm.oclc.org/10.1017/CBO9780511543036},
}

@article {SaU,
	AUTHOR = {Sacks, J. and Uhlenbeck, K.},
	TITLE = {The existence of minimal immersions of {$2$}-spheres},
	JOURNAL = {Ann. of Math. (2)},
	FJOURNAL = {Annals of Mathematics. Second Series},
	VOLUME = {113},
	YEAR = {1981},
	NUMBER = {1},
	PAGES = {1--24},
	ISSN = {0003-486X},
	MRCLASS = {58E12 (53C42 58E20)},
	MRNUMBER = {604040},
	MRREVIEWER = {John\ C.\ Wood},
	DOI = {10.2307/1971131},
	URL = {https://doi.org/10.2307/1971131},
}

@article{UhlenbeckKarenK1982CwLb,
author = {Uhlenbeck, Karen K},
copyright = {Copyright 1982 Springer-Verlag},
issn = {1432-0916},
journal = {Communications in mathematical physics},
keywords = {49F10 ; 53C05 ; 58E20 ; 81E10},
number = {no. 1},
pages = {31-42},
publisher = {Springer-Verlag},
title = {Connections with Lp bounds on curvature},
volume = {83},
year = {1982},
}

@article {BourguignonLawson,
	AUTHOR = {Bourguignon, Jean-Pierre and Lawson, Jr., H. Blaine},
	TITLE = {Stability and isolation phenomena for {Y}ang-{M}ills fields},
	JOURNAL = {Comm. Math. Phys.},
	FJOURNAL = {Communications in Mathematical Physics},
	VOLUME = {79},
	YEAR = {1981},
	NUMBER = {2},
	PAGES = {189--230},
	ISSN = {0010-3616,1432-0916},
	MRCLASS = {58E20 (53C05 81E10)},
	MRNUMBER = {612248},
	MRREVIEWER = {M.\ F.\ Atiyah},
	URL = {http://projecteuclid.org/euclid.cmp/1103908963},
}

@book {FU,
	AUTHOR = {Freed, Daniel S. and Uhlenbeck, Karen K.},
	TITLE = {Instantons and four-manifolds},
	SERIES = {Mathematical Sciences Research Institute Publications},
	VOLUME = {1},
	EDITION = {Second},
	PUBLISHER = {Springer-Verlag, New York},
	YEAR = {1991},
	PAGES = {xxii+194},
	ISBN = {0-387-97377-X},
	MRCLASS = {57R55 (57M40 57N05 57R57 57R60 58D27 58G10)},
	MRNUMBER = {1081321},
	DOI = {10.1007/978-1-4613-9703-8},
	URL = {https://doi.org/10.1007/978-1-4613-9703-8},
}

@article{DGR22,
  title={Morse index stability for critical points to conformally invariant lagrangians},
  author={Da Lio, Francesca and Gianocca, Matilde and Rivi{\`e}re, Tristan},
  journal={arXiv preprint arXiv:2212.03124, To appear at JEMS},
  year={2022}
}

@InCollection{rivière2015variations,
 Author = {Rivi{\`e}re, Tristan},
 Title = {The variations of {Yang}-{Mills} {Lagrangian}},
 BookTitle = {Geometric analysis. In honor of Gang Tian's 60th birthday},
 ISBN = {978-3-030-34952-3; 978-3-030-34955-4; 978-3-030-34953-0},
 Pages = {305--379},
 Year = {2020},
 Publisher = {Cham: Birkh{\"a}user},
 Language = {English},
 DOI = {10.1007/978-3-030-34953-0_15},
 Keywords = {58E15,81T13,35J25,35J47,35J50},
 zbMATH = {7225730},
 Zbl = {1460.58010}
}

@article{Hunt1966,
author={Hunt, Richard A.},
title={ON {$L^{p,q}$} SPACES},
journal={L'Enseignement Math{\'e}matique},
year={1966},
publisher={Fondation L'Enseignement Math{\'e}matique},
volume={12},
number={4},
pages={249},
issn={0013-8584},
doi={10.5169/seals-40747},
url={https://doi.org/10.5169/seals-40747}
}

@article {troyanov2010hodge,
	AUTHOR = {Troyanov, Marc},
	TITLE = {On the {H}odge decomposition in {$\Bbb R^n$}},
	JOURNAL = {Mosc. Math. J.},
	FJOURNAL = {Moscow Mathematical Journal},
	VOLUME = {9},
	YEAR = {2009},
	NUMBER = {4},
	PAGES = {899--926, 936},
	ISSN = {1609-3321,1609-4514},
	MRCLASS = {58A14 (42B20 46F05)},
	MRNUMBER = {2663996},
	MRREVIEWER = {Gilles\ Carron},
	DOI = {10.17323/1609-4514-2009-9-4-899-926},
	URL = {https://doi-org.univ-eiffel.idm.oclc.org/10.17323/1609-4514-2009-9-4-899-926},
}

@article{peetrelorentzsobolev,
     author = {Peetre, Jaak},
     title = {Espaces d'interpolation et th\'eor\`eme de {Soboleff}},
     journal = {Annales de l'Institut Fourier},
     pages = {279--317},
     publisher = {Institut Fourier},
     address = {Grenoble},
     volume = {16},
     number = {1},
     year = {1966},
     doi = {10.5802/aif.232},
     zbl = {0151.17903},
     mrnumber = {36 #4334},
     language = {fr},
     url = {https://aif.centre-mersenne.org/articles/10.5802/aif.232/}
}

@book{gilbarg1977elliptic,
  title={Elliptic partial differential equations of second order},
  author={Gilbarg, David and Trudinger, Neil S and Gilbarg, David and Trudinger, NS},
  volume={224},
  number={2},
  year={1977},
  publisher={Springer}
}

@article{riviere2002interpolation,
  title={Interpolation spaces and energy quantization for Yang--Mills fields},
  author={Riviere, Tristan},
  journal={Communications in Analysis and Geometry},
  volume={10},
  number={4},
  pages={683--708},
  year={2002},
  publisher={International Press of Boston}
}

@article{laurainriviere2014angular,
  title={Angular energy quantization for linear elliptic systems with antisymmetric potentials and applications},
  author={Laurain, Paul and Riviere, Tristan},
  journal={Analysis \& PDE},
  volume={7},
  number={1},
  pages={1--41},
  year={2014},
  publisher={Mathematical Sciences Publishers}
}

@book {Grafakos1,
    AUTHOR = {Grafakos, Loukas},
     TITLE = {Classical {F}ourier analysis},
    SERIES = {Graduate Texts in Mathematics},
    VOLUME = {249},
   EDITION = {Third},
 PUBLISHER = {Springer, New York},
      YEAR = {2014},
     PAGES = {xviii+638},
      ISBN = {978-1-4939-1193-6; 978-1-4939-1194-3},
   MRCLASS = {42-01 (42Bxx)},
  MRNUMBER = {3243734},
MRREVIEWER = {Atanas\ G.\ Stefanov},
       DOI = {10.1007/978-1-4939-1194-3},
       URL = {https://doi.org/10.1007/978-1-4939-1194-3},
}

@article {RiviereLin,
    AUTHOR = {Lin, Fang-Hua and Rivi\`ere, Tristan},
     TITLE = {Energy quantization for harmonic maps},
   JOURNAL = {Duke Math. J.},
  FJOURNAL = {Duke Mathematical Journal},
    VOLUME = {111},
      YEAR = {2002},
    NUMBER = {1},
     PAGES = {177--193},
      ISSN = {0012-7094,1547-7398},
   MRCLASS = {58E20},
  MRNUMBER = {1876445},
MRREVIEWER = {Ernst\ C.\ Kuwert},
       DOI = {10.1215/S0012-7094-02-11116-8},
       URL = {https://doi.org/10.1215/S0012-7094-02-11116-8},
}

@article {PR14,
    AUTHOR = {Petrache, Mircea and Rivi\`ere, Tristan},
     TITLE = {Global gauges and global extensions in optimal spaces},
   JOURNAL = {Anal. PDE},
  FJOURNAL = {Analysis \& PDE},
    VOLUME = {7},
      YEAR = {2014},
    NUMBER = {8},
     PAGES = {1851--1899},
      ISSN = {2157-5045,1948-206X},
   MRCLASS = {46E35 (35B45 35J57 58J05 70S15)},
  MRNUMBER = {3318742},
MRREVIEWER = {Yoichi\ Miyazaki},
       DOI = {10.2140/apde.2014.7.1851},
       URL = {https://doi.org/10.2140/apde.2014.7.1851},
}

@article {GL24,
	AUTHOR = {Gauvrit, Mario and Laurain, Paul},
	TITLE = {Morse index stability for {Y}ang-{M}ills connections},
	JOURNAL = {Int. Math. Res. Not. IMRN},
	FJOURNAL = {International Mathematics Research Notices. IMRN},
	YEAR = {2025},
	NUMBER = {16},
	PAGES = {Paper No. rnaf250, 33},
	ISSN = {1073-7928,1687-0247},
	MRCLASS = {53C07 (57K40 58E15)},
	MRNUMBER = {4948567},
	DOI = {10.1093/imrn/rnaf250},
	URL = {https://doi-org.univ-eiffel.idm.oclc.org/10.1093/imrn/rnaf250},
}

@book {han_elliptic_2000,
	AUTHOR = {Han, Qing and Lin, Fanghua},
	TITLE = {Elliptic partial differential equations},
	SERIES = {Courant Lecture Notes in Mathematics},
	VOLUME = {1},
	EDITION = {Second},
	PUBLISHER = {Courant Institute of Mathematical Sciences, New York; American
	Mathematical Society, Providence, RI},
	YEAR = {2011},
	PAGES = {x+147},
	ISBN = {978-0-8218-5313-9},
	MRCLASS = {35Jxx (35-01 35B50)},
	MRNUMBER = {2777537},
}

@book {GM,
	AUTHOR = {Giaquinta, Mariano and Martinazzi, Luca},
	TITLE = {An introduction to the regularity theory for elliptic systems,
	harmonic maps and minimal graphs},
	SERIES = {Appunti. Scuola Normale Superiore di Pisa (Nuova Serie)
	[Lecture Notes. Scuola Normale Superiore di Pisa (New
	Series)]},
	VOLUME = {11},
	EDITION = {Second},
	PUBLISHER = {Edizioni della Normale, Pisa},
	YEAR = {2012},
	PAGES = {xiv+366},
	ISBN = {978-88-7642-442-7; 978-88-7642-443-4},
	MRCLASS = {35-02 (35B65 35J20 35J60 58E20)},
	MRNUMBER = {3099262},
	DOI = {10.1007/978-88-7642-443-4},
	URL = {https://doi.org/10.1007/978-88-7642-443-4},
}

@article{bolik_h_2001,
	title = {H. {Weyl}'s boundary value problems for differential forms},
	url = {https://api.semanticscholar.org/CorpusID:124735710},
	journal = {Differential and Integral Equations},
	author = {Bolik, Jürgen},
	year = {2001},
}

@article {HS,
	AUTHOR = {Hong, Min-Chun and Schabrun, Lorenz},
	TITLE = {The energy identity for a sequence of {Y}ang-{M}ills
	{$\alpha$}-connections},
	JOURNAL = {Calc. Var. Partial Differential Equations},
	FJOURNAL = {Calculus of Variations and Partial Differential Equations},
	VOLUME = {58},
	YEAR = {2019},
	NUMBER = {3},
	PAGES = {Paper No. 83, 27},
	ISSN = {0944-2669,1432-0835},
	MRCLASS = {58E15},
	MRNUMBER = {3946427},
	MRREVIEWER = {Daniele\ Bartolucci},
	DOI = {10.1007/s00526-019-1535-y},
	URL = {https://doi.org/10.1007/s00526-019-1535-y},
}

@article {LiW,
	AUTHOR = {Li, Yuxiang and Wang, Youde},
	TITLE = {A weak energy identity and the length of necks for a sequence
	of {S}acks-{U}hlenbeck {$\alpha$}-harmonic maps},
	JOURNAL = {Adv. Math.},
	FJOURNAL = {Advances in Mathematics},
	VOLUME = {225},
	YEAR = {2010},
	NUMBER = {3},
	PAGES = {1134--1184},
	ISSN = {0001-8708,1090-2082},
	MRCLASS = {58E20 (35J62)},
	MRNUMBER = {2673727},
	MRREVIEWER = {Andreas\ Gastel},
	DOI = {10.1016/j.aim.2010.03.020},
	URL = {https://doi.org/10.1016/j.aim.2010.03.020},
}

@article {Waldron,
	AUTHOR = {Waldron, Alex},
	TITLE = {Long-time existence for {Y}ang-{M}ills flow},
	JOURNAL = {Invent. Math.},
	FJOURNAL = {Inventiones Mathematicae},
	VOLUME = {217},
	YEAR = {2019},
	NUMBER = {3},
	PAGES = {1069--1147},
	ISSN = {0020-9910,1432-1297},
	MRCLASS = {53E99 (35R01 58E15)},
	MRNUMBER = {3989258},
	MRREVIEWER = {Yu\ Zheng},
	DOI = {10.1007/s00222-019-00877-2},
	URL = {https://doi.org/10.1007/s00222-019-00877-2},
}

@article{uhlenbeck_regularity_1977,
	title = {Regularity for a class of non-linear elliptic systems},
	volume = {138},
	issn = {0001-5962,1871-2509},
	url = {https://doi.org/10.1007/BF02392316},
	doi = {10.1007/BF02392316},
	number = {3-4},
	journal = {Acta Mathematica},
	author = {Uhlenbeck, K.},
	year = {1977},
	mrnumber = {474389},
	pages = {219--240},
}

@inproceedings{isobe_regularity_2008,
	title = {A regularity result for a class of degenerate {Yang}-{Mills} connections in critical dimensions},
	url = {https://api.semanticscholar.org/CorpusID:122311636},
	author = {Isobe, Takeshi},
	year = {2008},
}

@article{hamburger_regularity_1992,
	title = {Regularity of differential forms minimizing degenerate elliptic functionals.},
	volume = {431},
	url = {http://eudml.org/doc/153451},
	journal = {Journal für die reine und angewandte Mathematik},
	author = {Hamburger, C.},
	year = {1992},
	keywords = {global regularity, partial regularity, variational integral, vector-valued differential forms},
	pages = {7--64},
}

@book{HardyIneq,
	abstract = {This volume presents advances that have been made over recent decades in areas of research featuring Hardy's inequality and related topics. The inequality and its extensions and refinements are not only of intrinsic interest but are indispensable tools in many areas of mathematics and mathematical physics. Hardy inequalities on domains have a substantial role and this necessitates a detailed investigation of significant geometric properties of a domain and its boundary. Other topics covered in this volume are Hardy- Sobolev-Maz’ya inequalities; inequalities of Hardy-type involving magnetic fields; Hardy, Sobolev and Cwikel-Lieb-Rosenbljum inequalities for Pauli operators; the Rellich inequality.   The Analysis and Geometry of Hardy’s Inequality provides an up-to-date account of research in areas of contemporary interest and would be suitable for a graduate course in mathematics or physics. A good basic knowledge of real and complex analysis is a prerequisite.},
	author = {Balinsky, Alexander A and Evans, W. Desmond and Lewis, Roger T and Lewis, Roger T and Evans, W. Desmond},
	address = {Cham},
	booktitle = {Universitext},
	copyright = {These electronic books are licensed by OhioLINK and may be under copyright protection. Please see the Acceptable Use Guidelines for more information, or contact your librarian.},
	edition = {1},
	isbn = {3319228706},
	issn = {0172-5939},
	keywords = {Differential equations Partial ; Inequalities (Mathematics) ; Mathematical physics ; Mathematics},
	language = {eng},
	publisher = {Springer International Publishing},
	series = {Universitext},
	title = {The Analysis and Geometry of Hardy’s Inequality},
	year = {2015},
}

@article{uhlenbeck_chern_1985,
	title = {The {Chern} classes of {Sobolev} connections},
	volume = {101},
	issn = {1432-0916},
	url = {https://doi.org/10.1007/BF01210739},
	doi = {10.1007/BF01210739},
	abstract = {AssumeF is the curvature (field) of a connection (potential) onR4 with finiteL2 norm\$\${\textbackslash}left( \{{\textbackslash}int{\textbackslash}limits\_\{R{\textasciicircum}4 \} \{{\textbackslash}left{\textbar} F {\textbackslash}right{\textbar}{\textasciicircum}2 dx{\textless} {\textbackslash}infty \} \} {\textbackslash}right)\$\$. We show the chern number\$\$c\_2= \{1 {\textbackslash}mathord\{{\textbackslash}left/ \{{\textbackslash}vphantom \{1 8\}\} {\textbackslash}right. {\textbackslash}kern-{\textbackslash}nulldelimiterspace\} 8\}{\textbackslash}pi {\textasciicircum}2 {\textbackslash}int{\textbackslash}limits\_\{R{\textasciicircum}4 \} \{F {\textbackslash}wedge\} F\$\$(topological quantum number) is an integer. This generalizes previous results which showed that the integrality holds forF satisfying the Yang-Mills equations. We actually prove the natural general result in all even dimensions larger than 2.},
language = {en},
number = {4},
urldate = {2025-11-21},
journal = {Communications in Mathematical Physics},
author = {Uhlenbeck, Karen K.},
month = dec,
year = {1985},
keywords = {Complex System, Neural Network, Nonlinear Dynamics, Quantum Number, Statistical Physic},
pages = {449--457},
file = {PDF:/Users/mario/Zotero/storage/NYLGEKSM/Uhlenbeck - 1985 - The Chern classes of Sobolev connections.pdf:application/pdf},
}

		\end{document}